\def\thetitle{{ . }}
\newtheorem{thm}{Theorem}[section]
\newtheorem{lem}[thm]{Lemma}
\newtheorem{cor}[thm]{Corollary}
\newtheorem{prop}[thm]{Proposition}
\newtheorem{que}{Question}
\newtheorem{defn}[thm]{Definition}
\newtheorem*{que*}{Question}
\theoremstyle{remark}
\newtheorem*{rmk}{\textbf{Remark}}
\theoremstyle{definition}
\newtheorem*{defn*}{Definition}
\newcommand\Mod{\operatorname{Mod}}
\newcommand\BDK{\operatorname{BDK}}
\newcommand\PER{\operatorname{PER}}
\title{Monodromy Through Bifurcation Locus of the Mandelbrot Set}
\author{Hyungryul Baik}
\address{Hyungryul Baik, Department of Mathematical Sciences, KAIST,  
	291 Daehak-ro, Yuseong-gu, Daejeon 34141, South Korea }
\email{hrbaik@kaist.ac.kr}
\author{Juhun Baik}
\address{Juhun Baik, Department of Mathematical Sciences, KAIST,  
	291 Daehak-ro, Yuseong-gu, Daejeon 34141, South Korea }
\email{jhbaik@kaist.ac.kr}
\date{\today}
\begin{document}
	
	\begin{abstract}
		We investigate the behavior of itinerary sequence of each point of the Julia set of $z\mapsto z^2 + c$ when the parameter $c$ in the shift locus is allowed to pass through points in the bifurcation locus $\mathcal{P}_2$, which we call ``narrow", first proposed by Dierk Schleicher in \cite{schleicher2017internal}.
		We first show the combinatoric and geometric properties of narrow characteristic arcs.
		Also, we show how the itinerary sequence changes in an algorithmic way by using lamination models proposed by Keller in \cite{keller2007invariant}.
		Finally, we found an equivalence relation on the set of $0$-$1$ sequences so that the changing rule is a shift invariant up to the equivalence relation.
		This generalizes Atela's works in \cite{atela1992bifurcations}, \cite{atela1993mandelbrot}, which dealt with the special case of the generalized rabbit polynomials.
	\end{abstract}
	\keywords{biaccessible points, bifurcation locus, itinerary sequence, kneading sequence}
	\maketitle
	
	\section{Introduction} \label{sec:1-intro} 
	The shift locus of complex polynomials of degree $d\geq 2$ is a collection of polynomials that every critical point escapes to infinity under iterations of itself.
	The reason we call it a shift polynomial is the following theorem.
	\begin{thm}\label{thm:shift_defn}
		Suppose $P$ is a shift polynomial of degree $d\geq 2$ and $J_P$ be a Julia set of $P$.
		Then there exists a homeomorphism between $J_P$ and $\Sigma_d$, a set of one-sided infinite sequence of $d$ symbols.
		Furthermore $P \vert_{J_P} : J_P \to J_P$ is conjugated by this homeomorphism to the one-sided shift map $\sigma: \Sigma_d \to \Sigma_d$.
		\emph{i.e.,} 
		\[
		\begin{tikzcd}
			J_P \arrow[r] \arrow[d, "P"] & \Sigma_d \arrow[d, "\sigma"] \\
			J_P \arrow[r]        & \Sigma_d                               
		\end{tikzcd}
		\]
	\end{thm}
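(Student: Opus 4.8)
The plan is to establish the conjugacy in stages, first building the symbolic coding from the escape-time dynamics and then upgrading it to a homeomorphism. Since $P$ is a shift polynomial, every critical point escapes, so the filled Julia set $K_P$ equals $J_P$ and is totally disconnected; the Böttcher coordinate $\varphi_P$ conjugating $P$ to $z\mapsto z^d$ extends over a large neighborhood of infinity, but not over all of $\C\setminus K_P$ because critical points lie in the escaping set. First I would fix a large round disk $D$ centered at $0$ with $P^{-1}(D)\subset D$ (interior) and $J_P\subset \operatorname{int}D$; then $P^{-1}(D)$ consists of finitely many components, and the preimages form a nested sequence. The key combinatorial step is to show that $P^{-1}(D)$ has exactly $d$ connected components $D_0,\dots,D_{d-1}$, each mapped homeomorphically (in fact, as a proper degree-one map, hence biholomorphically onto $D$) by $P$. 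This uses that the critical points of $P$ all escape, so none of them lies in $P^{-1}(D)\setminus D$ once $D$ is chosen large enough — more precisely, one chooses $D$ so large that all critical values lie in $D$ but the relevant critical points have already left, forcing $P: D_i\to D$ to be unramified.

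Next I would define the itinerary map $h\colon J_P\to \Sigma_d$ by $h(z)_n = i$ iff $P^n(z)\in D_i$. This is well-defined because $J_P$ is forward-invariant and each point's orbit stays in $\bigcup D_i$. To see $h$ is a bijection I would use the standard nesting argument: for a finite word $w=(w_0,\dots,w_{n-1})$, the set $D_w := D_{w_0}\cap P^{-1}(D_{w_1})\cap\cdots\cap P^{-(n-1)}(D_{w_{n-1}})$ is a nonempty connected compact set, mapped biholomorphically onto $D$ by $P^n$, and these sets are nested as $n$ grows with $\bigcap_n D_w$ a single point — the single-point claim follows because the spherical diameters of $D_w$ shrink to zero, which one extracts from an expansion estimate for $P$ near $J_P$ (e.g. $|(P^n)'|$ grows uniformly on $J_P$, or more elementarily from a normal-families/Schwarz-lemma argument on the disks $P^{-k}(D)$). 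Surjectivity of $h$ is then the statement that every word gives a nonempty nested intersection; injectivity is that distinct points eventually land in different $D_i$, which again follows from diameters shrinking to zero. Continuity of $h$ and of $h^{-1}$ follows from the same nesting: $h^{-1}$ of a cylinder set is $D_w\cap J_P$, which is open in $J_P$, and $h$ is continuous because $J_P$ is compact Hausdorff and $h$ is a continuous bijection — so I would check continuity of $h$ directly from the fact that $D_i$ are open and closed in a neighborhood basis.

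Finally, the conjugacy relation is immediate from the construction: if $h(z)=(w_0,w_1,w_2,\dots)$ then $P(z)\in D_{w_1}$ and $P^{n}(P(z))=P^{n+1}(z)\in D_{w_{n+1}}$, so $h(P(z)) = (w_1,w_2,\dots) = \sigma(h(z))$, which is exactly the commuting square. The main obstacle — and the only place real analysis enters — is proving that $\operatorname{diam}(D_w)\to 0$ uniformly, equivalently that $P$ is ``expanding'' along $J_P$ in an appropriate metric; all shift polynomials are hyperbolic (expanding on the Julia set) precisely because the critical orbits escape, so I would invoke or reprove the standard fact that a rational map with all critical points in the basin of an attracting fixed point (here $\infty$) is expanding on its Julia set, and then the geometric shrinking of the puzzle pieces $D_w$ is a routine consequence. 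A clean alternative for this step is to use the hyperbolic metric on $\C\setminus\overline{D}$ (or on $\C\setminus K_P$) and note that the inverse branches of $P^n$ mapping $D$ into $D_w$ are uniform contractions, since the inclusion $\overline{D}\hookrightarrow \widehat{\C}\setminus(J_P)$ is a strict inclusion of hyperbolic domains.
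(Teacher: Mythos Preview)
The paper does not prove this theorem: it is stated in the introduction as a classical background result and invoked again for $d=2$ in Section~2, with no proof given anywhere. So there is no proof in the paper to compare against.

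Your overall strategy (build a Markov partition into $d$ pieces, code by itineraries, show cylinder diameters shrink via hyperbolicity) is the standard one and the later steps are fine, but the construction of the $d$ pieces has a genuine error. For a \emph{large} round disk $D$ with $P^{-1}(D)\subset D$, the preimage $P^{-1}(D)$ is a single topological disk, not $d$ components: near infinity $P$ behaves like $z\mapsto z^d$, so the preimage of a big round disk is just a slightly smaller disk. Your justification (``all critical values lie in $D$ but the relevant critical points have already left'') is self-contradictory, since if a critical value lies in $D$ then by definition the critical point lies in $P^{-1}(D)\subset D$ and has not left at all; indeed, with all critical values in $D$, Riemann--Hurwitz forces $P^{-1}(D)$ to be connected. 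The correct move is the opposite: take $D=\{G\le h\}$ a sublevel set of the Green's function with $h$ just above $\max_i G(c_i)$, so that $D$ is a single disk; if in addition $h/d<\min_i G(c_i)$ then $P^{-1}(D)=\{G\le h/d\}$ contains no critical point of $P$, so $P\colon P^{-1}(D)\to D$ is an unbranched degree-$d$ cover of a disk and splits into exactly $d$ components. For $d=2$ this always works; for $d\ge 3$ the inequality $\max_i G(c_i)<d\min_i G(c_i)$ can fail, and one must instead pass to a deeper preimage $P^{-N}(D)$ and do more bookkeeping to see that the resulting coding is the full $d$-shift rather than a proper subshift.
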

	We endow the topology of $\Sigma_d$ as the product topology of $\{0,1,\cdots, d-1\}^\mathbb{N}$, hence it is homeomorphic to a Cantor set.
	
	The most famous and the simplest one is the exterior of Mandelbrot set, $\mathbb{C} - \mathcal{M}$, which is the shift locus of quadratic polynomials $\mathcal{S}_2$.
	As the Mandelbrot set seems fractal, a shift locus of degree $d$, $\mathcal{S}_d$, has also a fractal structure and it has been a challenge to understand its topology and geometry.
	In 1994, Blanchard, Devaney, and Keen  \cite{blanchard1991dynamics} showed that there is an action by the fundamental group of the shift locus $\pi_1(\mathcal{S}_d)$ on the set of shift automorphisms of $d$ symbols.
	Furthermore they proved that this action is surjective. 
	\emph{i.e.,} for any shift automorphism $\phi$, there exists an element in $\pi_1(\mathcal{S}_d)$ which acts on the Julia set exactly same as $\phi$.
	\[
    	\BDK : \pi_1(\mathcal{S}_d) \twoheadrightarrow Aut(\Sigma_d, \sigma)
	\]
	Here a shift automorphism is an automorphism of $\Sigma_d$ which commutes with the one-sided shift map $\sigma$.
	
	Unlike $d\geq 3$, the quadratic case has no interests in usual, because the shift locus $\mathcal{S}_2$ is conformal to $\mathbb{C} - \mathbb{D}$ as proved by Douady and Hubbard.
	Hence $\pi_1(\mathcal{S}_2)$ is $\mathbb{Z}$ and it is generated by a loop $\gamma$ which wraps around $\mathcal{M}$ as in the figure \ref{fig:Mandelbrot_and_gamma}.
	Under $\BDK$ map, this generator acts on $\Sigma_2$ as a symbol change of $0$ and $1$\, which is the only nontrivial shift automorphism in $\Sigma_2$.
	\begin{align*}
		\BDK : \pi_1(\mathcal{S}_2)~\cong~\langle\gamma\rangle~ & \longrightarrow Aut(\Sigma_2, \sigma)\\
		\gamma \qquad&\longmapsto (0,1) \text{ symbol swap}
	\end{align*}
	Let $J_\alpha$ be a Cantor Julia set of $P_\alpha : z\mapsto z^2 + c$, where the parameter angle of $c$ is $\alpha$.
	Then as $c$ moves around the loop, $J_0$ permutes itself and came back to the position where they start.
	
	\begin{figure}[h]
		\centering
		\includegraphics[width = .5\textwidth]{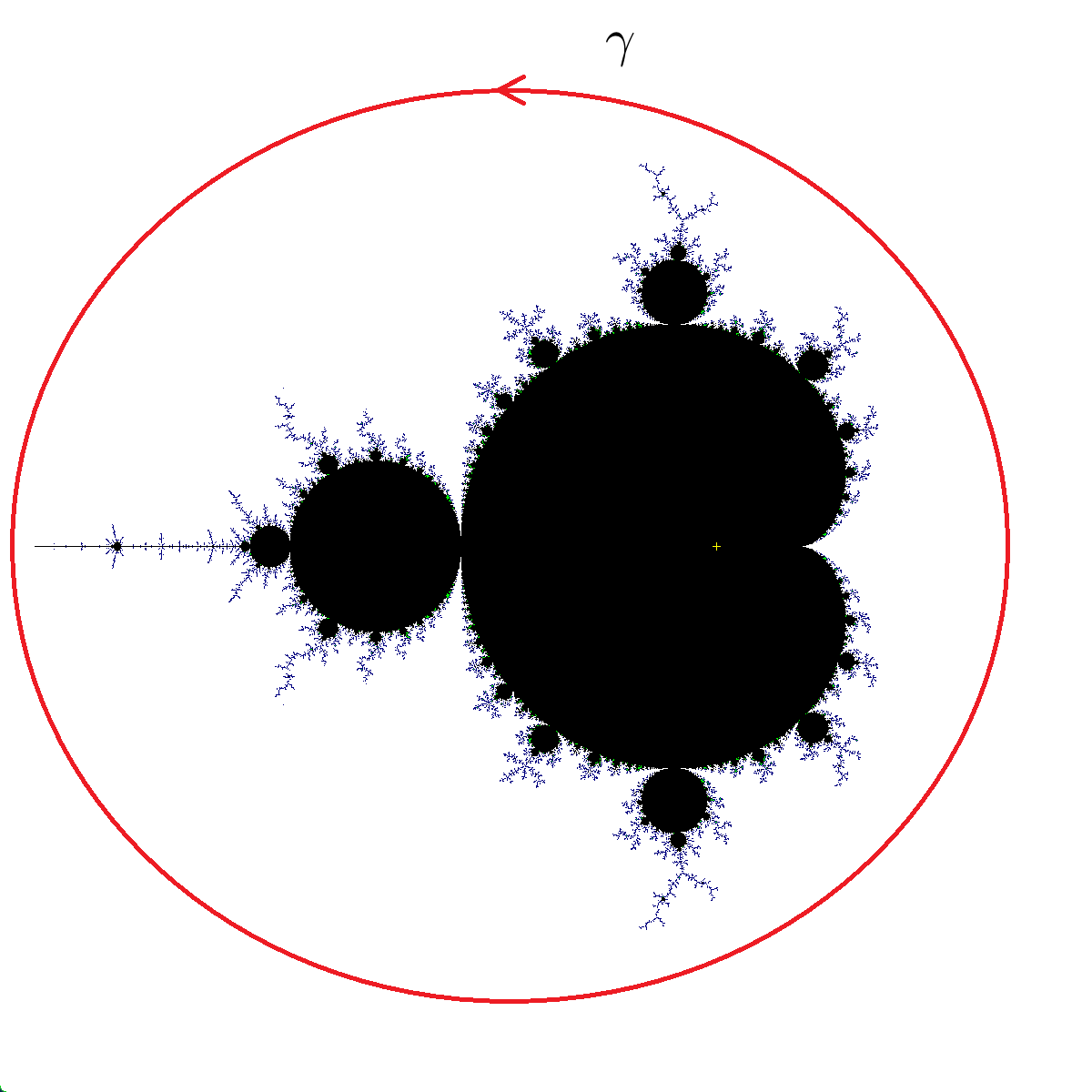}
		\caption{$\gamma$ is a loop which moves around the Mandelbrot set $\mathcal{M}$. $\gamma$ is a generator of the shift locus $\mathcal{S}_2 \cong \mathbb{C} - \overline{\mathbb{D}}$.}
		\label{fig:Mandelbrot_and_gamma}
	\end{figure}

    \begin{figure}
        \centering
        \includegraphics[width = .7\textwidth]{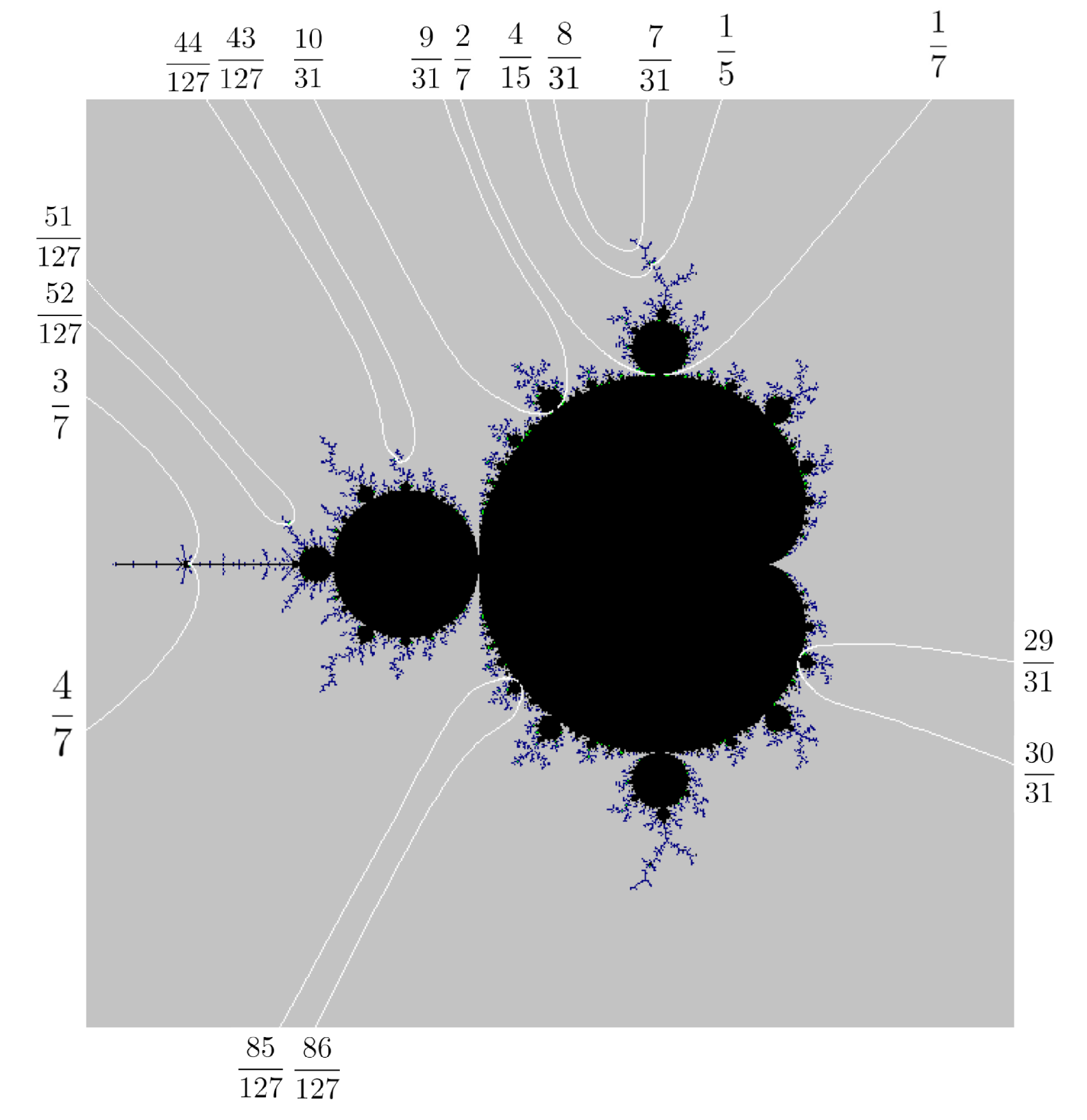}
        \caption{Examples of parameter ray pairs landing at the bifurcation locus, which are all narrow. We investigate what happens when $c$ moves along a pair of parameter rays.}
        \label{fig:bifurcations}
    \end{figure}
    
	\subsection*{Narrow characteristic arcs}
	Milnor proved in \cite{milnor2000periodic} that	there is a combinatorial way (called ``Orbit portrait") of describing the dynamical properties of parabolic points.
	Also, Douady, Hubbard and Lavarus proved in \cite{douady1984etude} that every parabolic point has exactly two parameter rays landing at the root of a hyperbolic component. 
	Those pair of angles are called \emph{companion angles}, denoted as $\alpha$ and $\overline{\alpha}$.
	The arc whose endpoints form a pair of companion angles is called a \emph{characteristic arc}.
	The collection of all characteristic arcs became a lamination, so called \emph{Quadratic Minor Lamination}.
	In \cite{schleicher2017internal}, Schleicher proposed the notion of narrow characteristic arcs.
    Simply put, a narrow arc of period $n$ is an arc of width $\frac{1}{2^{n}-1}$.
	We give a necessary and sufficient condition of narrowness for a characteristic arc in terms of its orbit portrait.
	By using this, we propose a geometric condition of simply renormalizable arcs.
	\begin{thm}[Corollary \ref{cor:simply_renormalizable_criterion}]
		Every narrow arc/component is not simply renormalizable.
		Every prime period arc/component is not simply renormalizable.
		Every satellite arc/component is not narrow unless its root meets in the main cardioid $M_0$.
	\end{thm}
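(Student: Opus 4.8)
\smallskip
\noindent\textbf{Proof strategy.}
The plan is to read off all three assertions from the orbit‑portrait criterion for narrowness of the preceding theorem, together with the standard description of simple renormalizability: a hyperbolic component $W$ (equivalently, its characteristic arc) is simply renormalizable precisely when it lies in a proper little Mandelbrot copy, i.e.\ when it is a nontrivial tuning $W=W_0\ast W'$ with $p:=\operatorname{per}(W_0)\ge 2$ and $q:=\operatorname{per}(W')\ge 2$, so that its period is the composite number $n=pq$. Granting this, the second assertion is immediate: a prime $n$ has no factorization $n=pq$ with $p,q\ge 2$, hence admits no nontrivial tuning, hence the corresponding arc/component is not simply renormalizable. (Here one must be careful to read ``simply renormalizable'' as a renormalization of period strictly less than $n$ — a nontrivial tuning — so that the immediate period‑$n$ renormalization carried by every hyperbolic component of period $\ge 2$ is not counted; this is exactly what gives the second assertion content, and it dovetails with the main‑cardioid exception in the third.)

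For the third assertion I would argue by contraposition. If a satellite arc $A$ of period $n$ is narrow, write $m$ for the period of the hyperbolic component whose boundary its root meets, so that the root of $A$ realizes a satellite orbit portrait of base period $m\ge 1$, valence $v\ge 2$, rotation number $r/v$, with $n=vm$. Applying the narrowness criterion to this portrait, I expect to see that a satellite portrait can satisfy it only when $m=1$ — equivalently, only when the companion angles of $A$ are $\tfrac{1}{2^{n}-1}$ and $\tfrac{2}{2^{n}-1}$ up to the $\{0,1\}$‑symbol swap, forcing rotation number $\pm 1/v$ about the $\alpha$‑fixed point and hence the root of $A$ on $M_0$. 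As a cross‑check, or alternative route, for $m\ge 2$ one should be able to locate inside the characteristic arc a periodic angle of period $<n$ obtained from the parent period‑$m$ portrait, which is incompatible with the minimal width $\tfrac{1}{2^{n}-1}$ of a narrow arc.

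The first assertion is the substantive one, and again I would proceed by contraposition: assume $W=W_0\ast W'$ with $p=\operatorname{per}(W_0)\ge 2$, $q=\operatorname{per}(W')\ge 2$, $n=pq$. Tuning by $W_0$ acts on external angles by the substitution $\Sigma$ sending the symbol $0$ to the length‑$p$ periodic word $u^{-}$ of the lower companion angle of $W_0$ and the symbol $1$ to the length‑$p$ word $u^{+}$ of the upper one; the companion angles of $W$ then have length‑$n$ periodic words $\Sigma(w^{-})$ and $\Sigma(w^{+})$, where $w^{\pm}$ are the length‑$q$ words of the companion angles of $W'$. Since the width of $A_{W}$ equals $\big(\operatorname{val}\Sigma(w^{+})-\operatorname{val}\Sigma(w^{-})\big)/(2^{n}-1)$, I must show this numerator exceeds $1$. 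Letting $k$ be the first block in which $w^{+}$ and $w^{-}$ disagree, a carry analysis of $\Sigma$ should give the difference $>1$ whenever $k<q$, and $=\operatorname{val}(u^{+})-\operatorname{val}(u^{-})$ when $k=q$; so narrowness of $W$ would force $k=q$ together with $W_0$ narrow, i.e.\ $W'$ would have companion words of the form $(C0,C1)$ — in particular $W'$ narrow with its lower companion word ending in $0$. That case is excluded by the combinatorial properties of narrow arcs from the earlier sections: the lower companion word of a narrow arc of period $\ge 2$ ends in $1$ (the two companion words are consecutive integers only through a nonempty carry). Hence the numerator is $>1$ and $W$ is not narrow.

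The hard part will be the last step of the first assertion: getting the transformation of companion angles under the tuning substitution $\Sigma$ exactly right, and running the carry analysis finely enough to isolate the single exceptional configuration ($k=q$ with $W_0$ narrow), which is then closed off by the combinatorics of narrow companion words. Everything else is bookkeeping — matching the orbit‑portrait narrowness criterion against the satellite/primitive dichotomy for the third assertion, and tracking the composite‑period constraint $n=pq$ for the second.
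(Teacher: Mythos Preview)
Your arguments for the second and third assertions are essentially the paper's own (the second is the observation that simple renormalization forces a composite period; the third follows since a satellite off the main cardioid is a nontrivial tuning, hence simply renormalizable, hence not narrow by the first assertion).

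For the first assertion your route is genuinely different. The paper does not compute widths at all: it combines Proposition~\ref{prop:narrow = no nesting} (narrow $\Leftrightarrow$ $A_1$ is not nested by any $A_i$ with $2\le i<n$) with Lemma~\ref{lem:renorm_implies_nesting}, which shows that for a tuning $I=\mathcal P\ast\mathcal Q$ the orbit portrait of $\mathcal P$ partitions $\{A_1,\dots,A_n\}$ into $m$ blocks, and the block containing $A_1$ carries an inner orbit portrait (that of $\mathcal Q$) in which the characteristic arc is nested by some member; Douady's tuning formula then identifies that nesting arc with some $A_{(l-1)m+1}$ with $(l-1)m+1<n$. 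So the contradiction with narrowness is purely through the nesting criterion, not through arithmetic on companion words.

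Your width computation via the substitution $\Sigma$ can be made to work, and it is pleasantly self-contained: writing $d=\mathrm{val}(u^+)-\mathrm{val}(u^-)\ge 1$, one checks that the gap between $\Sigma(w)$ and $\Sigma(w+1)$ equals $d\cdot\frac{2^{jp}(2^p-2)+1}{2^p-1}$ when $w$ ends in $01^{\,j}$, which is $d$ for $j=0$ and $\ge d(2^p-1)\ge 3$ for $j\ge 1$. Summing gaps over $[w^-,w^+)$ gives $\Sigma(w^+)-\Sigma(w^-)>1$ unless $W'$ is narrow with $j=0$, i.e.\ $w^-$ ends in $0$; Lemma~\ref{lem:narrow_starts_with_odd} then closes that case. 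What this buys you is that you never invoke the nesting criterion; what the paper's route buys is that the argument is geometric, short, and reuses the machinery (Proposition~\ref{prop:narrow = no nesting}) already in place, avoiding the carry bookkeeping you flag as ``the hard part.''
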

	
	We remark that Schleicher define the \emph{internal address} of each hyperbolic component of $\mathcal{M}$ in \cite{schleicher2017internal}.
	By using that he completely determine when the corresponding polynomial is renormalizable, including both simple and crossed cases.
	Thanks to his theorem we get the collection of non-simply renormalizable polynomials which is not narrow with non-prime period in Appendix \ref{appendix:B-3non}.
	
	\subsection*{Kneading sequence changes at the bifurcation locus}	
	The set of external angles for all points in Julia set $J_\alpha$ is equal to that of $J_{\overline{\alpha}}$.
	Hence in the kneading/itinerary sequence sense it is natural to ask what happens if we change $\alpha$ to $\overline{\alpha}$.
    Figure \ref{fig:bifurcations} shows some examples of parameter rays which are narrow.
	We proved for narrow cases there is an algorithm how the itinerary sequence changes in theorem \ref{thm:rule_unified}.

	In \cite{atela1993mandelbrot} and \cite{atela1992bifurcations}, Atela obtained such an algorithm for the case of generalized rabbit polynomials (in the paper, Atela called it \emph{main bifurcation points}.)
    Generalized rabbit polynomials or main bifurcation points correspond to the set of characteristic arcs of parameter angle $\left( \frac{1}{2^{n}-1}, \frac{2}{2^{n}-1} \right)$ for $n\geq 2$.
	Atela constructs the dynamic graph and every itinerary sequence is lifted to the infinite path of the graph, and find the rule by differing the coloring of a pair of vertices.
	We generalized this result to every narrow characteristic arc. 
    Our method is different from Atela's in that we use the lamination model proposed by Keller in \cite{keller2007invariant}. 
    Via this method, we can describe more explicitly how itinerary sequences change.
	
	\begin{thm}[Theorem \ref{thm:rule_unified}]
		Let $s = s_1s_2s_3\cdots$ be a $0$-$1$ sequence.
		Also, let $v = v^\alpha$ be the repetition word of $\alpha$, $e = e^\alpha$ is a characteristic symbol of $\alpha$ and $\PER(\alpha) = |v|+1$ be the period of $\alpha$.
		Then we define $\varphi_\alpha(s)$ as below.
		\begin{enumerate}
			\item $i = 0$.
			\item Set $w = s_i\cdots s_{i+|v|}$.
			\begin{enumerate}
				\item If $w = 0v$ or $1v$, set $i \leftarrow i+|v|+1$ and go back to (2).
				\item If not, but if there is a previous $0v$ or $1v$, check $s_i = e$. 
				\begin{enumerate}
					\item If $s_{i+|v|+1} = e$, swap all $0v$ and $1v$. 
					\item If $s_{i+|v|+1} \neq e$, trace back to the previous $0v$ (or $1v$) and go back to (b). 
				\end{enumerate}
			\end{enumerate}			
			\item If $w$ is neither $0v$ nor $1v$, then set $i \leftarrow i+1$ and go back to (2).
		\end{enumerate}

        Suppose $(\alpha, \overline{\alpha})$ is narrow.
        Choose $p \in J_\alpha$ and let $\theta$ be the dynamic angle of $p$.
        If $\theta$ is not precritical angle, then $\varphi_\alpha(I^\alpha(\theta)) = I^{\overline{\alpha}}(\theta)$.
	\end{thm}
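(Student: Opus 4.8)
\emph{Plan.} The strategy is to convert the monodromy into a purely combinatorial statement about the angle-doubling map $d\colon\phi\mapsto2\phi$ on $\mathbb{R}/\mathbb{Z}$, and then to check that $\varphi_\alpha$ realizes exactly that statement. First I would set up the dictionary coming from Theorem~\ref{thm:shift_defn} and Keller's lamination model \cite{keller2007invariant}: for a Cantor parameter of angle $\beta$, the two external rays $R_{\beta/2},R_{(\beta+1)/2}$ crash into the critical point and cut the circle of angles into the two arcs $L_0^\beta,L_1^\beta$ with common endpoints $\{\beta/2,(\beta+1)/2\}$, and for a non-precritical dynamic angle $\theta$ the itinerary $I^\beta(\theta)$ records, along the orbit $\theta,d\theta,d^2\theta,\dots$, which of $L_0^\beta,L_1^\beta$ is visited; under the conjugacy this is precisely the $\Sigma_2$-coordinate of the point of $J_\beta$ landing at external angle $\theta$. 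Next I would observe that as $c$ runs in along $R_\alpha$ to the root $c_0$ of the hyperbolic component and out again along $R_{\overline\alpha}$, the parameter angle is $\alpha$ on the first leg and $\overline\alpha$ on the second, so a fixed non-precritical $R_\theta$ never crashes, keeps landing, and moves continuously; hence the monodromy carries the point of $J_\alpha$ landing at angle $\theta$ to the point of $J_{\overline\alpha}$ landing at the \emph{same} angle $\theta$. Thus the dynamic angle is preserved and only the coding partition changes, and the theorem becomes the claim that the $\overline\alpha$-recoding of $I^\alpha(\theta)$ is $\varphi_\alpha(I^\alpha(\theta))$.

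The first genuinely new point is to identify which entries change. A direct computation of the symmetric difference of the two partitions gives $L_0^\alpha\triangle L_0^{\overline\alpha}=d^{-1}(A)$, where $A=(\alpha,\overline\alpha)$ is the characteristic arc: $d^{-1}(A)$ is the union of $(\alpha/2,\overline\alpha/2)$, lying on the critical-value side of the $\alpha$-partition and on the non-critical-value side of the $\overline\alpha$-partition, and of $((\alpha+1)/2,(\overline\alpha+1)/2)$, lying the other way around, while off $d^{-1}(A)$ the two partitions agree. Consequently, for non-precritical $\theta$, the entry of $I^\alpha(\theta)$ attached to the orbit point $d^k(\theta)$ gets flipped in passing to $I^{\overline\alpha}(\theta)$
\[
\text{if and only if}\qquad d^{k+1}(\theta)\in A,
\]
and the direction of each flip ($0\to1$ versus $1\to0$) is forced by which component of $d^{-1}(A)$ contains $d^k(\theta)$. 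So the whole problem reduces to deciding, for every $k$, whether the shifted angle $d^{k+1}(\theta)$ lies in the narrow arc $A$, using only the sequence $I^\alpha(\theta)$; equivalently, to detect ``$\phi\in A$'' from $I^\alpha(\phi)=\sigma^{k+1}(I^\alpha(\theta))$.

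This detection is the heart of the matter, and the reason for the bookkeeping in $\varphi_\alpha$. Since $A$ is narrow, $\abs{A}=\tfrac{1}{2^{\PER(\alpha)}-1}$, so $A$ is not a single cylinder of the $\alpha$-partition but a countable nested union of cylinders — this is forced by $\alpha,\overline\alpha$ being $d^{\PER(\alpha)}$-fixed, which produces a self-similar first-return structure of $d$ near $A$ — whose outermost piece sits on the side $e=e^\alpha$ of the partition and whose symbolic description is controlled by the repetition word $v=v^\alpha$ read from the orbit portrait; here I would invoke the combinatorial analysis of narrow characteristic arcs established in the earlier sections to make all of this precise. The parsing in $\varphi_\alpha$ — reading a maximal run of length-$(\abs v+1)$ blocks $0v$ or $1v$, and upon a mismatch either swapping all $0v\leftrightarrow1v$ (when the symbol $s_{i+\abs v+1}$ just past the block equals $e$) or tracing back to the previous block and re-examining it — is then shown to be exactly the finite-state procedure that resolves, simultaneously for all $k$, whether $d^{k+1}(\theta)\in A$: the ``swap'' implements the forced $0\to1$ and $1\to0$ flips of the previous step on the two components of $d^{-1}(A)$, and the ``trace back'' descends one level of the nested structure, terminating because the orbit of a non-precritical $\theta$ never meets $\partial A=\{\alpha,\overline\alpha\}$. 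The main obstacle will be the boundary cases, where $d^{k+1}(\theta)$ is so close to $\alpha$ or $\overline\alpha$ that no bounded prefix of $I^\alpha(\theta)$ decides membership and one genuinely needs the recursion, together with checking that the many flips can be carried out consistently; once these are handled, $\varphi_\alpha(I^\alpha(\theta))=I^{\overline\alpha}(\theta)$ follows, and it recovers Atela's computation \cite{atela1993mandelbrot} in the special case $\alpha=\tfrac{1}{2^{\PER(\alpha)}-1}$, where the recursion collapses.
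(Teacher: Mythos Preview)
Your plan is correct and is essentially the paper's argument in different clothing: your key observation that the $k$-th symbol flips exactly when $d^{k+1}(\theta)\in A$ is precisely Lemma~\ref{lem:symbol_change} (the short chords of the central quadrilateral are the two halves of $d^{-1}(A)$), and your ``nested-cylinder'' detection of $\phi\in A$ via the symbol $e^\alpha$ following a maximal run of $tv$-blocks is exactly Lemma~\ref{lem:narrow_follows_v_alpha} together with Proposition~\ref{prop:narrow_a-regular_prefix}. The only difference is packaging: the paper organizes everything through Keller's infinite gaps $G^\alpha_w$, splitting each itinerary into an $\alpha$-regular prefix and a $0v$--$1v$ tail (the tail handled by Theorem~\ref{thm:0v1v-seq_rule}, the prefix by the discussion preceding Theorem~\ref{thm:rule_unified}, and the remaining points by a limit argument), whereas you bypass the lamination scaffolding and work directly with the symmetric difference $L_0^\alpha\triangle L_0^{\overline\alpha}=d^{-1}(A)$ and cylinder sets.
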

	The power of the theorem is that (1) we do not have to lift the given sequence to an infinite path of the graph as proposed in \cite{atela1993mandelbrot}, and (2) we can always get the target itinerary sequence by just looking at the given sequence from the beginning.
	
	As in \cite{atela1993mandelbrot}, we also provide the dynamical graph method to understand $\varphi_\alpha$ in the appendix \ref{appendix:A-sym_dyn}.
    Such method is applicable regardless of narrowness of $\alpha$, though it is hard to get the target sequence explicitly.

    At this moment, the algorithm $\varphi_\alpha$ only covers the points which are not precritical angles.
    To extend the domain to the whole $J_\alpha \cong \{0,1\}^\mathbb{N}$, we give a marking on each precritical angle, which will be introduced in the next subsection.
	
	\subsection*{Equivalence relation on itinerary sequences}
	The itinerary sequence function $I^\alpha : S^1 \to \Sigma_2$ is not surjective and moreover not defined for some angles, which are preimages of $\alpha$ under angle doubling map.
	Try to make each itinerary sequence compatible with the points in Cantor type Julia set, we first extend the angle of $S^1$ by $\mathbf{E}_\alpha$ (see Definition \ref{defn:extended_angle}).
	
	Furthermore, $\varphi_\alpha$ is neither injective nor surjective, and sometimes multi-valued.
	We investigate when such behaviors happen and resolve the issues by defining the equivalence relation as follows.
	
	\begin{defn}[Definition \ref{defn:equivalence_relation_on_Sigma_2}]
		Let $s_1, s_2 \in \Sigma_2$ and $x_1, x_2 \in J_\alpha$ be the corresponding point.
		Suppose $\theta_1, \theta_2$ are extended angle of $x_1, x_2$, respectively.
		We define the equivalence relation $\sim$ as follows,
		\[
		s_1 \sim s_2 \quad \Leftrightarrow \quad \theta_1 \approx_\alpha \theta_2
		\]
	\end{defn}
	Quotienting $\mathbf{E}_\alpha$ by this equivalence relation, $\varphi_\alpha$ became well-defined and shift-invariant. Here the shift-invariant means that it commutes with one-sided shift of sequences.
	
	\begin{thm}[Theorem \ref{thm:shift_invariant}]
		$\varphi_\alpha : \Sigma_2 / \sim~ \to \Sigma_2 / \sim$ is well-defined.
		Moreover it is order $2$ element and shift-invariant. 
	\end{thm}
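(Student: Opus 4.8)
The plan is to derive all three assertions from the intertwining identity of Theorem~\ref{thm:rule_unified}, that $\varphi_\alpha\circ I^\alpha=I^{\overline\alpha}$ on the non-precritical angles, together with the symmetry of the companion pair $(\alpha,\overline\alpha)$ and the compatibility of $\sigma$ with angle doubling coming from Theorem~\ref{thm:shift_defn}. The conceptual content is short; the work is the bookkeeping that pushes these identities across the extension $\mathbf{E}_\alpha$ and the quotient by the equivalence relation $\approx_\alpha$ (equivalently $\sim$).

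\textbf{Well-definedness.} Over a non-precritical angle $\theta$, Theorem~\ref{thm:rule_unified} gives $\varphi_\alpha(I^\alpha(\theta))=I^{\overline\alpha}(\theta)$. First I would extend this identity to all of $\mathbf{E}_\alpha$ using the marking on precritical angles, and check that each fibre of $I^\alpha$ over $\mathbf{E}_\alpha$ is a single $\approx_\alpha$-class while $\varphi_\alpha$ carries it into a single class on the $\overline\alpha$-side; this is also what turns the a priori multi-valued $\varphi_\alpha$ into a single-valued map after quotienting. The point that makes the two sides communicate is that, because $J_\alpha$ and $J_{\overline\alpha}$ carry the same set of external angles, the extended-angle data and the identifications defining $\approx$ for $\alpha$ and for $\overline\alpha$ match under the natural bijection of their point sets. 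Concretely, if $s_1\sim s_2$, namely their extended angles satisfy $\theta_1\approx_\alpha\theta_2$, then $\varphi_\alpha(s_i)=I^{\overline\alpha}(\theta_i)$ and the corresponding angles are $\approx$-related on the $\overline\alpha$-side, so $\varphi_\alpha(s_1)\sim\varphi_\alpha(s_2)$. This is exactly well-definedness of $\varphi_\alpha$ on $\Sigma_2/\sim$.

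\textbf{Order two.} Running Theorem~\ref{thm:rule_unified} with $\alpha$ and $\overline\alpha$ interchanged --- legitimate since $\overline{\overline\alpha}=\alpha$ and $(\overline\alpha,\alpha)$ is again narrow --- gives $\varphi_{\overline\alpha}\circ I^{\overline\alpha}=I^\alpha$, so on the image of $I^\alpha$, hence after the extension above on all of $\Sigma_2/\sim$, we obtain $\varphi_{\overline\alpha}\circ\varphi_\alpha=\mathrm{id}$ and symmetrically $\varphi_\alpha\circ\varphi_{\overline\alpha}=\mathrm{id}$. So the induced self-map of $\Sigma_2/\sim$ is a bijection with inverse $\varphi_{\overline\alpha}$, and it is an involution exactly when $\varphi_\alpha=\varphi_{\overline\alpha}$. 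I would establish this last equality by comparing, at the combinatorial level, the algorithm of Theorem~\ref{thm:rule_unified} for the pair $(\alpha,\overline\alpha)$ with the one for $(\overline\alpha,\alpha)$: interchanging the two endpoints of a narrow arc leaves its width $\frac{1}{2^{\PER(\alpha)}-1}$ and its orbit portrait unchanged and only toggles the characteristic symbol, and I expect this to force the two procedures to produce $\sim$-equivalent outputs on every input, whence $\varphi_\alpha^2=\mathrm{id}$ on $\Sigma_2/\sim$.

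\textbf{Shift-invariance.} Under $I^\alpha$ the shift $\sigma$ corresponds to the doubling map $\theta\mapsto2\theta$ (equivalently to $P_\alpha$ on $J_\alpha$) by Theorem~\ref{thm:shift_defn}, and the same for $\overline\alpha$; also $\theta$ is non-precritical if and only if $2\theta$ is. Hence for non-precritical $\theta$,
\[
\varphi_\alpha(\sigma(I^\alpha(\theta)))=\varphi_\alpha(I^\alpha(2\theta))=I^{\overline\alpha}(2\theta)=\sigma(I^{\overline\alpha}(\theta))=\sigma(\varphi_\alpha(I^\alpha(\theta))),
\]
so $\varphi_\alpha\circ\sigma=\sigma\circ\varphi_\alpha$ on the image of $I^\alpha$. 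Since a level-$k$ precritical angle doubles to a level-$(k-1)$ one, the marking is natural with respect to doubling, $\sigma$ descends to $\mathbf{E}_\alpha/\approx_\alpha\,\cong\,\Sigma_2/\sim$, and the identity above extends there by density; this is the shift-invariance claimed. The step I expect to be the main obstacle is precisely the uniform handling of the precritical angles --- arranging the marking so that $\varphi_\alpha$, $\sigma$, and $\approx_\alpha$ are simultaneously compatible so that all three identities genuinely descend to $\Sigma_2/\sim$ --- together with the combinatorial comparison $\varphi_\alpha=\varphi_{\overline\alpha}$ underlying order two.
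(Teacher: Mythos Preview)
Your approach is essentially the paper's: realize $\varphi_\alpha$ on the quotient as $I^{\overline\alpha}\circ(I^\alpha)^{-1}$ via the lift to $\mathbf{E}_\alpha$, use that $\approx_\alpha=\approx_{\overline\alpha}$ (both are the landing pattern at the common root, Theorem~\ref{thm:landing_pattern=limit_lam}) together with Proposition~\ref{prop:Julia_equiv_and_iti_seq} for well-definedness, and use the compatibility of $\sigma$ with angle doubling (equivalently the $h$-invariance of $\partial\mathcal{L}_\alpha$) for shift-invariance. The one place you work harder than necessary is order two: rather than comparing the algorithms for $\alpha$ and $\overline\alpha$ combinatorially, the paper simply notes that once $\varphi_\alpha$ is written as $I^{\overline\alpha}\circ(I^\alpha)^{-1}$, the underlying geometric rule is the diagonal swap in the quadrilateral $\mathcal{Q}^{(\alpha,\overline\alpha)}$ (Lemma~\ref{lem:symbol_change}), which is manifestly symmetric in the pair, so $\varphi_\alpha=\varphi_{\overline\alpha}=\varphi_\alpha^{-1}$ on $\Sigma_2/\sim$ without further analysis.
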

	
	\subsection*{Future works}
	In the upcoming paper, we will construct big mapping classes in $\Mod(\mathbb{C} - \{ \text{Cantor set}\})$ or $\Mod(S^2 - \{ \text{Cantor set}\})$ associated to each point in the bifurcation locus of degree $2$.
	Such big mapping classes generate a subgroup of $\Mod(\mathbb{C} - \{ \text{Cantor set}\})$ or $\Mod(S^2 - \{ \text{Cantor set}\})$ which features many interesting dynamical properties. 
	
	\subsection*{Outline of the paper}
	We start with a brief introduction of basic complex dynamics in section \ref{sec:2-prelim}.
	In section \ref{sec:3-char_arcs}, narrow arcs and combinatorics of its orbit portrait are discussed with details and we prove that the narrow condition is sufficient to be not simply renormalizable.
	In section \ref{sec:4-inv_lam} we introduce Keller's works in \cite{keller2007invariant} and define the quadrilateral lamination to describe how the itinerary sequence changes, only for angles which are not pre-critical angle under angle doubling map.
	Section \ref{sec:5-action_on_iti_seq} cares the pre-critical angles to completely define $\varphi_\alpha$ for critical angle $\alpha$.
	We extend angles in $S^1$ with some markers and in section \ref{sec:6-equiv_rel} we give an equivalence relation to make $\varphi_\alpha$ to be shift-invariant.
	In appendix \ref{appendix:A-sym_dyn} we show that for narrow arcs there is a dynamic graph analogous to the one in \cite{atela1993mandelbrot}.
	We end with the table of non-narrow, non simply renormalizable with non-prime period arcs, attached in appendix \ref{appendix:B-3non}.
	
	\subsection*{Acknowledgement}
	We thanks to InSung Park for helpful conversations.
    Also, we thanks to the program \emph{mandel} \cite{jung21mandel} by Wolf Jung which helps to understand and visualize a lot of examples.

	\section{Preliminary} \label{sec:2-prelim}
	In this section we briefly summarize basic theories related to the dynamics of quadratic polynomials.
	For more details, we recommend \cite{milnor2011dynamics}, \cite{hubbard2016teichmuller}, \cite{douady1984exploring} and \cite{carleson2013complex}.
    Some figures are drawn by the computer program \emph{`mandel'}, \cite{jung21mandel} written by Wolf Jung.
	\subsection{Quadratic polynomials}~
 
	Any complex polynomial $a_dz^d + \cdots + a_0$ can be conjugated to a \emph{monic} ($a_d = 1$) and \emph{centered} ($a_{d-1} = 0$) polynomial by a linear map. In particular, any quadratic polynomial $P(z) = \alpha z^2 + \beta z + \gamma$ is conjugate to a polynomial of the form $z\mapsto z^2 + c$ for some $c\in \mathbb{C}$, hence quadratic polynomials are determined up to linear conjugacy by the constant term $c\in \mathbb{C}$. Consequently the moduli space of quadratic polynomials are just a complex plane $\mathbb{C}$.
	We first briefly recall the definition of Fatou set and Julia set.
	\begin{defn}[Fatou set, Julia set]\label{defn:Fatou_Julia}
		Let $P : S \to S$ be a quadratic polynomial which maps the Riemann sphere $S$ to itself.
		Fatou set $F_P$ is a collection of points $z\in S$ such that there is a neighborhood $U$ of $z$ satisfying that the iterations of $P$ at $U$, $P\vert_U, P^{\circ 2}\vert_U, \cdots$ became a normal family. 
		By definition, Fatou set is open.
		Julia set $J_P$ is the complement of $F_P$.
		\emph{i.e.,} $J_P = S - F_P$.
	\end{defn}
	Also, the set of points whose forward orbit $\{P^{\circ n}(z)\}_{n \in \mathbb{Z}}$ is bounded is called the filled Julia set $K_P$. As the name suggests, the boundary of $K_P$ is $J_P$.
	The connectedness of Julia set is determined by the behavior of critical point $z = 0$.
	\begin{thm}\label{thm:shift=escape}
		Let $P(z) = z^2 + c$, $c\in S$.
		Then 
		\begin{itemize}
			\item $J_P$ is connected $\Leftrightarrow$ $\{P^{\circ n}(0)\}_{n = 1, 2, \cdots}$ is bounded.
			\item $J_P$ is a Cantor set $\Leftrightarrow$ $\{P^{\circ n}(0)\}_{n = 1, 2, \cdots}$ escapes to infinity.
		\end{itemize}
		Furthermore if $0$ escapes to infinity under iteration of $P$ (the latter case), $P\vert_{J(P)}$ is conjugate to the one-sided shift on $2$ symbols.
	\end{thm}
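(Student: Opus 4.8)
The plan is to run the classical Böttcher‑coordinate argument, organized around the dichotomy that for $P(z)=z^2+c$ the critical orbit $\{P^{\circ n}(0)\}$ is \emph{either} bounded \emph{or} escapes to $\infty$. This dichotomy is the escape criterion: if $|P^{\circ n}(z)|>\max(|c|,2)$ for some $n$ then $|P^{\circ m}(z)|\to\infty$, so the critical orbit either stays forever in the disk of radius $\max(|c|,2)$ or tends to $\infty$. Since ``$J_P$ connected'' and ``$J_P$ a Cantor set'' are incompatible, it suffices to prove the two forward implications (orbit bounded $\Rightarrow J_P$ connected) and (orbit escapes $\Rightarrow J_P$ is a Cantor set with $P|_{J_P}$ conjugate to the one‑sided $2$‑shift); the $\Leftarrow$ directions then follow formally, as does the last sentence. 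Throughout I would use the Green's function $G(z)=\lim_n 2^{-n}\log^{+}|P^{\circ n}(z)|$, which is continuous on $\mathbb{C}$, harmonic and positive on $\mathbb{C}\setminus K_P$, vanishes exactly on $K_P$, satisfies $G(P(z))=2G(z)$, and whose only critical points are the points of $\bigcup_{n\ge 0}P^{-n}(0)$, with critical values $\{G(0)/2^{n}:n\ge 0\}$.

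For the bounded case, $0\in K_P$, so $G$ has no critical point on $\mathbb{C}\setminus K_P$, and $c=P(0)\in K_P$, so $P\colon\mathbb{C}\setminus K_P\to\mathbb{C}\setminus K_P$ is an unbranched degree‑$2$ covering. Hence the B\"ottcher map $\phi$, initially defined near $\infty$ with $\phi(z)\sim z$ and conjugating $P$ to $w\mapsto w^{2}$, extends via the functional equation $\phi(P(z))=\phi(z)^{2}$ to a conformal isomorphism $\mathbb{C}\setminus K_P\xrightarrow{\ \sim\ }\mathbb{C}\setminus\overline{\mathbb{D}}$. In particular $\widehat{\mathbb{C}}\setminus K_P$ is a topological disk, so $K_P$ is a full continuum, and then $J_P=\partial K_P$ is connected by the standard fact that the boundary of a full continuum (equivalently of a simply connected domain in $\widehat{\mathbb{C}}$) is connected.

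For the escaping case, put $\eta=G(0)>0$ and $\Omega=\{G<2\eta\}$. Since $2\eta$ is not a critical value of $G$, $\phi$ is defined and conformal on $\{G>\eta\}\supset\partial\Omega$, showing $\Omega$ is a Jordan domain conformally a round disk. Now $P^{-1}(\Omega)=\{G<\eta\}$ with $\overline{P^{-1}(\Omega)}=\{G\le\eta\}\Subset\Omega$, and since $G(c)=2\eta$ we have $c\notin\Omega$, so $P\colon P^{-1}(\Omega)\to\Omega$ is an unbranched degree‑$2$ covering of a simply connected domain; therefore $P^{-1}(\Omega)=U_0\sqcup U_1$ with each $P|_{U_i}\colon U_i\to\Omega$ a conformal isomorphism and $\overline{U_0}\cup\overline{U_1}\Subset\Omega$. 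Because $K_P=\{G=0\}\subseteq\{G<\eta\}=U_0\sqcup U_1$ is forward invariant while any point with some $P^{\circ n}(z)\notin U_0\cup U_1$ has $G(P^{\circ m}(z))\to\infty$, we get $K_P=\{z:P^{\circ n}(z)\in U_0\cup U_1\ \forall n\ge 0\}$. Assigning to $z\in K_P$ its itinerary $(i_n)$ defined by $P^{\circ n}(z)\in U_{i_n}$ gives a map $K_P\to\{0,1\}^{\mathbb{N}}$; writing $g_i=(P|_{U_i})^{-1}$, the cylinder $[i_0\cdots i_n]$ corresponds to the nested topological disk $g_{i_0}\circ\cdots\circ g_{i_n}(\Omega)$. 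Each $g_i$ maps $\Omega$ into the compact set $\{G\le\eta\}$, so by the Schwarz--Pick lemma there is a uniform factor $\lambda<1$ with which $g_i$ contracts the hyperbolic metric $\rho_\Omega$ on that compact set, whence $\operatorname{diam}_{\rho_\Omega}g_{i_0}\circ\cdots\circ g_{i_n}(\Omega)=O(\lambda^{n})\to 0$, and likewise the Euclidean diameter, these disks all lying in the fixed compact $\{G\le\eta\}$. It follows that the itinerary map is a homeomorphism $K_P\cong\{0,1\}^{\mathbb{N}}$, that it conjugates $P$ to the one‑sided shift (as $P$ carries $[i_0i_1\cdots]$ to $[i_1i_2\cdots]$), and that $K_P$ is a Cantor set; being totally disconnected it has empty interior, so $J_P=\partial K_P=K_P$. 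This settles the escaping case and hence the whole theorem.

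I expect the main obstacle to be the contraction estimate $\operatorname{diam}g_{i_0}\circ\cdots\circ g_{i_n}(\Omega)\to 0$ — that is, upgrading ``each inverse branch maps $\Omega$ strictly into itself'' to a uniform geometric rate; this is exactly where one must invoke strict domain monotonicity of the hyperbolic metric together with the compactness of $\{G\le\eta\}\Subset\Omega$ (or, alternatively, the Koebe distortion theorem applied to the univalent maps $g_{i_0}\circ\cdots\circ g_{i_n}$). A secondary delicate point is the global extension of the B\"ottcher coordinate in the connected case, which rests on the completeness of $G$ (no critical points off $K_P$) and the absence of critical values over $\mathbb{C}\setminus K_P$; everything else is bookkeeping with $G$ and covering‑space theory.
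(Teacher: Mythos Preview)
Your proposal is correct and is the standard B\"ottcher/Green's-function argument. The paper, however, does not give its own proof of this statement: it is placed in the preliminary section as classical background, with the reader referred to the standard sources (Milnor, Hubbard, Douady, Carleson--Gamelin) listed at the start of that section, and with the sole remark that the final assertion is the $d=2$ case of the theorem quoted in the introduction. There is therefore nothing in the paper to compare your argument against; what you have written is essentially the proof those textbooks contain.
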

	The last statement is exactly the theorem \ref{thm:shift_defn} for the case $d = 2$.
	The shift locus of degree $2$, denoted as $\mathcal{S}_2$, is the collection of all quadratic shift polynomials.
	Recall that the Mandelbrot set $\mathcal{M}$ is the set of $c\in \mathbb{C}$ that $0$ does not escape to infinity, i.e., it is the connectedness locus by the theorem \ref{thm:shift=escape}. 
	Therefore, $\mathcal{S}_2$ is the complement of $\mathcal{M}$.
	
	For a quadratic polynomial $P$, suppose some $z$ satisfies $P^{\circ k}(z) = z$.
	We call the smallest such positive integer $k$ is called a period and $\{z, P(z), \cdots, P^{\circ (k-1)}(z)\}$ as a periodic cycle (abusing the notation,  sometimes it is just called a fixed point even though it is not a single point). 
	We define the multiplier $\rho(z) := \big(P^{\circ k}\big)'(z) = P'(z)\times P'(P(z)) \times\cdots P'(P^{\circ (k-1)}(z))$ for each periodic cycle of period $k$.	Note that the multiplier stays constant in the same periodic cycle.
	Fixed points are classified by their multipliers: each fixed point is called superattracting if $|\rho| = 0$, attracting if $0 < |\rho| < 1$, irrationally indifferent if $\rho = e^{2\pi i \theta}$ with irrational $\theta$, parabolic if $\rho = e^{2\pi i \theta}$ with rational $\theta$, and repelling if $|\rho| > 1$, respectively. 
	\begin{defn}[Hyperbolic component]\label{defn:Hyp_comp}
		Let $P_c := z^2 + c$ for $c\in \mathbb{C}$. 
		Denote $X_k$ be the set of pairs $(c, z) \in \mathbb{C}^2$ such that $P_c^{\circ k}(z) = z$. 
		Consider the function $\rho_k : X_k \to \mathbb{C}$ which sends $(c, z) \mapsto (P_c^{\circ k})'(z)$, the multiplier of such fixed point.
		Let $A_k$ be the set of pairs $(c, z) \in X_k$ with $|\rho(c, z)| < 1$ and $M_k$ be $\pi(A_k)$, where $\pi : (c, z)\mapsto c$ is the projection onto the first factor (\emph{i.e.,} $M_k$ is the collection of $c$ such that $P_c$ has an attracting fixed point of period $k$).
		Then each component $M$ of $M_k$ is an open component of the interior $\mathring{\mathcal{M}}$ of $\mathcal{M}$, and called ``\emph{hyperbolic component}".
	\end{defn}
	It is still an open question that whether every component of $Int(\mathcal{M})$ is hyperbolic or not. 
	This is called \emph{hyperbolic density conjecture}.
	
    The multiplier map $\rho$ gives an analytic isomorphism from  each hyperbolic component $M$ to an open unit disc $\mathbb{D}$ and it extends to a homeomorphism from $\overline{M}$ to $\overline{\mathbb{D}}$.
	Abusing the notation, let $\rho$ be the extended homeomorphism.
	Then $c_M := \rho^{-1}(0)$ is called the \emph{center} of $M$ and $r_M := \rho^{-1}(1)$ is called the \emph{root} of $M$.
	
	\subsection{Parameter angle}~
	
	Douady and Hubbard proved in \cite{douady1984exploring} that $\mathcal{M}$ is connected, by explicitly presenting an analytic map between $\mathbb{C} - \overline{\mathbb{D}}$ to $\mathbb{C} - \mathcal{M}$.
	It is not yet known whether $\mathcal{M}$ is locally connected and this is called \emph{MLC conjecture}.
	
	\begin{thm}\label{thm:Mandelbrot_is_connected}
		There is an analytic isomorphism $\Psi$ between $\mathbb{C} - \mathcal{M}$ and $\mathbb{C} - \overline{\mathbb{D}}$, and hence the Mandelbrot set $\mathcal{M}$ is connected.
	\end{thm}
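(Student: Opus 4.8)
The plan is to realize $\Psi$ explicitly through B\"ottcher coordinates and to verify directly that it is a proper holomorphic bijection. Fix $c \in \mathbb{C} - \mathcal{M}$, so the critical point $0$ escapes under $P_c(z) = z^2 + c$. Since $\infty$ is a superattracting fixed point of $P_c$, B\"ottcher's theorem yields a unique conformal germ $\varphi_c$ at $\infty$, tangent to the identity (so $\varphi_c(z) = z + O(1)$), with $\varphi_c(P_c(z)) = \varphi_c(z)^2$. Pulling $\varphi_c$ back along $P_c$ using this functional equation, one extends it to the region $\{\, z : G_c(z) > G_c(0) \,\}$, where
\[
G_c(z) = \lim_{n \to \infty} 2^{-n}\log^+\!\bigl|P_c^{\circ n}(z)\bigr|
\]
is the Green's function of the basin of $\infty$ with pole at $\infty$, and $G_c(0) > 0$ is its value at the escaping critical point. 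The only obstruction to extending $\varphi_c$ further is the critical value $c = P_c(0)$, but $G_c(c) = 2\,G_c(0) > G_c(0)$, so $c$ itself lies in the domain of $\varphi_c$, and I would set $\Psi(c) := \varphi_c(c)$.

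First I would check that $\Psi$ takes values in $\mathbb{C} - \overline{\mathbb{D}}$ and is holomorphic in $c$. The first is immediate from $|\varphi_c(z)| = e^{G_c(z)}$, which gives $|\Psi(c)| = e^{2G_c(0)} > 1$. For holomorphy the point is that $\varphi_c(z)$ is holomorphic jointly in $(c,z)$: near $\infty$ this follows from the locally uniformly convergent product
\[
\varphi_c(z) = z \prod_{n \ge 0}\Bigl(1 + \frac{c}{P_c^{\circ n}(z)^2}\Bigr)^{1/2^{n+1}},
\]
and the pulled-back extensions are compositions of holomorphic maps, so joint holomorphy propagates over all of $\{G_c > G_c(0)\}$; restricting to the slice $z = c$ gives holomorphy of $\Psi$ on $\mathbb{C} - \mathcal{M}$.

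The heart of the argument is that $\Psi$ is biholomorphic, which I would deduce from properness together with a degree count. If $c_k \to c_\infty \in \partial\mathcal{M}$ then the critical orbit stays bounded in the limit, forcing $G_{c_k}(0) \to 0$ and hence $|\Psi(c_k)| = e^{2G_{c_k}(0)} \to 1$; and if $c_k \to \infty$ then $\Psi(c_k) \to \infty$. Thus $\Psi \colon \mathbb{C} - \mathcal{M} \to \mathbb{C} - \overline{\mathbb{D}}$ is proper, hence a finite branched covering of some degree $d \ge 1$. To pin down $d$, estimate $\Psi$ near $\infty$: for $|c|$ large the evaluation point $c$ lies deep in the escaping region, the product above with $z = c$ gives $\Psi(c) = c + O(1)$, so $\Psi$ extends holomorphically across $\infty$ on the Riemann sphere with $\Psi(\infty) = \infty$ and local degree $1$ there, while properness forces $\Psi^{-1}(\infty) = \{\infty\}$. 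A proper holomorphic map having a single, unramified preimage over one target point has global degree $1$, so $\Psi$ is a conformal isomorphism $S^2 - \mathcal{M} \xrightarrow{\ \sim\ } S^2 - \overline{\mathbb{D}}$.

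Connectedness of $\mathcal{M}$ then follows formally: $S^2 - \mathcal{M}$ is conformally, hence topologically, an open disk, so it is connected and simply connected, and by a standard fact from plane topology a compact set $K \subset S^2$ is connected if and only if every connected component of $S^2 - K$ is simply connected. I expect the main obstacle to be establishing the joint holomorphic dependence $(c,z) \mapsto \varphi_c(z)$ over the whole region $\{G_c > G_c(0)\}$ rather than merely near $\infty$, together with the uniform estimates needed in the properness/degree step as $c$ approaches $\mathcal{M}$: there the domain of $\varphi_c$ collapses, yet one must keep the evaluation point $c$ controlled well inside it and control the convergence of the defining product.
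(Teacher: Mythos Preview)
The paper does not supply its own proof of this theorem: it is stated in the preliminaries and attributed to Douady and Hubbard \cite{douady1984exploring} with no further argument. Your proposal is correct and is essentially the classical Douady--Hubbard proof, defining $\Psi(c)=\varphi_c(c)$ via the B\"ottcher coordinate, establishing holomorphy from the product expansion, properness from $|\Psi(c)|=e^{2G_c(0)}$, and degree one from the asymptotics $\Psi(c)=c+O(1)$ at infinity; there is nothing to compare against in the paper itself.
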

	Identifying the exterior of $\mathcal{M}$ with $\mathbb{C} - \overline{\mathbb{D}}$, we can assign each point in $\mathbb{C} - \mathcal{M}$ an angle. 
	More precisely, for any $c \in \mathbb{C} - \mathcal{M}$ the \emph{external angle} of $c$ is $\arg(\Psi^{-1}(c))/2\pi \in [0,1]$. 
    Here, angles are normalized to be between 0 and 1 for convenience and we identify $0$ and $1$ so that the set of external angles form a circle of unit circumference.
	We also call the ray $\Psi^{-1}\big(\{re^{i\alpha}~|~ r \leq 1\}\big)$ as an \emph{external ray} $\mathcal{R}_\alpha$.
    The external angles and external rays here are also called \emph{parameter angles} and \emph{parameter rays} respectively since they are on the parameter plane. 
    We will define external angles and external rays similarly in the complement of Julia sets, and they are called \emph{dynamic angles} and \emph{dynamic rays} respectively since they live in the dynamic plane. 
	We denote $P_\alpha := z^2 + c$, where the external angle of $c \in \mathcal{S}_2$ is $\alpha$, and $J_\alpha$ as its Julia set.
	
	Again by Douady and Hubbard, it is well known that for every parabolic point $c \in \mathcal{M}$ there is a pair of parameter rays $\mathcal{R}_\alpha, \mathcal{R}_{\overline{\alpha}}$ that land at $c$ and vise versa.
	Furthermore such $\alpha$ and $\overline{\alpha}$ are periodic under angle doubling, hence a rational with odd denominators.
	Such pair of angles are called \emph{companion angles}.
	
	\subsection{External angle and Kneading sequences}~
	
	For the polynomial case, there is a nice coordinate change done by B\"{o}ttcher in \cite{boettcher1904principal}.
	\begin{thm}[B\"{o}ttcher coordinates]\label{thm:bottcher}
		Let $P$ be a polynomial of degree $d$.
		Then there exists a pair of neighborhoods $ V\subset U$ of $\infty$ and an analytic mapping $\phi : V \to \mathbb{C}$ which sends $\infty$ to $0$ and satisfying that $\phi'(\infty) = 1$ and $(\phi(z))^d = \phi(P(z))$.
	\end{thm}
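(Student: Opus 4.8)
The statement is the classical Böttcher theorem at the superattracting fixed point $\infty$, so the plan is the standard one: move $\infty$ to the origin, build the linearizing coordinate as a uniformly convergent infinite product of roots of the iterates, and transport back. First I would normalize. Since every polynomial is linearly conjugate to a monic centered one (as recalled above) and a linear conjugacy is a biholomorphism of the sphere fixing $\infty$, I may assume $P(z)=z^d+a_{d-2}z^{d-2}+\cdots+a_0$. Conjugating by the inversion $\iota(z)=1/z$ turns $P$ into $Q:=\iota\circ P\circ\iota$, holomorphic near $0$ with $Q(0)=0$ and, by a direct expansion, $Q(w)=w^d\,u(w)$ for a holomorphic $u$ with $u(0)=1$. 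Thus $0$ is a superattracting fixed point of $Q$ of local degree $d$, and it suffices to produce a holomorphic $g$ near $0$ with $g(0)=0$, $g'(0)=1$, and $g(Q(w))=g(w)^d$: then $\phi(z):=g(1/z)$ on a neighborhood $V$ of $\infty$ works, since $1/P(z)=Q(1/z)$ gives $\phi(P(z))=g(Q(1/z))=g(1/z)^d=\phi(z)^d$, while $\phi(\infty)=g(0)=0$ and $g'(0)=1$ reads as $\phi'(\infty)=1$ in the chart $w=1/z$.

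To build $g$, I would fix $r>0$ small enough that on the disc $D_r$ one has $|u-1|<\tfrac12$ (so the principal branch of $\log u$ is defined and bounded, say by $M$, since $u(D_r)\subset D(1,\tfrac12)$) and $|Q(w)|\le\tfrac12|w|$ (possible because $|Q(w)|\le 2|w|\cdot|w|^{d-1}$ there). Then $Q$ maps $D_r$ into itself and $|Q^{\circ k}(w)|\le r/2^{k}\to 0$ uniformly on $D_r$. Define
\[
g(w)\;=\;w\prod_{k=0}^{\infty}\exp\!\Big(\tfrac{1}{d^{k+1}}\log u\big(Q^{\circ k}(w)\big)\Big),
\]
which is the same as $\lim_{n\to\infty}\big(Q^{\circ n}(w)\big)^{1/d^{n}}$ for the branches just specified, using the identity $Q^{\circ n}(w)=w^{d^n}\prod_{k=0}^{n-1}u(Q^{\circ k}(w))^{d^{n-1-k}}$ proved by induction. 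Since the $k$-th exponent is bounded by $M/d^{k+1}$ and these are summable, the product converges uniformly on $D_r$ to a nonvanishing holomorphic function equal to $1$ at $0$; hence $g$ is holomorphic on $D_r$ with $g(0)=0$ and $g'(0)=1$, and shrinking $D_r$ to some $D_{r'}$ makes $g$ a biholomorphism onto a neighborhood of $0$ by the inverse function theorem.

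The functional equation is then bookkeeping. In $g(Q(w))=Q(w)\prod_{k\ge 0}u\big(Q^{\circ(k+1)}(w)\big)^{1/d^{k+1}}$, factor $Q(w)=w^d u(w)$ and reindex $j=k+1$; in $g(w)^d=w^d\prod_{k\ge 0}u\big(Q^{\circ k}(w)\big)^{1/d^{k}}$, where $\big(\exp(d^{-(k+1)}\log u)\big)^d=\exp(d^{-k}\log u)$ keeps the branches consistent, split off the $k=0$ factor $u(w)$. Both sides become $w^d u(w)\prod_{j\ge 1}u\big(Q^{\circ j}(w)\big)^{1/d^{j}}$, so $g(Q(w))=g(w)^d$. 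Transporting $g$ back through $\iota$ (and, had we not assumed $P$ monic, through the initial linear conjugacy) yields the claimed $\phi$ on $V$, with $U\supset V$ any larger such neighborhood of $\infty$.

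The only genuinely delicate point is the branch choice together with the convergence of the infinite product: once the roots $u(\cdot)^{1/d^{k+1}}$ are taken as principal roots near $1$ — legitimate because $u$ stays in $D(1,\tfrac12)$ on $D_r$ and the iterates $Q^{\circ k}(w)$ tend to $0$ — uniform convergence and the functional equation are routine. (If one also wants uniqueness of $\phi$, it follows by comparing two normalized solutions: $h=\phi_2\circ\phi_1^{-1}$ satisfies $h(w^d)=h(w)^d$ near $0$ with $h(0)=0$, $h'(0)=1$, forcing $h=\operatorname{id}$ by matching power series coefficients; but this is not needed for the statement as given.)
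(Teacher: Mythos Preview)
Your proof is correct and follows the standard construction of the B\"ottcher coordinate via the infinite product $\lim_{n}(Q^{\circ n})^{1/d^n}$, with careful attention to branch choices and uniform convergence. The paper, however, does not give its own proof of this theorem: it is stated as a classical background result with a citation to B\"ottcher's original work, so there is nothing to compare against. Your argument would serve perfectly well as a self-contained proof were one needed.
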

	The B\"{o}ttcher coordinates enable us to understand dynamics around infinity, which is semi-conjugation of a monomial map $w\mapsto w^d$.
	Let $G_P := \log |\phi|$, a Green's function.
	\begin{lem}[Properties of Green's function]\label{lem:property_Green_ftn}~
		
		\begin{enumerate}
			\item $G_P : \mathbb{C} \to \mathbb{R}_{\geq 0}$.
			\item The level set of $G_P$ (except at $0$) forms a horizontal foliation of $\mathbb{C} - K_P$.
			\item $K_P = G_P^{-1}(0)$.
			\item Critical points of $G_P$ consists of critical points of $P$ and their preimages under $P$.
		\end{enumerate}
	\end{lem}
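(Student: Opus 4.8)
The plan is to build $G_P$ from the B\"{o}ttcher coordinate of Theorem~\ref{thm:bottcher} and then transport the four properties along the functional equation (all of this is classical; cf.\ \cite{milnor2011dynamics}). First I would fix the B\"{o}ttcher map $\phi$ near $\infty$, in the normalization for which $G_P := \log|\phi|$ satisfies $G_P(z) = \log|z| + o(1)$ as $z\to\infty$; taking $\log|\cdot|$ of $\phi(P(z)) = \phi(z)^d$ yields $G_P(P(z)) = d\,G_P(z)$ wherever both sides make sense. Since every $z\notin K_P$ has $P^{\circ n}(z)\to\infty$, I would \emph{define} $G_P(z):=d^{-n}G_P(P^{\circ n}(z))$ for any $n$ large enough that $P^{\circ n}(z)$ lies in the domain of $\phi$ (independence of $n$ comes from the functional equation), and put $G_P\equiv 0$ on $K_P$. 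Locally on $\mathbb{C}\setminus K_P$ this is a composition of the harmonic function $\log|\phi|$ with a holomorphic iterate of $P$, hence harmonic there; a short estimate near $\infty$ (equivalently, uniform convergence of $d^{-n}\log^+|P^{\circ n}(\cdot)|$) gives continuity on all of $\mathbb{C}$. Because $G_P(z)=\log|z|+o(1)$ forces $G_P>0$ near $\infty$, we get $G_P(z)=d^{-n}G_P(P^{\circ n}(z))>0$ for \emph{every} $z\notin K_P$, while $G_P\equiv 0$ on $K_P$; this is simultaneously (1) $G_P\ge 0$ and (3) $K_P=G_P^{-1}(0)$.

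For (2) I would note that the conformal chart $\phi$ carries the level sets $\{G_P = c\}$ near $\infty$ onto the round circles $\{|w|=\mathrm{const}\}$, which foliate a punctured neighborhood of the point $0=\phi(\infty)$; this puncture is the center-type singular point excluded in the statement. Away from the critical points of $G_P$ the function $G_P$ is a submersion onto $(0,\infty)$, so every $\{G_P=c\}$ with $c>0$ is a compact smooth $1$-manifold, and these sets partition $\mathbb{C}\setminus K_P$; together with the transverse gradient lines (the dynamic rays) this is the claimed horizontal foliation, singular exactly on the set described in (4).

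For (4) I would use that a harmonic function is locally the real part of a holomorphic function, say $G_P=\operatorname{Re}\,h$, so that $\operatorname{Crit}(G_P)=\{h'=0\}$, a discrete subset of $\mathbb{C}\setminus K_P$ since $G_P$ is nonconstant. Differentiating $G_P\circ P = d\,G_P$ with $\partial/\partial z$ gives the recursion $h'(z)=d^{-1}\,P'(z)\,h'(P(z))$. Hence $h'(z)=0$ exactly when $P'(z)=0$ or $h'(P(z))=0$; iterating, a point in $\operatorname{Crit}(G_P)$ must satisfy $P^{\circ k}(z)\in\operatorname{Crit}(P)$ for some $k\ge 0$, since otherwise $h'$ would vanish along the escaping orbit $P^{\circ n}(z)\to\infty$, where the conformality of $\phi$ makes $h'$ nonzero. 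The same recursion read backwards shows every iterated $P$-preimage of a critical point of $P$ is a zero of $h'$. Therefore $\operatorname{Crit}(G_P)=\bigcup_{k\ge 0}P^{-k}(\operatorname{Crit}(P))$, i.e.\ the critical points of $P$ together with all of their preimages under $P$, which is (4).

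The main obstacle is this last step. One must set up the Wirtinger/chain-rule identity with care --- the local holomorphic primitives $h$ near $z$ and near $P(z)$ are different branches, but $h'$ is globally single-valued on $\mathbb{C}\setminus K_P$, which is what legitimizes the recursion --- and then argue that the backward induction terminates, i.e.\ that no critical point of $G_P$ can have an entire forward orbit consisting of zeros of $h'$; this is exactly where the escape of orbits off $K_P$ and the conformality of $\phi$ at $\infty$ enter. A minor secondary point is making "foliation" in (2) precise as a singular foliation whose singularities are the points found in (4).
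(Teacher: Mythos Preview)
The paper does not prove this lemma: immediately after the statement it says ``We omit the proof here.'' So there is nothing to compare against, and your proposal is supplying exactly the classical argument (essentially the one in \cite{milnor2011dynamics}) that the authors are implicitly citing. Your outline is correct.

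One small normalization point worth flagging: in Theorem~\ref{thm:bottcher} as stated in the paper, the B\"{o}ttcher map $\phi$ sends $\infty$ to $0$ with $\phi'(\infty)=1$, so $\phi(z)\sim 1/z$ near $\infty$ and the paper's literal definition $G_P:=\log|\phi|$ would be \emph{negative} there, contradicting~(1). You silently repaired this by passing to the normalization $G_P(z)=\log|z|+o(1)$, i.e.\ working with $1/\phi$ (or equivalently with $\lim_n d^{-n}\log^+|P^{\circ n}(z)|$). That is the right move; just make explicit in your write-up that this is what you are doing, since otherwise a reader checking against the paper's $\phi$ will be confused. Beyond that, your handling of~(4) via the Wirtinger recursion $h'(z)=d^{-1}P'(z)\,h'(P(z))$ and the termination argument using conformality of the B\"{o}ttcher chart near $\infty$ is the standard and correct route.
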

	We omit the proof here.
	With the lemma \ref{lem:property_Green_ftn}, we can identify a small neighborhood of infinity as a disc, and hence draw a ray of fixed angle.
	Such ray is called \emph{dynamic ray} of \emph{dynamic angle} $\theta$, and it is orthogonal to every level set of $G_P$.	We also denoted it as $\mathcal{R}_\theta$ by abusing the notation.	In other words, dynamic rays form the vertical foliation orthogonal to the level sets which form the horizontal foliation. 
	
	For periodic $\alpha$ under angle doubling, every dynamic ray of $\mathbb{C} - J_\alpha$ lands at some points in $J_\alpha$ or meets (pre-)critical points and bifurcates.
	Note that $0$ is the only critical point for $P_\alpha$, which is a preimage of the critical value and its dynamic angle is $\alpha$.
	Hence, by B\"{o}ttcher coordinate sense, two dynamic rays of angles $\frac{\alpha}{2}, \frac{\alpha+1}{2}$ land at $0$.
	
	Now, choose any point $p \in J_\alpha$ and let $\theta$ be its dynamic angle. 
	Then the kneading sequence of $p$ (or equivalently, $\theta$) is defined as follows.
	\begin{defn}[Kneading sequence]\label{defn:kneading_seq}
		Suppose the above.
		Divide $S^1$ into $2$ pieces, cut at $\frac{\alpha}{2}$ and $\frac{\alpha+1}{2}$.
		Let $A_\alpha$ be an open arc $\{ \theta ~|~ \frac{\alpha}{2} < \theta < \frac{\alpha+1}{2} \}$ and $B_\alpha$ be the other open arc.
		Then the kneading sequence $I^\alpha(\theta) := x_0x_1x_2\cdots$ is a $0$-$1$ sequence possibly with some $*$'s whose entries are determined by
		\begin{align*}
			x_i = \begin{cases}
				~0 & \text{ if } 2^{i}\theta \in A_\alpha\\ 
				~1 & \text{ if } 2^{i}\theta \in B_\alpha\\ 
				~* & \text{ if } 2^{i}\theta = \dfrac{\alpha}{2} \text{ or } \dfrac{\alpha + 1}{2}
			\end{cases}
		\end{align*}
	\end{defn}
	Hence we construct a map $J_P \to \Sigma_2$, by sending $p\in J_P$ to its itinerary sequence.
	We end up this section with introducing some notations here.
	\begin{itemize}
		\item A $0$-$1$ word $w$ is a finite sequence $w := w_0w_1\cdots w_k$, where $w_i \in \{0, 1\}$.
		Similarly, a $0$-$1$ sequence is an infinite sequence whose entries are all $0$ or $1$.
		\item $\{0, 1\}^{\mathbb{N}}$ is a union of all $0$-$1$ sequences of infinite length.
		\item A $0$-$1$ word $w$ is in $\{0, 1\}^n$ if and only if the length $|w|$ of $w$ is $n$.
		\item $\{0, 1\}^* := \bigcup_{n\in \mathbb{N}}$ $\{0, 1\}^n$ is the collection of every finite $0$-$1$ word $w$. 
		\item For $w\in \{0,1\}^*$, $\overline{w} \in \{0,1\}^\mathbb{N}$ is an infinite sequence with repeating $w$'s.
	\end{itemize}
	These notations will be used in forthcoming sections, which discuss lamination models.
	
	\subsection{Renormalization and Tuning}
	In \cite{douady1986algorithms}, Douady proposed that there is a homeomorphism of Mandelbrot set $\mathcal{M}$ into itself.
	More precisely, let $M_0$ be the main cardioid component of $\mathcal{M}$ and choose any hyperbolic component $M$.
	Then there is a topological embedding $\psi_M : \mathcal{M} \to \mathcal{M}$ satisfying the followings:
	\begin{enumerate}
		\item $\psi_M(0) = c_M$, where $c_M$ is the center of $M$.
		\item $\psi_M(M_0) = M$.
		In other words, $\psi_M$ sends the main cardioid to the hyperbolic compontent $M$ we chose.
		\item $\partial \psi_M(\mathcal{M}) \subset \partial \mathcal{M}$.
	\end{enumerate}
	This is called a \emph{(simple) renormalization} or \emph{tuning}.
	For any $c \in \mathcal{M}$, $\psi_M(c)$ is called \emph{$M$ tuned by $c$} and denoted $M \perp c$.
	
	The tuning affects their Julia sets also.
	Roughly speaking, the filled Julia set $K_{M\perp c}$ is obtained by first drawing $K_{c_M}$ and replacing its every bounded Fatou component into a copy of $K_c$.
	See the figure \ref{fig:rabbit}, \ref{fig:rabbit_tuned_by_basilica} and \ref{fig:mandelbrot_and_basilica}.
	All Julia sets in figures correspond to the center of their hyperbolic components.
	
	\begin{figure}[h]
		\centering
		\begin{subfigure}{0.45\textwidth}
			\centering
			\includegraphics[width=.9\linewidth]{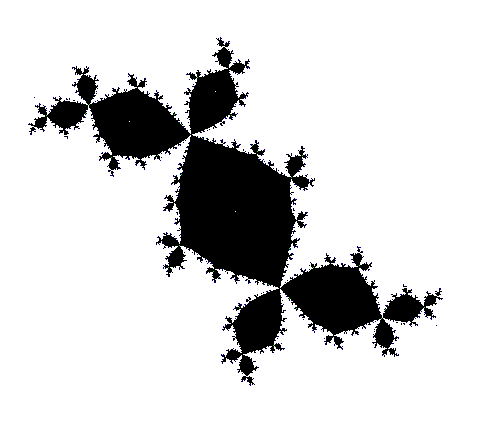}
			\caption{Douady's Rabbit. The parameter angles of the hyperbolic component are $\frac{1}{7}$ and $\frac{2}{7}$.}
			\label{fig:rabbit}
		\end{subfigure}
		\begin{subfigure}{0.45\textwidth}
			\centering
			\includegraphics[width=.9\linewidth]{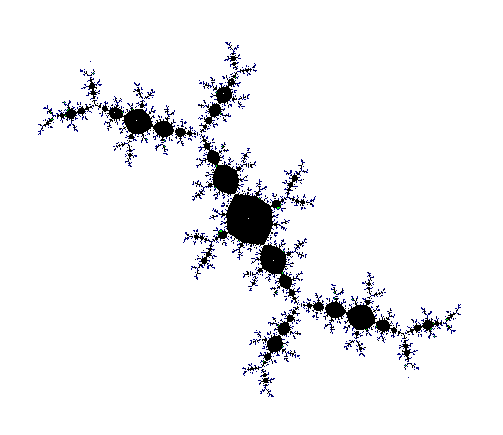}
			\caption{Rabbit tuned by basilica. The parameter angles are $\frac{10}{63}, \frac{17}{63}$}
			\label{fig:rabbit_tuned_by_basilica}
		\end{subfigure}
		\caption{Figures of rabbit and tuned rabbit. Note that the Fatou component of rabbit alters to the basilica.}
	\end{figure}
	\begin{figure}[h]
		\centering
		\includegraphics[width=.8\textwidth]{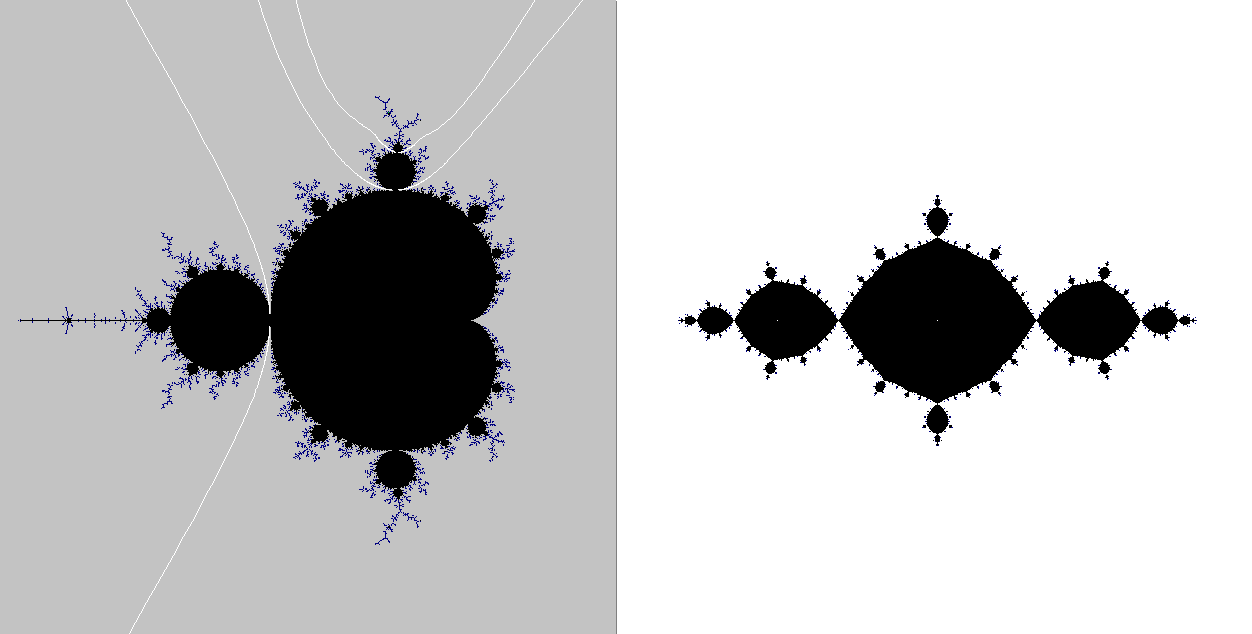}
		\caption{The white lines are the parameter rays, landing on hyperbolic components corresponds to basilica, rabbit, and rabbit tuned by basilica.
		The right figure is a basilica.}
		\label{fig:mandelbrot_and_basilica}
	\end{figure}
	
	There are two kinds of renormalizabilty, one is simple and the other is crossed.
	Instead of saying the details, we refer \cite{mcmullen1994complex} and \cite{mcmullen1996renormalization} for further discussions.
	
	Choose $M$ be any hyperbolic component.
	Suppose $\alpha < \overline{\alpha}$ be a pair of parameter angles of the root of $M$.
	In the same paper, Douady proved the following formula.
	\begin{lem}[Douady's angle tuning]\label{lem:douady_angle_tuning_formula}
		Suppose the above situation.
		Note that $\alpha$ and $\overline{\alpha}$ are both periodic under angle doubling of same period and hence in base $2$ expansion, they have a repeating word $w_0, w_1 \in \{0,1\}^n$ of same length, respectively.
		\emph{i.e.,}
		\[
			\alpha = 0.w_0 w_0 \cdots. \qquad \text{ and } \qquad \overline{\alpha} = 0.w_1 w_1 \cdots.
		\]
		Let $x \in \partial\mathcal{M}$ and there is a parameter ray of angle $\theta$ lands at $x$.
		Suppose the binary expansion of $\theta$ is
		\[
			\theta = 0.t_1t_2t_3 \cdots.
		\]
		Then there is a parameter ray of angle $M\perp \theta$ which lands at $M\perp x$ and,
		\[
			M\perp \theta = 0. w_{t_1}w_{t_2}w_{t_3} \cdots.
		\]
	\end{lem}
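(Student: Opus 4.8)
The plan is to deduce the parameter-plane statement from a statement in the dynamic plane and then to analyze that statement through the quasiconformal surgery that defines $\psi_M$. As a guiding consistency check, observe first that the proposed map $\tau\colon \theta = 0.t_1t_2\cdots \mapsto 0.w_{t_1}w_{t_2}\cdots$ is a monotone degree-one map of the circle satisfying $\tau(2\theta) = 2^{n}\,\tau(\theta)$, where $n=|w_0|=|w_1|$ is the common period of $\alpha$ and $\overline{\alpha}$ under doubling; thus $\tau$ semiconjugates angle doubling to its $n$th iterate, exactly the relation one expects from a renormalization of period $n$. Its image is the Cantor set of angles whose binary expansion is a concatenation of blocks from $\{w_0,w_1\}$, and the endpoints of its ``central'' gap are $\tau(0)=0.\overline{w_0}=\alpha$ and $\tau(1^-)=0.\overline{w_1}=\overline{\alpha}$, matching $\psi_M(r_{M_0})=r_M$.

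Fix $c$ with connected Julia set and write $c_0:=c_M$ for the center of $M$ and $Q:=P_{M\perp c}$. First I would recall the renormalization picture for $Q$: the polynomial $P_{c_0}$ has a superattracting cycle of period $n$ with bounded Fatou components $U_0,\dots,U_{n-1}$, and $Q$ carries small filled Julia sets $K'_0,\dots,K'_{n-1}$, one sitting in the place of each $U_i$, each a topological copy of $K_c$, permuted cyclically by $Q$, and such that $Q^{\circ n}$ restricted to a neighbourhood of the copy $K'_{*}$ containing the critical point is a quadratic-like map hybrid equivalent to $P_c$. Outside $\bigcup_i K'_i$ the map $Q$ is modelled on $P_{c_0}$ outside its Fatou components. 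The combinatorial heart of the argument is a reparametrization of external rays: the dynamic ray pair $\mathcal{R}_{\alpha},\mathcal{R}_{\overline{\alpha}}$ of $P_{c_0}$ lands at the characteristic repelling point on the boundary of the Fatou component containing the critical value, and together with its forward iterates under doubling it cuts $K_{c_0}$ into the partition that records the kneading of $c_0$; replacing the characteristic Fatou component by a copy of $K_c$ turns its single ``internal'' parameter into the whole circle of dynamic angles of $P_c$, and one checks that the external rays of $Q$ reaching this inserted copy have precisely the angles $\tau(\theta)$ as $\theta$ ranges over dynamic angles of rays landing on $\partial K_c$. Granting this, if $\mathcal{R}^{P_c}_{\theta}$ lands at $y\in\partial K_c$, then $\mathcal{R}^{Q}_{\tau(\theta)}$ lands at the corresponding point of $\partial K'_{*}$; reading off the itinerary of the $Q$-orbit of that point one block at a time --- each block $w_{t_j}$ consumed in $n$ steps, the choice of $w_0$ versus $w_1$ recording the digit $t_j$ of the $P_c$-itinerary --- confirms that the angle is $0.w_{t_1}w_{t_2}\cdots$.

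To pass to the parameter plane I would use the Douady--Hubbard dictionary between parameter rays and dynamic rays: for $x\in\partial\mathcal{M}$ in the wake of $M$, the landing of $\mathcal{R}_\theta$ at $x$ is detected by the landing of the dynamic ray $\mathcal{R}_\theta$ at the relevant (pre)periodic point of $P_x$ (or of any $P_c$ with $c$ in the corresponding subwake) via orbit portraits; applying the dynamic-plane reparametrization inside $P_{M\perp x}$ then shows $\mathcal{R}_{\tau(\theta)}$ lands at $\psi_M(x)=M\perp x$. The special angles --- roots of hyperbolic components $M'$, where one may instead compute the companion pair of $M\perp M'$ directly from the superattracting cycle of $P_{M\perp c_{M'}}$, together with strictly preperiodic angles --- form a dense set on which the formula can be verified by hand and then propagated; for $x$ outside every wake meeting $M$ the assertion is degenerate.

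I expect the main obstacle to be making the ray-reparametrization step fully rigorous: the insertion of copies of $K_c$ into the Fatou components of $K_{c_0}$ is a quasiconformal surgery, and one must verify that straightening it back to $P_{M\perp c}$ carries external rays to external rays realizing exactly the block substitution $\tau$, with correct bookkeeping for the countably many (pre)periodic rays landing on the $\partial U_i$ and for non-generic $\theta$ (dyadic or strictly preperiodic angles, at which $\tau$ is discontinuous and two parameter rays may land together at $M\perp x$, so ``the'' ray of angle $M\perp\theta$ is defined only up to that ambiguity) --- this is essentially the technical content of Douady's original tuning construction. An alternative route, closer to the methods used elsewhere in this paper, is to run the same argument combinatorially on Keller's invariant laminations: define the tuned lamination by replacing each gap of $\mathcal{L}_{c_0}$ lying over a Fatou component with a scaled copy of $\mathcal{L}_c$, note that the induced relabelling of the boundary circle is exactly $\tau$, and transfer the conclusion to the parameter plane via the quadratic minor lamination.
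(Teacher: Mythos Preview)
The paper does not prove this lemma at all: it is quoted as a result of Douady (from \cite{douady1986algorithms}) and stated without proof, with the preceding sentence ``In the same paper, Douady proved the following formula.'' So there is no ``paper's own proof'' to compare against.

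Your sketch is a reasonable outline of the standard argument --- relate parameter rays to dynamic rays, read off the block substitution from the renormalization/surgery picture, and pass back to the parameter plane --- and your consistency checks ($\tau$ monotone, $\tau(2\theta)=2^n\tau(\theta)$, image a Cantor set with central gap $[\alpha,\overline{\alpha}]$) are correct and worth recording. But as you yourself flag, the core step is not actually carried out: the claim that straightening the surgered map to $P_{M\perp c}$ takes external rays to external rays realizing exactly the substitution $\tau$ is precisely the content of Douady's theorem, and you have asserted rather than proved it. The ``alternative route'' via laminations has the same shape: defining the tuned lamination and observing that the boundary relabelling is $\tau$ is straightforward, but identifying that lamination with the landing pattern of $P_{M\perp c}$ again requires the tuning theorem or an equivalent. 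If you intend this as a pointer to the literature, that matches what the paper does; if you intend it as an independent proof, the main technical step is still missing.
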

	Note that the formula preserves the order.
	In other words, $a<b \Rightarrow (M\perp a) < (M \perp b)$.
	
	There are some remarks about the formula.
	First of all, sometimes binary expansion is not $1$-$1$.
	Especially in $S^1$, we identify $0 = 1$ and hence $0$ attains two different binary expansions $0.000\cdots.$ and $0.111\cdots.$
	After Douady's formula, those two expansions correspond to the pair of parameter angles of the root of $M$.
	
	\section{Characteristic arcs} \label{sec:3-char_arcs}
	
	In this section, we identify $S^1$ as the set of real numbers quotient by $\mathbb{Z}$ and choose a real number in $[0, 1)$ as a representative for each point in $S^1$. There is a canonical cyclic order on $S^1$ which is inherited from the natural total order on $\mathbb{R}$, under the map $x \mapsto e^{2\pi i x}$.
 
	Let's begin with some notations. For $x, y\in S^1$ we denote $(x, y)$ as an \emph{arc} of $S^1$ which is a set of $z$ satisfying that $[x,z,y]$ is positively ordered (with respect to the canonical cyclic order).
	
	We denote $xy$ as a \emph{chord} of $S^1$ whose endpoints are $x, y\in S^1$.	
    Unless specified, we do not care the order of two points. 
    Usually we will draw it like a curve inside a circle, just like a hyperbolic geodesic in the Poincar\'{e} disk.	
    When $x = y$ the chord $xy$ is called degenerated.
	
	The length of arc $l(x,y)$ is measured by the usual metric in $S^1$.
	
	We call a point $p$ lies behind the chord $xy$ if $p$ belongs to the interior of one of $(x,y), (y,x)$ which has length smaller than $1/2$. (the case $xy$ is a diameter is excluded here and this is not necessary in the rest of the paper.) 
	Also, $xy$ is nested by $zw$ (denoted as $xy \subset zw$) if every point $p$ lie behind $xy$ also lie behind $zw$.
	See the figure \ref{fig:arcs_and_chords} also.
	
	\begin{figure}
		\centering
		\includegraphics[width=.5\textwidth]{"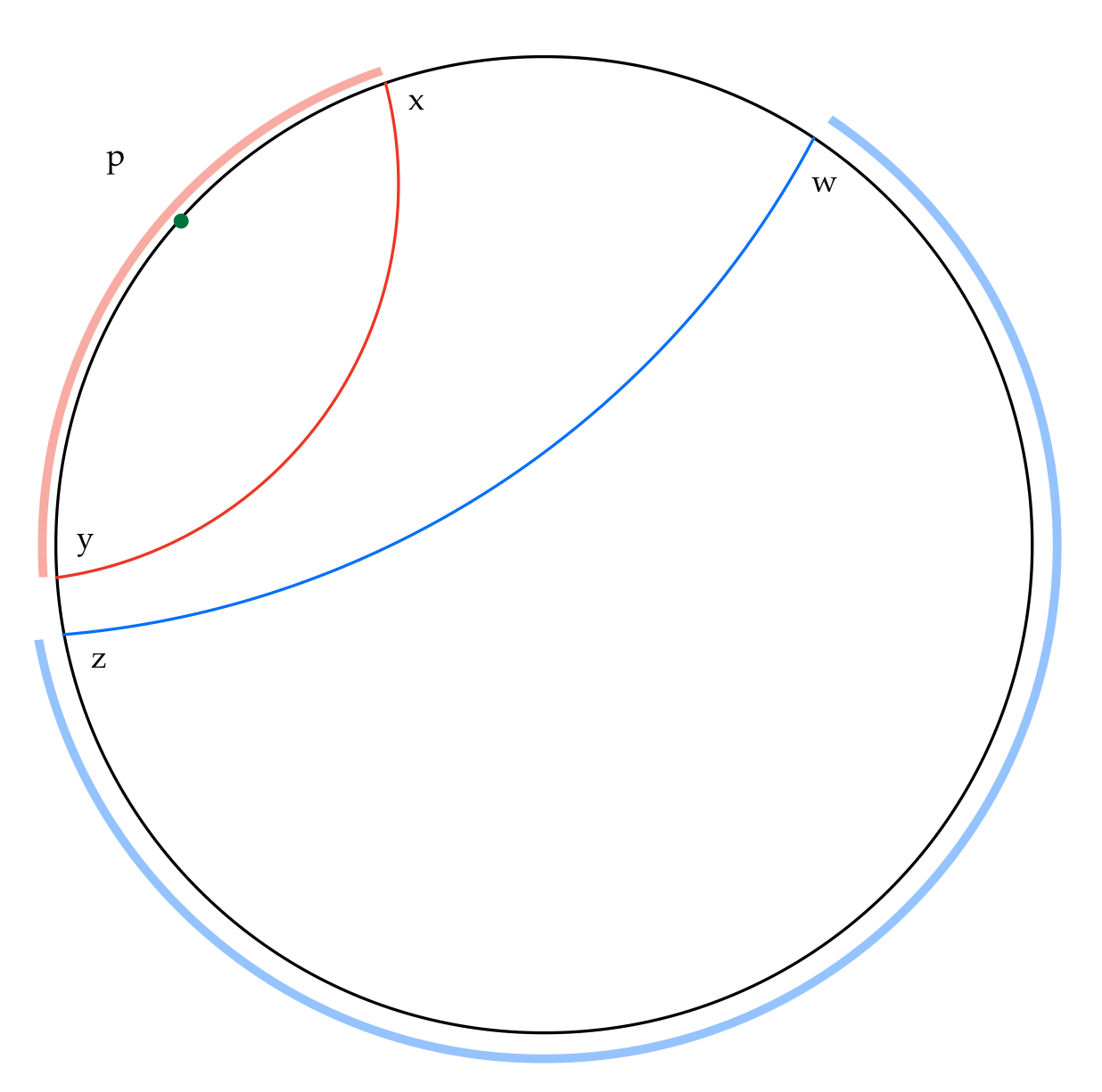"}
		\caption{Here $xy$ and $zw$ are chords. The point $p$ lies behind $xy$ and the chord $xy$ lies behind $zw$. Note that two arcs $(x, y)$ and $(z,w)$ is disjoint.}
		\label{fig:arcs_and_chords}
	\end{figure}
		
	\subsection{Orbit portrait}
	For periodic nonzero $p \in S^1$, in \cite{milnor2000periodic} Milnor introduced a combinatorial model to understand the periodic points in the dynamic plane, called \emph{Orbit portrait}.
	
	\begin{defn}[Orbit portrait]\label{defn:orbit_portrait}
		Let $O_j$ be the set of angles.
		Then the collection $\mathcal{O} :=\{O_1, \cdots, O_p\}$ is called \emph{orbit portrait} if 
		\begin{enumerate}
			\item Each $O_j$ is finite and $|O_i| = |O_j|$ for all $i\neq j$.
			\item For each $j$, the angle doubling map $\theta \mapsto 2\theta \mod \mathbb{Z}$ is a bijective map between $O_j$ to $O_{j+1}$, which preserving the cyclic order of $\mathbb{R}/\mathbb{Z}$.
			\item For any $a\in \cup_{j = 1}^p O_j$, $a$ is periodic under angle doubling of the same period $rp$, a multiple of $p$.
			\item $O_j$'s are pairwise unlinked.
			More precisely, for each $i\neq j$, the line/polygon $\mathbf{P}_i$ inscribed in the unit circle $S_1$ whose vertices are points in $O_i$ has no intersections with $\mathbf{P}_j$.
		\end{enumerate}
	\end{defn}
	$p$ is called orbit period, and $rp$ is called ray period or angle period.
	There are examples of orbit portraits in figure \ref{fig:Orbit_portrait_ex}.
	
	\begin{figure}[h]
		\centering
        \includegraphics[width=.9\linewidth]{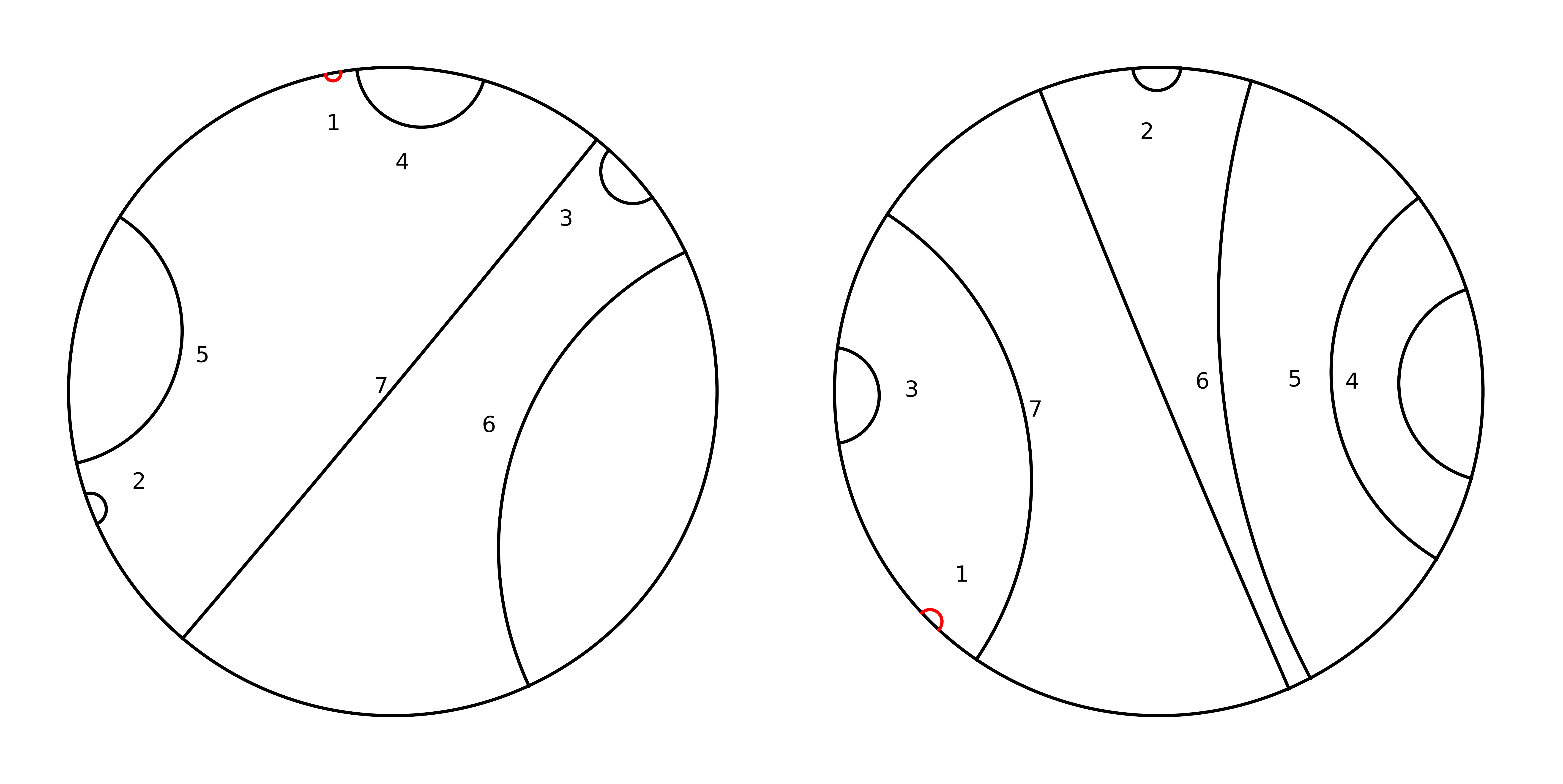}
		\caption{Examples of characteristic arcs.
        Characteristic arcs for left and right orbit portraits are $\left( \frac{35}{127}, \frac{36}{127} \right), \left( \frac{158}{255}, \frac{161}{255} \right)$, respectively.
        We denote by the number for each $i$th image of $A_1$.
        Note that the left one is narrow and the right one is not narrow.
		Hence the characteristic arc on the right is nested by the $7$th orbit.}
		\label{fig:Orbit_portrait_ex}
	\end{figure}

	The last statement implies the orbit portrait yields a lamination when we draw a line/polygon of endpoints of each $O_j$'s.
	An orbit portrait captures the dynamics of any repelling periodic point of given Julia set. In particular, we have the following.
	\begin{thm}[\cite{hubbard2016teichmuller}, Theorem 10.5.13]\label{lem:char_arc_lands_at_root} 
		Suppose $I = (\alpha, \overline{\alpha})$ is a characteristic arc of period $n$ and $M$ is a corresponding hyperbolic component.
		Let $c_M$ be the center of $M$ so that $p(z) = z^2 + c_M$ is postcritically finite.
		Then the dynamic rays of angle $\alpha$ and $\overline{\alpha}$ land at the root of Fatou component which contains $c_M$.
	\end{thm}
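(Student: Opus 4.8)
The plan is to transfer information between the parameter plane and the dynamical plane of $p=z^2+c_M$ through the orbit portrait attached to the root of $M$.

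First I would set up the dynamical side. Since $c_M$ is the center of $M$ (Definition \ref{defn:Hyp_comp}) and $M$ has period $n$, the critical point $0$ is periodic of period $n$, so $p$ is postcritically finite; hence $J_p$ is connected and locally connected, and by Carath\'eodory every bounded Fatou component is a Jordan domain. Let $U_0\ni 0,\ U_1\ni c_M,\dots,U_{n-1}$ be the cycle of Fatou components, so that $p(U_i)=U_{i+1}$ cyclically and $p^{\circ n}|_{U_1}$ is a degree-two branched cover of $U_1$ fixing $c_M$ with local degree $2$; thus it is conformally conjugate to $w\mapsto w^2$ on $\mathbb{D}$. Consequently $\partial U_1$ carries a unique point $r$ fixed by $p^{\circ n}$, which by definition is the \emph{root} of $U_1$, and $r$ is a repelling periodic point of $p$ whose exact period $m$ divides $n$.

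Next I would analyze the dynamic rays landing at $r$. By the Douady--Hubbard landing theorem, $r$ is the landing point of a nonempty finite collection of periodic dynamic rays of a common ray period, whose angles form one set $A_1$ of an orbit portrait $\mathcal{O}(r)$ in the sense of Definition \ref{defn:orbit_portrait}. Since the return map $p^{\circ m}$ permutes these rays cyclically and maps the Jordan domain $U_1$ attached at $r$ onto itself, exactly two of them, of angles $\beta_-<\beta_+$, bound the complementary sector $V$ that contains $U_1$; combinatorially $(\beta_-,\beta_+)$ is then the characteristic arc of $\mathcal{O}(r)$ and $V$ its characteristic sector, containing $c_M$ but omitting $0$ and the rest of the postcritical orbit. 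It remains to identify $(\beta_-,\beta_+)$ with the companion pair $(\alpha,\overline\alpha)$ of the root $r_M$ of $M$. For this I would invoke the stability of orbit portraits across a parameter wake: let $W$ be the wake bounded by the parameter rays $\mathcal{R}_\alpha,\mathcal{R}_{\overline\alpha}$, which land at $r_M$ by hypothesis, so that $c_M\in W$. Throughout $W$ the dynamic rays of angles $\alpha$ and $\overline\alpha$ move holomorphically and land at a common repelling periodic point $z(c)$ whose orbit portrait is the constant portrait $\mathcal{O}$ having $(\alpha,\overline\alpha)$ as characteristic arc, with the critical value $c$ in the characteristic sector. Specializing to $c=c_M$, the point $z(c_M)$ is a repelling periodic point whose characteristic sector contains $c_M$ while omitting $0$ and the remaining postcritical points, hence contains all of $U_1$; pulling this configuration back by the conjugacy $p^{\circ n}|_{U_1}\cong(w\mapsto w^2)$ and using that the two bounding rays are permuted among themselves by the return map, one sees that $z(c_M)$ must coincide with the $p^{\circ n}$-fixed boundary point of $U_1$, i.e.\ $z(c_M)=r$ and $\{\beta_-,\beta_+\}=\{\alpha,\overline\alpha\}$.

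The hard part will be the parameter-to-dynamics transfer in the last step: verifying that the orbit portrait and the position of the critical value inside its characteristic sector are genuinely locally constant on the wake $W$, and then ruling out that $z(c_M)$ is a different repelling periodic point carrying the same portrait. Both are handled by the holomorphic motion of repelling cycles together with their rays and by the combinatorial rigidity of the characteristic sector, and this is where the substantive work is concentrated; by contrast, the dynamical-side bookkeeping in the first two paragraphs is essentially formal once local connectivity of $J_p$ is in hand.
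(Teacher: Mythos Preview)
The paper does not give its own proof of this statement: it is quoted verbatim as Theorem~10.5.13 of \cite{hubbard2016teichmuller} and used as a black box, with no argument supplied. So there is no in-paper proof to compare your proposal against.

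That said, your outline is the standard route to this result and is essentially what one finds in Hubbard or in Milnor's orbit-portrait paper \cite{milnor2000periodic}: identify the root of the critical-value Fatou component as the unique $p^{\circ n}$-fixed boundary point, invoke the landing theorem to obtain an orbit portrait there, and then use wake stability to match its characteristic arc with the parameter pair $(\alpha,\overline\alpha)$. Your identification of the ``hard part'' is accurate --- the substantive content is exactly the constancy of the portrait on the wake and the uniqueness of the repelling cycle realizing it --- and the dynamical bookkeeping you describe is routine once local connectivity is available. One small point: you should be slightly more careful in asserting that exactly two rays bound the sector containing $U_1$; in the satellite case the set $A_1$ may have more than two elements, and what you want is that the two rays bounding the \emph{characteristic} complementary arc are the ones adjacent to $U_1$. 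This is handled in Milnor's Lemma~2.6--2.7, and your sketch implicitly uses it.
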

	In other words, if there is a polygon gap in the orbit portrait, then some Fatou components which contain critical orbit meet at one point.
	
	Complementary arcs for each $A_j \in \mathcal{O}$ are subintervals obtained by cutting at each point in $A_j$.
	By the lemma \ref{defn:orbit_portrait}, the angle doubling map $h$ still be a bijection between the set of complementary arcs of $A_j, A_{j+1}$, except one.
	More specifically, $2$ complementary arcs of length $>1/2$ cannot exist in one set.
	Suppose $L_j$ be the complementary arc of length $>1/2$ if exists.
	Then except $L_j$, $h$ sends the others diffeomorphically, and $L_j$ covers whole circle of length $1$.
	The overlapped region should be one of the complementary arcs of $A_{j+1}$, since the sum of length should be $2$ and there are only $1$ piece left after taking $h$.
	The below lemma elaborate the above.
	\begin{lem}[\cite{milnor2000periodic}, Lemma 2.5]\label{lem:critical_arc}
		The set of complementary arcs of $A_j$ sends diffeomorphically to the one of $A_{j+1}$ (modulo its period) except one.
		For each $j$, $L_j$ exists and its image under $h$ covers the whole circle, and there is a unique complementary arc for $A_{j+1}$ which is covered only once by the image.
	\end{lem}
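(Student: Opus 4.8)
The plan is to fix an index $j$ and study the restriction of the doubling map $h$ to the complementary arcs of $A_j$, using essentially only the cyclic–order–preserving bijection $h\colon A_j\to A_{j+1}$ from item (2) of Definition \ref{defn:orbit_portrait}. Write $A=A_j=\{a_1,\dots,a_k\}$ in cyclic order and $A'=A_{j+1}=h(A)$; then $C_m:=(a_m,a_{m+1})$ (indices mod $k$) are the complementary arcs of $A$, with $\sum_m l(C_m)=1$, and since $h$ preserves the cyclic order, $h(a_1),\dots,h(a_k)$ is again a cyclically ordered listing of $A'$, so $C'_m:=\bigl(h(a_m),h(a_{m+1})\bigr)$ runs through the complementary arcs of $A'$. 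One may assume $k\ge 2$, the case $k=1$ (one complementary arc, of length $1$) being immediate.

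First I would record the behaviour of $h$ on a single complementary arc. Since $h$ has no critical point on $S^1$, $h|_{C_m}$ is an orientation-preserving immersion whose image is the arc swept out positively from $h(a_m)$, of total angular length $2\,l(C_m)$, and $h|_{C_m}$ is injective precisely when $l(C_m)\le \tfrac12$. If $l(C_m)<\tfrac12$ then $2\,l(C_m)<1$, so this image is the positive arc from $h(a_m)$ to $h(a_{m+1})$; as $h(a_m),h(a_{m+1})$ are cyclically consecutive in $A'$, that arc is exactly $C'_m$, and $h|_{C_m}$ is a diffeomorphism of $C_m$ onto the complementary arc $C'_m$ of $A'$. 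The value $l(C_m)=\tfrac12$ cannot occur, as it would force $h(a_m)=h(a_{m+1})$, contradicting $|A'|=k\ge 2$.

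The crux is then a short counting argument isolating the exceptional arc. At most one $C_m$ can have $l(C_m)>\tfrac12$, since the lengths sum to $1$. At least one must: if every $C_m$ had $l(C_m)<\tfrac12$, then $h(C_m)=C'_m$ for all $m$ by the previous step, whence $2=\sum_m 2\,l(C_m)=\sum_m l\bigl(h(C_m)\bigr)=\sum_m l(C'_m)=1$, a contradiction. Let $L_j=C_{m_0}$ be this unique arc with $l(L_j)>\tfrac12$; then $h|_{L_j}$ is not injective and $h(L_j)=S^1$. This gives the first sentence of the lemma, the existence of $L_j$, and the fact that its image is the whole circle.

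Finally I would identify the overlapped arc and note where the difficulty lies. For $m\ne m_0$ the $k-1$ arcs $C'_m=h(C_m)$ are distinct complementary arcs of $A'$, so exactly one complementary arc of $A'$ is missed, namely $L'_j:=C'_{m_0}=\bigl(h(a_{m_0}),h(a_{m_0+1})\bigr)$. Because $h\colon S^1\to S^1$ has degree $2$, the interior of each complementary arc of $A'$ is covered with total multiplicity $2$ by the maps $h|_{C_n}$; for $m\ne m_0$ this comes once from the diffeomorphism $h|_{C_m}$ and once from $h|_{L_j}$ (as $h(L_j)=S^1$), whereas $L'_j$, being hit by no $h|_{C_m}$ with $m\ne m_0$, is covered twice by $h|_{L_j}$ alone — equivalently, the path $h|_{L_j}$, of length $2\,l(L_j)\in(1,2)$, runs once around $S^1$ and then retraces the arc $\bigl(h(a_{m_0}),h(a_{m_0+1})\bigr)=L'_j$ of length $2\,l(L_j)-1$. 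Thus $L'_j$ is the asserted unique complementary arc of $A_{j+1}$ covered only once among the images $h(C_n)$. The main obstacle is the existence of $L_j$ — ruling out the a priori plausible scenario in which every complementary arc of $A_j$ maps diffeomorphically onto one of $A_{j+1}$, which the length/degree bookkeeping ($\sum 2\,l(C_m)=2\neq 1$) forbids — together with the bookkeeping that the doubly covered region of $h|_{L_j}$ is precisely one complementary arc of $A_{j+1}$ and not a union of several; everything else follows routinely from cyclic-order preservation.
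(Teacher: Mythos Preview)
Your proof is correct. Note, however, that the paper does not supply its own proof of this lemma: it is quoted directly from Milnor \cite{milnor2000periodic} (Lemma~2.5) and stated without argument, with only the informal sentence preceding it (``Then except $L_j$, $h$ sends the others diffeomorphically, and $L_j$ covers whole circle of length $1$\ldots'') serving as motivation. Your argument is essentially the standard one Milnor gives: use cyclic-order preservation to see that short complementary arcs ($l<\tfrac12$) map diffeomorphically onto complementary arcs of $A_{j+1}$, rule out $l=\tfrac12$ since antipodal points collide under doubling, and use the length bookkeeping $\sum 2\,l(C_m)=2\neq 1$ to force exactly one long arc $L_j$; the degree-$2$ count then pins down the unique ``critical value arc'' $C'_{m_0}$ of length $2\,l(L_j)-1$ as the one not hit by any diffeomorphic $h|_{C_m}$. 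There is nothing to correct.
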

	A unique minimum length among all complementary arcs is attained, and any complementary arc realizing the minimum is called \emph{characteristic arc} of $\mathcal{O}$.

    Abusing the notation above, we enumerate the orbit under angle doubling of the characteristic arc $I$ as follows.
	Set $I := A_1$.
	If $l(A_i) < 1/2$, then $A_{i+1}$ is an image of $A_i$ under angle doubling.
	Otherwise, we set $A_{i+1}$ as the unique complementary arc obtained from lemma \ref{lem:critical_arc}.
	
	There is a $1$-$1$ correspondence between the collection of characteristic arc and a bifurcation locus.
	Each point in the bifurcation locus, there is a unique hyperbolic component which admits the point as its root.
	\emph{i.e.,} we can identify characteristic arc to its hyperbolic component.
	More precisely, if $c$ is a point in the bifurcation locus, then $z^2 + c$ has a parabolic cycle of points.
	Draw all rays which land at points in the parabolic cycle, then it forms an orbit portrait.
	Milnor proved that every orbit portrait can be realized as a parabolic cycle of certain quadratic polynomials.
	
	Moreover, if one characteristic arc $I$ is nested by the other characteristic arc $J$, then those orbit portrait are unlinked to each other.
	In a dynamical sense, if $c$ belongs to a hyperbolic component corresponding to $I$, then the Julia set of $z^2+c$ has a repelling periodic point whose landing rays form an orbit portrait of $J$.
	This is called \emph{Orbit forcing}.
	
	One may collect all the characteristic arcs and draw them in the unit disk.
	These arcs form a lamination which is called \emph{quadratic minor lamination (or simply QML)}, as first proposed by Thurston. 
	We provide the picture of QML in figure \ref{fig:QML}.
    Characteristic arcs have properties which enable us to get all arcs in an algorithmic way. 
	\begin{lem}\label{lem:Lavarus_lemma}
		Every leaf in QML satisfies the followings.
		\begin{enumerate}
			\item Every periodic angle is an endpoint of non-degenerate leaf of QML, whose the other endpoint has same period.
			\item Every leaf of QML is the limit of leaves with periodic endpoints.
			\item Suppose two leaves in QML have equal period $p$ and one is nested by the other.
			Then there exists a leaf $L$ so that it has period $<p$ and it separates two leaves. 
		\end{enumerate}
	\end{lem}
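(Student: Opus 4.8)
All three assertions are classical (in essence Lavaurs's lemma for the quadratic minor lamination), and the plan is to deduce them from the dictionary set up above between characteristic arcs, orbit portraits, parabolic parameters and hyperbolic components, together with the parameter-ray landing theorems of Douady and Hubbard. Parts (1) and (2) should be bookkeeping; the combinatorial content is concentrated in (3), which I would address last.

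For (1): let $\theta$ be periodic of period $n$ under angle doubling. Since $\theta$ is periodic, the parameter ray $\mathcal{R}_\theta$ lands at a parabolic parameter, which is the root $r_M$ of a unique hyperbolic component $M$; by the Douady--Hubbard theorem recalled above, exactly two parameter rays land at $r_M$, and by definition their angles form the companion pair of $M$, whose chord is the characteristic arc of $M$ -- a non-degenerate leaf of QML. Hence $\theta$ is an endpoint of this leaf, and by condition~(3) of Definition~\ref{defn:orbit_portrait} both endpoints, being among the angles $\bigcup O_j$ of the orbit portrait of $M$, are periodic of its common ray period, which therefore equals $n$. For (2): QML is, by construction, the closure in the space of chords of the disc of the family of all characteristic arcs, so a leaf of QML is either a characteristic arc or a Hausdorff limit of such; since both endpoints of any characteristic arc lie in $\bigcup O_j$ of its orbit portrait, they are periodic of equal period by Definition~\ref{defn:orbit_portrait}(3). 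Thus the leaves with periodic endpoints are dense in QML and every leaf is a limit of them, with no crossings introduced because distinct characteristic arcs are pairwise unlinked (orbit forcing).

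For (3) -- the main point: suppose $I=(\alpha,\overline\alpha)$ and $J=(\beta,\overline\beta)$ are leaves of QML of period $p$ with $I$ nested by $J$ and $I\neq J$, so that on the parameter side $W_I\subsetneq W_J$ for the corresponding wakes. The leaves of QML that nest $I$ all have wakes containing $W_I$ and are pairwise unlinked, hence are linearly ordered by nesting; let $q\le p$ be the least period occurring among leaves that nest $I$ and are nested in $J$ (a nonempty family, as $I$ and $J$ themselves qualify), realised by a leaf $L$. If $q<p$ then $L\notin\{I,J\}$, so $I\subsetneq L\subsetneq J$ and $L$ separates $I$ from $J$; it thus suffices to rule out $q=p$. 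This is where the real work lies, and the plan is to argue by contradiction: assuming $q=p$, confront the orbit portrait $\mathcal{O}_L$ (of ray period $p$, with characteristic arc $A_1=L$) with $\mathcal{O}_I$, and follow the period-$p$ orbit of the endpoint $\alpha$ through the complementary arcs $A_1,\dots,A_p$ of $\mathcal{O}_L$ using Lemma~\ref{lem:critical_arc}; the minimality of $q$ (which leaves no leaf of period $<p$ strictly between $I$ and $L$) together with the unlinkedness axiom Definition~\ref{defn:orbit_portrait}(4) should be incompatible with two distinct orbit portraits of equal period $p$ whose characteristic arcs are nested without intervening lower-period leaves, forcing $q<p$. (The same analysis can be organised through Schleicher's internal addresses.) I expect this period-drop step to be the only genuine obstacle.
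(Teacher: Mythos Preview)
The paper does not prove this lemma at all: it is stated as a classical fact attributed to Lavaurs \cite{lavaurs1986description}, and the text moves directly to the Lavaurs algorithm without any argument. So there is no ``paper's own proof'' to compare against; you are supplying a proof where the authors only supply a citation.

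Your treatments of (1) and (2) are adequate. For (1) you correctly invoke the Douady--Hubbard landing theorem and the orbit-portrait axiom that all angles in $\bigcup O_j$ share the same ray period. For (2) you use that QML is by definition the closure of the set of characteristic arcs, which have periodic endpoints. Both are sound.

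Part (3), however, is not proved in your proposal --- and you say so yourself. You set up the natural minimality argument (take $q$ the least period of a leaf between $I$ and $J$, and seek a contradiction if $q=p$), but the sentence ``the minimality of $q$ \dots\ together with the unlinkedness axiom \dots\ should be incompatible with two distinct orbit portraits of equal period $p$ whose characteristic arcs are nested without intervening lower-period leaves'' is precisely Lavaurs's lemma restated, not an argument for it. Nothing in Definition~\ref{defn:orbit_portrait}(4) or Lemma~\ref{lem:critical_arc} by itself forbids two period-$p$ characteristic arcs from being adjacent in the nesting order; what forces a lower-period leaf between them is a genuinely combinatorial count (for instance, comparing the positions of the $2^p-1$ period-$p$ points against the complementary arcs of the outer orbit portrait, or an internal-address induction), and that count is absent here. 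As written, (3) is a plan with the decisive step left as an expectation. If you want a self-contained proof rather than a citation, you must actually execute the period-drop; otherwise, matching the paper, it is cleanest simply to cite Lavaurs.
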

	The first $2$ properties implies that to generate QML it suffices to find all periodic characteristic arcs.
	Lavarus proposed in \cite{lavaurs1986description} an algorithm to get all characteristic arcs. 
	\begin{lem}[Lavarus algorithm, \cite{lavaurs1986description}]\label{lem:Lavarus_algorithm}
		Start with a leaf $(1/3, 2/3)$.
		Suppose $C_n$ is a collection of all characteristic arcs whose period less than $n$.
		For $p = n+1$, consider all points of $k/(2^{n+1}-1), 1\leq k \leq 2^{n+1}-2$.
		Connect them pairwise starting with $1/(2^{n+1}-1)$ by the rule as follows.
		\begin{enumerate}
			\item Choose the point in the order of increasing angles.
			\item The other endpoint is the lowest angle among all angles which is not linked with any other leaf in the collection $C_n$.
		\end{enumerate} 
		If every point of period $(n+1)$ finds its pair, increase $p$ by $1$.
		Iteratively, the algorithm collects all characteristic arcs.
	\end{lem}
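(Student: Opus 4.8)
The plan is to prove the correctness of Lavaurs' algorithm by induction on the period $p$, assuming inductively that the algorithm has produced the collection $C$ of all characteristic arcs of period $<p$ and showing that one greedy sweep through the points $k/(2^p-1)$, $1\le k\le 2^p-2$, adds \emph{exactly} the characteristic arcs of primitive period $p$; the base case $p=2$ is the seed leaf $(1/3,2/3)$. I would first record three reductions. (i) The points $k/(2^p-1)$ are precisely the angles, other than $0$, whose period under doubling divides $p$. (ii) Those of period strictly less than $p$ are, by the inductive hypothesis together with Lemma~\ref{lem:Lavarus_lemma}(1), already endpoints of leaves of $C$, so the $p$-th sweep only needs to match the angles of primitive period $p$. (iii) By Lemma~\ref{lem:Lavarus_lemma}(1) and the fact that QML is a lamination, each primitive-period-$p$ angle is an endpoint of a unique leaf of QML, which is a characteristic arc with both endpoints of primitive period $p$. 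Thus the $p$-th sweep should produce exactly the characteristic arcs of period $p$.

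The core of the argument is the following invariant, which I would prove by induction on the processing order inside the $p$-th sweep: \emph{at the moment the algorithm reaches the candidate $x$, the period-$p$ leaves drawn so far are exactly the characteristic arcs $(\alpha,\bar\alpha)$ of period $p$ with $\alpha<x$.} If $x$ has period $<p$, it is already an endpoint of a leaf of $C$ and is skipped. If $x$ has primitive period $p$ and is already an endpoint of a drawn leaf, the invariant forces $x$ to be the upper endpoint of an already-drawn characteristic arc, and $x$ is skipped. Otherwise $x$ is the lower endpoint of its characteristic arc $(x,\bar x)$, with $\bar x>x$ still unused; here I must show the greedy rule pairs $x$ with $\bar x$. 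That $\bar x$ is a legal choice is clear: the leaf $(x,\bar x)$ of QML is unlinked with every leaf drawn so far (all of which are characteristic arcs, by $C$ and by the invariant) and $\bar x$ has not been used. So everything comes down to showing that no $b$ with $x<b<\bar x$ is legal.

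This last claim is the heart of the proof and the step I expect to require the real work. If $b$ has period $<p$, or primitive period $p$ but already used, then $b$ is an endpoint of a previously drawn leaf and hence not an admissible partner. In the remaining case, $b$ has primitive period $p$ and is unused, so $b$ lies strictly behind the QML leaf $(x,\bar x)$; then its companion $b'$ must also lie in $(x,\bar x)$, for otherwise the QML leaf $(b,b')$ would cross $(x,\bar x)$, and $b'\notin\{x,\bar x\}$ because $x$ and $\bar x$ are companions of each other. Hence $(b,b')$ is nested in $(x,\bar x)$ with both leaves of period $p$, so by Lemma~\ref{lem:Lavarus_lemma}(3) there is a leaf $L$ of QML of period $<p$ separating them. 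Since $L$ has period $<p$ it lies strictly inside $(x,\bar x)$, and the nesting, together with the fact that characteristic arcs have length $<1/2$, places exactly one endpoint of $L$ in the arc $(x,b)$. Therefore $xb$ is linked with $L$, which belongs to $C$ and is already drawn, so $b$ is not legal. This pins the greedy choice to $\bar x$: the algorithm draws $(x,\bar x)$, and the invariant is restored; in particular the procedure never stalls, since $(x,\bar x)$ is available whenever $x$ is reached. Running the invariant to the end of the $p$-th sweep shows that every characteristic arc of period $p$ is produced and every primitive-period-$p$ point is matched, which closes both inductions.
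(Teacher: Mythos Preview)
The paper does not prove this lemma itself; it is stated with attribution to Lavaurs and used as a black box, so there is no in-paper argument to compare against.

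Your proof is correct. The double induction (outer on the period $p$, inner on the sweep position) is the natural structure, and you have put the weight exactly where it belongs: ruling out every candidate $b$ with $x<b<\bar x$. The appeal to Lemma~\ref{lem:Lavarus_lemma}(3) to manufacture a lower-period separating leaf $L$ nested strictly between $(b,b')$ and $(x,\bar x)$, followed by the observation that $L$ then has one endpoint in the arc $(x,b)$ and the other in $(b,\bar x)$ so that the chord $xb$ is linked with $L\in C$, is exactly the right mechanism and is carried out cleanly.

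One cosmetic point: when you dismiss an already-used $b$ by saying it is ``an endpoint of a previously drawn leaf and hence not an admissible partner,'' the operative reason is simply that paired points are not re-paired; sharing an endpoint is not the same as being linked. In fact the sub-case ``$b$ of primitive period $p$, already used, and $x<b<\bar x$'' is vacuous: by your invariant such a $b$ would be the upper endpoint of some drawn arc $(\alpha,b)$ with $\alpha<x$, and unlinkedness of $(\alpha,b)$ with $(x,\bar x)$ in QML forces $b>\bar x$. This does not affect the validity of your argument.
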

	After drawing all characteristic arc in the unit circle, taking the closure (add all accumulated arcs) becomes a famous lamination, called QML (quadratic minor lamination), which is first introduced by Thurston.

	\begin{figure}
		\centering
		\includegraphics[width=.8\textwidth]{"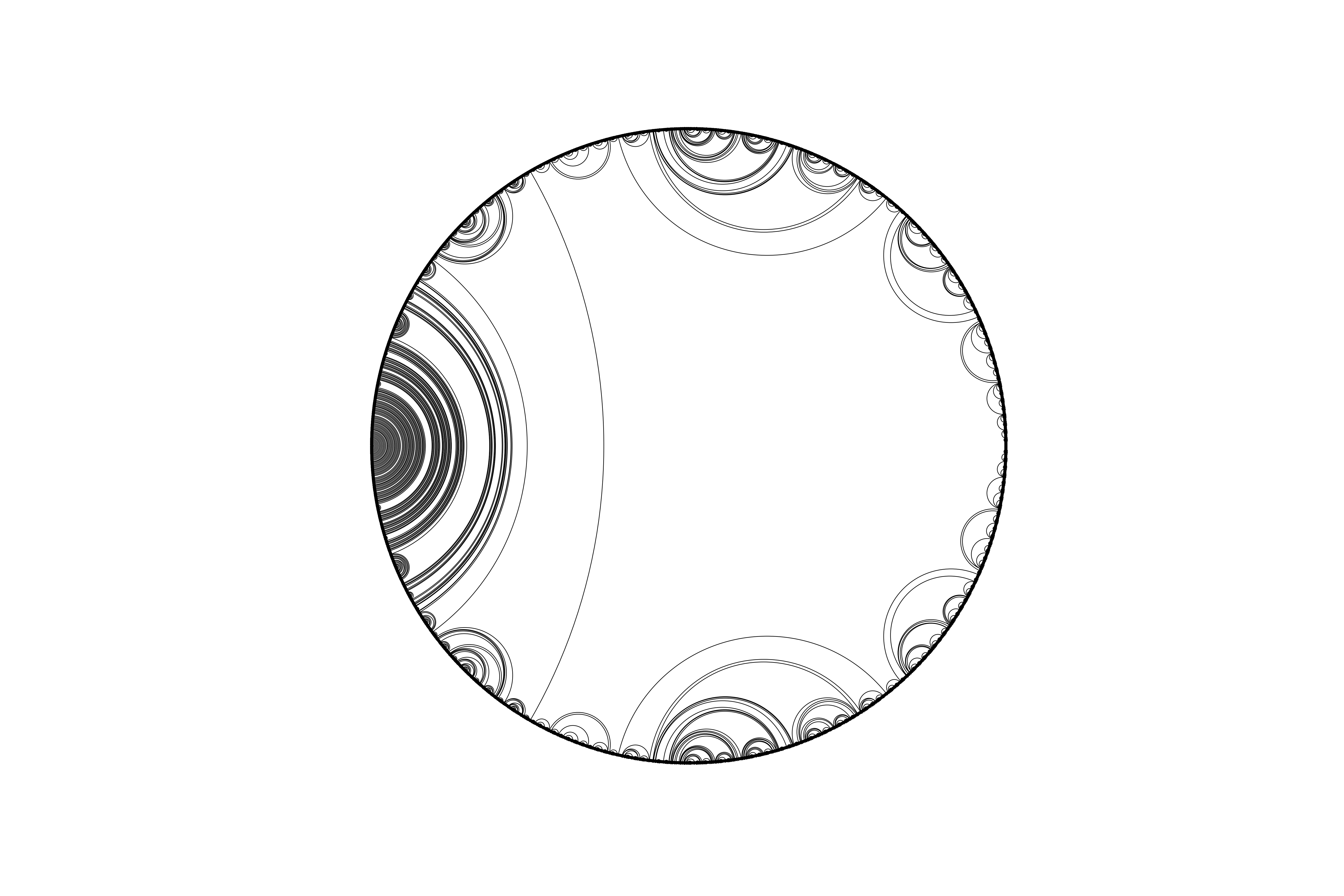"}
		\caption{QML, in this figure we only draw arcs up to period $10$.}
		\label{fig:QML}
	\end{figure}
 	
	Suppose $z_1$ is a parabolic fixed point of $P$ and $\{z_1, \cdots, z_p\}$ be its periodic orbit under $P$.
	There are two types of parabolic points, one is \emph{primitive} and the other one is \emph{satellite}.
	\begin{defn}\label{defn:primitive_satellite}
		Suppose $\alpha \in S^1$ be nonzero periodic and $\overline{\alpha}$ is its companion angle.
		$\alpha$ is called \emph{satellite} if the orbit of $\alpha$ coincides to that of $\overline{\alpha}$.
		Otherwise, $\alpha$ is called \emph{primitive}.
	\end{defn}
	
	We briefly remark some properties of primitive/satellite points.
	\begin{rmk}[See Lemma 2.7 in \cite{milnor2000periodic}]~
		
		\begin{enumerate}
			\item If $\alpha$ is primitive, there are no polygon gap in the orbit portrait.
			With respect to the dynamical plane and rays, there are at most $2$ rays land on each point in the periodic orbit.
			\item If $\alpha$ is satellite, then there is a $r$-gon gap in the orbit portrait. 
			\emph{i.e.,} exactly $r$ rays lands at each point of the periodic orbit.
		\end{enumerate}
	\end{rmk}
	
	\subsection{Narrow arcs}
	Another criterion to classify characteristic arc is by its length.
	\begin{defn}[Narrow arc, narrow component, \cite{schleicher2017internal}]
		Suppose $I$ is a characteristic arc of angle period $n$.
		Then $I$ is called \emph{narrow arc} if $ l(I) = \frac{1}{2^n - 1}$.
		The hyperbolic component corresponds to $I$ is called \emph{narrow component}.
	\end{defn}
	By definition and the lemma \ref{lem:Lavarus_lemma}, if a characteristic arc $I$ is not narrow, then there is a shorter arc lie behind $I$ whose period is strictly less than $n$.
	
	We prove the following property.
	\begin{lem}\label{lem:not_narrow_has_unique_longest_narrow}
		Suppose $I$ is a characteristic arc of period $n$
		If $I$ is not narrow and $I$ satisfies $\frac{1}{2^k-1} < l(I) < \frac{1}{2^{k-1}-1}$, then there is a narrow arc $J$ nested by $I$, whose period is $k$.
		Moreover $k$ is strictly less than $n$.
	\end{lem}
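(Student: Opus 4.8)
The plan is to exploit two things: the lower bound on $l(I)$, which via a pigeonhole argument forces $I$ to contain a periodic angle of small period, and the fact that QML is a lamination, which then forces the whole corresponding leaf to sit behind $I$. First I would argue as follows. Since $I$ is not narrow, the hypothesis gives $l(I)>\frac{1}{2^{k}-1}$; the $2^{k}-1$ angles $\frac{j}{2^{k}-1}$ ($0\le j\le 2^{k}-2$) partition $S^1$ into arcs of length $\frac{1}{2^{k}-1}$, so the open arc $I$, being strictly longer, must contain one of these angles in its interior. Invoking the standard fact that $0$ never lies in the closure of a characteristic arc, this angle is a nonzero periodic angle $x$ whose period $p$ satisfies $2\le p\le k$. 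By Lemma \ref{lem:Lavarus_lemma}(1), $x$ is an endpoint of a non-degenerate QML leaf $J'$ of period $p$; its other endpoint also has period $p<n$, hence is not an endpoint of $I$, and it cannot lie outside the short arc of $I$ either, since then $J'$ and $I$ would be linked chords. As QML is a lamination this is impossible, so $J'$ is nested by $I$, giving a characteristic arc of period $\le k$ behind $I$.

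Next I would pass to the innermost such arc. Among all characteristic arcs nested by $I$ (nonempty by the previous step), let $J$ be one of minimal period $m\le p\le k$. I claim $J$ is narrow: otherwise the observation recorded just after the definition of narrow arcs would produce a strictly shorter characteristic arc behind $J$ of period $<m$, which by transitivity of nesting lies behind $I$ with period $<m$, contradicting minimality. Hence $l(J)=\frac{1}{2^{m}-1}$, and since $J$ is strictly nested by $I$ we get $\frac{1}{2^{m}-1}=l(J)<l(I)<\frac{1}{2^{k-1}-1}$, which forces $m\ge k$; together with $m\le k$ this yields $m=k$. For the last clause, since $I$ is not narrow the same observation gives a characteristic arc behind $I$ of period $<n$, so the minimal period realised above satisfies $m<n$, i.e. $k=m<n$.

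The routine parts are the pigeonhole count and the elementary manipulation of the chain $\frac{1}{2^{m}-1}<l(I)<\frac{1}{2^{k-1}-1}$. The delicate inputs — and the only real obstacles — are the two facts borrowed from the surrounding theory: that $0$ is never behind a characteristic arc (needed so that the angle produced by pigeonhole is genuinely periodic of period $\ge 2$, hence the endpoint of a non-degenerate leaf), and that whenever a characteristic arc fails to be narrow the ``shorter arc behind it'' can be taken to be a genuine QML leaf of strictly smaller period — this is what makes the minimality argument in the second step close up, and it is exactly the content of the observation following the definition of narrow arcs together with parts (1) and (3) of Lemma \ref{lem:Lavarus_lemma}.
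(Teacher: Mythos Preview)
Your argument is correct and takes a somewhat different route from the paper's. The paper counts directly: the length bounds put $2$ or $3$ angles of the form $j/(2^k-1)$ behind $I$, the unlinked property of QML forces exactly $2$, and these pair into a leaf $J$ of length $1/(2^k-1)$, hence narrow of period $k$. You instead produce a single such angle by pigeonhole, obtain some leaf $J'$ of period $p\le k$ behind $I$, and then pass to a leaf $J$ of \emph{minimal} period; the observation following the definition of narrow forces $J$ to be narrow, and the inequality $l(J)<l(I)<1/(2^{k-1}-1)$ pins its period to exactly $k$. Your minimality argument is slightly more robust---the paper's ``$2$ or $3$'' is not fully justified, since an open arc of length just above $1/(2^k-1)$ can in principle contain only one such angle---at the cost of invoking the observation (hence Lemma~\ref{lem:Lavarus_lemma}(1),(3)) as an intermediate step.

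One small slip in your first paragraph: you assert the partner of $x$ ``also has period $p<n$'', but at that stage you only know $p\le k$, and $k<n$ has not yet been established (it is the final clause of the lemma). What you actually need is that the partner $y$ is not an endpoint of $I$; this follows more directly from the fact that each periodic angle lies on exactly one leaf of QML (implicit in Lemma~\ref{lem:Lavarus_algorithm}): were $y$ an endpoint of $I$ we would have $J'=I$, contradicting that $x$ lies in the interior.
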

	\begin{proof}
		Since $\frac{1}{2^k-1} < l(I) < \frac{1}{2^{k-1}-1}$, there are $2$ or $3$ points of period $k$ lie behind $I$.
		By lemma \ref{lem:Lavarus_lemma}, it must be $2$ points and those have to be endpoints of some characteristic arc $J$, because of the unlinked condition of QML.
		Then $l(J) = \frac{1}{2^k-1}$.
		Hence $J$ is narrow of period $k$.
		If $k = n$, $l(I) < \frac{1}{2^{n-1}-1}$ implies $l(I) = \frac{1}{2^n-1}$.
		Hence a contradiction.
	\end{proof}
	Note that such arc $J$ is the longest narrow arc among all narrow arcs which lie behind $I$.
	Using the lemma \ref{lem:not_narrow_has_unique_longest_narrow} we prove the equivalence definition of narrow condition of $I$.
	\begin{prop}\label{prop:narrow = no nesting}
		Suppose $I = A_1$ is a characteristic arc of period $n$.
		Then $I$ is narrow if and only if $I$ is not nested by  $A_i$ for all $2 \leq i < n$.  
	\end{prop}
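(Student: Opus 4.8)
The plan is to reduce the statement to a condition on the lengths $\ell_i := l(A_i)$ of the arcs in the orbit $A_1 = I, A_2, \dots, A_n$ enumerated as above. First I would extract from Lemma~\ref{lem:critical_arc} the length recursion along the orbit: if $\ell_i < 1/2$ then $A_{i+1} = h(A_i)$ and $\ell_{i+1} = 2\ell_i$, whereas if $\ell_i > 1/2$, so that $A_i$ is the long complementary arc $L_j$ for the appropriate $j$, then $A_{i+1}$ is the complement of $h$ applied to the (short) complement of $A_i$, hence $\ell_{i+1} = 2\ell_i - 1$. Call an index $i$ a \emph{fold} when $\ell_i > 1/2$. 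Since the orbit closes up ($A_{n+1} = A_1$) and a fold-free orbit would force $\ell_1 = 2^n \ell_1$, at least one fold occurs; and $A_1$, being the shortest complementary arc (so $\ell_1 < 1/2$), is never one.

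The crucial claim is that $I$ is narrow if and only if $A_n$ is the \emph{unique} fold in the orbit. If $\ell_1 = \frac{1}{2^n-1}$, then $\ell_i = \frac{2^{i-1}}{2^n-1}$ for $i \le n$, and since $\frac{2^{i-1}}{2^n-1} < \frac12$ exactly when $i \le n-1$ while $\frac{2^{n-1}}{2^n-1} > \frac12$, the arc $A_n$ is long and is the only fold. Conversely, if $A_n$ is the unique fold, then $A_1, \dots, A_{n-1}$ arise from $I$ by plain doubling, $\ell_n = 2^{n-1}\ell_1 > \frac12$, and the fold at step $n$ gives $\ell_1 = 2\ell_n - 1 = 2^n \ell_1 - 1$, i.e. $\ell_1 = \frac{1}{2^n-1}$, so $I$ is narrow.

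Next I would translate the nesting condition using the definition of ``lies behind''. A point lies behind $I$ exactly when it lies in the short side of $I$, which is $I$ itself since $\ell_1 < 1/2$. For $2 \le i \le n$ with $A_i \ne A_1$, the arcs $A_1$ and $A_i$ are distinct complementary arcs of the orbit portrait (Lemma~\ref{lem:critical_arc}, Definition~\ref{defn:orbit_portrait}), hence have disjoint interiors; so if $A_i$ is a fold its short side is the complement of $A_i$, which contains $I$, giving $I \subset A_i$, while if $A_i$ is not a fold its short side is $A_i$ itself, disjoint from $I$, so $I$ is not nested by $A_i$. Thus $I \subset A_i \iff A_i$ is a fold. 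Assembling: if $I$ is narrow the explicit lengths make $A_1, \dots, A_n$ pairwise distinct with $A_n$ the only fold, so no $A_i$ with $2 \le i < n$ nests $I$; conversely, if no $A_i$ with $2 \le i < n$ nests $I$, then each such $A_i$ differs from $A_1$ and is not a fold, so the guaranteed fold must be $A_n$, whence $I$ is narrow by the previous paragraph. (The degenerate coincidence $A_i = A_1$ for some $2 \le i < n$ is excluded a priori, since it forces the arc-orbit — hence the minimal complementary length — to have period $< n$, incompatible with $l(I)$ being a multiple of $\frac{1}{2^n-1}$ whose orbit closes only at $n$; alternatively such an $I$ is visibly non-narrow via Lemma~\ref{lem:not_narrow_has_unique_longest_narrow}, so the equivalence still holds.)

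I expect the main obstacle to be the first step: reading off the precise length recursion from Lemma~\ref{lem:critical_arc}, in particular justifying the value $\ell_{i+1} = 2\ell_i - 1$ at a fold by correctly identifying $A_{i+1}$ with the relevant (``overlap'') complementary arc of the next orbit set, together with the bookkeeping that keeps $A_1, \dots, A_n$ honestly distinct complementary arcs so that the ``behind'' translation is unambiguous. Once that setup is pinned down, the equivalence ``narrow $\iff$ unique fold at $A_n$'' and the nesting translation are both short computations.
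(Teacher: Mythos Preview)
Your reduction to lengths and the characterization ``narrow $\iff$ the unique fold is at $A_n$'' is correct and clean. The gap is in the next step, where you claim that for $2 \le i \le n$ the arcs $A_1$ and $A_i$ ``are distinct complementary arcs of the orbit portrait, hence have disjoint interiors.'' This is not what the orbit portrait axioms say: $A_1$ is a complementary arc of $O_1$, while $A_i$ is a complementary arc of $O_{j(i)}$ with $j(i)\equiv i\pmod p$, and complementary arcs of \emph{different} $O_j$'s can certainly overlap. Already for the airplane ($\alpha=3/7$) one has $A_2=(6/7,1/7)\subset A_3=(5/7,2/7)$, so disjointness fails in general; and in the non-narrow example $(\alpha,\overline\alpha)=(158/255,161/255)$ one computes $A_8=(79/255,208/255)\supset A_1$, so a fold need not have $A_1$ on its short side. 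The unlinkedness in Definition~\ref{defn:orbit_portrait} only gives that the \emph{chords} $a_1,a_i$ don't cross, which yields the dichotomy $A_1\subset A_i$ or $A_1\cap A_i=\emptyset$, not disjointness.

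Consequently your equivalence ``$I$ nested by $A_i\iff A_i$ is a fold'' is unproven: it would fail exactly when $A_1\subset A_i$ (in which case a short $A_i$ nests $I$ without being a fold, and a long $A_i$ is a fold without nesting $I$). What you actually need is $A_1\not\subset A_i$ for all $2\le i<n$, and this is precisely the geometric content that the paper establishes by a different route: it takes the two half-preimages of $A_1$, observes that the chord $a_n$ is a long side of the quadrilateral they span, and shows that any $A_{k-1}$ arising from a hypothetical nesting $A_k$ (with $k<n$) would have to cross $a_n$, violating unlinkedness. Your fold-counting argument becomes a valid proof only once this disjointness is supplied, and supplying it is essentially the whole proposition.
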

	\begin{proof}
		We denote $I$ as $A_1$.
		Suppose to the contrary that $A_1$ is narrow and it is nested by $A_k$ for some $2\leq k \leq n-1$.
		$l(A_1) = \frac{1}{2^n-1}$ implies $l(A_{i}) = \frac{2^{i-1}}{2^n-1}$.
		Thus $A_n$ is the only arc of length $>1/2$ and $A_i \to A_{i+1}$ is just an angle doubling for all $i \leq n-1$.
	    Let $x<y$ be the endpoint of $A_1$.
	 	The preimage of $x$ and $y$ under angle doubling are $\frac{x}{2}, \frac{x+1}{2}$ and $\frac{y}{2}, \frac{y+1}{2}$	, respectively.
	 	Since $A_{k-1} \to A_k$ is an angle doubling, endpoints of $A_{k-1}$ must contain either $\frac{x}{2}, \frac{y}{2}$ or $\frac{x+1}{2}, \frac{y+1}{2}$.
	 	But $A_n$ is one of the long side of quadrilateral whose vertices are preimages, $A_n$ and $A_{k-1}$ intersects.
	 	Therefore we conclude that if $A_1$ is narrow then it is never nested or nested only by $A_n$.
	 	
	 	Now, suppose $I$ is not narrow and satisfies $\frac{1}{2^m-1} < l(A_1) < \frac{1}{2^{m-1}-1}$.
	 	By lemma \ref{lem:not_narrow_has_unique_longest_narrow}, there is a unique longest narrow arc $B_1$ of period $m < n$.
	 	Denote $B_j$'s as its orbit.
	 	We split it into $2$ cases.
	 	See the figure \ref{fig:narrow_proof} also.	 	
	 	\begin{figure}
	 		\centering
	 		\includegraphics[width = .5\textwidth]{"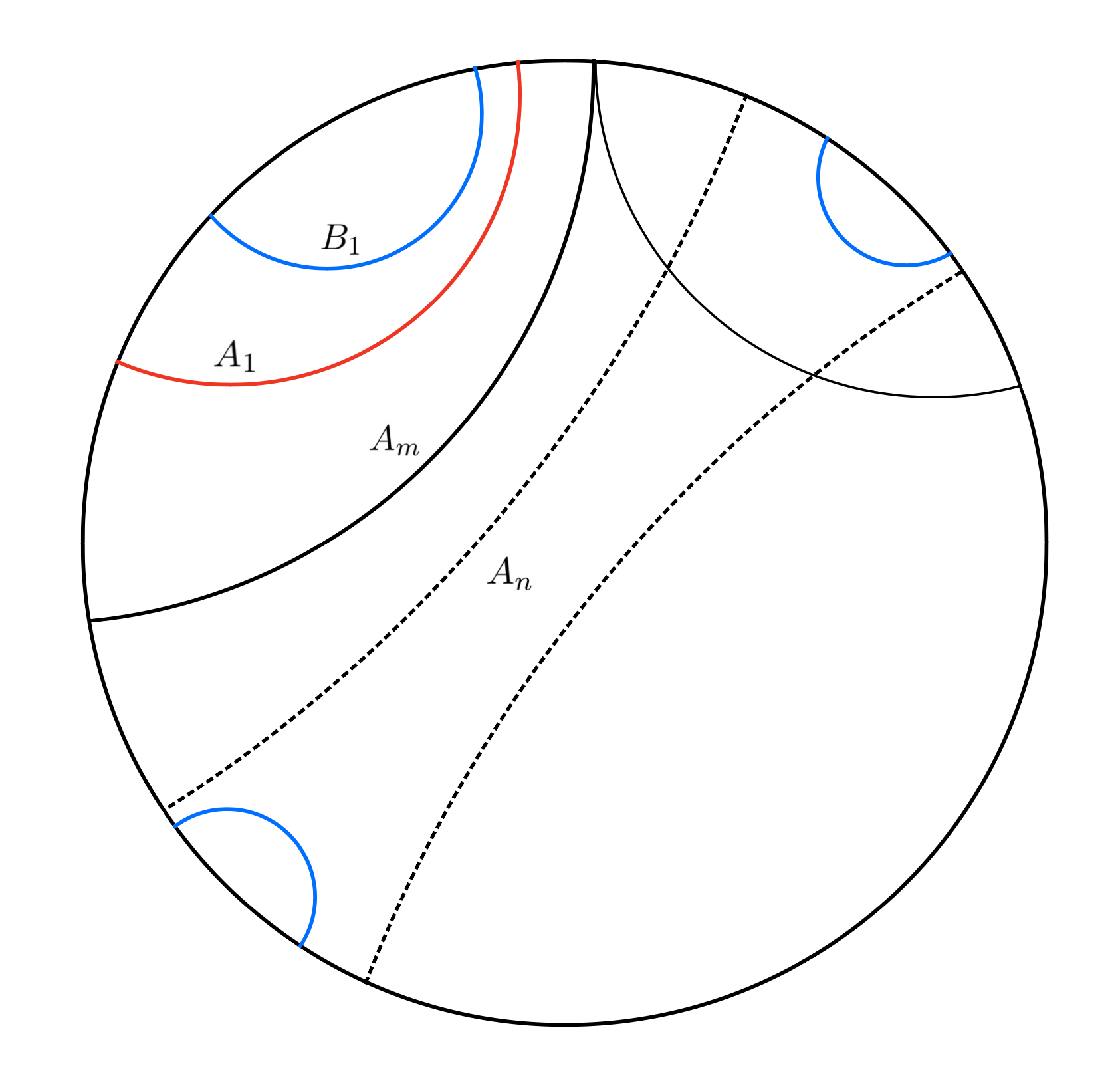"}
	 		\caption{In the middle there are two candidate chords of $A_n$. Then $A_m$ cannot nest the preimage of $A_1$ without intersecting $A_n$.}
	 		\label{fig:narrow_proof}
	 	\end{figure}
	 	
	 	\begin{itemize}
	 		\item $l(A_1) > \frac{1}{2^{m-1}}$.
	 		
	 		Then $\frac{1}{2^{m-1}} < l(A_1) < \frac{1}{2^{m-1}-1}$ gives $\frac{2^{i-1}}{2^{m-1}} < l(A_i) < \frac{2^{i-1}}{2^{m-1}-1}$ for $1\leq i \leq m$ and hence $A_{m-1}$ is the first $A_i$ whose length is longer than $1/2$.
	 		Endpoints of $A_{m-1}$ cannot be in the preimage of $A_1$ unless one of the complementary arc of $A_m$ is shorter than $A_1$, which is impossible.
	 		As $A_1$ nests $B_1$, the preimage of $B_1$ is a (proper) subset of $A_1$.
	 		This implies $A_m$ nests $B_1$.
	 		Since $A_1$ is the shortest among all other $A_j$, $A_m$ cannot be located between $B_1$ and $A_1$.
	 		Therefore $A_m$ nests $A_1$ and $m$ is strictly less than $n$, so we are done.
	 		
	 		\item $l(A_1) < \frac{1}{2^{m-1}}$. 
	 		
	 		Same strategy as above, we get $A_{m+1}$ nests $A_1$.
	 		So we are done if $m+1 < n$.
	 		Suppose $m+1 = n$.
	 		Then again $A_m$ is the first $A_i$ whose length is longer than $1/2$.
	 		Let $S_m$ be $S^1 - A_m$.
	 		If $A_m$ contains $B_1$, then $A_{m-1}$ must contain one of the preimage of $B_1$ since $A_{m-1} \to A_m$ is just an angle doubling map.
	 		$A_m$ is strictly longer than $l(A_1)/2$ and hence contain the preimage of $A_1$ under angle doubling (, not in an orbit), which induces intersection between $A_n$.
	 		This implies $S_m$ contains $B_1$ and therefore $A_m$ must nest $A_1$.
	 	\end{itemize}
 		Same analogy as in the latter case, if we replace $A_m$ to $A_n$, we get $A_n$ must nest $A_1$.
	\end{proof}
	The proposition implies that the characteristic arc $I$ is always nested by some $A_m$ with $m$ less or equal to the period of $I$, and $m = n$ if and only if $I$ is narrow.
	\begin{lem}\label{lem:narrow_starts_with_odd}
		Suppose $I = (\frac{a}{2^n-1}, \frac{a+1}{2^n-1})$.
		Then $a$ must be an odd number.
	\end{lem}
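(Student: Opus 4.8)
\subsection*{Proof proposal}

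The plan is to argue by contradiction: assume $a$ is even and show this forces the orbit $A_1 = I, A_2, \dots, A_n$ to violate the nesting $A_1 \subset A_n$ that holds for every narrow characteristic arc (Proposition~\ref{prop:narrow = no nesting} and the remark following it).

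First I would record the orbit structure coming from narrowness. Write $\alpha = \tfrac{a}{2^n-1}$ and $\overline\alpha = \tfrac{a+1}{2^n-1}$, so $A_1 = (\alpha,\overline\alpha)$ has length $\tfrac{1}{2^n-1}$. As in the proof of Proposition~\ref{prop:narrow = no nesting}, narrowness gives $l(A_i) = \tfrac{2^{i-1}}{2^n-1}$; hence $A_i \to A_{i+1}$ is plain angle doubling for $1 \le i \le n-1$, the arc $A_n$ is the unique orbit arc of length $>1/2$, and $A_n$ is the forward arc from $2^{n-1}\alpha$ to $2^{n-1}\overline\alpha$.

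The key point is to locate the two endpoints of $A_n$ when $a$ is even. Using the $n$-periodic binary expansions $\alpha = 0.\overline{u0}$ and $\overline\alpha = 0.\overline{u1}$ with $u \in \{0,1\}^{n-1}$ (the last digit of the period is $0$ exactly because $a$ is even, and adding $1$ then flips only that digit), a shift by $n-1$ places of the period yields $2^{n-1}\alpha = 0.\overline{0u} = \tfrac{\alpha}{2}$ and $2^{n-1}\overline\alpha = 0.\overline{1u} = \tfrac12 + \tfrac{\overline\alpha}{2} = \tfrac{\overline\alpha+1}{2}$. (Equivalently: $2^{-1}\equiv 2^{n-1}\pmod{2^n-1}$, and $a$ being even makes the least residues of $2^{-1}a$ and $2^{-1}(a+1)$ come out exactly as $a/2$ and $a/2+2^{n-1}$.) A one-line check then confirms that the forward arc from $\tfrac{\alpha}{2}$ to $\tfrac{\overline\alpha+1}{2}$ has length $\tfrac{2^{n-1}}{2^n-1}>\tfrac12$, consistent with its being $A_n$, so $A_n = \bigl(\tfrac{\alpha}{2}, \tfrac{\overline\alpha+1}{2}\bigr)$.

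Finally, since $0 < \alpha < \overline\alpha = \alpha + \tfrac{1}{2^n-1} < 1$, the trivial inequalities $\tfrac{\alpha}{2} < \alpha$ and $\overline\alpha < \tfrac{\overline\alpha+1}{2}$ give $\tfrac{\alpha}{2} < \alpha < \overline\alpha < \tfrac{\overline\alpha+1}{2}$, hence $(\alpha,\overline\alpha) \subset \bigl(\tfrac{\alpha}{2}, \tfrac{\overline\alpha+1}{2}\bigr) = A_n$. But the region behind the chord of $A_n$ is its \emph{short} complementary arc $\bigl(\tfrac{\overline\alpha+1}{2}, \tfrac{\alpha}{2}\bigr)$, of length $\tfrac{2^{n-1}-1}{2^n-1} < \tfrac12$, which is disjoint from $A_n$; since the region behind $A_1$ is exactly $(\alpha,\overline\alpha)$, no point behind $A_1$ lies behind $A_n$, i.e. $A_1$ is not nested by $A_n$ — contradicting the fact that a narrow characteristic arc is always nested by $A_n$. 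Therefore $a$ must be odd. I expect the only real subtlety to be the bookkeeping in the third paragraph: one must make sure the reductions modulo $1$ (equivalently modulo $2^n-1$) really land $2^{n-1}\alpha$ at $\alpha/2$ and $2^{n-1}\overline\alpha$ at $(\overline\alpha+1)/2$ with no extra wraparound — which is precisely where the hypothesis "$a$ even", together with the harmless bound $a \le 2^n-4$ forced by the $n$-periodicity of $\overline\alpha$, enters — and then correctly match the combinatorial definition of "lying behind a chord" to these explicit arcs.
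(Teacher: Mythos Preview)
Your proof is correct and rests on the same key fact as the paper's — that for a narrow arc the chord $A_n$ must nest $A_1$ — but the execution is the explicit contrapositive of the paper's argument. The paper argues geometrically: since $A_n$ is one of the two long sides of the preimage quadrilateral with vertices $\tfrac{\alpha}{2},\tfrac{\overline\alpha}{2},\tfrac{\alpha+1}{2},\tfrac{\overline\alpha+1}{2}$, and since it is the one nesting $A_1$, one of its endpoints must be $\tfrac{\overline\alpha}{2}=\tfrac{a+1}{2(2^n-1)}$; this point lies on the periodic orbit, hence cannot be strictly preperiodic, forcing $a+1$ even. You instead assume $a$ even, compute the endpoints of $A_n$ directly via the binary/modular identity $2^{n-1}\equiv 2^{-1}\pmod{2^n-1}$ to get $A_n=\bigl(\tfrac{\alpha}{2},\tfrac{\overline\alpha+1}{2}\bigr)$, and then check that $A_1$ sits in the long complementary arc rather than behind the chord. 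Your route is more computational and self-contained (the binary-expansion bookkeeping is done in full), while the paper's is terser and leans on the quadrilateral picture together with the preperiodicity obstruction; both are short and either would serve.
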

	\begin{proof}
		By assumption, $I = A_1$ is narrow.
		Hence $A_n$ nests $A_1$.
		Consider the preimage of $A_1$ under angle doubling map.
		It consists of $2$ short arcs, and if one draw the polygon whose vertices are the endpoints of such arcs, $A_n$ must be the longer side which nest $A_1$.
		This implies one of the endpoint of $A_n$ is $\frac{a+1}{2(2^n-1)}$.
		Therefore $a$ must be odd unless $\frac{a+1}{2(2^n-1)}$ became strictly preperiodic.
	\end{proof}

	\begin{figure}[h]
		\centering
		\includegraphics[width = .7\textwidth]{"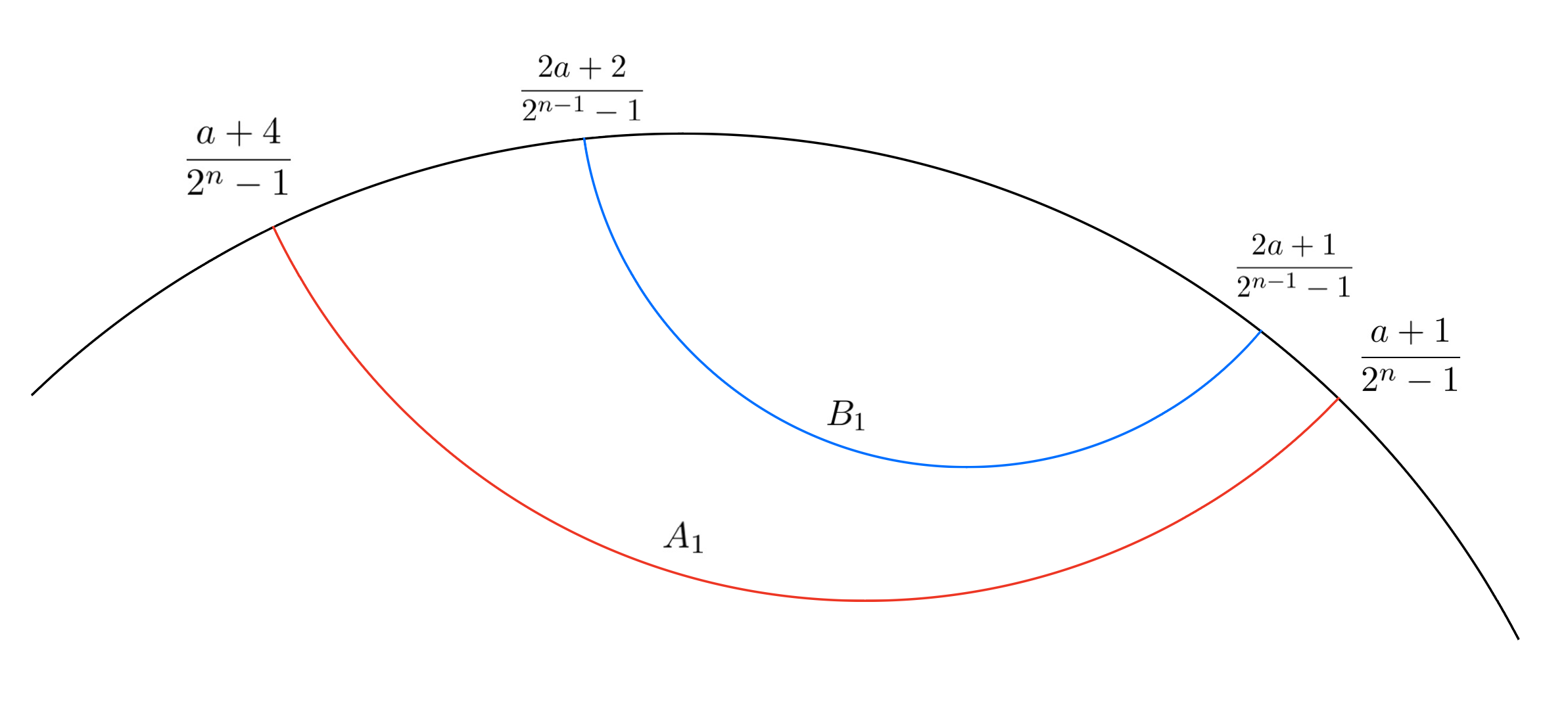"}
		\caption{Examples in the remark.}
		\label{fig:example_of_narrow}
	\end{figure}

	\begin{rmk}
		\begin{enumerate}
			\item The lemma \ref{prop:narrow = no nesting} states that $A_m$ is the first one which nests $A_1$.
			\item In the latter case, only $l(A_1)$ satisfying the condition $\frac{1}{2^{n-1}-1} < l(A_1) < \frac{1}{2^{n-2}-1}$ is $l(A_1) = \frac{3}{2^n-1}$.
			There are only $1$ case for $A_1, B_1$ as below, because of the lemma \ref{lem:narrow_starts_with_odd}.		
			\[
				\frac{a+1}{2^n-1} < \frac{2a+1}{2^{n-1}-1} < \frac{2a+2}{2^{n-1}-1} < \frac{a+4}{2^n-1}
			\]
			These are depicted in figure \ref{fig:example_of_narrow}.
		\end{enumerate}
	\end{rmk}
	\subsection{Simply renormalizable arcs}
	Suppose $\mathcal{P}$ and $\mathcal{Q}$ are hyperbolic components, and $I$ is a characteristic arc corresponds to the hyperbolic component $\mathcal{P}*\mathcal{Q}$.
	We first note that $\mathcal{Q} \mapsto \mathcal{P}*\mathcal{Q}$ gives a tuning from any point $c\in \mathcal{Q}$, and hence $p(z) = z^2+c$ with $c\in\mathcal{P}*\mathcal{Q}$ is renormalizable.
	Choose $c$ a center of $\mathcal{P}*\mathcal{Q}$ and consider the orbit portrait of $I$.
	Since $\mathcal{P}*\mathcal{Q}$ is contained in the wake of $\mathcal{P}$, the orbit portrait $\mathcal{O}_\mathcal{P}$ corresponds to $\mathcal{P}$ is unlinked with the orbit portrait of $I$.
	Furthermore, the orbit portrait $\mathcal{O}_\mathcal{P}$ partition the arcs $I = A_1, A_2, \cdots, A_n$ into same number of $m$ subsets $\{A_1, A_{m+1}, \cdots A_{(k-1)m+1}\}$, $\{A_2, A_{m+2}, \cdots, A_{(k-1)m+2} \}$, 
	$\cdots \{ A_m, A_{2m}, \cdots, A_{km} (= A_n)\}$, where $m$, $k$ and $n = km$ is the period of $\mathcal{Q}$, $\mathcal{P}$, and $\mathcal{P}*\mathcal{Q}$, respectively.
	
	The small Julia set $K(1)$ corresponds to $c_M$.
	The local map $p^{\circ m}$ is hybrid equivalent to some polynomial, whose constant term belongs to $\mathcal{Q}$.
	Hence, locally there is an orbit portrait corresponding to $\mathcal{Q}$ and also for the other $K(i)$'s.
	
	By the Douady's angle tuning formula \ref{lem:douady_angle_tuning_formula}, we can recover each orbit portrait into the whole orbit portrait of $\mathcal{I}$. 
	Also, each small orbit portrait is separated by chords in $\mathcal{O}_\mathcal{P}$.
	
	\emph{i.e.,} If given characteristic arc $I$ is simply renormalizable, then there exists a characteristic arc $J$ which satisfies the followings,
	\begin{itemize}
		\item $I$ is nested by $J$.
		\item Period of $J$ is a integer multiple of period of $I$.
		The orbit portrait of $J$ separates the orbits in $I$ into subsets of same cardinality.
	\end{itemize}
	These condition gives a sufficient condition for renormalizablity.
	\begin{lem}\label{lem:renorm_implies_nesting}
		Suppose $I$ is a renormalizable arc of period $n$.
		Then $I := A_1$ is nested by $A_i$ for some $i$ strictly less than $n$.
		Therefore, $A_1$ cannot be narrow.
	\end{lem}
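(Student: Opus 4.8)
The plan is to read the nesting off the tuning picture developed just above the statement and then invoke Proposition~\ref{prop:narrow = no nesting}. Since $I$ is (simply) renormalizable, $I$ is the characteristic arc of a tuned component $\mathcal{P}*\mathcal{Q}$ with $\mathcal{P}\neq M_0$ of period $k\geq 2$ and $\mathcal{Q}\neq M_0$ of period $m\geq 2$, so that $n=km$; in particular $2\leq k<n$ and $k\mid n$. Because $\mathcal{P}*\mathcal{Q}$ lies in the wake of $\mathcal{P}$, the characteristic arc $J$ of $\mathcal{P}$ nests $I=A_1$, and, as recalled above, $\mathcal{O}_{\mathcal{P}}$ is unlinked with $\mathcal{O}_I$ and partitions the orbit $A_1,\dots,A_n$ of $I$ into $m$ subsets of cardinality $k$, cyclically permuted by angle doubling, with $A_1$ in the subset $S_1$ sitting behind $J$.

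The heart of the proof is to upgrade the nesting $A_1\subset J$ to a nesting $A_1\subset A_i$ by an arc of the orbit of $I$ itself, with $2\leq i<n$. Inside $S_1$ the $m$-th iterate of angle doubling models the renormalized degree-two dynamics and cyclically permutes the $k$ arcs of $S_1$; since $A_1$ is the globally shortest of all the $A_j$ it plays the role of characteristic arc in this renormalized picture, so the arc of $S_1$ that is carried onto $A_1$ is its ``critical'' arc, and — just as $A_n$ nests $A_1$ for any characteristic arc (the remark after Proposition~\ref{prop:narrow = no nesting}) — that arc nests $A_1$. One must then distinguish the type of $\mathcal{O}_I$ to see that its index is $<n$: when $\mathcal{O}_I$ is primitive the $A_j$ are pairwise distinct and the relevant arc is $A_{(k-1)m+1}=A_{n-m+1}$, with $n-m+1<n$ because $m\geq 2$; when $\mathcal{O}_I$ is a satellite portrait of orbit period $p$ and valence $v$ (so $vp=n$), renormalizability forces $p\geq 2$ — otherwise $I$ would be a generalized rabbit, attached to the main cardioid and hence not renormalizable — and the orbit returns to the polygon $O_1$ after $n-p$ steps onto the longest complementary arc of $O_1$, whose chord contains the short arc $A_1$; so $A_1\subset A_{n-p+1}$ with $n-p+1<n$. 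Throughout this one has to track the orbit carefully through the modified angle-doubling rule for long complementary arcs.

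Given such $A_i$ with $2\leq i<n$ and $A_1\subset A_i$, Proposition~\ref{prop:narrow = no nesting} at once gives that $A_1=I$ is not narrow. I expect the real obstacle to be the second paragraph — carrying the nesting through the long-arc modification and locating $i$ in each case — and I emphasise that knowing only that $I$ lies in the wake of a lower-period component would not suffice: the narrow, prime-period arc $\bigl(\tfrac{3}{31},\tfrac{4}{31}\bigr)$ already lies behind the period-four characteristic arc $\bigl(\tfrac{1}{15},\tfrac{2}{15}\bigr)$, yet it is narrow and not renormalizable, so the argument must genuinely use that $\mathcal{O}_{\mathcal{P}}$ splits the entire orbit of $I$ into equal subsets.
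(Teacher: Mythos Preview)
Your proof follows the same core idea as the paper's: exploit the renormalized (inner) orbit portrait to produce a nesting $A_1\subset A_i$ with $i<n$, then invoke Proposition~\ref{prop:narrow = no nesting}. The paper, however, executes this more economically and without your primitive/satellite case split.

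The paper simply observes that the inner orbit portrait (corresponding to $\mathcal{Q}$) consists of arcs $B_1=I,B_2,\ldots,B_k$; by the consequence of Proposition~\ref{prop:narrow = no nesting}, $B_1$ is nested by some $B_l$ with $l\le k$. Douady's angle tuning formula (Lemma~\ref{lem:douady_angle_tuning_formula}) is order-preserving, so this nesting transfers directly to $A_1\subset A_{(l-1)m+1}$ in the full portrait, and since $l\le k$ and $m\ge 2$ one has $(l-1)m+1\le n-m+1<n$. No case distinction is needed, and only the weak form ``nested by \emph{some} $B_l$'' is used.

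Your primitive case is this same argument with the specific choice $l=k$ (relying on the stronger fact that $B_k$ always nests $B_1$; this is established inside the \emph{proof} of Proposition~\ref{prop:narrow = no nesting}, not in the remark you cite, which only asserts nesting by \emph{some} $A_m$). Your satellite case abandons the renormalization picture and argues directly from the polygon structure, asserting that $A_{n-p+1}$ is the longest complementary arc of $O_1$. This is correct in examples, but you have not justified it: one would need to show that the last $p$ arcs $A_{n-p+1},\ldots,A_n$ are exactly the long arcs $L_1,\ldots,L_p$, which does not follow immediately from the definitions and the modified angle-doubling rule. Since the paper's uniform argument via Douady tuning already covers the satellite case, this detour is both unnecessary and introduces an unproven step.

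Your closing remark about $(3/31,4/31)$ is well taken and shows you understand why mere wake-containment does not suffice.
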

	\begin{proof}
		With above discussions, the characteristic arc $I$ is in the small partition and form a orbit portrait.
		Let this orbit portrait $\mathcal{O}_{inn}$, and denote each arc as $I = B_1, \cdots, B_k$.
		By proposition \ref{prop:narrow = no nesting}, $I$ is nested by $B_l$ for some $l \leq k$.
		After applying the Douady's angle tuning formula \ref{lem:douady_angle_tuning_formula}, $A_1$ is nested by $A_{(l-1)m+1}$ which is obviously different from $A_n$.
		Here $km = n$.
		Therefore $A_1$ cannot be narrow by the proposition \ref{prop:narrow = no nesting}.
	\end{proof}
	
	\begin{cor}\label{cor:simply_renormalizable_criterion}
		Every narrow arc/component is not simply renormalizable.
		Every prime period arc/component is not simply renormalizable.
		Every satellite arc/component is not narrow unless its root meets in the main cardioid $M_0$.
	\end{cor}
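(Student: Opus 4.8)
The plan is to derive all three assertions from Lemma~\ref{lem:renorm_implies_nesting} and Proposition~\ref{prop:narrow = no nesting}, together with the bookkeeping of periods under tuning and the standard description of satellite components; no new geometric input should be needed.

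The first assertion I would obtain as the contrapositive of Lemma~\ref{lem:renorm_implies_nesting}: if a narrow arc $I = A_1$ of period $n$ were simply renormalizable, then Lemma~\ref{lem:renorm_implies_nesting} would produce an index $i$ with $2 \le i < n$ such that $A_1$ is nested by $A_i$, contradicting Proposition~\ref{prop:narrow = no nesting}. For the second assertion I would recall, from the discussion preceding Lemma~\ref{lem:renorm_implies_nesting}, that a simply renormalizable characteristic arc is exactly one of the form $\mathcal{P} * \mathcal{Q}$ with $\mathcal{P}$ and $\mathcal{Q}$ hyperbolic components both different from the main cardioid $M_0$ (a tuning $\mathcal{P} * \mathcal{Q}$ with $\mathcal{P} = M_0$ or $\mathcal{Q} = M_0$ being trivial), and that then the period of $\mathcal{P} * \mathcal{Q}$ equals the product $km$ of the period $k$ of $\mathcal{P}$ and the period $m$ of $\mathcal{Q}$. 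Since $M_0$ is the unique period-$1$ component, $k \ge 2$ and $m \ge 2$, so $n = km$ is composite; hence an arc whose period is prime cannot be simply renormalizable.

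For the third assertion I would argue as follows. Let $I$ be a satellite arc whose root $r_M$ does not lie on $\partial M_0$. Then $r_M$ lies on the boundary of a unique hyperbolic component $M' \ne M_0$, and the satellite component $M$ is the image, under the tuning map $\psi_{M'}$, of a satellite component $\mathcal{S}$ of the main cardioid; that is, $M = M' * \mathcal{S}$ with $M'$ and $\mathcal{S}$ both of period at least $2$. Hence $I$ is simply renormalizable (of satellite type, a case of simple, as opposed to crossed, renormalization), and the first assertion then shows that $I$ is not narrow, the excluded case being precisely that $r_M$ meets $M_0$.

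The step I expect to require the most care is this last reduction: one must know that a satellite component not rooted on $\partial M_0$ genuinely arises as a nontrivial tuning $M' * \mathcal{S}$, and that a satellite-type renormalization indeed falls under the hypothesis ``renormalizable'' of Lemma~\ref{lem:renorm_implies_nesting} rather than being crossed. Both are standard facts in the Douady--Hubbard theory of tuning, and they are the only ingredients not already established in the excerpt; granting them, the corollary follows formally, the remaining two assertions being pure arithmetic of periods. Alternatively, one could prove the third assertion by hand, locating a copy of the orbit portrait of $M'$ among the arcs $A_1, \dots, A_n$ and invoking Proposition~\ref{prop:narrow = no nesting} directly, at the cost of essentially re-running the proof of Lemma~\ref{lem:renorm_implies_nesting}.
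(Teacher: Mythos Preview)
Your proposal is correct and follows the approach the paper intends. The paper does not supply a separate proof of the corollary: the first assertion is literally the final sentence of Lemma~\ref{lem:renorm_implies_nesting}, the second is justified in the sentence immediately following the corollary (``to be renormalized, the period must have a divisor except $1$ and itself''), and the third is left implicit. Your treatment of the third assertion---recognizing a satellite component off $M_0$ as a nontrivial tuning $M' * \mathcal{S}$ and then invoking the first assertion---is exactly the intended argument, and you correctly flag the two external facts (satellites arise as tunings; satellite renormalization is simple, not crossed) that the paper takes for granted from Douady--Hubbard theory.
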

	However the converse does not hold.
	For example, there are non-narrow characteristic arc of prime period.
	It is easy to verify that to be renormalized, the period must have a divisor except $1$ and itself and hence it is not simply renormalized.
	
	Unfortunately, there are many examples which is non-prime, non-narrow and not simply renormalizable.
	We left to the reader to see the table in the appendix \ref{appendix:B-3non}. 
	We found such examples by using the internal address of hyperbolic components, proposed by Schleicher in \cite{schleicher2017internal}.
	In fact, the internal address gives a complete classification of renormalizability, including cross-renormalizability.
	We end the section up with a question.
	\begin{que}
		What else geometric conditions should be required to be non-renormalizable?
	\end{que}
	\begin{figure}[h]
		\centering
		\includegraphics[width = .5\textwidth]{"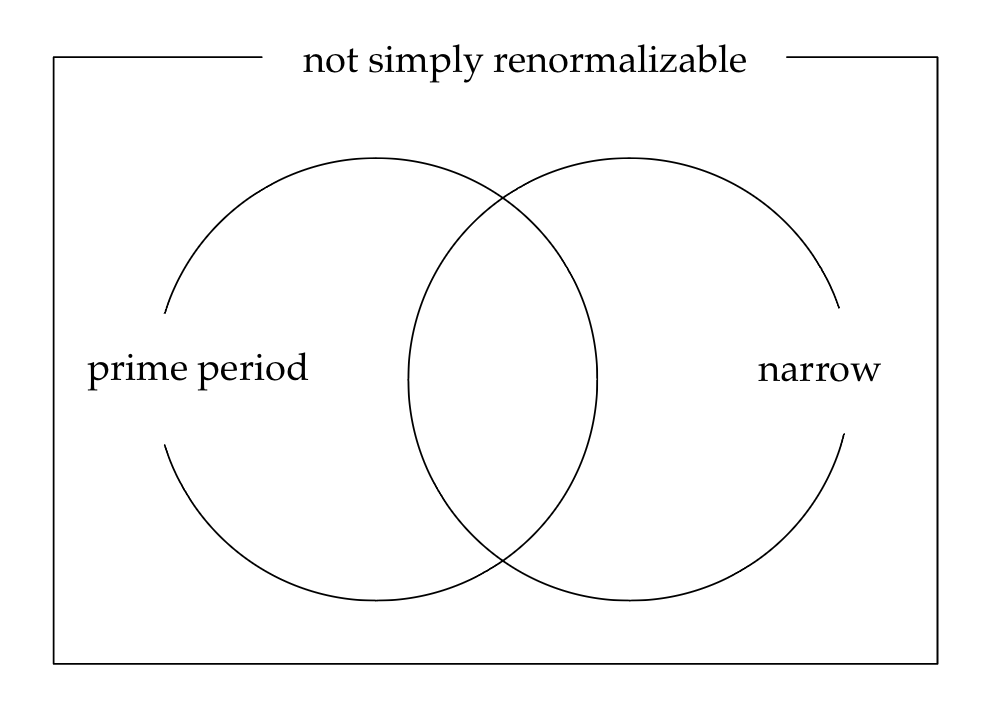"}
		\caption{Inclusion relations between narrow, prime period and not simply renormalizable conditions. Note that there are plenty of not simply renormalizable arcs which are non-narrow, non-prime period. We provide a table in the appendix \ref{appendix:B-3non}.}
		\label{fig:Venn_diagram}
	\end{figure}

	\section{Invariant Laminations} \label{sec:4-inv_lam}
	Most of contents in this section is done by Keller, we refer to the reader \cite{keller2007invariant} for further details.
	Here we endow his notations and theorems.
	\subsection{Lamination model generated by a long chord}
	Let $h$ be an angle doubling map in $S^1$. 
	\emph{i.e.,} $h(x) = 2x \mod 1$.
	Fix $\alpha\in S^1$ and suppose $\alpha \neq 0$ is periodic under $h$.
	In other words, there is the smallest $n\in \mathbb{N}$, a period of $\alpha$, such that $h^n(\alpha) = \alpha$.
	Denote $\PER(\alpha)$ as a period of $\alpha$ and $v^\alpha$ be its repeating word in the kneading sequence $\hat{\alpha} := I^\alpha(\alpha)$.
	Note that $v^\alpha$ always starts with $0$, since $\alpha$ lies on $A_\alpha := \frac{\alpha}{2}\frac{\alpha+1}{2}$.
	Since $\alpha$ is periodic, one of $\frac{\alpha}{2}, \frac{\alpha + 1}{2}$ should be equal to $h^{\PER(\alpha)-1}(\alpha)$.
	Such point is denoted by $\dot{\alpha}$, and the other one is denoted by $\ddot{\alpha}$, which is preperiodic.
	
	Consider the branches of inverse $h^{-1}$ whose domain is $S^1 - \{\alpha\}$.
	Denote one branch as $l^{\alpha}_0$ if the image is $(\frac{\alpha}{2}, \frac{\alpha + 1}{2})$, and the other as $l^{\alpha}_{1}$ if the image is $(\frac{\alpha + 1}{2}, \frac{\alpha}{2})$.
	To extend the domain to the whole $S^1$, define $l^{\alpha, t}_s(\alpha) = \dot{\alpha}$ if $t = s$, otherwise $l^{\alpha, t}_s(\alpha) = \ddot{\alpha}$ for $t,s \in \{0,1\}$.
	
	Graphically, the map $l^{\alpha, t}_s$ determines which endpoint of arc $(\frac{\alpha}{2}, \frac{\alpha + 1}{2})$ or $(\frac{\alpha}{2}, \frac{\alpha + 1}{2})$ is the preimage of $\alpha$, depends on the parity of $s$ and $t$.
	
    \begin{figure}
        \centering
        \includegraphics[width=.9\textwidth]{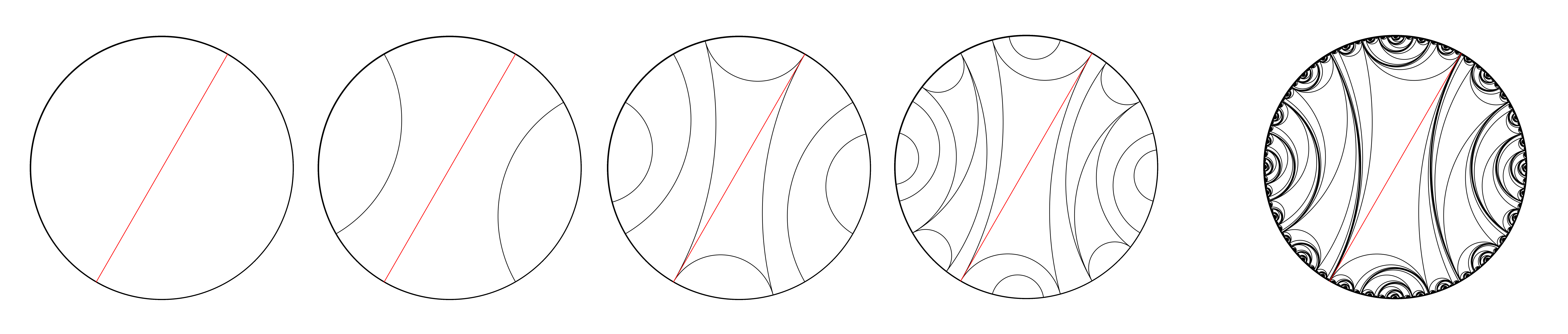}
        \caption{$\alpha = \frac{1}{3}$. Starting with red line $\frac{1}{6}\frac{2}{3}$ preimages are added. Figures depict the step $1$ to step $4$, and the rightmost one depicts $\mathcal{L}_\alpha$.}
        \label{fig:deg2lam_ex}
    \end{figure}
 
	Let $w = w_1w_2\cdots w_n$ be a finite length word with $0, 1$ symbols.
	Iteratively, we can define $l^{\alpha, t}_w := l^{\alpha, t}_{w_1} \circ \cdots l^{\alpha, t}_{w_n}$.
	If the input is in $S^1 - \{\alpha, h(\alpha), \cdots, h^{n-1}(\alpha)\}$, then the superscript $t$ is irrevalent.
	
	We first note that the branched inverse is considered as a prefix attaching map on the $\{0, 1\}^\mathbb{N}$.
	\begin{lem}
		Let $p(z) = z^2+c$ that the external angle of $c$ is $\alpha$.
		Suppose $x\in J_p$ attains its iternary sequence $x_1x_2\cdots$ and $\theta$ as its external angle.
		For any $0,1$ finite length word $w = w_1w_2\cdots w_n$, 
		$l^{\alpha, t}_w(\theta) = w_1\cdots w_n x_1x_2\cdots$.
	\end{lem}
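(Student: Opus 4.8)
The plan is to unravel the claim through the itinerary map — that is, to show $I^\alpha\bigl(l^{\alpha,t}_w(\theta)\bigr) = w_1\cdots w_n\,x_1x_2\cdots$, which is the content of the displayed equality since its left side is an angle of $S^1$ and its right side a $0$-$1$ sequence — and then to prove this by induction on $n = |w|$ after settling the case $n = 1$. The two structural inputs are: (i) the itinerary map intertwines angle doubling with the one-sided shift, $I^\alpha\circ h = \sigma\circ I^\alpha$, which is immediate from Definition \ref{defn:kneading_seq} because the entries of $I^\alpha(\theta)$ are read off successively from $h^0(\theta), h^1(\theta),\dots$; and (ii) by construction each branch $l^\alpha_s$ is a section of $h$ whose image is the open arc labelled $s$ in that definition, namely $A_\alpha$ for $s=0$ and $B_\alpha$ for $s=1$.

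Before running the induction I will record that the superscript $t$ plays no role here. Since $x\in J_p$ is assumed to have a genuine $0$-$1$ itinerary $x_1x_2\cdots$, its dynamic angle $\theta$ is not a precritical angle, so in particular $\theta$ does not lie on the orbit $\{\alpha, h(\alpha),\dots,h^{n-1}(\alpha)\}$ (every such angle is periodic and hits $\tfrac{\alpha}{2}$ or $\tfrac{\alpha+1}{2}$ under some iterate, forcing a $*$). The same then holds for every intermediate preimage $l^{\alpha,t}_{w_k}\circ\cdots\circ l^{\alpha,t}_{w_n}(\theta)$: if one of them equalled $\alpha$, applying $h$ the appropriate number of times would place $\theta$ on the orbit of $\alpha$, a contradiction. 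Hence, by the remark following the definition of $l^{\alpha,t}_w$, all the branches below act on $S^1-\{\alpha,\dots,h^{n-1}(\alpha)\}$ and the superscript may simply be dropped.

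For the base case $n=1$ set $s := w_1$ and $\theta' := l^\alpha_s(\theta)$. By (ii), $\theta'$ lies in the arc labelled $s$, so the first entry of $I^\alpha(\theta')$ is $s$; and $h(\theta')=\theta$, so by (i), $\sigma\bigl(I^\alpha(\theta')\bigr) = I^\alpha(\theta) = x_1x_2\cdots$. Together these give $I^\alpha(\theta') = s\,x_1x_2\cdots$. For the inductive step split $w = w_1 w'$ with $w' = w_2\cdots w_n$; directly from the definition of the composite branch, $l^{\alpha,t}_w(\theta) = l^\alpha_{w_1}\bigl(l^{\alpha,t}_{w'}(\theta)\bigr)$, the inner angle has itinerary $w_2\cdots w_n x_1x_2\cdots$ by the inductive hypothesis, and applying the base case to it prepends $w_1$, yielding $w_1 w_2\cdots w_n x_1x_2\cdots$.

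The argument is short, and the only place demanding care is the verification in the second paragraph that no intermediate preimage meets $\alpha$, so that one works with the unambiguous branches and the clean semiconjugacy. (If one wants the statement for precritical $\theta$ as well, the same induction goes through once one checks that the extension $l^{\alpha,t}_s(\alpha)\in\{\dot{\alpha},\ddot{\alpha}\}$ is still compatible with prepending $s$ — this reduces to the identity $\hat{\alpha}=0\,\sigma(\hat{\alpha})$ together with $\dot{\alpha}=h^{\PER(\alpha)-1}(\alpha)$ — and this is precisely where the bookkeeping role of the superscript $t$ appears.)
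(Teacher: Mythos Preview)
Your proof is correct and follows the same idea as the paper's: the key observation is precisely your input (ii), that the subscript $s$ of $l^\alpha_s$ records which of the two arcs $A_\alpha$, $B_\alpha$ the image lands in, and hence determines the first itinerary symbol. The paper's own proof simply says this is ``straightforward by the definition of $l$'' and recalls that observation; your version spells out the induction and the irrelevance of the superscript $t$ explicitly, but the approach is the same.
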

	\begin{proof}
		This is straightforward by the definition of $l$.
		Recall that we attach a superscript $0$ or $1$ if the branch is defined on $A_\alpha$ or $B_\alpha$ (defined in \ref{defn:kneading_seq}), respectively.
	\end{proof}
	
	Now construct the lamination model for long chord $S_\lambda^\alpha := \frac{\alpha}{2}\frac{\alpha + 1}{2} = \dot{\alpha}\ddot{\alpha}$.
	Here $\lambda$ implies the empty word.
	Denote $S^{\alpha, t}_w := l^{\alpha, t}_w(S_\lambda^\alpha)$ for finite $0,1$ word $w$.
	As $\alpha$ is periodic, $l^{\alpha, t}_{tv^\alpha}(\dot{\alpha}) = \dot{\alpha}$ and $l^{\alpha, t}_{(1-t)v^\alpha}(\ddot{\alpha}) = \ddot{\alpha}$.
	
	To be precise, we define the $\alpha$ regular word $w\in \{0,1\}^*$.
	\begin{defn}[$\alpha$-regular word]
		Fix a periodic $\alpha \in S^1$.
		$w\in \{0,1\}^*$ is called $\alpha$-regular if $w$ does not end with $0v^\alpha$ nor $1v^\alpha$.
	\end{defn}

	We call \emph{step $k$} lamination generated by $\alpha$ if we collect all the $j$th preimage of the long chord $S^\alpha_\lambda$ for all $j < k$.
    Also we denote $\mathcal{L}_\alpha$ as a collection of all chord $S^{\alpha, t}_w$, $w\in\{0,1\}^*$ and $t\in \{0,1\}$.
	By construction, it became an invariant under $h$, both forward and backward.
    See the figure \ref{fig:deg2lam_ex}.
	
	Observe that $S^{\alpha,0}_{0v^\alpha}, S^{\alpha,1}_{0v^\alpha}, S^\alpha_\lambda$ form a triangle and similarly,  $S^{\alpha,0}_{1v^\alpha}, S^{\alpha,1}_{1v^\alpha}, S^\alpha_\lambda$ also form a triangle.
	We denote them $\Delta_{0v^\alpha}$ and $\Delta_{1v^\alpha}$, respectively.
	Under the inverse map of $h$, there are infinitely many triangle for each finite word $u \in \{0,1\}^*$, mutually disjoint interiors and possibly share vertices/edges.
	We call these as a triangle gap $\Delta_{utv^\alpha}, t\in\{0,1\}$, named after the label of chord.
	More precisely, three sides of $\Delta_{utv^\alpha}$ are $S^{\alpha, 0}_{utv^\alpha}, S^{\alpha, 1}_{utv^\alpha}, S^{\alpha, t}_{u}$.
	
	\begin{rmk}
		Let $u\in \{0,1\}^*$.
		Consider two triangle gap $\Delta_{utv^\alpha}$ and $\Delta_{utv^\alpha 0v^\alpha}$.
        They share a side $S^{\alpha, 0}_{utv^\alpha}$.
		Similarly, $\Delta_{utv^\alpha}$ and $\Delta_{utv^\alpha 1v^\alpha}$ share a side $S^{\alpha, 1}_{utv^\alpha}$.
		So, for each $\alpha$-regular word $w$, infinitely many triangles are attached from the $\Delta_{wtv^\alpha}$.
        Iteratively, it will form an infinite binary tree of triangles.
		\emph{i.e.,} if one has $0,1$ one sided sequence $x_1x_2\cdots$, then there is an associated path of triangle gaps $\Delta_{wx_1v^\alpha},\Delta_{wx_1v^\alpha x_2v^\alpha}, \cdots$ for each $\alpha$ regular word $w$.
        See the figure \ref{fig:triangle_gaps_remark}.
	\end{rmk}
	
	\begin{figure}
		\centering
		\begin{subfigure}{0.45\textwidth}
			\centering
			\includegraphics[width=.9\linewidth]{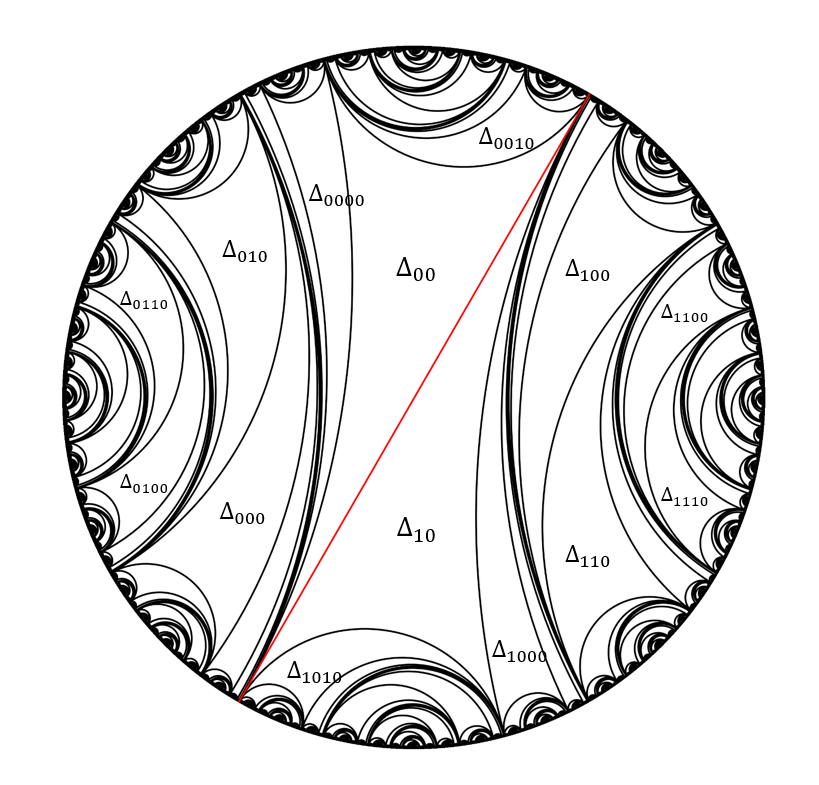}
		\end{subfigure}
		\begin{subfigure}{0.45\textwidth}
			\centering
			\includegraphics[width=.9\linewidth]{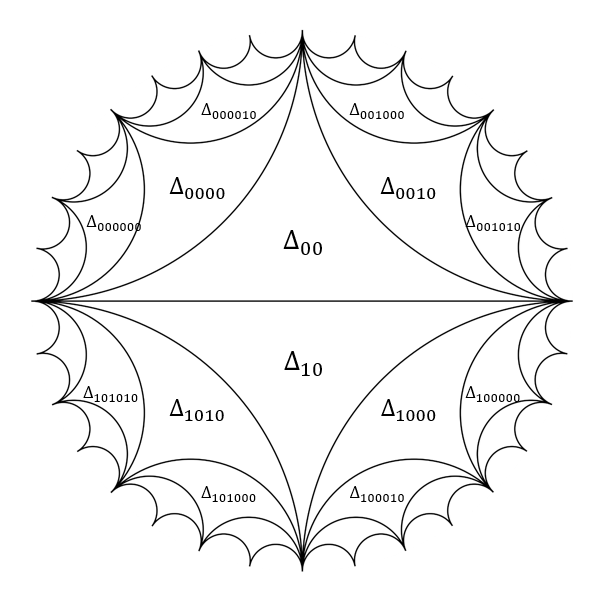}
		\end{subfigure}
		\caption{Here we have $\alpha = \frac{1}{3}$ and some indexes of triangle gaps. 
        If we focus on the triangle gaps starting from the long chord(red line in the left figure), then it forms a infinite binary tree.}
        \label{fig:triangle_gaps_remark}
	\end{figure}
	
	To distinguish sides of given triangle gap, we adopt the following notion.
	\begin{defn}[Characteristic symbol]\label{def:characteristic_symbol}
		Let $\alpha$ be a non-zero periodic in $S^1$.
		The symbol $t$ with $d(S^{\alpha, t}_{0v^\alpha}) < 1/4$ is called characteristic symbol of $\alpha$, denoted as $e^\alpha$.
	\end{defn}
	Note that $S^{\alpha, t}_{0v^\alpha}$ is a side of the triangle gap $\Delta_{0v^\alpha}$.
	As the definition says, the characteristic symbol states which side of triangle (except the long chord) is shorter than the other.
	More generally, consider we have an $\alpha$-regular word $w$.
	Then from $S^\alpha_w$ there is a triangle gap $\Delta_{w0v^\alpha}$, whose sides are $S^{\alpha, 0}_{w0v^\alpha}, S^{\alpha, 1}_{w0v^\alpha}$ and $S^\alpha_w$.
	The characteristic symbol $e$ states $d(S^{\alpha, e}_{w0v^\alpha}) < d(S^{\alpha, 1-e}_{w0v^\alpha})$.
	
	But for the other triangle gap  $\Delta_{w1v^\alpha}$ by symmetry $d(S^{\alpha, 1-e}_{w1v^\alpha}) < d(S^{\alpha, e}_{w1v^\alpha})$.
	Therefore the characteristic symbol is a tool to detect which one is shorter and it works like as a parity between super/subscripts.
		
	There are many ways to find which $t\in\{0,1\}$ is a characteristic symbol of $\alpha$, we refer one way as below.
	\begin{lem}\label{lem:char_symbol_determined}
		Let $\alpha$ be a non-zero periodic in $S^1$.
		$e^\alpha = 0$ if $\dot{\alpha}\dot{\overline{\alpha}}$ separates $\ddot{\alpha}\ddot{\overline{\alpha}}$ and $0\in S^1$.
		Symmetrically, $e^\alpha = 1$ if $\ddot{\alpha}\ddot{\overline{\alpha}}$ separates $\dot{\alpha}\dot{\overline{\alpha}}$ and $0\in S^1$.
	\end{lem}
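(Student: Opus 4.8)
Write $n:=\PER(\alpha)$, and assume $\alpha<\overline\alpha$ (the opposite labelling is handled the same way). The plan has three steps: make the triangle gap $\Delta_{0v^\alpha}$ completely explicit, locate its apex inside the lift of the characteristic arc — which already pins down $e^\alpha$ — and then rewrite "$\dot\alpha=\tfrac\alpha2$" as the separation statement in the lemma. For Step~1: the base $S^\alpha_\lambda=\dot\alpha\ddot\alpha=\tfrac\alpha2\tfrac{\alpha+1}2$ is a diameter, so $A_\alpha,B_\alpha$ are the two half-circles. Since $v^\alpha$ has length $n-1$ it is $\alpha$-regular, and since $\ddot\alpha$ is the \emph{preperiodic} preimage of $\alpha$ the backward orbit $l^\alpha_{v^\alpha}(\ddot\alpha)$ never hits $\alpha$, so $l^\alpha_{v^\alpha}$ is a genuine contraction branch not depending on the superscript. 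Combining this with the periodicity relation $l^{\alpha,0}_{0v^\alpha}(\dot\alpha)=\dot\alpha$ and the fact that $l^{\alpha,0}_0$ and $l^{\alpha,1}_0$ agree except at the single input $\alpha$, one finds $S^{\alpha,0}_{0v^\alpha}=\dot\alpha P$ and $S^{\alpha,1}_{0v^\alpha}=\ddot\alpha P$ with common apex $P:=l^\alpha_{0v^\alpha}(\ddot\alpha)\in A_\alpha$; thus $\Delta_{0v^\alpha}$ is the triangle $\dot\alpha\,\ddot\alpha\,P$ inscribed in the half-disc over the diameter $S^\alpha_\lambda$.

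Step~2 is the crux. I claim $h(P)\in(\alpha,\overline\alpha)$, equivalently $P$ lies in the lift $\bigl(\tfrac\alpha2,\tfrac{\overline\alpha}2\bigr)$ of the characteristic arc contained in $A_\alpha$. To get at it, push $S^{\alpha,0}_{0v^\alpha}$ forward: $h(S^{\alpha,0}_{0v^\alpha})=S^\alpha_{v^\alpha}=\alpha\,h(P)$, a leaf of $\mathcal L_\alpha$ which is an $(n{-}1)$-st preimage of the critical diameter $S^\alpha_\lambda$, and one must show such a leaf is forced to lie behind the characteristic leaf $\alpha\overline\alpha$, so that $h(P)$ falls on the $(\alpha,\overline\alpha)$-side of $\alpha$ rather than into the long complementary arc. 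Granting the claim, the arc $\bigl(\tfrac\alpha2,\tfrac{\overline\alpha}2\bigr)$ has length $\tfrac12 l(\alpha,\overline\alpha)<\tfrac14$ and is adjacent to $\tfrac\alpha2$, so $d(P,\tfrac\alpha2)<\tfrac14<d(P,\tfrac{\alpha+1}2)$ (the two distances summing to $l(A_\alpha)=\tfrac12$). Since $S^{\alpha,0}_{0v^\alpha}=\dot\alpha P$ and $S^{\alpha,1}_{0v^\alpha}=\ddot\alpha P$, Definition \ref{def:characteristic_symbol} then yields
\[
e^\alpha=0\iff\dot\alpha=\tfrac\alpha2,\qquad e^\alpha=1\iff\dot\alpha=\tfrac{\alpha+1}2 .
\]

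For Step~3 I would translate "$\dot\alpha=\tfrac\alpha2$" into the stated separation. The four preimages $\tfrac\alpha2,\tfrac{\overline\alpha}2,\tfrac{\alpha+1}2,\tfrac{\overline\alpha+1}2$ are the vertices of a quadrilateral $Q$ whose diagonals are the diameters $S^\alpha_\lambda,S^{\overline\alpha}_\lambda$ and each of whose four sides maps onto $\alpha\overline\alpha$ under $h$; since the four preimages are distinct, $\dot\alpha\dot{\overline\alpha}$ and $\ddot\alpha\ddot{\overline\alpha}$ are a pair of \emph{opposite} sides of $Q$. Now $0<\tfrac\alpha2<\tfrac{\alpha+1}2$ puts $0$ in $B_\alpha$, behind the side $\tfrac{\overline\alpha+1}2\tfrac\alpha2$ of $Q$; hence $0$ lies behind $\dot\alpha\dot{\overline\alpha}$ exactly when $\{\dot\alpha,\dot{\overline\alpha}\}=\{\tfrac\alpha2,\tfrac{\overline\alpha+1}2\}$, i.e. exactly when $\dot\alpha=\tfrac\alpha2$ (using that $\dot\alpha=\tfrac\alpha2$ forces $\dot{\overline\alpha}=\tfrac{\overline\alpha+1}2$, a parity fact about companion pairs that can be read off the orbit portrait). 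Because $\dot\alpha\dot{\overline\alpha}$ and $\ddot\alpha\ddot{\overline\alpha}$ are opposite sides, "$0$ behind $\dot\alpha\dot{\overline\alpha}$" is the same as "$\dot\alpha\dot{\overline\alpha}$ separates $\ddot\alpha\ddot{\overline\alpha}$ from $0$", and symmetrically "$0$ behind $\ddot\alpha\ddot{\overline\alpha}$" (i.e. $\dot\alpha=\tfrac{\alpha+1}2$) is the same as "$\ddot\alpha\ddot{\overline\alpha}$ separates $\dot\alpha\dot{\overline\alpha}$ from $0$"; combining with the displayed equivalences finishes the lemma.

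The main obstacle is Step~2: one has to keep the branch labels and orientations straight through the inverse iteration that produces $P$, and prove that the leaf $S^\alpha_{v^\alpha}$ — hence $h(P)$ — lands on the characteristic-arc side of $\alpha$ and not in the long complementary arc. Steps~1 and~3, by contrast, are bookkeeping: they use only the periodicity relations for the branches $l^{\alpha,t}_{\bullet}$, the distinctness of the four preimages, the parity fact for companion angles, and the elementary observation that $0\in B_\alpha$ — equivalently that $0$ is the $\beta$-fixed point, with constant itinerary $\overline1$ — which is exactly the input that ties the geometric separation condition to Keller's symbolic model.
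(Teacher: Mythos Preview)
The paper does not give an independent proof: it simply cites Proposition~2.41 of Keller~\cite{keller2007invariant}. Your outline therefore goes further than the paper attempts, and Steps~1 and~3 are largely correct bookkeeping. (In Step~3, the ``parity fact'' that $\dot\alpha=\alpha/2$ forces $\dot{\overline\alpha}=(\overline\alpha+1)/2$ follows because $\dot\alpha\dot{\overline\alpha}$ is the chord $A_n$ of the orbit portrait, which nests $A_1=\alpha\overline\alpha$ and so must be a \emph{long} side of the central quadrilateral, not a short one; this is what pairs $\alpha/2$ with $(\overline\alpha+1)/2$ rather than with $\overline\alpha/2$.)

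The gap you yourself flag in Step~2 is genuine and is the heart of the matter. You need $h(P)=l^\alpha_{v^\alpha}(\ddot\alpha)$ to land in the characteristic arc $(\alpha,\overline\alpha)$, equivalently that the leaf $S^\alpha_{v^\alpha}$ of $\mathcal L_\alpha$ sits behind $\alpha\overline\alpha$. This is precisely the combinatorial input that Keller supplies when building the model, and without reproducing that argument (or an equivalent one showing that the $(n{-}1)$-st inverse image of the critical diameter under the kneading branches lands on the correct side of $\alpha$) your proof remains a sketch. Note also a potential circularity trap: the monotone-convergence description of $\overline\alpha$ in Definition~\ref{def:associated_periodic_point} and the remarks following it already \emph{use} $e^\alpha$, so you cannot invoke that machinery to locate $h(P)$ without assuming what you want to prove. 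In short, your reduction isolates the difficulty correctly but does not resolve it; the paper sidesteps the issue by deferring to Keller.
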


	\begin{proof}
		This is a proposition 2.41 in \cite{keller2007invariant}.
	\end{proof}

	$S^{\alpha, e^\alpha}_{u0v^\alpha}$ for some $u \in \{0, 1\}^*$ became smaller and degenerate eventually as $|u|$ grows, since the subscript $e^\alpha$ always choose the shorter side.
	
	\subsection{Limit of laminations}
	\begin{defn}[Accumulation of chords]
		Suppose $x_iy_i$ is a sequence of chords with $x_i \to x$ and $y_i \to y$ in a usual topology of $S^1$.
		Then we say $x_iy_i$ accumulates to $xy$.
	\end{defn}
	Now can consider the limit lamination of given lamination $\mathcal{L}_\alpha$.
	\emph{i.e.,} the set of all accumulation chords of $\mathcal{L}_\alpha$.
	We denote it $\partial \mathcal{L}_\alpha$.
	It is easy to check that $\partial\mathcal{L}_\alpha$ is again an invariant lamination under $h$.
	
	Since $d(S^{\alpha, e^\alpha}_{u0v^\alpha})$ goes to $0$, taking the limit of $S_w^{\alpha, e}$ it degenerates eventually. 
	Thus we have the following corollary.
	\begin{cor}\label{cor:accumulate(1-e)}
		$\partial \mathcal{L}_\alpha$ for non-zero periodic $\alpha\in S^1$ is a limit of $S^{\alpha, 1-e}_w$ for all $w\in \{0,1\}$.
	\end{cor}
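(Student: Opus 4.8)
The plan is to prove that $\partial\mathcal{L}_\alpha$ and the set $\mathcal{A}$ of accumulation chords of the subfamily $\mathcal{F}:=\{S^{\alpha,1-e}_w : w\in\{0,1\}^*\}$ agree up to degenerate chords; since $\mathcal{F}\subseteq\mathcal{L}_\alpha$ the inclusion $\mathcal{A}\subseteq\partial\mathcal{L}_\alpha$ is immediate, so everything is in showing that each non-degenerate accumulation chord of $\mathcal{L}_\alpha$ lies in $\mathcal{A}$. First I would record the (routine) fact that the superscript in $l^{\alpha,t}_w$ is consulted only when a backward step lands on $\alpha$, which by $l^{\alpha,t}_{tv^\alpha}(\dot{\alpha})=\dot{\alpha}$ and $l^{\alpha,t}_{(1-t)v^\alpha}(\ddot{\alpha})=\ddot{\alpha}$ happens precisely when $w$ ends with $0v^\alpha$ or $1v^\alpha$; hence $S^{\alpha,0}_w=S^{\alpha,1}_w$ for every $\alpha$-regular $w$, and therefore $\mathcal{L}_\alpha\setminus\mathcal{F}$ consists only of the chords $S^{\alpha,e}_{ut'v^\alpha}$, each of which is a leg of the triangle gap $\Delta_{ut'v^\alpha}$.

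For the main inclusion I would use the length bookkeeping of the characteristic symbol: in $\Delta_{ut'v^\alpha}$ the leg carrying the superscript $e$ is the shorter one when $t'=0$ and the longer one when $t'=1$, while the observation preceding the corollary gives $d(S^{\alpha,e}_{u0v^\alpha})\to 0$ and, by the parity role of $e$, $d(S^{\alpha,1-e}_{u1v^\alpha})\to 0$ as $|u|\to\infty$; so in every triangle gap of large index one of the two legs is arbitrarily short. Now let $C=xy$ be a non-degenerate accumulation chord of $\mathcal{L}_\alpha$, written $C=\lim_k C_k$ with the $C_k\in\mathcal{L}_\alpha$ pairwise distinct; passing to a subsequence, either infinitely many $C_k\in\mathcal{F}$ (and we are done), or $C_k=S^{\alpha,e}_{u_kt'_kv^\alpha}$ with $|u_k|\to\infty$. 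If $t'_k=0$ along a subsequence the $C_k$ are shrinking shorter legs and $C$ is degenerate, a contradiction; so eventually $C_k$ is the longer leg of $\Delta_{u_k1v^\alpha}$, whose companion shorter leg $S^{\alpha,1-e}_{u_k1v^\alpha}\in\mathcal{F}$ shrinks. Thus two vertices of $\Delta_{u_k1v^\alpha}$ coalesce, and the longer leg $C_k$ and the base $S^{\alpha,1}_{u_k}$ converge to the same chord, so $C=\lim_k S^{\alpha,1}_{u_k}$. If $e=0$ we are finished, since $S^{\alpha,1}_{u_k}=S^{\alpha,1-e}_{u_k}\in\mathcal{F}$. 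If $e=1$, then $S^{\alpha,1}_{u_k}=S^{\alpha,e}_{u_k}$ lies in $\mathcal{F}$ when $u_k$ is $\alpha$-regular and is otherwise again a longer leg $S^{\alpha,e}_{u'_k1v^\alpha}$ (it cannot be a shorter leg, as those shrink), so stripping the trailing $1v^\alpha$-block and iterating reaches a member of $\mathcal{F}$ after finitely many steps.

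I expect the main obstacle to be this last iteration: showing that the accumulated perturbation of the successive ``strip a $1v^\alpha$-block'' replacements, each bounded by the diameter of a shorter leg removed, genuinely tends to $0$ along the sequence $k\to\infty$, and dealing with the edge case where the iteration terminates at a shorter leg of bounded index (hence finitely many possibilities). Both require a quantitative form of the shrinking in the second paragraph, namely a bound on the diameter of the shorter leg of $\Delta_\sigma$ in terms of the length of the $\alpha$-regular prefix of $\sigma$, decaying geometrically. A cleaner approach I would try first is to bypass the base-replacement entirely and exploit the forward $h$-invariance of $\partial\mathcal{L}_\alpha$ together with $h(S^{\alpha,t}_{aw})=S^{\alpha,t}_w$: applying $h$ repeatedly to a hypothetical longer-leg approximation of $C$ shortens all indices until the approximants become shorter legs (members of $\mathcal{F}$) or are manifestly degenerating, and pulling back along the corresponding inverse branches then exhibits $C$ itself as a limit from $\mathcal{F}$.
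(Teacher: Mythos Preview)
Your proposal is far more detailed than what the paper actually does. In the paper this corollary is treated as an immediate consequence of the single observation stated just before it: ``Since $d(S^{\alpha, e^\alpha}_{u0v^\alpha})$ goes to $0$, taking the limit of $S_w^{\alpha, e}$ it degenerates eventually.'' That is the entire argument the paper offers; no case analysis, no triangle-replacement, no iteration. So you are not reproducing the paper's proof but rather supplying the rigor it omits.

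Your overall strategy is sound and the decomposition $\mathcal{L}_\alpha\setminus\mathcal{F}=\{S^{\alpha,e}_{ut'v^\alpha}\}$ is exactly the right starting point. Two points deserve care. First, the paper only asserts $d(S^{\alpha,e}_{u0v^\alpha})\to 0$; your parallel claim $d(S^{\alpha,1-e}_{u1v^\alpha})\to 0$ ``by the parity role of $e$'' is plausible but not literally stated, and you should either derive it from the same mechanism (the branch $l^\alpha_w$ halves the length of any chord whose endpoints stay in the same inverse-branch domain, and the shorter leg is precisely the one that does) or avoid it. Second, the obstacle you yourself flag is genuine: the strip-and-replace iteration in the $e=1$ case can require unboundedly many steps as $k\to\infty$, so you really do need a quantitative (geometric) decay of the shorter-leg diameters to control the accumulated error. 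Your proposed alternative via forward $h$-invariance is cleaner and is the route I would pursue: push $C$ forward by $h^{|u_k|}$ so the approximants land among finitely many first-generation chords, then pull back. Either way, what you have written is a correct outline that fills a gap the paper leaves open, rather than a paraphrase of the paper's argument.
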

	With the previous remark, in the collection of $S^{\alpha, 1-e}_w$ every chord whose one endpoint is $\alpha$ has a form of $S^{\alpha, 1-e}_{(v^\alpha(1-e))^nv^\alpha}$ and they are associated by one binary path $(1-e)(1-e)\cdots$.
	Thus the other endpoint is $l^\alpha_{(v^\alpha(1-e))^nv^\alpha}(\ddot{\alpha})$.
	Furthermore since it follows $(1-e)$ marker only, the triangle path became thinner and thus $\alpha, l^\alpha_{v^\alpha}(\ddot{\alpha}), \cdots, l^\alpha_{(v^\alpha(1-e))^nv^\alpha}(\ddot{\alpha}), \cdots$ are positively ordered. 
	By corollary \ref{cor:accumulate(1-e)}, it accumulates and converges.
	We adopt the following definition.
	\begin{defn}[Associated periodic point]\label{def:associated_periodic_point}
		Let $\alpha$ be a non-zero periodic in $S^1$.
		The point $\overline{\alpha} := \lim_{n\to \infty} l^\alpha_{(v^\alpha(1-e))^nv^\alpha}(\ddot{\alpha})$ is well-defined and called \emph{associated periodic point with} $\alpha$. 
	\end{defn}
	In other words, $\overline{\alpha}$ is an endpoint of the accumulation chord whose the other endpoint is $\alpha$. 
	The associated periodic point $\overline{\alpha}$ shares properties of $\alpha$ as follows.
	\begin{prop}\label{prop:same_rep_word_v}
		$PER(\alpha)=PER(\overline{\alpha})$ and $v^\alpha = v^{\overline{\alpha}}$.
	\end{prop}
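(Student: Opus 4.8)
The plan is to show that $\overline{\alpha}$, as constructed in Definition \ref{def:associated_periodic_point}, is in fact the companion angle of $\alpha$ in the usual sense (the second ray landing at the root of the hyperbolic component), from which the two claimed identities $\PER(\alpha)=\PER(\overline{\alpha})$ and $v^\alpha=v^{\overline{\alpha}}$ follow. I would first argue directly from the limiting construction that $\overline{\alpha}$ is periodic under $h$ with period exactly $\PER(\alpha)$. Write $n=\PER(\alpha)$ and $v=v^\alpha$, so $|v|=n-1$. By construction $\overline{\alpha}=\lim_{k\to\infty} l^\alpha_{(v(1-e))^k v}(\ddot\alpha)$, and the chords $S^{\alpha,1-e}_{(v(1-e))^k v}$ have $\alpha$ as one endpoint and these points as the other endpoint. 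Applying $h^{\,n}$ to such a chord: since $h^{\,n}$ maps $S^\alpha_w$ to $S^\alpha_{w'}$ where $w'$ is $w$ with its first block $tv$ stripped off (this is just the "prefix attaching" Lemma made into a "prefix stripping" statement, using that $h^{\,n-1}$ strips a single $v$ and one extra $h$ strips the leading symbol), we get $h^{\,n}\bigl(S^{\alpha,1-e}_{(v(1-e))^k v}\bigr)=S^{\alpha,1-e}_{(v(1-e))^{k-1} v}$. Since $h$ is continuous, passing to the limit gives $h^{\,n}(\text{accumulation chord})=\text{accumulation chord}$, and since $h^{\,n}$ fixes $\alpha$ it must fix the other endpoint $\overline{\alpha}$ as well. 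Thus $\overline{\alpha}$ is periodic with period dividing $n$.

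Next I would rule out a smaller period. Here I would use that $\overline{\alpha}\ne\alpha$ (the accumulation chord is non-degenerate — this should already be implicit in Definition \ref{def:associated_periodic_point} being well-posed, since if it degenerated it would just be the point $\alpha$, but one should note it, using Corollary \ref{cor:accumulate(1-e)} and the fact that the $(1-e)$-path of triangle gaps is a genuinely infinite thinning nest with $\alpha$ on its boundary) together with the orbit-portrait picture: $\alpha$ and $\overline{\alpha}$ are endpoints of a leaf of $\partial\mathcal L_\alpha$, and $\partial\mathcal L_\alpha$ is $h$-invariant, so the whole orbit of this leaf is present; if $\overline{\alpha}$ had period $m<n$ then the $h$-orbit of the pair $(\alpha,\overline\alpha)$ would have size $n$ on the first coordinate and $m$ on the second, forcing some $h^j(\alpha)$ to be paired with $h^{j'}(\overline\alpha)=h^{j}(\overline\alpha)$ for $j\neq j' \bmod n$ via invariance, which collides with the unlinked/disjointness structure of the leaves of $\mathcal L_\alpha$ coming from the triangle-gap decomposition. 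Once $\PER(\overline{\alpha})=n$ is established, the equality $v^\alpha=v^{\overline\alpha}$ is essentially bookkeeping: the kneading/itinerary partition $A_\alpha,B_\alpha$ is cut at $\tfrac\alpha2,\tfrac{\alpha+1}2=\dot\alpha,\ddot\alpha$, and the leaf $\alpha\,\overline\alpha$ of $\partial\mathcal L_\alpha$ is, by the construction of $\overline\alpha$ as the $(1-e)$-limit, nested behind the long chord $S^\alpha_\lambda=\dot\alpha\ddot\alpha$ on the side consistent with $\alpha$; hence $\alpha$ and $\overline\alpha$ lie in the same atom of every level of the partition induced by the pull-backs $S^{\alpha}_{w}$ for $w$ a proper prefix of $v$, which says exactly that $I^\alpha(\alpha)$ and $I^\alpha(\overline\alpha)$ agree on the first $n-1$ entries, i.e. $v^{\overline\alpha}=v^\alpha$ (with the standard convention that the repeating word is read starting with $0$).

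I expect the main obstacle to be the second step — cleanly excluding a smaller period for $\overline{\alpha}$ and, relatedly, pinning down that the accumulation leaf $\alpha\,\overline\alpha$ is genuinely non-degenerate and sits on the correct side of the long chord. The degeneracy issue is handled by Corollary \ref{cor:accumulate(1-e)} together with the thinning-nest remark on triangle gaps, but turning "the $h$-orbit of the pair must respect the unlinked structure" into a rigorous contradiction requires carefully invoking the orbit-portrait axioms (Definition \ref{defn:orbit_portrait}, items (2) and (4)) for the orbit portrait carried by $\partial\mathcal L_\alpha$, or alternatively quoting Keller's identification of $\overline\alpha$ with the companion angle directly. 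If the paper is willing to cite that identification, the proposition collapses to "companion angles have equal period and equal repeating word", which is classical (Douady–Hubbard) and then only the $v^\alpha=v^{\overline\alpha}$ normalization needs a line. I would present the self-contained argument but flag the citation as the quick alternative.
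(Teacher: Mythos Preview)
The paper does not prove this proposition at all: immediately after the statement it says ``We will not prove the proposition'' and instead remarks that $\overline{\alpha}$ coincides with the companion angle (deferring to Corollary~\ref{cor:assoc = companion}, i.e.\ Keller's Corollary~4.15), from which both identities follow by the classical Douady--Hubbard theory. So your ``quick alternative'' at the very end is exactly the paper's route, and that part of your proposal matches the paper.

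Your self-contained argument is a genuine addition beyond what the paper offers, but it has a real gap in Step~3. You argue that $\alpha$ and $\overline{\alpha}$ sit in the same atom of the pull-back partition by $S^{\alpha}_w$ for $|w|\le n-2$, and conclude that $I^\alpha(\alpha)$ and $I^\alpha(\overline{\alpha})$ agree on their first $n-1$ entries. That is fine, but it does \emph{not} say $v^{\overline{\alpha}}=v^\alpha$: by definition $v^{\overline{\alpha}}$ is the repeating block of $I^{\overline{\alpha}}(\overline{\alpha})$, which is computed with respect to the partition cut at $\tfrac{\overline{\alpha}}{2},\tfrac{\overline{\alpha}+1}{2}$, not at $\tfrac{\alpha}{2},\tfrac{\alpha+1}{2}$. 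To close this you must also show that for $0\le i\le n-2$ the chord $h^i(\alpha)\,h^i(\overline{\alpha})$ avoids $S^{\overline{\alpha}}_\lambda$ as well (equivalently, that it stays outside the central quadrilateral $Q^{(\alpha,\overline{\alpha})}_\lambda$, whose two diagonals are $S^\alpha_\lambda$ and $S^{\overline{\alpha}}_\lambda$). This is true and follows from the orbit-portrait/unlinkedness picture, but it is an extra step you have not written.

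Step~2 (excluding a strictly smaller period for $\overline{\alpha}$) is, as you yourself flag, only a sketch. The scenario ``$m\mid n$, $m<n$'' would make several $h^j(\alpha)$ share the single endpoint $\overline{\alpha}$, producing a polygon rather than a chord; that is not by itself a contradiction (satellite portraits have polygon gaps), so you really do need the orbit-portrait axioms or the companion-angle identification to finish. Given that the paper simply cites the identification, presenting your direct argument is fine provided you acknowledge these two patches.
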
	
	We will not prove the proposition.
	Instead of the proof we remark some properties of associated periodic points.
	\noindent
	\begin{rmk}
		\begin{enumerate}~
			
			\item For a non-zero periodic $\alpha$ in $S^1$, the associated periodic point $\overline{\alpha}$ of $\alpha$ and its companion angle coincide.
			Hence, Proposition \ref{prop:same_rep_word_v} holds and moreover $\overline{\overline{\alpha}} = \alpha$. (See corollary \ref{cor:assoc = companion}.)
			\item Since the angle doubling map $h$ preserves the cyclic order, we conclude that for $w\in\{0,1\}^*$,
			\[
			    \lim_{n\to \infty} l^\alpha_{w(v^\alpha(1-e))^nv^\alpha}(\ddot{\alpha}) = l^{\alpha, 1-e}_w(\overline{\alpha})
			\]
			and hence 
			\[
			  \lim_{n\to \infty} S^{\alpha, 1-e}_{w(v^\alpha(1-e))^nv^\alpha} = l^{\alpha, 1-e}_w(\alpha\overline{\alpha})
			\]
			This is just a preimage argument for the accumulation chord $\alpha\overline{\alpha}\in\partial\mathcal{L}_\alpha$.
			
			\item 
			Under this observation we give a nice formula for $\overline{\alpha}$.
			Let $m = \PER(\alpha)$.
			\[   
			    \overline{\alpha} = \alpha + \frac{2^m}{2^m - 1}\big(l^\alpha_{v^\alpha}(\ddot{\alpha}) - \alpha\big)
			\]
		\end{enumerate}
	\end{rmk}
	$\dot{\alpha}\dot{\overline{\alpha}}$ and $\ddot{\alpha}\ddot{\overline{\alpha}}$ are the preimage of $\alpha\overline{\alpha}$, and one of it is the chord whose endpoints are same with the arc $A_{n}$ in the orbit portrait, where $n = \PER(\alpha)$.
	Hence we have a following.
	\begin{cor}\label{cor:1-step_test_for_characteristic_symbol}
		Let $(\alpha, \overline{\alpha})$ is a characteristic arc of period $n$.
		Then $e^\alpha = \alpha\times (2^n - 1) \mod 2$.
		Therefore, every narrow characteristic arc has its characteristic symbol $e^\alpha = 1$.  
	\end{cor}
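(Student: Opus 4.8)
The plan is to convert the definition of $e^\alpha$ into a question about the orbit portrait via Lemma~\ref{lem:char_symbol_determined}, and then settle it arithmetically. Since $\alpha$ is periodic of period $n$ under $h$ we may write $\alpha = a/(2^n-1)$ with $1 \le a \le 2^n-2$, so that $\alpha(2^n-1) = a$ and the claimed identity reads $e^\alpha \equiv a \pmod 2$. I note at the outset that the second assertion is then immediate: if $(\alpha,\overline\alpha)$ is narrow, Lemma~\ref{lem:narrow_starts_with_odd} says precisely that $a$ is odd, whence $e^\alpha = 1$. So the work is to prove $e^\alpha \equiv a \pmod 2$.

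First I would do the arithmetic. The periodic preimage of $\alpha$ is $\dot\alpha = h^{n-1}(\alpha) = 2^{n-1}\alpha$, and since $2^n \equiv 1 \pmod{2^n-1}$ one has $2^n a \equiv a + (2^n-1)\,(a \bmod 2) \pmod{2(2^n-1)}$, hence
\[
\dot\alpha \;\equiv\; \frac{\,a + (2^n-1)(a \bmod 2)\,}{2(2^n-1)} \pmod 1,
\]
which is $\tfrac{\alpha}{2}$ when $a$ is even and $\tfrac{\alpha+1}{2}$ when $a$ is odd; correspondingly $\ddot\alpha$ is the other of $\tfrac{\alpha}{2},\tfrac{\alpha+1}{2}$, and the same holds for $\overline\alpha$ with its own numerator $\overline a$. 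Geometrically, $\{\dot\alpha,\ddot\alpha\}$ and $\{\dot{\overline\alpha},\ddot{\overline\alpha}\}$ are antipodal pairs, so the four points are the vertices of a rectangle inscribed in $S^1$ whose diagonals are the long chords $S_\lambda^{\alpha}=\dot\alpha\ddot\alpha$ and $S_\lambda^{\overline\alpha}=\dot{\overline\alpha}\ddot{\overline\alpha}$, and one pair of whose opposite sides, $\{\dot\alpha\dot{\overline\alpha},\ \ddot\alpha\ddot{\overline\alpha}\}$, consists of the two $h$-preimages of the chord $\alpha\overline\alpha$. By the remark preceding the statement, one of these two preimage chords has the same endpoints as the arc $A_n$ of the orbit portrait; since $A_n$ is periodic it has periodic endpoints, and the only periodic points among the four are $\dot\alpha,\dot{\overline\alpha}$, so $A_n$ is carried by $\dot\alpha\dot{\overline\alpha}$.

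Next I would locate $0$. Because $\alpha\in(0,1)$ we have $0<\tfrac{\alpha}{2}<\tfrac{\alpha+1}{2}<1$, so $0\notin A_\alpha$, and likewise $0\notin A_{\overline\alpha}$; hence $0$ lies in the unique one of the four arcs cut out by $S_\lambda^{\alpha}\cup S_\lambda^{\overline\alpha}$ that is contained in $B_\alpha\cap B_{\overline\alpha}$. Using the parity description of $\dot\alpha,\ddot\alpha,\dot{\overline\alpha},\ddot{\overline\alpha}$ above, together with the fact that $\alpha<\overline\alpha$ are the endpoints of the short arc $A_1=(\alpha,\overline\alpha)$ (which omits $0$, and into which the complementary arc determined by $\dot\alpha\dot{\overline\alpha}$ on the side opposite $0$ doubles, so that $0$ sits in $A_n$ itself), one checks case by case that this arc is always one of the two cut off by the rectangle sides $\dot\alpha\dot{\overline\alpha}$ and $\ddot\alpha\ddot{\overline\alpha}$ — never a ``diagonal'' arc — and, more precisely, that $\ddot\alpha$ and $\ddot{\overline\alpha}$ lie on the same side of the chord $\dot\alpha\dot{\overline\alpha}$ as $0$ exactly when $a$ is odd, and on the opposite side exactly when $a$ is even. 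By Lemma~\ref{lem:char_symbol_determined}, the ``opposite side'' case is exactly $e^\alpha=0$ and the ``same side'' case is exactly $e^\alpha=1$; therefore $e^\alpha\equiv a=\alpha(2^n-1)\pmod 2$, which completes the proof.

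The step I expect to be the main obstacle is the case analysis in the last paragraph: one has to keep careful track of the cyclic order of the four preimage points and of $0$ for each parity of $a$ (and the parity of $\overline a$ it forces), and verify uniformly that the side of $\dot\alpha\dot{\overline\alpha}$ on which $0$ sits depends only on the parity of $a$. Everything else — the congruence for $\dot\alpha$, the identification of $A_n$ with $\dot\alpha\dot{\overline\alpha}$ via the preceding remark, and the appeals to Lemmas~\ref{lem:char_symbol_determined} and~\ref{lem:narrow_starts_with_odd} — is routine.
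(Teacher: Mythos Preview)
Your proof is correct and follows essentially the same approach as the paper's: write $\alpha = a/(2^n-1)$, determine $\dot\alpha$ from the parity of $a$, identify $A_n$'s chord with $\dot\alpha\dot{\overline\alpha}$ via the remark immediately preceding the corollary, locate $0$ relative to that chord, and apply Lemma~\ref{lem:char_symbol_determined}. The paper's version is just two terse lines (and contains what appears to be a typo); yours supplies the missing arithmetic and the case analysis, though the parenthetical about ``$0$ sits in $A_n$ itself'' is a bit muddled and could simply be replaced by the direct observation that the arc containing $0$ always has endpoints $\{\alpha/2,(\overline\alpha+1)/2\}$, which is a $\dot{}$--$\dot{}$ or $\ddot{}$--$\ddot{}$ pair exactly according to the parity of $a$.
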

	\begin{proof}
		The proof is exactly same with lemma \ref{lem:narrow_starts_with_odd}.
		Suppose $\alpha = \frac{a}{2^n-1}$.
		If $a$ is even, $\ddot{\overline{\alpha}} = \overline{\alpha}/2 > \alpha/2 = \dot{\alpha}$.
		Thus $\ddot{\alpha}\ddot{\overline{\alpha}}$ and $0\in S^1$ and $e^\alpha = 0$.
		Symmetric proof for odd $a$.
	\end{proof}
	
	From the chord $\alpha\overline{\alpha}$ in the limit lamination $\partial \mathcal{L}_\alpha$, we can generate whole $\partial \mathcal{L}_\alpha$.

    \begin{figure}
        \centering
        \includegraphics[width = .5\textwidth]{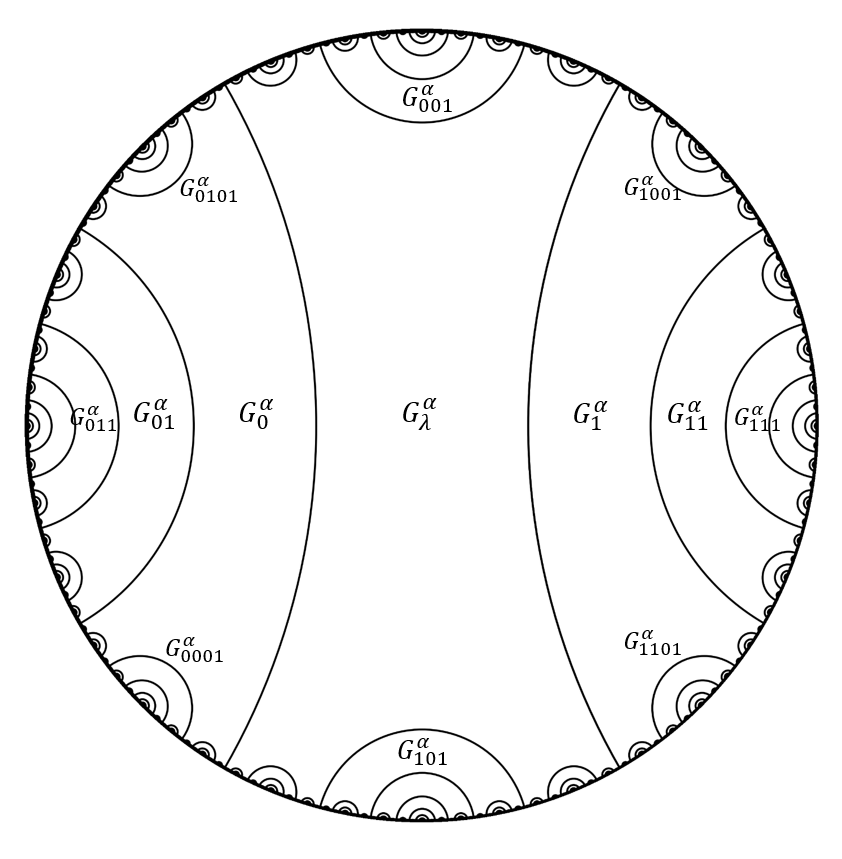}
        \caption{$\partial \mathcal{L}_\alpha$ for $\alpha = \frac{1}{3}$. Note that all the gaps are infinite gaps, whose intersection with $S^1$ is a Cantor set.}
        \label{fig:0.333_limitlam_index}
    \end{figure}
 
	\begin{thm}
		Let $\alpha$ be a non-zero periodic in $S^1$ and $e = e^\alpha$ be the characteristic symbol of $\alpha$.
		Then 
		\[
			\partial \mathcal{L}_\alpha = \text{ Closure of } \big\{ l^{\alpha, 1-e}_w (\alpha\overline{\alpha}) ~|~ w\in \{0,1\}^* \big\}
		\]
	\end{thm}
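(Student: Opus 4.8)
The plan is to establish the two inclusions separately; writing $\mathcal{A}$ for the closure of $\{l^{\alpha,1-e}_w(\alpha\overline\alpha):w\in\{0,1\}^*\}$, the inclusion $\mathcal{A}\subseteq\partial\mathcal{L}_\alpha$ is essentially formal while $\partial\mathcal{L}_\alpha\subseteq\mathcal{A}$ is where the real work lies. For $\mathcal{A}\subseteq\partial\mathcal{L}_\alpha$, three facts suffice. First, $\partial\mathcal{L}_\alpha$ is closed: a limit of accumulation chords is again an accumulation chord, by a diagonal argument in $S^1\times S^1$. Second, $\partial\mathcal{L}_\alpha$ is backward invariant under $h$, as already remarked after its definition, so $l^{\alpha,1-e}_w(\partial\mathcal{L}_\alpha)\subseteq\partial\mathcal{L}_\alpha$ for every $w\in\{0,1\}^*$, each $l^{\alpha,1-e}_w$ being a branch of $h^{-|w|}$ acting on chords. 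Third, $\alpha\overline\alpha\in\partial\mathcal{L}_\alpha$: taking the empty word in the remark following Definition \ref{def:associated_periodic_point} gives $\alpha\overline\alpha=\lim_{n\to\infty}S^{\alpha,1-e}_{(v^\alpha(1-e))^nv^\alpha}$, which is a genuine accumulation chord by Corollary \ref{cor:accumulate(1-e)}. Combining these, the set $\{l^{\alpha,1-e}_w(\alpha\overline\alpha):w\in\{0,1\}^*\}$ sits inside the closed set $\partial\mathcal{L}_\alpha$, hence so does its closure $\mathcal{A}$.

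For $\partial\mathcal{L}_\alpha\subseteq\mathcal{A}$, let $C\in\partial\mathcal{L}_\alpha$; one may assume $C$ is non-degenerate, the degenerate case being vacuous once one checks separately (by hand, in the borderline diameter case $\alpha\sim1/3$) that degenerate limits of the short chords $S^{\alpha,e}_{\cdot}$ are again limits of the $S^{\alpha,1-e}$ family. By Corollary \ref{cor:accumulate(1-e)} write $C=\lim_{k\to\infty}S^{\alpha,1-e}_{w_k}$; since only finitely many chords $S^{\alpha,1-e}_w$ of each bounded length exist and distinct ones are a positive distance apart, non-degeneracy of $C$ forces $|w_k|\to\infty$ after passing to a subsequence. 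The heart of the argument will be a normal-form analysis of the trailing part of $w_k$, read from the right in blocks of the form $\star v^\alpha$ with $\star\in\{0,1\}$. Because the characteristic symbol $e=e^\alpha$ marks the \emph{shorter} side at each triangle gap (Definition \ref{def:characteristic_symbol}) and $S^{\alpha,e}_{u0v^\alpha}$ degenerates as $|u|\to\infty$, a trailing block equal to $ev^\alpha$ sends $S^{\alpha,1-e}_{w_k}$ into a shrinking triangle and forces $\diam S^{\alpha,1-e}_{w_k}\to 0$; hence, for $C$ non-degenerate, all but a bounded-length prefix of $w_k$ must be a concatenation of $(1-e)v^\alpha$-blocks. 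After passing to a further subsequence this prefix stabilizes to a finite word $w$, and the remark following Definition \ref{def:associated_periodic_point} applied with this $w$, together with continuity of the finitely many relevant inverse branches, shows that $S^{\alpha,1-e}_{w_k}$ converges to a chord of the form $l^{\alpha,1-e}_{w'}(\alpha\overline\alpha)$ or to a limit of such; either way $C\in\mathcal{A}$.

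The delicate point, and the step I expect to be the main obstacle, will be making the trailing-block analysis \emph{quantitative}: one needs a two-sided estimate for $\diam S^{\alpha,1-e}_w$ that is bounded below in terms of the number of consecutive trailing $(1-e)v^\alpha$-blocks of $w$ and collapses to $0$ once a trailing $ev^\alpha$-block appears, uniformly over the remaining prefix. This is a statement about how the inverse branches $l^{\alpha,1-e}_w$ distort arc length, and the natural tools are the triangle-gap combinatorics of the remark after Definition \ref{def:characteristic_symbol} together with Proposition \ref{prop:same_rep_word_v}, which guarantees $v^{\overline\alpha}=v^\alpha$ so that the same block pattern governs the motion of both endpoints of the chords. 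A secondary nuisance is superscript bookkeeping: the superscript $t$ is irrelevant for $\alpha$-regular subscripts, but the trailing $(1-e)v^\alpha$-blocks are precisely the non-regular ones, so one must confirm that the convention $t=1-e$ is consistently propagated through the limit on both sides of the claimed identity.
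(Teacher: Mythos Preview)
The paper does not prove this theorem: its entire proof is the sentence ``This is a proposition 2.42 in \cite{keller2007invariant}.'' There is therefore no argument in the paper to compare your proposal against; you are attempting strictly more than the paper does.

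Your inclusion $\mathcal{A}\subseteq\partial\mathcal{L}_\alpha$ is fine, and the three ingredients you isolate (closedness of $\partial\mathcal{L}_\alpha$, backward invariance, and $\alpha\overline\alpha\in\partial\mathcal{L}_\alpha$ via the limit formula in the remark after Definition~\ref{def:associated_periodic_point}) are exactly what one needs.

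For the reverse inclusion your trailing-block heuristic has a genuine gap. The claim that a trailing \emph{subscript} block $ev^\alpha$ forces $S^{\alpha,1-e}_{w_k}$ to shrink conflates the roles of superscript and subscript. The characteristic symbol $e$ says which \emph{superscript} picks out the short side of a given triangle; by the parity remark after Definition~\ref{def:characteristic_symbol}, in $\Delta_{wsv^\alpha}$ the short side is $S^{\alpha,t}_{wsv^\alpha}$ with $t$ determined jointly by $e$ and $s$ (short when $s=0,\,t=e$ or $s=1,\,t=1-e$). With the superscript frozen at $1-e$, whether $S^{\alpha,1-e}_{wsv^\alpha}$ is the short side therefore depends on $s$, but not via the rule ``$s=e$'' that you assert. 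More seriously, a single short-side crossing does not by itself force degeneration in the limit: what controls the diameter is the entire sequence of short/long choices along the triangle path, which depends on consecutive block transitions rather than on individual blocks in isolation. You are right that this is the delicate point, but the block-by-block criterion you state does not deliver the estimate you need. A cleaner way to organise the reverse inclusion, using machinery the paper develops anyway, is through the infinite-gap structure: each non-degenerate accumulation chord of $\mathcal{L}_\alpha$ bounds some gap $G^\alpha_u$, and the boundary chords of those gaps are, by construction, exactly the chords $l^{\alpha,1-e}_{\cdot}(\alpha\overline\alpha)$ together with their accumulation set.
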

    \begin{proof}
        This is a proposition 2.42 in \cite{keller2007invariant}.
    \end{proof}
	
	One can easily guess that all gaps in $\partial \mathcal{L}_\alpha$ are made up of collection of triangles which start at a chord in $\mathcal{L}_\alpha$.
	As in the previous remark, triangles form an infinite binary trees.
	Together with the observation of correspondence between triangle path and binary sequence, we deduce the following.
	\begin{prop}\label{prop:infinite_gap_encoding}
		Suppose $\alpha$ and $e$ be as above and $m = PER(\alpha)$.
		Then each $\alpha$-regular word $w\in \{0,1\}^*$ there is a corresponding infinite gap $G_w^\alpha$ in $\partial \mathcal{L}_\alpha$, whose boundaries are $l^{\alpha, 1-e}_{ws_1v^\alpha s_2v^\alpha \cdots s_n}$ for $s_i \in \{0,1\}$.
	\end{prop}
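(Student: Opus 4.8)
The plan is to read off the structure of $\partial\mathcal{L}_\alpha$ directly from the triangle-gap tree of $\mathcal{L}_\alpha$, using the characteristic symbol $e=e^\alpha$ to control which sides degenerate in the limit. Fix an $\alpha$-regular word $w$. By $\alpha$-regularity the chord $S^\alpha_w$ is nondegenerate, and by the remark on triangle gaps preceding Definition \ref{def:characteristic_symbol} the preimages of the long chord $S^\alpha_\lambda$ lying behind $S^\alpha_w$ are organized into an infinite binary tree of triangle gaps: at the root sit $\Delta_{w0v^\alpha}$ and $\Delta_{w1v^\alpha}$, which share the side $S^\alpha_w$, and each node $\Delta_{ws_1v^\alpha\cdots s_kv^\alpha}$ has its two children attached along its two non-base sides $S^{\alpha,0}_{ws_1v^\alpha\cdots s_kv^\alpha}$ and $S^{\alpha,1}_{ws_1v^\alpha\cdots s_kv^\alpha}$. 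First I would record that these triangles have pairwise disjoint interiors (immediate from the lamination property) and let $U_w$ be the union of their interiors together with the part of $\overline{\mathbb{D}}$ behind $S^\alpha_w$ that meets no triangle; one checks that $U_w$ is open and connected.

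Next comes the limiting step. Two ingredients feed into it. First, Corollary \ref{cor:accumulate(1-e)} together with the remark that $d\bigl(S^{\alpha,e}_{u0v^\alpha}\bigr)\to 0$ under iteration of the $e$-marker shows that a non-base side carrying the superscript $e$ is a chord that collapses to a point in $\partial\mathcal{L}_\alpha$ after finitely many further $e$-steps. Second, the non-base sides carrying the superscript $1-e$ persist and, along any fixed infinite $(1-e)$-direction in the tree, converge monotonically — here one uses that $h$ preserves cyclic order, Definition \ref{def:associated_periodic_point}, Proposition \ref{prop:same_rep_word_v}, and Keller's theorem identifying $\partial\mathcal{L}_\alpha$ with the closure of $\{\,l^{\alpha,1-e}_w(\alpha\overline{\alpha})\,\}$ — to the accumulation chord $l^{\alpha,1-e}_{ws_1v^\alpha\cdots s_kv^\alpha}(\alpha\overline{\alpha})$. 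Collapsing every degenerating side, all triangles of the tree merge into one piece: I would define $G_w^\alpha$ to be the connected component of $\overline{\mathbb{D}}\setminus\partial\mathcal{L}_\alpha$ containing $U_w$, and then argue that any $\partial\mathcal{L}_\alpha$-chord on $\partial G_w^\alpha$ is, by the identification theorem, a closure point of the family $\{S^{\alpha,1-e}_\bullet\}$, hence — by the nesting of the triangle tree and the shrinking of the $e$-marked sides — must be exactly one of the chords $l^{\alpha,1-e}_{ws_1v^\alpha\cdots s_kv^\alpha}(\alpha\overline{\alpha})$ with $s_i\in\{0,1\}$ and $k\ge 0$ (the case $k=0$ giving $l^{\alpha,1-e}_w(\alpha\overline{\alpha})$), which is the family displayed in the statement. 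Conversely each such chord lies in $\partial\mathcal{L}_\alpha$ and bounds $G_w^\alpha$, so these are precisely the boundary chords; since the $S^1$-arcs cut off by successive generations of these chords shrink to points, $G_w^\alpha\cap S^1$ is a Cantor set, matching Figure \ref{fig:0.333_limitlam_index}, and distinct $\alpha$-regular words give disjoint regions $U_w$ separated by chords of $\mathcal{L}_\alpha$, so $w\mapsto G_w^\alpha$ is the asserted encoding.

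I expect the limiting step to be the real obstacle. Two points need care: (i) the quantitative estimate that \emph{every} $e$-marked non-base side down the tree, not just $S^{\alpha,e}_{0v^\alpha}$, shrinks to a point, so that no spurious chord of $\partial\mathcal{L}_\alpha$ survives to subdivide $G_w^\alpha$ — this means tracking how the bound $d<1/4$ improves under each additional inverse step; and (ii) a careful cyclic-order argument on $S^1$ showing the $(1-e)$-marked sides are correctly nested and their complementary arcs shrink, so that no chord of $\partial\mathcal{L}_\alpha$ other than those listed can enter the interior of $U_w$. The bookkeeping of the superscript $t$ and of which child of a triangle carries the degenerating label at each stage is the most error-prone ingredient; I would isolate it as a lemma describing how $e^\alpha$ propagates down the tree, which is essentially a precise form of the ``parity between super/subscripts'' observed just after Definition \ref{def:characteristic_symbol}.
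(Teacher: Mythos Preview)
Your proposal is correct and follows essentially the same approach as the paper: view $G^\alpha_w$ as the union of the infinite binary tree of triangle gaps $\Delta_{ws_1v^\alpha\cdots s_nv^\alpha}$ rooted at $S^\alpha_w$, and identify its boundary chords as the accumulations along the $(1-e)$-marked (long) sides. The paper's own argument is a terse three-sentence version of exactly this, so your additional care with the limiting step and the bookkeeping of the $e$/$(1-e)$ markers simply fills in details the paper leaves to the reader.
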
 
	We call $G^\alpha_{v^\alpha}$ as a critical value gap, since it contains $\alpha$ as its boundary gap, which is an external angle of critical value.
	Also, $G^\alpha_\lambda$, a infinite gap corresponding to empty word $\lambda$ is called critical point gap.
	\begin{proof}
		Fix an $\alpha$-regular word $w$.
		Then the gap $G^\alpha_w$ is associated to the infinite binary tree, which is a collection of triangle gaps starting from $S^\alpha_w$.
		\emph{i.e.,} $G^\alpha_w$ can be considered as a union of $\Delta^\alpha_{ws_1v^\alpha s_2v^\alpha\cdots s_nv^\alpha}$, with $s_1\cdots s_n \in\{0,1\}^*$. 
		Furthermore, each chord accumulated by the long sides of such triangles became a boundary of given gap $G^\alpha_w$.
		Hence we are done.
	\end{proof}
	
	Choose one infinity gap $G^\alpha_w$ of $\alpha$-regular word $w$.
	By proposition \ref{prop:infinite_gap_encoding}, this gap is bounded by infinitely many chords $l^{\alpha, 1-e}_{ws_1v^\alpha s_2v^\alpha \cdots s_n}$.
	So the infinity gap can be restated as a Cantor set, starting from the circle and delete all the points out which lie behind the chords $l^{\alpha, 1-e}_{ws_1v^\alpha s_2v^\alpha \cdots s_n}$.
	Hence any surviving point $p\in G^\alpha_w$ has an itinerary sequence of form $ws_1v^\alpha s_2v^\alpha\cdots$ where $s_1s_2\cdots$ is a $0$-$1$ sequence.
	Notice that if given itinerary sequence ends with $\overline{0v^\alpha}$ or $\overline{1v^\alpha}$, then it converges to an endpoint of some boundary chord.
	According to the observation above we get the following.
	
	\begin{prop}\label{prop:endpt_of_inf_gap}
		$p\in G^\alpha_w$ if and only if $I^\alpha(p)$ ends with $0v^\alpha$-$1v^\alpha$ sequences.
		Furthermore, if $I^\alpha(p)$ ends with $\overline{0v^\alpha}$ or $\overline{1v^\alpha}$, then $p$ is an endpoint of the boundary chord.
	\end{prop}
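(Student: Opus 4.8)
The plan is to run a block-by-block peeling argument using the two facts already in hand: the lemma that the branch $l^{\alpha,t}_u$ realizes prefixing by $u$, i.e.\ it sends the angle with $\alpha$-itinerary $x_1x_2\cdots$ to the angle with itinerary $ux_1x_2\cdots$ (and hence maps $S^1$ homeomorphically onto the closed arc of angles whose itinerary begins with $u$, carrying the long chord $S^\alpha_\lambda=\dot\alpha\ddot\alpha$ to the chord $S^{\alpha,t}_u$ that splits this arc according to the $(|u|+1)$-st itinerary symbol); and Proposition \ref{prop:infinite_gap_encoding} together with its preceding remark, which present $G^\alpha_w$ as the infinite binary tree of triangle gaps $\Delta_{ws_1v^\alpha s_2v^\alpha\cdots s_nv^\alpha}$, rooted at $S^\alpha_w$, each node passing to its two children along its two inner sides $S^{\alpha,0}_{ws_1v^\alpha\cdots s_nv^\alpha}$ and $S^{\alpha,1}_{ws_1v^\alpha\cdots s_nv^\alpha}$.

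The heart of the argument is a self-similar reduction: for an $\alpha$-regular word $w$ and an angle $\theta$, one has $\theta\in G^\alpha_w$ if and only if $I^\alpha(\theta)$ begins with $ws_1v^\alpha$ for a (then unique) $s_1\in\{0,1\}$ and the preimage $(l^{\alpha,s_1}_{ws_1})^{-1}(\theta)$ lies in the critical value gap $G^\alpha_{v^\alpha}$. Indeed, the two level-one subtrees of $G^\alpha_w$ hanging below $\Delta_{w0v^\alpha}$ and $\Delta_{w1v^\alpha}$ are exactly the $l^{\alpha,0}_{w0}$- and $l^{\alpha,1}_{w1}$-images of $G^\alpha_{v^\alpha}$ (up to the single boundary triangle at the node, which does not meet $S^1$ in its interior), and by the prefixing lemma these images meet $S^1$ precisely in the angles whose itinerary begins with $w0v^\alpha$, resp.\ $w1v^\alpha$, and then continues along a $G^\alpha_{v^\alpha}$-itinerary. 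Applying this reduction repeatedly --- the word $v^\alpha$ is again $\alpha$-regular, so the reduction feeds back into itself with $w$ replaced by $v^\alpha$ --- produces after $k$ steps that $I^\alpha(\theta)=ws_1v^\alpha s_2v^\alpha\cdots s_kv^\alpha\,[\text{rest}]$, and letting $k\to\infty$ gives $I^\alpha(\theta)=ws_1v^\alpha s_2v^\alpha\cdots$, i.e.\ that after the $\alpha$-regular head $w$ the itinerary is a concatenation of blocks $0v^\alpha$ and $1v^\alpha$. Conversely, any sequence of that form is seen, prefix by prefix, to place $\theta$ in $\Delta_{ws_1v^\alpha\cdots s_nv^\alpha}$ for every $n$, hence in $G^\alpha_w$.

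For the final sentence, if $I^\alpha(\theta)$ is eventually $\overline{bv^\alpha}$, say $I^\alpha(\theta)=ws_1v^\alpha\cdots s_kv^\alpha\,\overline{bv^\alpha}$ with $b\in\{0,1\}$, then the peeling, past stage $k$, follows the constant-$b$ branch of the tree, so $\theta$ is the common limit of the nested triangles $\Delta_{ws_1v^\alpha\cdots s_kv^\alpha(bv^\alpha)^m}$. Since $\alpha$ is periodic, $l^{\alpha,b}_{bv^\alpha}$ fixes $\dot\alpha$ (and $l^{\alpha,b}_{(1-b)v^\alpha}$ fixes $\ddot\alpha$), so this is precisely the monotone collapse used to define the associated periodic point $\overline\alpha$, transported forward by $l^{\alpha,t}_{ws_1v^\alpha\cdots s_kv^\alpha}$; it identifies $\theta$ with an endpoint of the boundary chord $l^{\alpha,1-e}_{ws_1v^\alpha\cdots s_kv^\alpha b}(\alpha\overline\alpha)$ of $G^\alpha_w$ from Proposition \ref{prop:infinite_gap_encoding} (which degenerates to that single point when $b=e$). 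Hence $p$ is an endpoint of that boundary chord.

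The main obstacle is making the self-similar reduction precise: one must check that the level-one subtree below $\Delta_{wsv^\alpha}$ really is the $l^{\alpha,s}_{ws}$-copy of the critical value gap $G^\alpha_{v^\alpha}$ (a short index computation, but with an off-by-one because $\Delta_{wsv^\alpha}$ itself is a boundary triangle of that copy), and that this forces the next block of the itinerary to be exactly $sv^\alpha$ rather than any shorter word --- which relies on there being no lamination leaves between the inner and outer sides of a triangle gap other than the sides of its sub-triangles, so that itinerary symbols are only ``decided'' at block boundaries --- all while keeping the superscripts $t$ versus $1-e$ consistent as one moves down the tree, which is governed by the side-labelling in the remark before Proposition \ref{prop:infinite_gap_encoding}.
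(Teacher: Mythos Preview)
Your approach is correct and aligns with the paper's, though you supply considerably more detail than the paper does. In the paper, this proposition has no separate proof environment: the justification is the short paragraph immediately preceding the statement, which simply observes that by Proposition~\ref{prop:infinite_gap_encoding} the gap $G^\alpha_w$ is bounded by the chords $l^{\alpha,1-e}_{ws_1v^\alpha\cdots s_n}$, so a surviving point must have itinerary $ws_1v^\alpha s_2v^\alpha\cdots$, and that an eventually $\overline{bv^\alpha}$ tail forces convergence to an endpoint of one of those boundary chords.

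Your recursive reduction to the critical value gap $G^\alpha_{v^\alpha}$ is a legitimate and slightly more structured way to say the same thing; it makes explicit the self-similarity that the paper leaves implicit in the phrase ``infinite binary tree of triangles.'' The one place to be a little careful is your parenthetical that the boundary chord ``degenerates to that single point when $b=e$'': this is true, but note that the paper's statement is only the forward implication (eventually $\overline{bv^\alpha}$ $\Rightarrow$ endpoint), and the paper explicitly remarks afterward that the converse fails because some boundary-chord endpoints are preimages of $\alpha$ and hence have itineraries containing $*$. Your argument is consistent with this, but do not inadvertently claim the converse.
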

	The converse of the latter statement does not hold at all because the endpoint of the boundary chord might be the preimage of $\alpha$, whose itinerary sequence contains $*$ eventually.
		
	\subsection{Translation to quadratic polynomials}
	In this section $\alpha \in S^1$ is nonzero periodic under $h$.
	For such $\alpha$ there is an associated angle $\overline{\alpha}$.
	As we aforementioned in the preliminary, such point is called a \emph{companion angle}.
	Thus two external rays of angle $\alpha, \overline{\alpha}$ in the parameter space land at the same point $r_{M_\alpha}$, which is a root of a hyperbolic component $M_\alpha$.
	From now on, we drop $M$ and denote the center and root as $c_\alpha$ and $r_\alpha$ if $\alpha$ is well understood.
	\emph{i.e.,} $z\mapsto z^2 + r_\alpha$ has a parabolic fixed point for any periodic $\alpha$.
	
	\begin{cor}[Corollary 4.15 in \cite{keller2007invariant}]\label{cor:assoc = companion}
		For periodic $\alpha\in S^1$ the parameter ray of angle $\alpha, \overline{\alpha}$ land at the same root point $r_M$ for some $M$.
		Therefore, the associated angle $\overline{\alpha}$ of nonzero periodic $\alpha$ is identical to the companion angle.
	\end{cor}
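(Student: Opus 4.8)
The plan is to run the argument through the dictionary between the abstract lamination $\partial\mathcal{L}_\alpha$ and the dynamical plane of $P_{c_M}$, where $M=M_\alpha$ is the hyperbolic component whose root $r_M$ is the landing point of the parameter ray $\mathcal{R}_\alpha$ and $c_M$ its center, and then transport the conclusion to the parameter plane via the Douady--Hubbard correspondence at parabolic parameters. Throughout write $n=\PER(\alpha)$, $v=v^\alpha$, $e=e^\alpha$, so $|v|=n-1$ and $|v(1-e)|=n$. \emph{Step 1: $\overline\alpha$ is $h$-periodic of period $n$.} Since $l^\alpha_{v(1-e)}$ is a branch of $h^{-n}$ on its domain, applying $h^n$ to the defining limit $\overline\alpha=\lim_{k}l^\alpha_{(v(1-e))^k v}(\ddot\alpha)$ strips off one leading block $v(1-e)$ and, by continuity, gives $h^n(\overline\alpha)=\lim_k l^\alpha_{(v(1-e))^{k-1}v}(\ddot\alpha)=\overline\alpha$; so $\overline\alpha$ is fixed by $h^n$, and minimality of the period (Proposition \ref{prop:same_rep_word_v}) upgrades this to $\PER(\overline\alpha)=n$. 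In particular $\mathcal{R}_{\overline\alpha}$ lands at a parabolic parameter.

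\emph{Step 2: identify $\partial\mathcal{L}_\alpha$ with the dynamical lamination of $P_{c_M}$.} The lamination $\mathcal{L}_\alpha$ is obtained by pulling the long chord $S^\alpha_\lambda=\frac{\alpha}{2}\frac{\alpha+1}{2}$ back along the two branches of $h^{-1}$, and $h(S^\alpha_\lambda)$ collapses to the single point $\alpha$, which is precisely the dynamic angle of the critical value of $P_{c_M}$. Thus $S^\alpha_\lambda$ plays the role of the critical (major) leaf, and by Thurston's description of invariant quadratic laminations in terms of their major leaf, $\partial\mathcal{L}_\alpha$ agrees with the ray--landing lamination of $P_{c_M}$: two dynamic rays land at a common point of the Julia set exactly when the corresponding chord lies in $\partial\mathcal{L}_\alpha$. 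Under this identification the critical value gap $G^\alpha_v$ of Proposition \ref{prop:infinite_gap_encoding} corresponds to the bounded Fatou component of $P_{c_M}$ containing the critical value, and --- by Definition \ref{def:associated_periodic_point} together with Corollary \ref{cor:accumulate(1-e)} --- the boundary chord of $G^\alpha_v$ through $\alpha$ is exactly $\alpha\overline\alpha$. Hence the dynamic rays of $P_{c_M}$ of angles $\alpha$ and $\overline\alpha$ land at a common point $z_0$, the root of the critical value Fatou component.

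\emph{Step 3: transport to the parameter plane.} The point $z_0$ lies on a repelling periodic orbit, and the orbit portrait of that orbit (Definition \ref{defn:orbit_portrait}) has $(\alpha,\overline\alpha)$ as a complementary arc with no other orbit angle in its interior: every boundary chord $l^{\alpha,1-e}_w(\alpha\overline\alpha)$ with $w\neq\lambda$ is nested behind $\alpha\overline\alpha$ or disjoint from it, so all remaining orbit angles lie on the far side. Minimality of its length makes $(\alpha,\overline\alpha)$ the characteristic arc, so Theorem \ref{lem:char_arc_lands_at_root} applies to it; combined with the Douady--Hubbard fact that at a parabolic parameter the two landing parameter rays carry exactly the characteristic angles of the parabolic orbit portrait (see \cite{milnor2000periodic}, \cite{douady1984exploring}), we conclude that $\mathcal{R}_\alpha$ and $\mathcal{R}_{\overline\alpha}$ both land at $r_M$. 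Since by definition the companion angle of $\alpha$ is the other parameter angle landing at $r_M$, this gives $\overline\alpha=$ companion angle of $\alpha$, and $\overline{\overline\alpha}=\alpha$ follows by the symmetry of the construction.

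\emph{Main obstacle.} The crux is Step 2: making rigorous the identification of the purely combinatorial object $\partial\mathcal{L}_\alpha$ with the dynamical lamination of $P_{c_M}$ --- in particular, that the degenerate major leaf $S^\alpha_\lambda\mapsto\alpha$ singles out the same lamination Thurston's pullback algorithm builds from the critical-value angle $\alpha$, and that $G^\alpha_v$ is really the critical value Fatou component rather than another periodic component of the same period. If one prefers to avoid Thurston's machinery, an alternative is to stay inside $S^1$: Step 1 gives periodicity of $\overline\alpha$, the no-crossing property of $\partial\mathcal{L}_\alpha$ gives that the orbits of $\alpha$ and $\overline\alpha$ together satisfy Definition \ref{defn:orbit_portrait}, and the $(1-e)$-monotone accumulation of the chords $S^{\alpha,1-e}_{(v(1-e))^k v}$ (cf. Corollary \ref{cor:1-step_test_for_characteristic_symbol}) shows directly that $(\alpha,\overline\alpha)$ is the complementary arc of minimal length; Milnor's realization theorem for orbit portraits then identifies it with the characteristic arc of $M_\alpha$ and hence $\overline\alpha$ with the companion angle.
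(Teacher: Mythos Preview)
The paper does not supply its own proof of this statement; it is quoted verbatim as Corollary~4.15 of Keller~\cite{keller2007invariant} and left unproved, so there is nothing to compare your argument against directly.

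That said, your reconstruction is broadly the right shape. Two comments. First, your Step~2 --- the identification of $\partial\mathcal{L}_\alpha$ with the dynamical landing pattern --- is in fact stated separately in the paper as Theorem~\ref{thm:landing_pattern=limit_lam} (Corollary~4.2 in Keller), again without proof. So within the logic of this paper you may simply cite that result rather than invoke Thurston's major-leaf classification; the two corollaries are imported as a package from Keller and neither is derived from the other here. Second, the alternative route you sketch in your final paragraph --- stay entirely in $S^1$, verify that the $h$-orbits of $\alpha$ and $\overline\alpha$ assemble into an orbit portrait, and show $(\alpha,\overline\alpha)$ is its characteristic arc via the monotone $(1-e)$-accumulation, then appeal to Milnor's realization theorem --- is both cleaner and closer in spirit to how Keller actually proceeds: it avoids passing through the Julia set of $P_{c_M}$ altogether and keeps the argument at the level of abstract laminations until the very last step. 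Your Step~2 as written is not wrong, but the appeal to ``Thurston's description of invariant quadratic laminations in terms of their major leaf'' is doing a lot of unexamined work (you yourself flag this), whereas the combinatorial alternative needs only the orbit-portrait axioms and the length estimate already implicit in Corollary~\ref{cor:accumulate(1-e)}.
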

	
	At $r_M$, the Julia set of $z^2 + r_M$ is connected.
	Let $\alpha, \overline{\alpha}$ be a pair corresponds to $M$.
	Suppose $\approx_\alpha$ be an equivalence relation on $S^1$ satisfying that 
	\[
		\theta_1 \approx_\alpha \theta_2 \quad \Leftrightarrow \quad \text{ Dynamic rays of angle }\theta_1, \theta_2 \text{ lands at the same point.}
	\]
	If we draw a line/polygon whose endpoints/vertices are in the same class of $\approx_\alpha$, then it became a lamination.
	We call this a \emph{landing pattern} of $\alpha$, or the \emph{pinched disc model of $K_\alpha$ ($= K_{\overline{\alpha}}$)}.
	We remark the following theorem.
	\begin{thm}[Corollary 4.2 in \cite{keller2007invariant}]\label{thm:landing_pattern=limit_lam}
		Suppose $\alpha$ is nonzero periodic in $S^1$.
		Then the landing pattern of $\alpha$ is equal to $\partial\mathcal{L}_\alpha$.
	\end{thm}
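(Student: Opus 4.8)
The plan is to show that both the landing pattern $\mathrm{LP}(\alpha)$ of the parabolic polynomial $z\mapsto z^2+r_\alpha$ and the combinatorial limit lamination $\partial\mathcal{L}_\alpha$ are the closure, under angle doubling $h$, of the grand orbit of the single seed leaf $\alpha\overline{\alpha}$, and that this description pins the lamination down. As a preliminary, $\mathrm{LP}(\alpha)$ is genuinely a closed lamination: since $r_\alpha$ is a parabolic parameter, the filled Julia set $K_{r_\alpha}$ is locally connected (Douady--Hubbard), so every dynamic ray of $z^2+r_\alpha$ lands, the relation $\approx_\alpha$ is closed and invariant under $h$ in both directions (because $z^2+r_\alpha$ acts on landing points exactly as $h$ does on angles), and $\overline{\mathbb{D}}/\!\approx_\alpha$ with its induced self-map recovers $(K_{r_\alpha},z^2+r_\alpha)$.

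First I would locate the seed leaf in $\mathrm{LP}(\alpha)$. By Corollary \ref{cor:assoc = companion}, $(\alpha,\overline{\alpha})$ is the companion pair attached to $M_\alpha$, hence the characteristic arc of the orbit portrait realized by the parabolic cycle of $z^2+r_\alpha$ (Milnor's orbit portrait theorem; compare Theorem \ref{lem:char_arc_lands_at_root}), so the dynamic rays $\mathcal{R}_\alpha,\mathcal{R}_{\overline{\alpha}}$ land at a common point of $J_{r_\alpha}$ --- the characteristic point of the parabolic orbit --- and thus $\alpha\overline{\alpha}\in\mathrm{LP}(\alpha)$. Because $0\notin J_{r_\alpha}$, the map $z^2+r_\alpha$ is a genuine two-to-one covering of $J_{r_\alpha}$, so ray identifications lift: both $h$-preimage chords of a leaf of $\mathrm{LP}(\alpha)$ again lie in $\mathrm{LP}(\alpha)$, the only ambiguity being which preimage of the common landing point to pair with which, and this ambiguity is exactly the $e^\alpha$ versus $1-e^\alpha$ bookkeeping (Definition \ref{def:characteristic_symbol}, Corollary \ref{cor:1-step_test_for_characteristic_symbol}); forward iteration is automatic since $\alpha$ is periodic (Proposition \ref{prop:same_rep_word_v}). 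Hence the closure of the grand orbit of $\alpha\overline{\alpha}$ is contained in $\mathrm{LP}(\alpha)$, and by the theorem just established above (Keller, Proposition 2.42) that closure is $\partial\mathcal{L}_\alpha$; so $\partial\mathcal{L}_\alpha\subseteq\mathrm{LP}(\alpha)$.

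The reverse inclusion $\mathrm{LP}(\alpha)\subseteq\partial\mathcal{L}_\alpha$ is the heart of the matter and the step I expect to be the main obstacle, since a priori $J_{r_\alpha}$ could carry ``extra'' pinchings not forced by the combinatorics, i.e.\ $\partial\mathcal{L}_\alpha$ could be a proper sublamination of $\mathrm{LP}(\alpha)$. The resolution is that $\partial\mathcal{L}_\alpha$ already has the correct complementary structure: by Proposition \ref{prop:infinite_gap_encoding} each of its gaps is an infinite gap indexed by an $\alpha$-regular word --- a Fatou-type gap --- and the distinguished gaps $G^\alpha_\lambda,G^\alpha_{v^\alpha}$ carry the critical point and critical value and reproduce the orbit portrait. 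Thus the quotient $\overline{\mathbb{D}}\to\overline{\mathbb{D}}/\partial\mathcal{L}_\alpha$ semiconjugates $h$ to a degree-two branched self-map of a topological sphere with a parabolic cycle of the prescribed portrait that identifies nothing further; comparing itinerary sequences through Propositions \ref{prop:infinite_gap_encoding} and \ref{prop:endpt_of_inf_gap} shows that every cut point of $J_{r_\alpha}$ is a vertex of one of these gaps, so $\approx_\alpha$ cannot be strictly finer than the relation cut out by $\partial\mathcal{L}_\alpha$. Equivalently, one may invoke the uniqueness of a degree-two invariant lamination generated by its critical-value leaf $\alpha\overline{\alpha}$ (Thurston's quadratic-lamination theory, or the pullback/rigidity argument of Keller in \cite{keller2007invariant}). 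Either way this gives $\mathrm{LP}(\alpha)=\partial\mathcal{L}_\alpha$.

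Finally I would record an alternative, more analytic route to the full statement, to be relegated to a remark: as $c\to r_\alpha$ from outside $\mathcal{M}$ along the parameter ray $\mathcal{R}_\alpha$, the Cantor Julia sets $J_{P_c}$ converge to $J_{r_\alpha}$ in the Hausdorff metric and the step-$k$ laminations building $\mathcal{L}_\alpha$ stabilize to $\partial\mathcal{L}_\alpha$, so continuity of ray landing at the limit identifies $\partial\mathcal{L}_\alpha$ with $\mathrm{LP}(\alpha)$; the obstacle on that route is the (semi)continuity of the Julia set at a parabolic parameter, which is why I would keep the finitary argument above as the main proof.
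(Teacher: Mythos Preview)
The paper does not prove this theorem at all: it is quoted verbatim as Corollary~4.2 of Keller~\cite{keller2007invariant} and used as a black box, with no argument supplied. So there is nothing in the paper to compare your proposal against beyond the bare citation.

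Your sketch is broadly sound for the easy direction $\partial\mathcal{L}_\alpha\subseteq\mathrm{LP}(\alpha)$: the seed leaf $\alpha\overline{\alpha}$ lies in the landing pattern by the orbit-portrait theory you cite, the landing pattern is closed and fully $h$-invariant, and $\partial\mathcal{L}_\alpha$ is the closure of the grand orbit of that seed leaf, so the inclusion follows. For the reverse inclusion, however, your argument is more of an outline than a proof. The sentence ``comparing itinerary sequences through Propositions~\ref{prop:infinite_gap_encoding} and~\ref{prop:endpt_of_inf_gap} shows that every cut point of $J_{r_\alpha}$ is a vertex of one of these gaps'' is precisely the nontrivial step and is not established by those propositions, which only describe the combinatorics of $\partial\mathcal{L}_\alpha$ and say nothing about $J_{r_\alpha}$ directly; you would need an independent argument (e.g.\ a Yoccoz-puzzle or expansion argument at the parabolic parameter) that two rays landing together in $J_{r_\alpha}$ forces equality of itineraries in the sense of $\mathcal{L}_\alpha$. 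Your fallback to Thurston's uniqueness of the degree-two invariant lamination with prescribed minor leaf is the honest route, but then the content of the theorem is exactly that uniqueness statement, and you should state and invoke it precisely rather than gesture at it. In short: the paper defers entirely to Keller, and if you want to supply a proof you should either reproduce Keller's argument or give a clean statement of the Thurston uniqueness you are relying on for the hard inclusion.
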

	Suppose a paramter $c$ follows along the parameter ray $\mathcal{R}_\alpha$ and finally lands on the root $r_M$.
	At each moment $c$ lies on the $\mathcal{R}_\alpha$ the lamination model of $J_c$ is the one generated by long chord $\frac{\alpha}{2}\frac{\alpha+1}{2} = \dot{\alpha}\ddot{\alpha}$.
	If we pinch it from the long chord and its 1st preimages and 2nd and so on, then the limiting process would be a Julia set of $z\mapsto z^2 +c_\alpha$, where $c_\alpha$ is a parameter outside of Mandelbrot set whose parameter angle is $\alpha$.

	When $c$ lands at $r_M$ eventually, we choose a lamination model of $J_{r_M}$ to be its landing pattern.
	Then such choice of limit lamination enable us to consider the landing of parameter as in the laminational model as below.
	\emph{i.e.,}
	\[
		\lim_{c \in \mathcal{R}_\alpha,~ c \to r_M} \mathcal{L}_\alpha = \partial\mathcal{L}_\alpha
	\]
	The limit on the lefthand side is by the topology derived from the accumulation of chords.
	It gives one way to understand the boundary of shift locus of degree $2$ in the sense of laminations.
		
	\section{Action on itinerary sequences} \label{sec:5-action_on_iti_seq}		
		
	We now introduce the quadrilateral lamination $\mathcal{Q}$.
	
	\begin{defn}[quadrilateral lamination] \label{defn:quadrilateral_lam}
		Suppose $\overline{\alpha}$ be an associated angle of $\alpha$.
		The center gap associated to $(\alpha, \overline{\alpha})$, denoted as $Q^{(\alpha,\overline{\alpha})}_\lambda$, is a gap whose endpoint is $\alpha/2, (\alpha+1)/2, \overline{\alpha}/2, (\overline{\alpha}+1)/2$.
		\emph{i.e.,} long chords $S^\alpha_\lambda, S^{\overline{\alpha}}_\lambda$ of $\mathcal{L}_\alpha$ and $\mathcal{L}_{\overline{\alpha}}$ are diagonals of the center gap $Q^{(\alpha,\overline{\alpha})}_\lambda$.
		
		\emph{Quadrilateral lamination} associated to $(\alpha, \overline{\alpha})$, denoted as $\mathcal{Q}^{\alpha}$ is the collection of preimages of $Q = Q^{(\alpha,\overline{\alpha})}_\lambda$ under $h$.
	\end{defn}
	
	The center gap is actually a rectangle, since its diagonals are diameters.
	Hence the there are two types of sides, one is longer than the other. 
    We provide a figure \ref{fig:quad_lam} of quadraliteral lamination $\mathcal{Q}^\alpha$ for $\alpha = 1/3$.
	
	Remark that $\alpha$ and its associated angle $\overline{\alpha}$ form a characteristic arc. 
	Also, $h(Q)$ is equal to characteristic arc $\alpha\overline{\alpha}$ and the shorter sides are the preimage $l^{\alpha, e^\alpha}$ and the other is the preimage of $l^{\alpha, 1-e^\alpha}$ because of their length.
	Hence we have the following.
	\begin{prop}\label{prop:lim_lam_is_a_subset}
		All chords in $\partial\mathcal{L}_\alpha$ are in the $\mathcal{Q}^{\alpha}$.
		The others has a form of $l^{\alpha, e}_w(\alpha\overline{\alpha})$.
		\emph{i.e.,} every chord in $\mathcal{Q}^\alpha$ has a form of $l^{\alpha, t}_w(\alpha\overline{\alpha})$, $t\in\{0,1\}$ and $w\in \{0,1\}^*$.
	\end{prop}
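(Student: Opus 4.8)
The plan is to reduce the whole statement to the single identity $h(Q^{(\alpha,\overline{\alpha})}_\lambda)=\alpha\overline{\alpha}$ recorded after Definition~\ref{defn:quadrilateral_lam}, and then propagate it under the branched inverses $l^{\alpha,t}_s$. First I would write down the four sides of the center gap $Q=Q^{(\alpha,\overline{\alpha})}_\lambda$ explicitly. Its two diagonals $S^\alpha_\lambda=\dot{\alpha}\ddot{\alpha}$ and $S^{\overline{\alpha}}_\lambda=\dot{\overline{\alpha}}\ddot{\overline{\alpha}}$ are diameters, so $Q$ is a rectangle whose vertices, in cyclic order, alternate between the fibre $h^{-1}(\alpha)=\{\dot{\alpha},\ddot{\alpha}\}$ and the fibre $h^{-1}(\overline{\alpha})=\{\dot{\overline{\alpha}},\ddot{\overline{\alpha}}\}$. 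Applying $h$ collapses each diagonal to $\alpha$, resp.\ $\overline{\alpha}$, and carries each of the four sides onto the characteristic chord $\alpha\overline{\alpha}$; matching up which endpoint of a side sits over $\alpha$ (this is governed by the superscript $t$) and which inverse arc it lies in (this is the subscript $s$), one sees that the four sides of $Q$ are exactly the chords $l^{\alpha,t}_s(\alpha\overline{\alpha})$, $s,t\in\{0,1\}$, the two short ones carrying the superscript $e^\alpha$ and the two long ones carrying $1-e^\alpha$, as already recorded in the discussion following Definition~\ref{defn:quadrilateral_lam}.

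Next I would iterate. Since $\mathcal{Q}^\alpha$ is by definition the family of $h$-preimages of $Q$, and taking an $h$-preimage of a gap is the same as applying the two branches $l^{\alpha,t}_0,l^{\alpha,t}_1$, a side of the $k$-th preimage of $Q$ is the image under some branch $l^{\alpha,t}_w$ with $|w|=k$ of a side of $Q$, hence --- once the superscript convention is unwound --- again a chord of the form $l^{\alpha,t'}_{w'}(\alpha\overline{\alpha})$ with $|w'|=k+1$; conversely every such chord arises this way by peeling the last letter off $w'$. This is exactly the ``i.e.'' clause. Grouping the resulting chords by the superscript, those carrying $1-e^\alpha$, together with their accumulation chords, form --- by Keller's theorem stated above, that $\partial\mathcal{L}_\alpha$ is the closure of $\{\,l^{\alpha,1-e}_w(\alpha\overline{\alpha}):w\in\{0,1\}^*\,\}$ --- precisely $\partial\mathcal{L}_\alpha$; since each of them is already a long side of some quadrilateral gap, this gives $\partial\mathcal{L}_\alpha\subseteq\mathcal{Q}^\alpha$, and the chords of $\mathcal{Q}^\alpha$ that remain are exactly the ones carrying $e^\alpha$, which is the middle assertion.

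The step I expect to be the main obstacle is the bookkeeping of the superscript $t$: one must check that ``preimage of the quadrilateral'' really coincides with ``precompose with a branch of $h^{-1}$'' at the level of the labelled chords $l^{\alpha,t}_w(\alpha\overline{\alpha})$, the delicate point being that the superscript is felt only when a lift lands on the orbit of $\alpha$, which is precisely where $l^{\alpha,e}_w$ and $l^{\alpha,1-e}_w$ can fail to be distinct when $w$ is not $\alpha$-regular. Tied to this, one must confirm that passing to the closure in Keller's theorem neither leaves $\mathcal{Q}^\alpha$ nor absorbs any $e^\alpha$-labelled chord: the first follows from the limit formula $\lim_{n} S^{\alpha,1-e}_{w(v^\alpha(1-e))^n v^\alpha}=l^{\alpha,1-e}_w(\alpha\overline{\alpha})$ from the earlier remark, which shows every accumulation chord of the $(1-e^\alpha)$-family is again of the form $l^{\alpha,1-e}_{w'}(\alpha\overline{\alpha})$; the second follows because the $e^\alpha$-labelled chords shrink and degenerate as their word grows, so none of them can coincide with such a limit. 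Once this combinatorial consistency is in place, all three assertions are mere reorganizations of the identity $h(Q)=\alpha\overline{\alpha}$.
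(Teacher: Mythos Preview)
Your proposal is correct and follows essentially the same route as the paper's proof: identify the four sides of the center rectangle $Q$ as the four chords $l^{\alpha,t}_s(\alpha\overline{\alpha})$, $s,t\in\{0,1\}$, with the long pair carrying superscript $1-e^\alpha$ and the short pair $e^\alpha$, and then propagate under the branched inverses. The paper's version is terser --- it records the identification of the sides and passes immediately to ``hence all chords are of the form $l^{\alpha,t}_w(\alpha\overline{\alpha})$'' --- whereas you spell out the closure step (via Keller's description of $\partial\mathcal{L}_\alpha$) and the superscript bookkeeping more carefully; these are points the paper leaves implicit rather than handles differently.
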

	\begin{proof}
		Note that the quadrilateral gap $Q^\alpha$ is surrounded by $4$ distinct chords, one pair of long sides and the other pair of short sides.
		Since $\alpha/2, (\alpha+1)/2$ are antipodal to each other, the long side has to be a form of $\frac{\alpha}{2}\frac{\overline{\alpha}}{2}$ or $\frac{\alpha}{2}\frac{\overline{\alpha}+1}{2}$ and the other one must be the short side.
		Without loss of generality, suppose $\frac{\alpha}{2}\frac{\overline{\alpha}}{2}$ is a long side.
		Since it is a preimage of $\alpha\overline{\alpha}$ and it is longer than the other preimage $\frac{\alpha}{2}\frac{\overline{\alpha}+1}{2}$, it coincides to $l^{\alpha, 1-e}_s(\alpha\overline{\alpha})$.
		Exactly same argument will give $l^{\alpha, e}_s(\alpha\overline{\alpha}) = \frac{\alpha}{2}\frac{\overline{\alpha}+1}{2}$.
		Here $s$ is just a constant only to specify where the chord lies beyond $S^\alpha$ or not.
		Hence all chords are the preimages of $l^{\alpha, t}_w(\alpha\overline{\alpha})$ with $t\in\{0,1\}$ and $w\in \{0,1\}^*$.	
	\end{proof}
	We call every chord in $l^{\alpha, e^\alpha}_w(\alpha\overline{\alpha})$ as \emph{short chord} and except those we denote them as \emph{long chord}.
	Therefore, $\mathcal{Q}^\alpha$ is a lamination generated by $\alpha\overline{\alpha}$.
	
	\begin{cor}
		$\mathcal{Q}^\alpha$ is a lamination.
	\end{cor}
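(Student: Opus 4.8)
The plan is to reduce the statement to the fact that the full backward orbit of a single leaf under the angle‑doubling map $h$ is an unlinked family of chords, and then run the usual pullback induction. Calling a collection of chords a \emph{lamination} means that no two of them cross transversally in the open disc (and, if one also insists on closedness, that the collection equals its own closure). By Proposition \ref{prop:lim_lam_is_a_subset} the chords of $\mathcal{Q}^\alpha$ are exactly the $l^{\alpha,t}_w(\alpha\overline{\alpha})$ with $t\in\{0,1\}$ and $w\in\{0,1\}^*$; equivalently, since $h(Q)=\alpha\overline{\alpha}$, they are precisely the sides of the various preimage‑gaps of the rectangle $Q=Q^{(\alpha,\overline{\alpha})}_\lambda$. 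So $\mathcal{Q}^\alpha$ is the pullback of the single leaf $\alpha\overline{\alpha}$, and the whole content to be proved is that this pullback is pairwise unlinked.

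The key step is an induction on $N=\max(|w_1|,|w_2|)$ showing that $c_1=l^{\alpha,t_1}_{w_1}(\alpha\overline{\alpha})$ and $c_2=l^{\alpha,t_2}_{w_2}(\alpha\overline{\alpha})$ are unlinked. I would use two elementary facts about $h$: each branch $l^{\alpha,t}_s$ maps $S^1$ into the closed arc $\overline{A_\alpha}$ if $s=0$ and into $\overline{B_\alpha}$ if $s=1$; and the two closed half‑discs $H_0,H_1$ bounded by the long chord $S^\alpha_\lambda=\dot{\alpha}\ddot{\alpha}$ meet only along $S^\alpha_\lambda$, while $h$ is injective on the interior of each $H_s$ and preserves the cyclic order there. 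If $w_1$ and $w_2$ begin with different symbols then $c_1\subset H_0$ and $c_2\subset H_1$, so their interiors are disjoint and they cannot cross. If they begin with the same symbol $a$, both chords lie in $H_a$; applying $h$ carries $c_i$ to $l^{\alpha,t_i}_{w_i'}(\alpha\overline{\alpha})$ with $w_i'$ obtained from $w_i$ by deleting its first symbol, and since $h$ preserves order (hence neither creates nor destroys interior crossings) on $H_a$, the inductive hypothesis applies to the strictly shorter words. The base case $N=0$ is the single leaf $\alpha\overline{\alpha}$.

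The delicate case — and the one I expect to be the main obstacle — is the mixed comparison of a preimage chord against $\alpha\overline{\alpha}$ itself, together with the bookkeeping of the superscript $t$ at chords that touch the periodic orbit $\{\alpha,h(\alpha),\dots\}$: there $l^{\alpha,0}_w$ and $l^{\alpha,1}_w$ differ precisely in which preimage of $\alpha$ they select, and one must check that the two resulting chords only share an endpoint and do not genuinely cross. This is exactly what the characteristic symbol $e^\alpha$ (Definition \ref{def:characteristic_symbol}) and the short‑side/long‑side dichotomy from the proof of Proposition \ref{prop:lim_lam_is_a_subset} are set up to control, so I would lean on that structure: the long chords $l^{\alpha,1-e}_w(\alpha\overline{\alpha})$ together with $\alpha\overline{\alpha}$ already form the landing pattern $\partial\mathcal{L}_\alpha$, which is a lamination by Theorem \ref{thm:landing_pattern=limit_lam}, so it only remains to adjoin the short chords $l^{\alpha,e}_w(\alpha\overline{\alpha})$ one at a time, each being the fourth side of a preimage‑quadrilateral of $Q$ whose other three sides already lie in (preimages of) $\partial\mathcal{L}_\alpha$ — which confines it inside a single gap and keeps the family unlinked. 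Finally, if one's definition of lamination requires closedness, one passes to $\overline{\mathcal{Q}^\alpha}$, using that a Hausdorff limit of pairwise unlinked chords is again unlinked; backward and forward $h$‑invariance is then immediate from $h\circ l^{\alpha,t}_w=l^{\alpha,t}_{w'}$ and the periodicity of $\alpha$.
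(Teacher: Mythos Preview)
The paper gives essentially no proof: the corollary is stated immediately after the observation ``Therefore, $\mathcal{Q}^\alpha$ is a lamination generated by $\alpha\overline{\alpha}$,'' and the unlinkedness is implicitly delegated to Keller's general pullback construction in \cite{keller2007invariant} (the same mechanism that makes $\mathcal{L}_\alpha$, generated by the long chord $S^\alpha_\lambda$, a lamination). Your proposal unpacks this in detail and is essentially correct, so in that sense you go well beyond what the paper does.

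One small correction to your second approach: a preimage-quadrilateral of $Q$ has only \emph{two} sides lying in $\partial\mathcal{L}_\alpha$ (its long sides), not three, the remaining two both being short. This does not break your argument, since the two long sides alone already confine both short sides to a single infinite gap $G^\alpha_u$; but it does mean you still owe a check that two short chords inside the same gap do not cross one another. For this you can either invoke the binary-tree decomposition of each $G^\alpha_u$ into quadrilaterals (which the paper spells out immediately after the corollary), or fall back on your first approach: any two short chords have nonempty words, so your induction applies and eventually reduces to the mixed comparison against $\alpha\overline{\alpha}$, which approach~2 has already handled via the fact that $\alpha\overline{\alpha}\in\partial\mathcal{L}_\alpha$. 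The two approaches thus genuinely complement each other rather than being redundant.
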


	Adding short chords gives another description of infinite gaps in $\partial \mathcal{L}_\alpha$.
	Choose the infinite gap $G_\lambda^\alpha$ at the center.
	It is bounded by the long chords.
	Adding the pair of short chords in the center gap divides the infinite gap $G^\alpha_\lambda$ into $3$ pieces, center gap and $2$ more pieces.
	If we add more short chords, $2$ more quadrilateral gaps are attached on both short chords of $Q^\alpha$.
	Iteratively, $G_\lambda^\alpha$ is comprised of infinitely many quadrilateral gaps, which forms an infinite binary tree.
    See the figure \ref{fig:quad_lam} also.
	
	We assign each quadrilateral gap $Q_w$ with a word $w \in \{0,1\}^*$.
	Such word is a prefix, \emph{i.e.,} $l^\alpha_{w}(Q) = Q_w$.
	By the observation, if $w = w'0v^\alpha$ or $w'1v^\alpha$, then the quadrilateral gap $Q_w$ is located right next to the $Q_{w'}$, sharing one of short chords of $Q_{w'}$.
	
	\begin{figure}
	    \centering
	    \includegraphics[width=.5\textwidth]{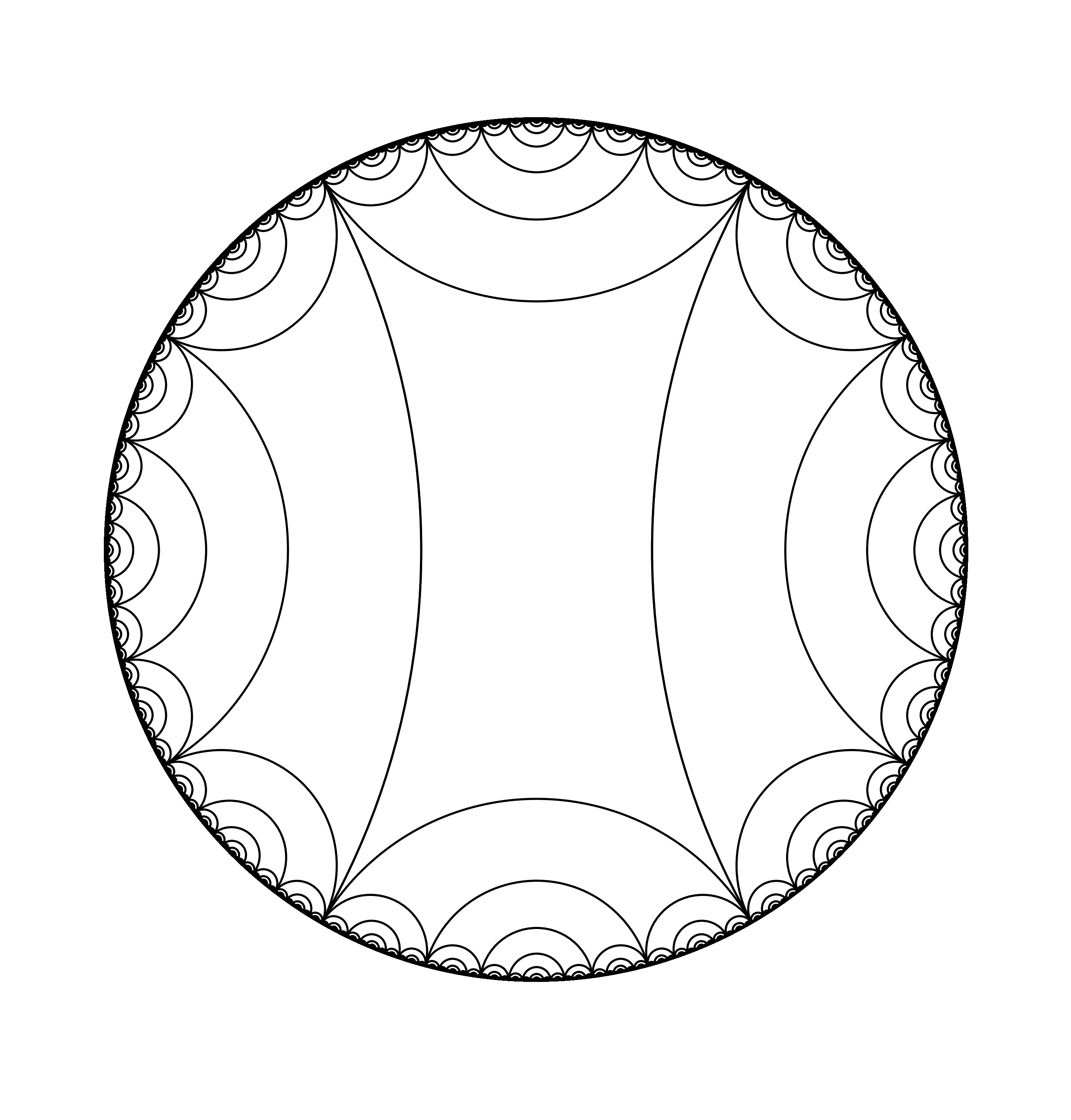}
	    \caption{$\mathcal{Q}^\alpha$ with $\alpha = \frac{1}{3}$. Compare with figure \ref{fig:0.333_limitlam_index} also.}
	    \label{fig:quad_lam}
	\end{figure}
	
	As in the definition, all quadrilateral gaps in $\mathcal{Q}$ have diagonals which are the preimages of $S^\alpha_\lambda$ and $S^{\overline{\alpha}}_\lambda$.
	Thus if $\alpha$ changes to $\overline{\alpha}$, then the only thing changes is the diagonal.
	This implies if a point $p$ lies behind the short chord of $Q^\alpha$, the first symbol $x$ of $I^\alpha(p)$ changes to $1-x$ when it comes to $I^{\overline{\alpha}}(p)$.
	Hence we get the following,
	\begin{lem}\label{lem:symbol_change}
		Suppose $p$ is not an endpoint of $\partial \mathcal{L}_\alpha$.
		Then $n$th symbol $s$ of $I^\alpha(p)$ changes to $(1-s)$ if and only if $p$ lies behind the $(n-1)$th preimage of the short chord. 
	\end{lem}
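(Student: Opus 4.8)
The plan is to transport both kneading sequences into the single geometric picture provided by the center gap $Q=Q^{(\alpha,\overline\alpha)}_\lambda$ and to compare them arc by arc around the circle. Write $\theta$ for the dynamic angle of $p$. By Definition \ref{defn:kneading_seq}, the $n$-th entry of $I^\alpha(p)$ records on which side of the long chord $S^\alpha_\lambda=\frac{\alpha}{2}\frac{\alpha+1}{2}$ the point $h^{n-1}(\theta)$ lies, and the $n$-th entry of $I^{\overline\alpha}(p)$ records on which side of $S^{\overline\alpha}_\lambda=\frac{\overline\alpha}{2}\frac{\overline\alpha+1}{2}$ it lies. By Definition \ref{defn:quadrilateral_lam} these two chords are precisely the two diagonals of the rectangle $Q$, so its four vertices $\frac{\alpha}{2},\frac{\overline\alpha}{2},\frac{\alpha+1}{2},\frac{\overline\alpha+1}{2}$ (in cyclic order) cut $S^1$ into four arcs; since the characteristic arc $(\alpha,\overline\alpha)$ has length less than $1/2$, its two $h$-preimage arcs have length less than $1/4$ and are exactly two of these four arcs, namely the two lying behind the short sides of $Q$.

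It then suffices to show that the $n$-th entries of $I^\alpha(p)$ and $I^{\overline\alpha}(p)$ differ if and only if $h^{n-1}(\theta)$ lies behind a short chord of $Q$, because the latter condition is equivalent to $p$ lying behind the $(n-1)$-st preimage of a short chord. First I would check, using the hypothesis, that $h^{j}(\theta)$ is not a vertex of $Q$ for any $j\ge 0$, so that every entry is well defined and no $*$ can occur. Indeed, each vertex of $Q$ is an $h$-preimage of $\alpha$ or of $\overline\alpha$; if $h^{j}(\theta)$ were such a vertex, then $\theta$ would be an endpoint of a $j$-fold $h$-pullback of the chord $\alpha\overline\alpha$, hence an endpoint of a side of some quadrilateral gap $Q_w\in\mathcal{Q}^\alpha$ with $|w|=j$; but every vertex of every $Q_w$ is also an endpoint of a long side of $Q_w$, and the long sides are leaves of $\partial\mathcal{L}_\alpha$ by Proposition \ref{prop:lim_lam_is_a_subset}, contradicting the assumption that $p$ is not an endpoint of $\partial\mathcal{L}_\alpha$. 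Hence $h^{j}(\theta)$ always lies in the interior of exactly one of the four arcs.

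The core is then a short case analysis. Let $A_\alpha$ be the half-circle bounded by $\frac{\alpha}{2},\frac{\alpha+1}{2}$ that contains $\frac{\overline\alpha}{2}$, and let $A_{\overline\alpha}$ be the half-circle bounded by $\frac{\overline\alpha}{2},\frac{\overline\alpha+1}{2}$ that contains $\frac{\alpha+1}{2}$. Running through the four arcs, one reads off directly that on the two arcs behind the long sides of $Q$ a point lies in both $A_\alpha$ and $A_{\overline\alpha}$ or in neither, so the corresponding entries of $I^\alpha(p)$ and $I^{\overline\alpha}(p)$ agree; whereas on the two arcs behind the short sides a point lies in exactly one of $A_\alpha,A_{\overline\alpha}$, so the corresponding entries disagree, and the flipped value is $1-s$ rather than $*$ by the previous paragraph. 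Applying this with the point $h^{n-1}(\theta)$, and then pulling back along the branch of $h^{-(n-1)}$ that follows the itinerary of $p$ (which carries the region behind a short chord of $Q$ bijectively onto the region behind the relevant $(n-1)$-st preimage), yields exactly the claimed equivalence.

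The step I expect to be the genuine obstacle is the vertex-avoidance argument: one must make airtight that the single hypothesis that $p$ is not an endpoint of $\partial\mathcal{L}_\alpha$ already prevents every orbit point $h^{j}(\theta)$ from stalling on a vertex of $Q$. The subtlety is precisely that the short chords themselves are \emph{not} leaves of $\partial\mathcal{L}_\alpha$, so one cannot argue directly; instead one must use that every endpoint of a short chord nonetheless coincides with an endpoint of a long chord, which \emph{is} a leaf of $\partial\mathcal{L}_\alpha$. Once the cyclic order of the four vertices of $Q$ is fixed, the remaining four-way case analysis and the preimage bookkeeping are routine.
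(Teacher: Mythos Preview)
Your proof is correct and follows essentially the same approach as the paper's: reduce to the first symbol via $h^{n-1}$, then observe that since the two diagonals of $Q$ are the defining chords $S^\alpha_\lambda$ and $S^{\overline\alpha}_\lambda$, the first symbols disagree precisely on the arcs behind the short sides. The paper's proof is a two-line sketch that omits the vertex-avoidance check and the explicit four-arc case analysis you supply, so your version simply fills in details the paper leaves implicit.
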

	\begin{proof}
		After taking $h^{(n-1)}$, $n$th symbol of $I^\alpha(h^{(n-1)}(p))$ became the first one, since $h$ acts on the itinerary sequence as a one-sided shift.
		As diagonals of $Q$ are the long chords, the point lies on the short side if and only if the first symbol changes.
		Thus we are done.
	\end{proof}
	
	By proposition \ref{prop:lim_lam_is_a_subset}, all the infinity gap in $\partial\mathcal{L}_\alpha$ can be decomposed into quadrilateral gap $Q^\alpha$ and its preimages, by $l^{\alpha, e}_w(\alpha\overline{\alpha})$.
	We call every chord of form $l^{\alpha, 1-e}_w(\alpha\overline{\alpha})$ as a \emph{long chord} and $l^{\alpha, e}_w(\alpha\overline{\alpha})$ as a \emph{short chord}.
	
	If we focus on diagonals of pre-quadrilateral gaps, $\alpha$ changes to $\overline{\alpha}$ implies all the diagonal are exchanged and it means the symbol $0, 1$ in front of $v^\alpha = v^{\overline{\alpha}}$.
	
	\begin{thm}\label{thm:0v1v-seq_rule}
		Suppose $p$ is a point in $G^\alpha_w$ for some $\alpha$-regular word $w$ and $I^\alpha(p) = ws_1v^\alpha s_2v^\alpha \cdots$ for some $s_i\in\{0,1\}.$
		Then $I^{\overline{\alpha}}(p) = w'(1-s_1)v^\alpha(1-s_2)v^\alpha \cdots$.
		\emph{i.e.,} $0v^\alpha, 1v^\alpha$ swap each other in the $0v^\alpha$-$1v^\alpha$ sequence part.
		Here $w'$ is another $\alpha$-regular word.
	\end{thm}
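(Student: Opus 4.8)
The plan is to exploit that the quadrilateral lamination $\mathcal Q^\alpha$ and its center gap $Q$ depend only on the \emph{unordered} pair $\{\alpha,\overline\alpha\}$: passing from $\alpha$ to $\overline\alpha$ leaves $\mathcal Q^\alpha$ and the decomposition of $\partial\mathcal L_\alpha$ into infinite gaps untouched, and merely interchanges the two diagonals of every quadrilateral gap; interchanging the diagonal of a gap toggles exactly one symbol of an itinerary (the mechanism behind Lemma \ref{lem:symbol_change}). Concretely: by Corollary \ref{cor:assoc = companion} the rays $\mathcal R_\alpha,\mathcal R_{\overline\alpha}$ land at the same root, so $K_\alpha=K_{\overline\alpha}$ and hence $\partial\mathcal L_\alpha=\partial\mathcal L_{\overline\alpha}$ by Theorem \ref{thm:landing_pattern=limit_lam}; and since $\overline{\overline\alpha}=\alpha$ and $v^{\overline\alpha}=v^\alpha$ (Proposition \ref{prop:same_rep_word_v} and the remark after it), the center gaps $Q^{(\alpha,\overline\alpha)}_\lambda$ and $Q^{(\overline\alpha,\alpha)}_\lambda$ are the same gap, so $\mathcal Q^\alpha=\mathcal Q^{\overline\alpha}$ and ``$\alpha$-regular'' $=$ ``$\overline\alpha$-regular''. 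Thus $p$ lies in one infinite gap $G$ of this common lamination, carrying two labels $G=G^\alpha_w=G^{\overline\alpha}_{w'}$, and Proposition \ref{prop:endpt_of_inf_gap} applied to $\overline\alpha$ already forces $I^{\overline\alpha}(p)=w'\,a_1v^\alpha a_2v^\alpha\cdots$ for some $a_i\in\{0,1\}$; everything reduces to proving $a_n=1-s_n$.

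Next I would set up the combinatorial skeleton and align the two encodings. By Proposition \ref{prop:lim_lam_is_a_subset} and the remark after it, $G$ is tiled by the quadrilateral gaps $Q_{wu}$, with $u$ a concatenation of blocks $0v^\alpha,1v^\alpha$, forming an infinite binary tree rooted at $Q_w=l^\alpha_w(Q)$; the point $p$ descends along a path $Q_w=P_0,P_1,P_2,\dots$ with $P_n=Q_{w s_1v^\alpha\cdots s_nv^\alpha}$, so that $P_n$ is reached from $P_{n-1}$ by crossing the short chord of $P_{n-1}$ labelled $s_n$. Viewed from $\overline\alpha$ the same tree is rooted at $Q_{w'}=l^{\overline\alpha}_{w'}(Q)$ and $P_n=Q_{w'a_1v^\alpha\cdots a_nv^\alpha}$. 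One must check $|w|=|w'|$: since $h^{|w|}$ and $h^{|w'|}$ both carry the common root gap $P_0$ onto $Q$, and $Q$ --- the critical gap of $\mathcal Q^\alpha$, which maps two-to-one, collapsing onto the leaf $\alpha\overline\alpha$, hence is not $h$-periodic --- can be reached from a given gap by at most one iterate of $h$, we get $|w|=|w'|$; so the periodic blocks of $I^\alpha(p)$ and $I^{\overline\alpha}(p)$ sit at the same positions, and the tree-edge $P_{n-1}\!\to\!P_n$ gets the label $s_n$ from $\alpha$ and $a_n$ from $\overline\alpha$.

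The heart is then a local computation at $Q$. Pushing $P_{n-1}$ forward by the unique iterate $h^{\,|w|+(n-1)\PER(\alpha)}$ carrying it to $Q$ --- an iterate conjugating $h$ to the shift, respecting $\mathcal Q^\alpha=\mathcal Q^{\overline\alpha}$ and both itinerary maps --- sends $p$ behind the short chord of $Q$ labelled $s_n$ by $\alpha$ and $a_n$ by $\overline\alpha$. With $\alpha<\overline\alpha$ and $\overline\alpha-\alpha<\tfrac12$, the vertices of $Q$ in cyclic order are $\tfrac\alpha2,\tfrac{\overline\alpha}2,\tfrac{\alpha+1}2,\tfrac{\overline\alpha+1}2$; $I^\alpha$ reads off the diagonal $\tfrac\alpha2\tfrac{\alpha+1}2$ (whose $0$-side is $A_\alpha$) and $I^{\overline\alpha}$ off $\tfrac{\overline\alpha}2\tfrac{\overline\alpha+1}2$ (whose $0$-side is $A_{\overline\alpha}$), while the two short sides $\tfrac\alpha2\tfrac{\overline\alpha}2$ and $\tfrac{\alpha+1}2\tfrac{\overline\alpha+1}2$ cut off arcs contained in $A_\alpha\cap B_{\overline\alpha}$ and in $B_\alpha\cap A_{\overline\alpha}$ respectively; hence a point behind either short chord is read with opposite first symbols by the two maps, so $a_n=1-s_n$. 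This yields $I^{\overline\alpha}(p)=w'(1-s_1)v^\alpha(1-s_2)v^\alpha\cdots$; the degenerate case where $I^\alpha(p)$ is eventually $\overline{0v^\alpha}$ or $\overline{1v^\alpha}$ (so $p$ is a common vertex of all large $P_n$) follows verbatim with an eventually straight path. One can equally run the last step through Lemma \ref{lem:symbol_change}: the positions whose iterates land behind a short chord of $Q$ are exactly the $s_n$-slots, and never the interior positions of a $v^\alpha$-block. I expect the real obstacle to be the bookkeeping of the second paragraph --- that $|w|=|w'|$ and that short-chord crossings fall precisely on the $s_n/a_n$ slots rather than inside the $v^\alpha$-blocks; once this is secured the center-gap computation is immediate. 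Note that $w'$ is left unidentified beyond being $\alpha$-regular of length $|w|$ --- pinning it down explicitly is exactly what the full algorithm of Theorem \ref{thm:rule_unified} does.
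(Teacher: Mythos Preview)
Your proposal is correct and follows essentially the same route as the paper, only far more thoroughly. The paper's proof is two sentences: it invokes Lemma \ref{lem:symbol_change} for the $s_i\mapsto 1-s_i$ swap and then remarks that $p$ sits in the same infinite gap, merely re-encoded by $\partial\mathcal L_{\overline\alpha}$; the fact $|w|=|w'|$ is stated as a one-line observation immediately after the theorem rather than proved inside it. Your write-up makes explicit what the paper leaves implicit --- that $\partial\mathcal L_\alpha=\partial\mathcal L_{\overline\alpha}$ and $\mathcal Q^\alpha=\mathcal Q^{\overline\alpha}$, that the quadrilateral tiling of each infinite gap is the same tree with two labelings, and the local center-gap computation showing the short-chord arcs lie in $A_\alpha\cap B_{\overline\alpha}$ and $B_\alpha\cap A_{\overline\alpha}$ --- but the mechanism is identical.
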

	\begin{proof}
		The first statement is obvious by lemma \ref{lem:symbol_change}.
		Also the actual position of $p$ does not move, it still lies on the same infinity gap, but different encodings with respect to $\partial\mathcal{L}_{\overline{\alpha}}$.
	\end{proof}
	Despite the encoding of infinity gap differs as $\alpha$ changes to $\overline{\alpha}$, its depth does not change and therefore $|w| = |w'|$.
	\emph{i.e.,} $\varphi_\alpha$ is a level preserving map.
	
	So the only thing we left is how the encoding of chosen infinite gap varies whenever $\alpha$ alters to $\overline{\alpha}$.
	In other words, we need a rule for $\alpha$-regular words.
	Recall that the only situation that the symbol changes is when the point $p$ or gap $G$ lies on some preimage of short side of $Q^\alpha$.
	Therefore, whenever the infinite gap in $\partial\mathcal{L}_\alpha$ lies behind the short side, some symbols in the encoded $\alpha$-regular word $w$ changes.
	
	For the narrow $\alpha$, we propose a rule.
	Recall that the characteristic arc $(\alpha, \overline{\alpha})$ is narrow if $\overline{\alpha} - \alpha=\frac{1}{2^{\PER(\alpha)}-1}$. 
	By proposition \ref{prop:narrow = no nesting}, narrow implies there are no nesting.
	Moreover, the narrow arc does not nest any other characteristic arcs of period strictly less than the period of itself.
	Recall also that every narrow arc $\alpha\overline{\alpha}$ admits its characteristic symbol $e^\alpha = 1$. 
	
	We first start with the lemma.
	\begin{lem}\label{lem:narrow_follows_v_alpha}
		Suppose $\alpha$ is narrow.
		Let $p$ lie behind one short side of the center gap of $\alpha$.
		Then $I^\alpha(p)$ starts with $0v^\alpha$ or $1v^\alpha$ and the first symbol depends on which short side $p$ lies behind.
	\end{lem}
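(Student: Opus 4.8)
The plan is to reduce the lemma to a statement about the forward orbit of the characteristic arc under the doubling map $h$, and then to feed narrowness through Proposition~\ref{prop:narrow = no nesting}. Write $n=\PER(\alpha)$, let $A_1=(\alpha,\overline\alpha),A_2,A_3,\dots$ be the orbit arcs of the characteristic arc, and let $v^\alpha=s_0s_1\cdots s_{n-2}$, so that $s_i$ is the kneading symbol of $h^i(\alpha)$ for $0\le i\le n-2$. First I would pin down the two short sides of the center gap $Q^{(\alpha,\overline\alpha)}_\lambda$: since $\tfrac\alpha2<\tfrac{\overline\alpha}2<\tfrac12\le\tfrac{\alpha+1}2<\tfrac{\overline\alpha+1}2$, its four sides in cyclic order have lengths $\delta,\tfrac12-\delta,\delta,\tfrac12-\delta$ with $\delta=\tfrac12(\overline\alpha-\alpha)$, so the short sides are $\tfrac\alpha2\tfrac{\overline\alpha}2$ and $\tfrac{\alpha+1}2\tfrac{\overline\alpha+1}2$, each mapped by $h$ onto the characteristic chord $\alpha\overline\alpha$. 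If $p$ lies behind $\tfrac\alpha2\tfrac{\overline\alpha}2$, then $p\in(\tfrac\alpha2,\tfrac{\overline\alpha}2)\subset A_\alpha$, so the symbol of $I^\alpha(p)$ at position $0$ is $0$ and $h(p)=2p\in(\alpha,\overline\alpha)=A_1$; if $p$ lies behind $\tfrac{\alpha+1}2\tfrac{\overline\alpha+1}2$, then $p\in B_\alpha$, the position-$0$ symbol is $1$, and again $h(p)\in A_1$. So it remains to show that for narrow $\alpha$ every $q\in A_1$ has $I^\alpha(q)$ beginning with the word $v^\alpha$, and with no $*$ in those first $n-1$ positions.

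The core claim is: for narrow $\alpha$ and $1\le j\le n-1$, the open arc $\mathring A_j$ contains neither preimage $\tfrac\alpha2$ nor $\tfrac{\alpha+1}2$ of $\alpha$. Narrowness forces $l(A_i)=\tfrac{2^{i-1}}{2^n-1}$, so $A_1,\dots,A_{n-1}$ all have length $<\tfrac12$ and each is the $h$-image of its predecessor, while $A_n$ is the unique orbit arc of length $>\tfrac12$ and, by the remark following Proposition~\ref{prop:narrow = no nesting}, the unique orbit arc nesting $A_1$. I would prove the claim by contradiction. Suppose $\tfrac\alpha2$ or $\tfrac{\alpha+1}2$ lies in $\mathring A_j$; applying $h$ puts the endpoint $\alpha$ of $A_1$ into $\mathring A_{j+1}$. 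If $j=n-1$, then $\alpha$ lies in the interior of the long arc $A_n$, which is disjoint from the closure of the short arc behind $A_n$; but $A_n$ nests $A_1$, forcing $\alpha$ into that closure — a contradiction. If $j\le n-2$, then $\alpha$ is interior to the short arc $A_{j+1}$, $2\le j+1\le n-1$, and I split on where the other endpoint $\overline\alpha$ of $A_1$ sits: if $\overline\alpha\in\mathring A_{j+1}$ then $A_1$ is nested by $A_{j+1}$, contradicting Proposition~\ref{prop:narrow = no nesting}; if $\overline\alpha\notin\overline{A_{j+1}}$ then the chords $A_1$ and $A_{j+1}$ link, contradicting the unlinkedness of the orbit portrait in Definition~\ref{defn:orbit_portrait}; and if $\overline\alpha$ is an endpoint of $A_{j+1}$ — which can only happen when $\alpha$ is satellite — then $A_1$ and $A_{j+1}$ share the endpoint $\overline\alpha$ while $\alpha\in\mathring A_{j+1}$, so $A_1$ is again nested by $A_{j+1}$, contradicting Proposition~\ref{prop:narrow = no nesting}.

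Granting the claim, I finish as follows. For $0\le i\le n-2$, the arc $A_{i+1}$ has no cut point in its interior, so $\mathring A_{i+1}$ is contained in a single open semicircle $A_\alpha$ or $B_\alpha$; its endpoint $h^i(\alpha)$ is not a cut point, since $h^{i+1}(\alpha)=\alpha$ would force $n\mid i+1$, so $h^i(\alpha)$ lies in the same open semicircle and every point of $\mathring A_{i+1}$ carries kneading symbol $s_i$ there. Also $\alpha\notin\mathring A_m$ for $2\le m\le n$, for otherwise $h$ would carry a preimage of $\alpha$ into $\mathring A_{m-1}$ with $1\le m-1\le n-1$, against the claim. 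Now for $q\in A_1$ and $0\le i\le n-2$ we have $h^i(q)\in\mathring A_{i+1}$ (the restrictions $h|_{A_1},\dots,h|_{A_{n-2}}$ are injective), so the position-$i$ symbol of $I^\alpha(q)$ equals $s_i$ and is never $*$ — that would require $h^{i+1}(q)=\alpha$, i.e.\ $\alpha\in\mathring A_{i+2}$, which is excluded. Hence $I^\alpha(q)$ begins with $v^\alpha$, and combining with the first paragraph, $I^\alpha(p)$ begins with $0v^\alpha$ or $1v^\alpha$ according to which short side $p$ lies behind.

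The step I expect to be the main obstacle is the bookkeeping in the claim when $\alpha$ is satellite: then $\overline\alpha$ lies in the forward orbit of $\alpha$, so some $h^i(\alpha)$ or $h^i(\overline\alpha)$ may coincide with an endpoint of an $A_j$ or with a cut point, and one must check that every such coincidence occurs at an \emph{endpoint} of the relevant arc rather than in its interior, since otherwise both the constancy of the symbol on $\mathring A_{i+1}$ and the absence of $*$ could fail. Should a uniform treatment get too delicate, a safe fallback is that by Corollary~\ref{cor:simply_renormalizable_criterion} every narrow satellite arc is rooted on the main cardioid, and for this single explicit family the conclusion can be verified directly from the orbit portraits.
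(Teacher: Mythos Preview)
Your proof is correct and follows essentially the same approach as the paper: identify the short sides of the center gap as preimages of the characteristic chord $\alpha\overline\alpha$, then show that narrowness forces every point behind $\alpha\overline\alpha$ to track the arcs $A_1,\dots,A_{n-1}$ under doubling and hence to follow $v^\alpha$. The paper's proof is a terse four sentences that simply asserts this tracking; your version fills in the verification that no cut point $\tfrac\alpha2,\tfrac{\alpha+1}2$ lands in $\mathring A_j$ for $1\le j\le n-1$ via Proposition~\ref{prop:narrow = no nesting} and the unlinkedness of the orbit portrait, which is the substantive content the paper leaves implicit. Your worry about the satellite case is unnecessary --- the shared-endpoint subcase you wrote out already handles it correctly, and the fallback via Corollary~\ref{cor:simply_renormalizable_criterion} is not needed.
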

	\begin{proof}
		Since $\alpha$ is narrow, every point lies behind the chord $\alpha\overline{\alpha}$ follows $v^\alpha$.
		This is because under angle doubling such points keep lie behind $A_1, \cdots, A_{n-1}$.
		Two short sides of the center gap is the preimage of the characteristic arc.
		Thus the first symbol of itinerary sequence is $0$ or $1$ followed by $v^\alpha$, and the first symbol only depends on which preimage $p$ belongs to.
	\end{proof}
	
	\begin{prop}\label{prop:narrow_a-regular_prefix}
		Suppose $I = \alpha\overline{\alpha}$ is narrow.
		Then the infinite gap of the shortest prefix among all gaps which lie behind the short edge $l^{\alpha, e^\alpha}_{tv^\alpha}$ is $G^{\alpha}_{tv^\alpha e^\alpha}$, $t\in\{0,1\}$.
	\end{prop}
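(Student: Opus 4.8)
The plan is to push everything to the level of itinerary sequences: describe which points lie behind the short edge $l^{\alpha,e^\alpha}_{tv^\alpha}$ in terms of $I^\alpha$, then use the encoding of infinite gaps from Proposition \ref{prop:endpt_of_inf_gap} to read off the minimal prefix, and finally check $\alpha$-regularity so that the candidate word indexes a genuine gap.

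Throughout write $e = e^\alpha$ and $n = \PER(\alpha) = |v^\alpha|+1$; narrowness gives $e = 1$ by Corollary \ref{cor:1-step_test_for_characteristic_symbol}, and by Proposition \ref{prop:narrow = no nesting} the characteristic arc $\alpha\overline{\alpha}$ nests no $A_i$ with $2 \le i < n$. Step one is to identify the points behind the short edge $C_t := l^{\alpha,e}_{tv^\alpha}(\alpha\overline{\alpha}) \in \mathcal{Q}^\alpha$. Using the prepending lemma (the one reading $l^{\alpha,t'}_w$ as attaching the prefix $w$ to an itinerary) together with the description of $\mathcal{Q}^\alpha$ from Proposition \ref{prop:lim_lam_is_a_subset}, a non-precritical $p$ lies behind $C_t$ exactly when $I^\alpha(p)$ begins with $tv^\alpha$ and the tail $h^{n}(p)$ lies behind $\alpha\overline{\alpha}$ on the short-chord side of the center gap. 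The crucial upgrade is that, for narrow $\alpha$, these two conditions together force $I^\alpha(p)$ to begin with the word $t\,v^\alpha\,e$: indeed $h^{n}(p)$ lying behind $\alpha\overline{\alpha}$ forces $I^\alpha(h^{n}p)$ to begin with $v^\alpha$, since by the narrowness argument of Lemma \ref{lem:narrow_follows_v_alpha} points behind $\alpha\overline{\alpha}$ remain behind $A_1,\dots,A_{n-1}$ under doubling, and the requirement that $p$ sit behind the \emph{short} side of the center gap rather than merely behind its companion long side pins the connecting symbol — the first symbol of $I^\alpha(h^n p)$ — to be precisely $e$, by Definition \ref{def:characteristic_symbol}, which singles out $e$ as the label of the shorter of the two non-diagonal sides hanging off a chord. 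Running the same translation in reverse gives the converse inclusion.

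Step two combines this with Proposition \ref{prop:endpt_of_inf_gap}: the points of an infinite gap $G^\alpha_w$ are exactly those whose itinerary has the form $w\,s_1 v^\alpha s_2 v^\alpha\cdots$. If $G^\alpha_w$ lies behind $C_t$, then every one of its points does, so each such itinerary must also begin with $t\,v^\alpha\,e$; comparing the two prefixes forces $|w|\ge n+1$, and when $|w|=n+1$ it forces $w = t\,v^\alpha\,e$. Step three verifies the remaining pieces: $t\,v^\alpha\,e$ is $\alpha$-regular, since its last $n$ letters form $v^\alpha e = v^\alpha 1$, which is neither $0v^\alpha$ nor $1v^\alpha$ because $e=1$ while $v^\alpha$ begins with $0$; hence $G^\alpha_{tv^\alpha e}$ is a genuine infinite gap, and by the reverse translation of step one all of its points (itineraries $t\,v^\alpha\,e\,s_1 v^\alpha s_2 v^\alpha\cdots$) lie behind $C_t$. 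Therefore $G^\alpha_{tv^\alpha e^\alpha}$ is the infinite gap of shortest prefix lying behind the short edge $l^{\alpha,e^\alpha}_{tv^\alpha}$, which is the assertion.

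The step I expect to be the main obstacle is the "crucial upgrade" in step one: making rigorous that \emph{behind the short chord, as opposed to merely behind its companion long chord} corresponds exactly to the connecting symbol equalling $e^\alpha$, while keeping the superscript bookkeeping of $l^{\alpha,t'}_w$ consistent (the superscript only affects the $\alpha$-orbit, whereas the short/long dichotomy is exactly the choice of $e$ versus $1-e$ at the last step). I expect this to reduce to the length comparison $d\big(S^{\alpha,e}_{uv^\alpha}\big) < d\big(S^{\alpha,1-e}_{uv^\alpha}\big)$ from the discussion after Definition \ref{def:characteristic_symbol}, together with the fact that for narrow $\alpha$ the chord $\alpha\overline{\alpha}$ and its relevant preimages are short enough that no leaf of smaller period interposes (Proposition \ref{prop:narrow = no nesting} and Lemma \ref{lem:not_narrow_has_unique_longest_narrow}), so that the local picture near $C_t$ is literally the triangle $\Delta_{0v^\alpha}$ transported by $l^{\alpha}_{tv^\alpha}$.
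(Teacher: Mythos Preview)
Your proof is correct and follows essentially the same route as the paper's: both use Lemma~\ref{lem:narrow_follows_v_alpha} to pin down the prefix $tv^\alpha$, then argue that the $(n+1)$-st symbol is forced to be $e^\alpha$, whence the minimal prefix is $tv^\alpha e^\alpha$. The paper dispatches your ``crucial upgrade'' geometrically---it applies $h^{n}$ and observes that the opposite side $l_1$ of $Q_{tv^\alpha}$ is sent to $\ddot{\alpha}\ddot{\overline{\alpha}}$, so everything behind $l_1$ lands behind $\ddot{\alpha}\ddot{\overline{\alpha}}$, which by Lemma~\ref{lem:char_symbol_determined} and Corollary~\ref{cor:1-step_test_for_characteristic_symbol} is exactly the condition giving symbol $e^\alpha$; your symbolic phrasing via Definition~\ref{def:characteristic_symbol} amounts to the same computation, and your extra checks ($\alpha$-regularity of $tv^\alpha e$ and the reverse inclusion) are implicit in the paper but good to make explicit.
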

	\begin{proof}
		We only prove for the case $t = 0$.
		The other case holds by symmetry.
		Let $Q$ be the center gap.
		By the observation, the quadrilateral gap $Q_{0v^\alpha}$ with prefix $0v^\alpha$ shares the short chord of the center gap.
		Let $l_1$ be a chord of $Q_{0v^\alpha}$ opposite to the short chord, and choose any quadrilateral gap $Q$ which lies behind $l_1$.
		Then since it lies behind the short side of the center gap, by lemma \ref{lem:narrow_follows_v_alpha} the prefix of $Q$ starts with $0v^\alpha$.
		Take $h^{n}$ to $Q$, then the endpoints of $h^n(Q)$ is contained in the arc $A_n$.
		Moreover, since $h^{n}(l_1) = \ddot{\alpha}\ddot{\overline{\alpha}}$, all endpoints lie behind the chord $\ddot{\alpha}\ddot{\overline{\alpha}}$.
		Thus any quadrilateral gap lie behind the short chord starts with $tv^\alpha e^\alpha$.
	\end{proof}
	Remark that the infinite gap $G^\alpha_{tv^\alpha(1-e^\alpha)}$, $t\in \{0,1\}$ do not lie behind the short chord of the center gap.
    See the figure \ref{fig:0.333_limitlam_index}.
		
	By using this, we can now detect how the prefix of infinite gap changes.
	Let $w$ be $\alpha$-regular word and $n = \PER(\alpha)$.
	We begin with checking the first $n$ symbols form a $t_1v^\alpha$.
	If not, we just slide one symbol over.
	If so, then we check the next $n$ symbols whether they form $t_2v^\alpha$ or not.
	We do until they do not form $tv^\alpha$.
	Suppose it ends at $t_kv^\alpha$.
	The next step is to check the next following symbol.
	If the symbol is $e^\alpha$, then all such $t_iv^\alpha$ changes to $(1-t_i)v^\alpha$.
	This is because it indicates such gap lies behind the boundary chord $l^{\alpha, 1-e^\alpha}_{t_1v^\alpha\cdots t_kv^\alpha}(\alpha\overline{\alpha})$ of the center gap.
	
	What if the symbol is not $e^\alpha$?
	Then we go back to the previous $v^\alpha$, and check what symbol follows after that.
	In this case we have the sequence $\cdots t_{k-1}v^\alpha t_kv^\alpha (1-e^\alpha) \cdots$, so we check whether $t_k$ is $e^\alpha$ or not.
	If not, then we go back again and again.		
	
	For example, let $\alpha = \frac{1}{7}$. 
	Then $v^\alpha = 00$ and $e^\alpha = 1$.
	If we have a $\alpha$-regular prefix $01000000001010010$, it changes to another prefix as below.
	\[
		0|1|0|0|000~0001|0|1001|0 \Rightarrow 0|1|0|0|100~1001|0|0001|0
	\]
	Note that the prefix has $100$ from the second symbol, but $0$ follows behind which is $1-e^\alpha$.
	Hence the second symbol does not change, and we slide one symbol over.
	
	Summarizing up, we have the following algorithm.	
	\begin{thm}\label{thm:rule_unified}
		Let $s = s_1s_2s_3\cdots$ be a $0$-$1$ sequence.
		Also, let $v = v^\alpha$ be the repetition word of $\alpha$, $e = e^\alpha$ is a characteristic symbol of $\alpha$ and $\PER(\alpha) = |v|+1$ be the period of $\alpha$.
		Then we define $\varphi_\alpha(s)$ as below.
		\begin{enumerate}
			\item $i = 0$.
			\item Set $w = s_i\cdots s_{i+|v|}$.
			\begin{enumerate}
				\item If $w = 0v$ or $1v$, set $i \leftarrow i+|v|+1$ and go back to (2).
				\item If not, but if there is a previous $0v$ or $1v$, check $s_i = e$. 
				\begin{enumerate}
					\item If $s_{i+|v|+1} = e$, swap all $0v$ and $1v$. 
					\item If $s_{i+|v|+1} \neq e$, trace back to the previous $0v$ (or $1v$) and go back to (b). 
				\end{enumerate}
			\end{enumerate}			
			\item If $w$ is neither $0v$ nor $1v$, then set $i \leftarrow i+1$ and go back to (2).
		\end{enumerate}

        Suppose $(\alpha, \overline{\alpha})$ is narrow.
        Choose $p \in J_\alpha$ and let $\theta$ be the dynamic angle of $p$.
        If $\theta$ is not precritical angle, then $\varphi_\alpha(I^\alpha(\theta)) = I^{\overline{\alpha}}(\theta)$.
	\end{thm}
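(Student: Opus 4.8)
\emph{Proof proposal.} The plan is to translate the statement into the language of the limit lamination and to decompose the passage from $\alpha$ to $\overline{\alpha}$ into an action on a ``block tail'' and an action on an ``$\alpha$-regular prefix''. Since $\theta$ is not precritical, $I^\alpha(\theta)$ and $I^{\overline\alpha}(\theta)$ are genuine $0$-$1$ sequences, and by Theorem~\ref{thm:landing_pattern=limit_lam} the landing pattern of $\alpha$ equals $\partial\mathcal{L}_\alpha=\partial\mathcal{L}_{\overline\alpha}$. By Propositions~\ref{prop:infinite_gap_encoding} and~\ref{prop:endpt_of_inf_gap}, $\theta$ lies in a unique infinite gap $G^\alpha_w$ for some $\alpha$-regular word $w$ (or is an endpoint of one of its boundary chords, in which case $I^\alpha(\theta)$ ends in $\overline{0v^\alpha}$ or $\overline{1v^\alpha}$), with $I^\alpha(\theta)=w\,s_1v^\alpha s_2v^\alpha\cdots$; the same gap occurs as $G^{\overline\alpha}_{w'}$ in $\partial\mathcal{L}_{\overline\alpha}$, with $I^{\overline\alpha}(\theta)=w'\,s_1'v^\alpha s_2'v^\alpha\cdots$ since $v^{\overline\alpha}=v^\alpha$ by Proposition~\ref{prop:same_rep_word_v}. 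Thus the theorem splits into: (i) the tail rule $s_i'=1-s_i$; (ii) the prefix rule $w\mapsto w'$; together with the assertion that, reading $s=I^\alpha(\theta)$ left to right, the algorithm outputs exactly $w'\,s_1'v^\alpha s_2'v^\alpha\cdots$.

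Part (i) is Theorem~\ref{thm:0v1v-seq_rule}: since $\mathcal{Q}^\alpha=\mathcal{Q}^{\overline\alpha}$ and replacing $\alpha$ by $\overline\alpha$ merely exchanges the two diagonals of every quadrilateral gap, crossing any short chord toggles exactly one symbol, while each descent in the binary tree of quadrilateral gaps filling $G^\alpha_\lambda$ crosses a short chord; hence every block $0v^\alpha/1v^\alpha$ of the tail has its leading symbol flipped, which is precisely the ``swap all $0v$ and $1v$'' clause applied blockwise to the tail. For part (ii) I would invoke Lemma~\ref{lem:symbol_change}: the $m$-th symbol of $I^\alpha(\theta)$ flips iff $h^m(\theta)$ lies behind one of the two short sides of the center gap $Q$. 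Narrowness enters here decisively: $e^\alpha=1$ by Corollary~\ref{cor:1-step_test_for_characteristic_symbol} and there is no nesting by Proposition~\ref{prop:narrow = no nesting}, so Proposition~\ref{prop:narrow_a-regular_prefix} gives that the shortest-prefix infinite gap behind the short side labelled $tv^\alpha$ is $G^\alpha_{tv^\alpha e^\alpha}$. Consequently $h^m(\theta)$ lies behind a short side of $Q$ exactly when the $\alpha$-regular prefix of the gap containing $h^m(\theta)$ begins with a block $tv^\alpha$ immediately followed by $e^\alpha$. Running this test simultaneously for all $m$ on the single sequence $s$ is exactly the march-and-swap procedure of the theorem — advance through $s$, collect a maximal run of consecutive blocks $t_1v^\alpha\cdots t_kv^\alpha$, and if it is followed by $e^\alpha$ swap each $t_iv^\alpha$ — with the trace-back of step~(2)(b)(ii) accounting for the fact that a greedily collected run can overshoot the true end of the $\alpha$-regular prefix (the $e^\alpha$ one reads may belong to the block tail, where it marks no short-side crossing), so one must back up to the last block at which the ``$e^\alpha$ follows'' test genuinely detects a short side of $Q$.

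The hard part is the faithfulness of this trace-back: proving that the algorithm's bookkeeping reproduces the condition ``$h^m(\theta)$ lies behind a short side of $Q$'' for every $m$ at once, and in particular that it correctly locates the boundary between $w$ and the block tail. I would prove this by induction on the length of $w$, i.e.\ on the depth of $G^\alpha_w$ in the quadrilateral-gap tree: one descent crosses exactly one short chord, flips exactly one block-leading symbol, and leaves a shorter $\alpha$-regular prefix to which the inductive hypothesis applies, the base case being the critical point gap $G^\alpha_\lambda$; the re-encoding $w\mapsto w'$ is then assembled from these single-step flips. The two degenerate cases — $\theta$ an endpoint of a boundary chord (periodic block tail, $I^\alpha(\theta)$ ending in $\overline{0v^\alpha}$ or $\overline{1v^\alpha}$) and $\theta$ whose orbit eventually enters $G^\alpha_\lambda$ — I would dispose of by the same step-by-step analysis, or by continuity of the encodings along the approximating sequences of periodic angles. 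Throughout, narrowness is used essentially twice: to force $e^\alpha=1$, and (via Proposition~\ref{prop:narrow = no nesting}) to guarantee that the first infinite gap behind a short side of $Q$ is $G^\alpha_{tv^\alpha e^\alpha}$ rather than one nested deeper — which is exactly what makes ``$e^\alpha$ immediately follows the block'' a complete description of short-side crossings; without narrowness, orbit forcing intervenes and the cruder graph description of Appendix~\ref{appendix:A-sym_dyn} becomes necessary.
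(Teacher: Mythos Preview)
Your proposal is correct and follows essentially the same route as the paper: split into the tail rule (Theorem~\ref{thm:0v1v-seq_rule}) and the prefix rule, use Lemma~\ref{lem:symbol_change} and Proposition~\ref{prop:narrow_a-regular_prefix} to identify symbol flips with short-side crossings, and then argue that the algorithm's march-and-trace-back reproduces this test. The paper's formal proof is in fact very short --- it simply declares that the case $p\in G^\alpha_w$ has been handled by the preceding discussion (exactly the ingredients you cite) and then disposes of the remaining case by approximation.

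Two small remarks. First, your induction on $|w|$ for the faithfulness of the trace-back is more explicit than what the paper does; the paper gives an informal description with a worked example rather than a formal inductive argument, so you are being slightly more careful here. Second, your list of ``degenerate cases'' should also include the case where $\theta$ lies in no infinite gap at all --- i.e., $\theta$ is a limit of infinite gaps and $I^\alpha(\theta)$ never settles into a $0v^\alpha$-$1v^\alpha$ tail. This is precisely the residual case the paper's proof isolates and handles by approximating $\theta$ by prefixes of nested gaps; your mention of ``continuity of the encodings along the approximating sequences'' is the right idea, but you should name this case explicitly rather than folding it into the endpoint case.
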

	\begin{proof}
		We have seen for any point $p \in G^\alpha_w$ for any $\alpha$-regular word $w$.
		The remaining part is the case $p$ is the limit of infinite gaps.
		Then $I^\alpha$ does not end with $0v^\alpha$-$1v^\alpha$ sequences.
		Hence, by truncating from the beginning of $I^\alpha(p)$, it is approximated by the prefixes, which nests $p$.
		Therefore, $\varphi_\alpha(I^\alpha(p)) = I^{\overline{\alpha}}(p)$.
	\end{proof}
	
	We remark that if $\alpha$ is not narrow, then the algorithm fails.
	Because of the nonnarrowness, there is $m < PER(\alpha)$ which $A_m$ nests $A_1$.
	This implies that there exists an infinite gap $G^\alpha_w$ with shorter length $|w| < |tv^\alpha e^\alpha|$, and such gap may change their symbols.
	Also, if the point lie behind the nonnarrow arc, then the itinerary sequence need not begin with $v^\alpha$, but only the first $(m-1)$ symbols of $v^\alpha$.
 
	Another remark is that $v^\alpha$ has an ambiguity for some sequences, especially the itinerary sequence which ends with $\overline{0v^\alpha}$ or $\overline{1v^\alpha}$.
	For example, let $\alpha = \frac{1}{7}$. 
	Then $v^\alpha = 00$ and $e^\alpha = 1$.
	Now consider the sequence $s = \overline{0} = 0000\cdots$.
	By the rule, step (2) will not end and we have to distinguish at which $0v$-$1v$ sequence starts. 
	But the problem is that we cannot distinguish what is the prefix of this sequence because all 3 cases (the empty word, $0$, $00$) could be its prefix.
	Each case is obtained by approximation of prefixes.
	For empty word prefix case, it is approximated by $0$, $0001$, $000~0001$, $000~000~0001$, so that after taking $\varphi_\alpha$ it gives $\overline{100}$.
	
	In the next section we will resolve this ambiguity by constructing an equivalence relation.
	
	\section{Equivalence relations on $\Sigma_2$} \label{sec:6-equiv_rel}
	Definition of itinerary sequence avoids some sequence in $\Sigma_2$ is not realized.
	For example, the $\overline{0} = 000\cdots$ cannot be realized by $I^\alpha(\theta)$, $\theta\in S^1$.
	This is because if we keep taking the inverse image so that each image contains an angle whose itinerary sequence starts with $0\cdots 0$, it converges to the set $\{1/3, 2/3\}$ and both angle contain $*$ in their sequence.
	 
	Hence the algorithm we propose is not defined on the whole $\Sigma_2$ and by the ambiguity $\varphi_\alpha$ is a multi-valued function.
	Also, because of detecting a piece of word and swap, it is far from shift invariant.
	
	Therefore, in this section our objectives are the followings.
	
	\begin{enumerate}
		\item Extend the angles so that $I^\alpha$ maps surjectively on $\Sigma_2$.
		\item Extend the domain of $\varphi_\alpha$ to $\Sigma_2$.
		\item Construct the equivalence relation on $\Sigma_2$ to make our map $\varphi_\alpha$ became a shift map.
	\end{enumerate} 
	
	\subsection{Extended angles}
	Consider $z^2 + 2$.
	The parameter angle of $c = 2$ is $0$, and since it is in $\mathcal{S}_2$ the Julia set is a Cantor set.
	Let $p_1, p_2$ be points in the Julia set corresponding to each sequence $\overline{0} = 000\cdots$, $\overline{1} = 111\cdots$, respectively.
	What are their external angles?
	Since those two points are fixed point under shift, the angle must be the fixed point of angle doubling map.
	Hence, both admits $0$ as their external angles.
	
	The main reason is when we extend B\"{o}ttcher coordinate into the whole $\mathbb{C} - J_P$ for shift polynomial $P$, we keep solving the equation $z\mapsto \sqrt{\phi(P^{\circ 2}(z))}$.
	Hence the branched region became smaller and smaller and it might converge to some points whose angles are pre-critical.
	As in the above example, the region that the first symbol of itinerary is equal to $0$ is an open interval $(0, 1/2)$, and the region that both first and second symbols are $0$ is an open interval $(0, 1/4)$.
	Keep doinig that we deduce that the region whose itinerary sequence starts with $\underbrace{0\cdots 0}_{n}$ is $(0, 1/2^n)$.
	\emph{i.e.,} The point $p$ has a dynamic angle of $0$, and such sequence of interval converges to $0$.
	But we can do the exact same for $1$, by taking $(1/2,1), (3/4, 1), (7/8, 1), \cdots$ and it converges to $1 = 0$.
	
	To distinguish those two points, we define the extended angle.
	\begin{defn}\label{defn:extended_angle}
		Let $P := P_\alpha(z) = z^2 + c$ is a shift polynomial satisfying that the parameter angle of $c$ is $\alpha$, which is periodic.
		Suppose $x \in J_P$ corresponds to $s = s_1s_2\cdots$ and let $\theta$ be the external angle of $x$.
		Then we endow a marker $+$ or $-$ on $\theta$ if it satisfies below.
		\begin{enumerate}
			\item $\theta^+$ if $\underset{\theta' \to \theta, \theta' > \theta}\lim I^\alpha(\theta') = s$,
			\item $\theta^-$ if $\underset{\theta' \to \theta, \theta' < \theta}\lim I^\alpha(\theta') = s$,
			\item $\theta$ if both limit (from the left and right) coincides.
		\end{enumerate}
		We call such angle $\theta^\pm$ as an \emph{extended angle}.
		Remark that if $\theta$ is not a preperiodic angle of $\alpha$, then the external angle does not have a marker.
	\end{defn}
	For example of $\alpha = 0$ as in the above, $0^+$ is an external angle of sequence $\overline{0}$, on the other hand $0^-$ for $\overline{1}$.
	
	Extended angle has an order inherited by the usual order in $S^1$, by considering $\theta^- < \theta < \theta^+$.
	With the ordering, $I^\alpha(\theta^\pm)$ is well-defined and compatible with the definition.
	Hence, we extend the domain as follows.
	\[
		\{ \theta \text{ is not preimage of $\alpha$} \} \cup \{ \theta^\pm \text{ is a preimage of }\alpha \text{, with a corresponding marker} \}.
	\]
	We denote the domain $\mathbf{E}_\alpha$.
	\begin{prop}\label{prop:extended_domain_of_I^alpha}
		Let $\alpha$ be nonzero periodic in $S^1$.
		Then $I^\alpha : \mathbf{E}_\alpha \to \Sigma_2$ is surjective.
	\end{prop}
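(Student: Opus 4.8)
The plan is to track, for a target sequence $s=s_1s_2s_3\cdots\in\Sigma_2$, the nested family of ``cylinder'' sets of angles realizing longer and longer prefixes of $s$, and to show that this family either pins down an ordinary angle with itinerary $s$ or pins down a precritical angle whose one-sided limit of itineraries is $s$, so that the corresponding extended angle does the job. Writing $X_0=A_\alpha$ and $X_1=B_\alpha$ for the two open arcs of Definition~\ref{defn:kneading_seq}, set
\[
 J_n=\{\,\theta\in S^1:\ 2^{i-1}\theta\in X_{s_i}\text{ for all }1\le i\le n\,\},
\]
which one can also describe through the inverse branches $l^\alpha_{s_1\cdots s_n}$ of Section~\ref{sec:4-inv_lam}. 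Since $h$ restricts to an orientation-preserving homeomorphism of each of $A_\alpha,B_\alpha$ onto $S^1\setminus\{\alpha\}$, each $J_n$ is a nonempty finite union of open arcs whose endpoints are all precritical angles, and the defining conditions only accumulate, so $J_1\supseteq J_2\supseteq\cdots$ and $\overline{J_1}\supseteq\overline{J_2}\supseteq\cdots$. First I would note the tautology that $\theta\in\bigcap_nJ_n$ if and only if $I^\alpha(\theta)=s$ (such a $\theta$ being automatically non-precritical, as a precritical angle carries a $*$ in its kneading sequence). Hence if $\bigcap_nJ_n\neq\emptyset$ we are finished: any such $\theta$ is an ordinary angle of $\mathbf{E}_\alpha$ mapping to $s$.

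It remains to handle $\bigcap_nJ_n=\emptyset$. Compactness gives $K:=\bigcap_n\overline{J_n}\neq\emptyset$; fix $\theta_0\in K$. The \emph{key claim} is that $\theta_0$ is precritical and that for all large $n$ it is an endpoint of a single arc $[\theta_0,c_n]$ (say) of $\overline{J_n}$, with $c_n\downarrow\theta_0$ from one fixed side. Indeed, $\theta_0\notin\bigcap_nJ_n$ and the nesting force $\theta_0\notin J_n$ for $n\ge N$; but $\theta_0\in\overline{J_n}$, and the arc-endpoints of the finitely many arcs of $\overline{J_n}$ are precritical, so $\theta_0$ is precritical and is an arc-endpoint of $\overline{J_n}$ for $n\ge N$. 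The inclusion $\overline{J_{n+1}}\subseteq\overline{J_n}$ prevents the side from switching and forces $c_n\to\theta_0$ --- otherwise $K$ would contain the nondegenerate arc $[\theta_0,\lim c_n]$, impossible since here $K$ consists only of precritical angles. Finally, every non-precritical $\beta\in(\theta_0,c_n)$ meets each defining inclusion strictly, so $\beta\in J_n$ and $I^\alpha(\beta)$ begins with $s_1\cdots s_n$; letting $n\to\infty$ gives $\lim_{\beta\to\theta_0,\,\beta>\theta_0}I^\alpha(\beta)=s$. By Definition~\ref{defn:extended_angle} the extended angle $\theta_0^{+}$ then lies in $\mathbf{E}_\alpha$ with $I^\alpha(\theta_0^{+})=s$ (symmetrically one obtains $\theta_0^{-}$ if the stabilized side is the other one), which finishes the proof.

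The main obstacle is exactly this middle step: checking, in the case $\bigcap_nJ_n=\emptyset$, that the nested arcs genuinely collapse onto one precritical angle from a consistent side, rather than oscillating between sides or leaving a residual set. I would carry it out combinatorially, using only the nesting of the $\overline{J_n}$ and the fact that all of their arc-endpoints are precritical (so that $K$ is at most countable here), as above. Alternatively one can read it off from Keller's structure theory: the targets $s$ with $\bigcap_nJ_n=\emptyset$ are precisely those ending with $\overline{0v^\alpha}$ or $\overline{1v^\alpha}$, so by Propositions~\ref{prop:infinite_gap_encoding} and~\ref{prop:endpt_of_inf_gap} such an $s$ designates an endpoint of a boundary chord of $\partial\mathcal{L}_\alpha$, and the only remaining ambiguity --- which of the two one-sided limits of $I^\alpha$ there equals $s$ --- is resolved by the very $\pm$-marker built into $\mathbf{E}_\alpha$.
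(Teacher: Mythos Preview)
Your argument is correct, and it takes a genuinely different route from the paper's. The paper's proof is essentially a two-line appeal to Definition~\ref{defn:extended_angle}: given $s\in\Sigma_2$, the shift conjugacy of Theorem~\ref{thm:shift=escape} produces a point $x\in J_\alpha$ with symbol sequence $s$, and Definition~\ref{defn:extended_angle} then simply assigns $x$ an extended angle in $\mathbf{E}_\alpha$ --- the surjectivity is baked into the way extended angles were set up. Your proof instead stays entirely on $S^1$: you build the nested cylinder arcs $J_n$, use compactness to locate a limit angle, and argue combinatorially that either some ordinary angle survives or the closures collapse onto a precritical angle from one consistent side, which is exactly what the $\pm$ marker records. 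This is more hands-on but also more self-contained: it does not invoke the conjugacy $J_\alpha\cong\Sigma_2$ or B\"ottcher coordinates, and it makes explicit the mechanism (one-sided collapse at a precritical endpoint, with the countability of precritical angles forcing $c_n\to\theta_0$) that the paper leaves implicit inside Definition~\ref{defn:extended_angle}. One small caveat on your closing alternative sketch: the claim that the sequences with $\bigcap_n J_n=\emptyset$ are \emph{precisely} those ending in $\overline{0v^\alpha}$ or $\overline{1v^\alpha}$ is plausible but would need its own justification; fortunately your main compactness argument does not depend on it.
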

	\begin{proof}
		It is straightforward from the definition.
		Choose any sequence in $\Sigma_2$.
		Since we start with the Julia set $J_\alpha$ of angle $\alpha$, there is a point $x\in J_\alpha$ which corresponds to $\alpha$.
		Then its extended angle is in $\mathbf{E}_\alpha$.
	\end{proof}
    With the definition, $\varphi_\alpha$ is a map which commutes the diagram.
	\[
		\begin{tikzcd}
			\mathbf{E}_\alpha \arrow[r, "Id"] \arrow[d, "I^\alpha"] & \mathbf{E}_\alpha \arrow[d, "I^{\overline{\alpha}}"] \\
			{\Sigma_2} \arrow[r, "\varphi_\alpha"]        & {\Sigma_2}                                
		\end{tikzcd}
	\]
	Still $\varphi_\alpha$ is far from injectivity.
	For example, consider $\alpha = 2/7$ and let $\theta_1 = \frac{1}{7}^+, \theta_2 = \frac{2}{7}^+, \theta_3 = \frac{4}{7}^+$.
	Then all three $I^\alpha(\theta_i) = \overline{0}$.
	However it is enable to distinguish the prefix if we know not only the sequence but the extended angle, by which infinite gap it belongs to.
	In the example $\theta_1 \in G^\alpha_\lambda, \theta_2 \in G^\alpha_{0}$, and $\theta_3 \in G^\alpha_{00}$.
    Their prefixes appear when each marker alters, such as $I^{\frac{2}{7}}\left( \frac{4}{7}^- \right) = \overline{001} = 00~\overline{100}$.
    This is described in the figure \ref{fig:rabbit_and_rays}.

    \begin{figure}
        \centering
        \includegraphics[width=.9\textwidth]{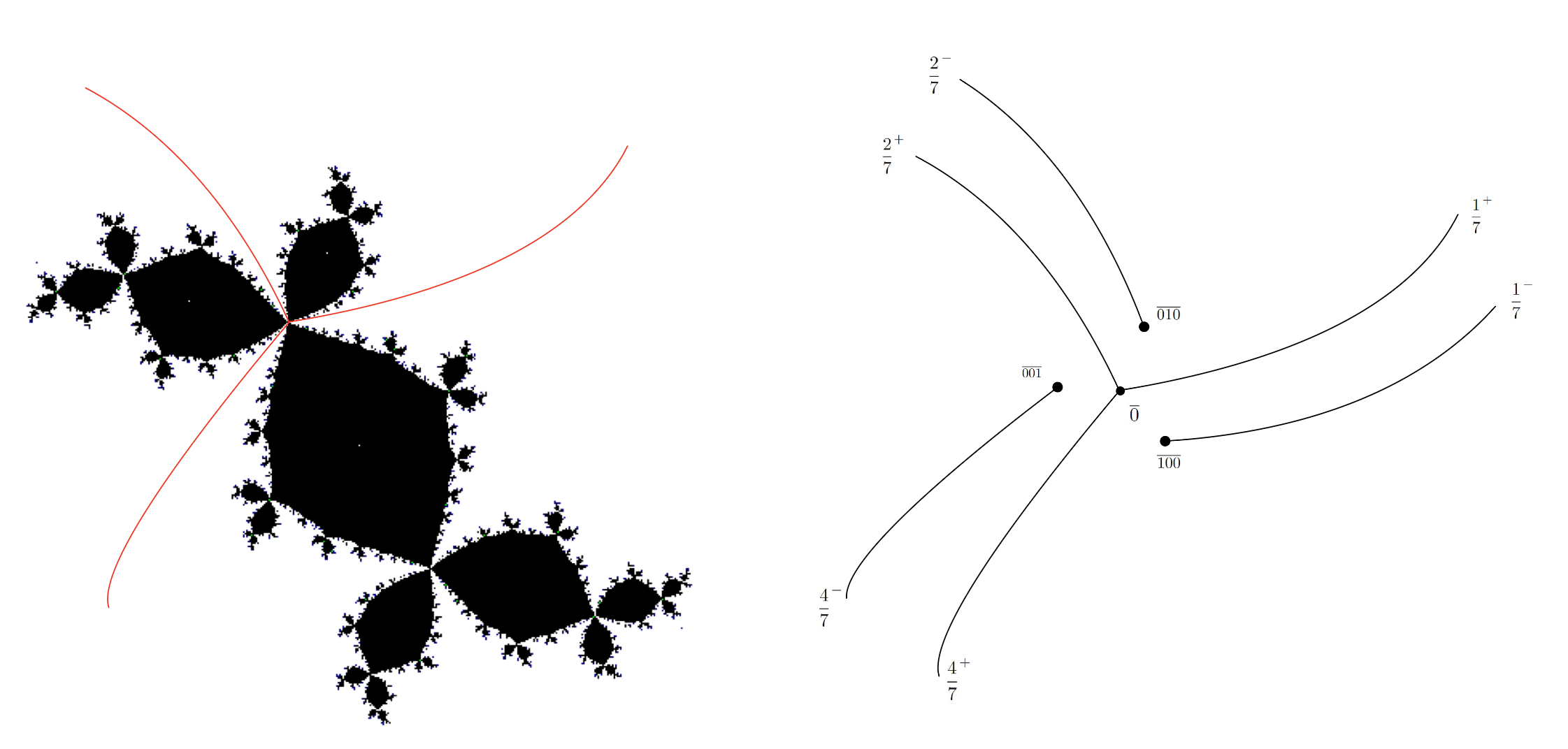}
        \caption{Figure on the left illustrates the dynamical plane and actual rays which lands at the fixed point of $z^2 + c$, where $c \approx -0.1226 +0.7449i$, a rabbit.
        Figure on the right is a schematic version of the left, before $c$ lands at the root.
        Black dots are points in Julia set.
        Suppose $\alpha = \frac{2}{7}$.
        As $c$ moves along $\mathcal{R}^\alpha$ and eventually lands at the root of the hyperbolic component, those $4$ points collapse into $1$ point.
        Here the dynamic rays should in fact bifurcate, but we draw it separately to distinguish easily. 
        Such bifurcation rays were discussed in \cite{atela1992bifurcations} also.}
        \label{fig:rabbit_and_rays}
    \end{figure}

	\subsection{Parabolic implosion}
	Yet $\varphi_\alpha$ is a multi-valued in the sense of mapping between sequences.
	In this section we will resolve this problem.
	
	We first define a Julia equivalence on $\mathbf{E}_\alpha$.
	This is again introduced by Keller in \cite{keller2007invariant}.
	\begin{defn}[Julia equivalence]\label{defn:Julia_equivalence}
		For periodic $\alpha\in S^1$ and $\theta, \theta'\in S^1$, we denote $\theta \approx^\alpha \theta'$ if there exists a finite sequence of points $\theta= z_1, z_2 \cdots, z_{n-1}, z_n = \theta'$ such that $z_iz_{i+1}$ are chords of $\partial \mathcal{L}^\alpha$.
	\end{defn}	
	One can easily verify that it is actually an equivalence relation.
	It can be translated into the language of itinerary sequence.
	
	\begin{prop}\label{prop:Julia_equiv_and_iti_seq}
		$x \approx^\alpha y$ $\iff$ $I^\alpha(x) = I^\alpha(y)$. 
	\end{prop}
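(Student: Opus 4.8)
The plan is to match the two relations gap by gap on the limit lamination $\partial\mathcal{L}_\alpha$ built in Section~\ref{sec:4-inv_lam}, after first agreeing (as in the rest of this section) to read $I^\alpha$ on the extended angles $\mathbf E_\alpha$, so that it is an honest surjection onto $\Sigma_2$ (Proposition~\ref{prop:extended_domain_of_I^alpha}) and no asterisks intervene. The starting observation is that $I^\alpha(\theta)$ is the itinerary of $\theta$ under angle doubling relative to the diagonal $S^\alpha_\lambda=\dot\alpha\ddot\alpha$ of the centre gap; equivalently, $I^\alpha(x)=I^\alpha(y)$ holds exactly when no chord $S^{\alpha,t}_w=l^{\alpha,t}_w(S^\alpha_\lambda)$ of the approximating lamination $\mathcal{L}_\alpha$ strictly separates $x$ from $y$.

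For the forward implication I would use transitivity of equality to reduce the claim to: if $zz'$ is a leaf of $\partial\mathcal{L}_\alpha$ then $I^\alpha(z)=I^\alpha(z')$. By the generation theorem for $\partial\mathcal{L}_\alpha$ every leaf is $l^{\alpha,1-e}_w(\alpha\overline\alpha)$ or a limit of such, and since the branched inverses $l^{\alpha,t}_w$ act as prefix attachment on itineraries it is enough to treat the leaf $\alpha\overline\alpha$ itself, the limiting leaves then being covered by continuity of $I^\alpha$ on $\mathbf E_\alpha$ together with Proposition~\ref{prop:endpt_of_inf_gap}. For $\alpha\overline\alpha$ one uses that this leaf does not cross $S^\alpha_\lambda$, so $h^n(\alpha)$ and $h^n(\overline\alpha)$ stay on the same side of $S^\alpha_\lambda$ for every $n$; combined with $\PER(\alpha)=\PER(\overline\alpha)$ and $v^\alpha=v^{\overline\alpha}$ (Proposition~\ref{prop:same_rep_word_v}) this gives $I^\alpha(\alpha)=I^\alpha(\overline\alpha)$ in $\mathbf E_\alpha$.

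For the converse, suppose $I^\alpha(x)=I^\alpha(y)$ with $x\neq y$. By the reformulation above no $\mathcal{L}_\alpha$-chord strictly separates $x$ and $y$, hence no leaf of $\partial\mathcal{L}_\alpha$ (being a limit of $\mathcal{L}_\alpha$-chords) strictly separates them, so $x$ and $y$ lie in the closure of a single gap of $\partial\mathcal{L}_\alpha$. If that gap is finite, its edges are leaves and $x\approx^\alpha y$. If it is an infinite gap $G^\alpha_w$, then by Proposition~\ref{prop:endpt_of_inf_gap} both itineraries have the form $w\,s_1v^\alpha s_2v^\alpha\cdots$; the encoding $(s_i)_i\mapsto w\,s_1v^\alpha s_2v^\alpha\cdots$ is injective, so $x$ and $y$ define the same $(s_i)_i$, hence the same nested chain of quadrilateral gaps inside $G^\alpha_w$ (Proposition~\ref{prop:infinite_gap_encoding}), hence $x=y$ --- unless one of them is instead an endpoint of a boundary chord of $G^\alpha_w$ shared with an adjacent gap, in which case that common chord is a leaf joining $x$ and $y$ (directly, or through one intermediate vertex), so again $x\approx^\alpha y$.

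The main obstacle is this last ``collision'' step of the converse: one must show that the encoding $(s_i)_i\mapsto w\,s_1v^\alpha\cdots$ can agree across two different $\alpha$-regular prefixes $w$ only when forced by a shared boundary chord, and it is precisely here that the edge case $v^\alpha=0^{\PER(\alpha)-1}$ --- Atela's generalized rabbits --- has to be inspected, since it is the only situation in which $w\,s_1v^\alpha\cdots$ and $w'\,s_1'v^\alpha\cdots$ with $|w|\neq|w'|$ can coincide. A secondary, technical point is to make ``no $\mathcal{L}_\alpha$-chord strictly separates them'' and the continuity of $I^\alpha$ precise at the (pre)periodic endpoints, which is exactly what the passage to $\mathbf E_\alpha$ and Proposition~\ref{prop:endpt_of_inf_gap} are for. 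An alternative and perhaps cleaner route is available: by Theorem~\ref{thm:landing_pattern=limit_lam} the lamination $\partial\mathcal{L}_\alpha$ is the landing pattern of $z^2+r_\alpha$, so $x\approx^\alpha y$ says the dynamic rays $\mathcal{R}_x,\mathcal{R}_y$ land at a common point of $J_{r_\alpha}$ (a chain of leaves collapses because a ray has a unique landing point); since $r_\alpha$ is parabolic the critical orbit lies in the Fatou set, so $I^\alpha(\theta)$ is the genuine symbolic itinerary of that landing point, and local connectivity of the parabolic Julia set then shows this itinerary determines the point, which is exactly the converse.
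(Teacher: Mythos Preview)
The paper does not prove this proposition; it is stated as a fact imported from Keller \cite{keller2007invariant}, so there is no argument in the paper to compare against directly.

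Your reformulation --- $I^\alpha(x)=I^\alpha(y)$ holds iff no chord $S^{\alpha,t}_w\in\mathcal L_\alpha$ strictly separates $x$ from $y$ --- is exactly the right way to read the statement on $S^1$ (it absorbs the asterisk cases: a $*$ in one itinerary means the orbit touches $S^\alpha_\lambda$, which is non-separation), and with that reading both directions of your combinatorial argument go through essentially as written. The forward direction is just $h$-invariance of $\partial\mathcal L_\alpha$ together with the fact that $\partial\mathcal L_\alpha\cup\{S^\alpha_\lambda\}\subset\overline{\mathcal L_\alpha}$ is unlinked; the converse reduces, via Lemma~\ref{lem:D_x's}, to identifying the nested intersection $\bigcap_n D^\alpha_{x_1\cdots x_n}$ with a single $\approx^\alpha$-class. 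Your alternative route through Theorem~\ref{thm:landing_pattern=limit_lam} and local connectivity of the parabolic Julia set is the standard and cleanest proof.

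Where your write-up wobbles is the passage to $\mathbf E_\alpha$. Under the paper's ``drop the markings'' extension of $\approx^\alpha$ (stated immediately after the proposition), the biconditional becomes false: for $\alpha=1/3$ one has $1/3^{+}\approx^\alpha 2/3^{+}$ (since $1/3\approx^\alpha 2/3$), yet $I^\alpha(1/3^{+})=\overline{01}\neq\overline{10}=I^\alpha(2/3^{+})$. So your sentence ``this gives $I^\alpha(\alpha)=I^\alpha(\overline\alpha)$ in $\mathbf E_\alpha$'' is not well-posed --- each of $\alpha,\overline\alpha$ carries two markers with distinct itineraries. This does not damage the proposition itself, which lives on $S^1$; it just means $\mathbf E_\alpha$ is not the place to run the argument. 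Keep it on $S^1$ with the non-strict-separation reading, or take the landing-pattern route.
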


	The equivalence is extended on the $\mathbf{E}_\alpha$ from $S^1$, by dropping the markings.
	By definition, $\theta_1 \approx^\alpha \theta_2$ if two external(dynamics) rays $\mathcal{R}_\theta, \mathcal{R}_{\theta'}$ lands at the same point of the Julia set of $z^2 + r_\mathcal{P}$, where $r_\mathcal{P}$ is the root of the hyperbolic component $\mathcal{P}$ corresponding to $\alpha$ and its companion angle.	
	Therefore the extension is exactly an equivalence relation of $\Sigma_2$.
	\begin{defn}\label{defn:equivalence_relation_on_Sigma_2}
		Let $s_1, s_2 \in \Sigma_2$ and $x_1, x_2 \in J_\alpha$ be the corresponding point.
		Suppose $\theta_1, \theta_2$ are extended angles of $x_1, x_2$, respectively.
		We define the equivalence relation $\sim$ as follows,
		\[
			s_1 \sim s_2 \quad \Leftrightarrow \quad \theta_1 \approx^\alpha \theta_2
		\]
	\end{defn}
	As $c$ follows the parameter ray $\mathcal{R}_\alpha$ and eventually lands at the root of hyperbolic components, some points in the Julia set $J_\alpha$ gathers into $1$ point, which corresponds to the equivalence class.
	It can be considered as a parabolic implosion, because as the parameter $c$ perturbs from the root $r_M$ to the direction of angle $\alpha$, some periodic cycles pop out from the point.
	
	\begin{thm}\label{thm:shift_invariant}
		$\varphi_\alpha : \Sigma_2 / \sim~ \to \Sigma_2 / \sim$ is well-defined.
		Moreover it is order $2$ element and shift-invariant. 
	\end{thm}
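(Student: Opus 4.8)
The plan is to establish the three assertions in turn: well-definedness on $\Sigma_2/\!\sim$, the involution property $\varphi_\alpha \circ \varphi_{\overline{\alpha}} = \mathrm{id}$ (order $2$), and shift-invariance. Throughout I will lean on the commuting diagram from Proposition~\ref{prop:extended_domain_of_I^alpha} together with Theorem~\ref{thm:rule_unified}, which already identifies $\varphi_\alpha \circ I^\alpha = I^{\overline{\alpha}}$ on the dense set of angles $\theta\in\mathbf{E}_\alpha$ that are not precritical; the extended-angle formalism $\mathbf{E}_\alpha$ handles the precritical ones.

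First, \textbf{well-definedness.} Suppose $s_1\sim s_2$, i.e.\ the corresponding extended angles satisfy $\theta_1 \approx^\alpha \theta_2$, which by Proposition~\ref{prop:Julia_equiv_and_iti_seq} (and Definition~\ref{defn:Julia_equivalence}) means $\theta_1$ and $\theta_2$ are joined by a finite chain of chords of $\partial\mathcal{L}_\alpha$, equivalently that the dynamic rays $\mathcal{R}_{\theta_1}, \mathcal{R}_{\theta_2}$ land at the same point of $J_{r_\mathcal{P}}$. But by Theorem~\ref{thm:landing_pattern=limit_lam} and Corollary~\ref{cor:assoc = companion}, $J_{r_\mathcal{P}}$ is the common landing object for the pair $(\alpha,\overline{\alpha})$: the \emph{set} of external angles of each point of $J_\alpha$ equals that of $J_{\overline{\alpha}}$ (as emphasized in the introduction), so $\theta_1 \approx^\alpha \theta_2$ if and only if $\theta_1 \approx^{\overline{\alpha}} \theta_2$. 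Hence, using the diagram $I^{\overline{\alpha}}(\theta_1) = I^{\overline{\alpha}}(\theta_2)$, which says exactly $\varphi_\alpha(s_1) = \varphi_\alpha(s_2)$. So $\varphi_\alpha$ descends to $\Sigma_2/\!\sim$. For surjectivity onto the quotient we use Proposition~\ref{prop:extended_domain_of_I^alpha}: every class has a representative coming from some $\theta\in\mathbf{E}_\alpha$, and applying $\varphi_{\overline{\alpha}}$ to it produces a preimage.

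Second, \textbf{order $2$.} The key observation is that the quadrilateral lamination is symmetric in $\alpha$ and $\overline{\alpha}$: $\mathcal{Q}^\alpha = \mathcal{Q}^{\overline{\alpha}}$ since both are generated by the chord $\alpha\overline{\alpha}$ (Proposition~\ref{prop:lim_lam_is_a_subset} and the remark after it), and $\overline{\overline{\alpha}} = \alpha$ by the first remark following Proposition~\ref{prop:same_rep_word_v}. Consequently $\varphi_{\overline{\alpha}}$ is defined by the identical algorithm with the roles of the two diagonals swapped, so at the level of angles $I^\alpha(\theta) = \varphi_{\overline{\alpha}}\big(I^{\overline{\alpha}}(\theta)\big) = \varphi_{\overline{\alpha}}\big(\varphi_\alpha(I^\alpha(\theta))\big)$. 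Since $I^\alpha : \mathbf{E}_\alpha \to \Sigma_2$ is surjective, $\varphi_{\overline{\alpha}}\circ\varphi_\alpha = \mathrm{id}$ on $\Sigma_2/\!\sim$; and because $v^\alpha = v^{\overline{\alpha}}$ and $e^{\overline{\alpha}} = 1 - e^\alpha$ (consistent with the parity discussion around Corollary~\ref{cor:1-step_test_for_characteristic_symbol}), one checks $\varphi_{\overline{\alpha}}$ is precisely $\varphi_\alpha$ acting on the quotient, giving $\varphi_\alpha^2 = \mathrm{id}$. I would spell out the bookkeeping that ``swap all $0v$ and $1v$'' is self-inverse, which makes the two algorithms literally mutually inverse.

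Third, \textbf{shift-invariance.} Here I would pass through the dynamics rather than combinatorics: the shift $\sigma$ on $\Sigma_2$ is conjugate to $h = P_\alpha|_{J_\alpha}$ via $I^\alpha$ (Theorem~\ref{thm:shift=escape}), and likewise for $\overline{\alpha}$; crucially $h$ is the \emph{same} angle-doubling map in both pictures. So on angles, $\varphi_\alpha(I^\alpha(h\theta)) = I^{\overline{\alpha}}(h\theta) = \sigma\,I^{\overline{\alpha}}(\theta) = \sigma\,\varphi_\alpha(I^\alpha(\theta))$, whence $\varphi_\alpha\circ\sigma = \sigma\circ\varphi_\alpha$ on $\Sigma_2/\!\sim$; one must also check $\sigma$ itself descends to the quotient, which follows because $h$ preserves the Julia equivalence $\approx^\alpha$ ($\partial\mathcal{L}_\alpha$ is $h$-invariant). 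The one subtlety is the boundary/precritical case: a point whose itinerary ends in $\overline{0v^\alpha}$ or $\overline{1v^\alpha}$, or a preimage of $\alpha$ carrying a $*$, must be handled via the approximation-by-prefixes argument already used in the proof of Theorem~\ref{thm:rule_unified} and via the $\pm$-markers of $\mathbf{E}_\alpha$.

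\textbf{Main obstacle.} The delicate point is not any single algebraic identity but the interface between the purely combinatorial map $\varphi_\alpha$ of Theorem~\ref{thm:rule_unified} (defined by a finite-window scan) and the geometric equivalence $\sim$: I must be certain that the multi-valuedness and non-injectivity of $\varphi_\alpha$ on raw sequences is \emph{exactly} collapsed by $\sim$ and nothing more — that two sequences get the same image if and only if their extended angles land together — and that this matching is compatible with $\sigma$ on the nose, including at precritical angles where the prefix of an infinite gap is only recovered as a limit of markers. Verifying that the ``trace back to the previous $0v$'' branch of the algorithm never produces an ambiguity once one works on $\mathbf{E}_\alpha/\!\approx^\alpha$ rather than $S^1$ is where the real care is needed.
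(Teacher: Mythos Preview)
Your proposal is correct and follows essentially the same route as the paper's proof: lift sequences to extended angles so that $\varphi_\alpha = I^{\overline{\alpha}}\circ (I^\alpha)^{-1}$, use Proposition~\ref{prop:Julia_equiv_and_iti_seq} (together with the fact that $\partial\mathcal{L}_\alpha = \partial\mathcal{L}_{\overline{\alpha}}$, hence $\approx^\alpha\,=\,\approx^{\overline{\alpha}}$) for well-definedness, invoke the $\alpha\leftrightarrow\overline{\alpha}$ symmetry for the order-$2$ claim, and deduce shift-invariance from the $h$-invariance of $\partial\mathcal{L}_\alpha$ and the conjugacy between $\sigma$ and angle doubling. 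You are somewhat more explicit than the paper (which dispatches each point in one sentence), in particular in isolating the equality $\approx^\alpha\,=\,\approx^{\overline{\alpha}}$ and in noting that $\sigma$ itself must descend to the quotient; these are exactly the ingredients the paper is relying on implicitly.
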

	\begin{proof}
		It has order $2$ by the definition.
		Despite $\varphi_\alpha : \Sigma_2 \to \Sigma_2$ is multi-valued, but by the construction above the equivalence gathers all the points which cause the problem into one class.
		We check one by one.
		\begin{enumerate}
			\item Lift every sequence into a subset of extended angle $\mathbf{E}_\alpha$.
			Then $\varphi_\alpha$ is $I^{\overline{\alpha}} \circ (I^\alpha)^{-1}$.
			By the proposition \ref{prop:Julia_equiv_and_iti_seq}, $\varphi_\alpha$ is well defined on $\Sigma_2 / \sim$.
			\item By construction of $\varphi_\alpha$, it is of order $2$ element.
            In fact, $\varphi_\alpha^{-1} = \varphi_{\overline{\alpha}}$.
			\item Shift invariance is straightforward by the fact that $\partial \mathcal{L}_\alpha$ is invariant under angle doubling.
			As the shift map on $\Sigma_2$ corresponds to the angle doubling of the angle of $p$, $\varphi_\alpha$ commutes with one-sided shift.
		\end{enumerate}
	\end{proof}
	We end the section with the remark.
	Blanchard, Devaney, and Keen proved the following theorem in \cite{blanchard1991dynamics},
	\begin{thm}[BDK map]
		There is a surjective map from $\pi_1(\mathcal{S}_d)$ to $Aut(\Sigma_d, \sigma)$, a shift automorphism of $d$ symbols.
	\end{thm}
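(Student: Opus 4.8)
The plan is to realize the map $\BDK$ as the monodromy of a flat bundle of Julia sets over $\mathcal{S}_d$ and then prove surjectivity by realizing a generating set of $Aut(\Sigma_d,\sigma)$ by explicit loops. Let $\mathcal{E}=\{(P,z):P\in\mathcal{S}_d,\ z\in J_P\}$, projecting to $P$. Throughout the shift locus the polynomial is $J$-stable: it is classical that the Julia set moves by an equivariant holomorphic motion, and $P|_{J_P}$ is conjugate to $\sigma\colon\Sigma_d\to\Sigma_d$ by Theorem~\ref{thm:shift_defn}. Hence $\mathcal{E}\to\mathcal{S}_d$ is a fiber bundle with Cantor fiber carrying a canonical flat structure (the local trivializations being the holomorphic motions), and the fiberwise dynamics $(P,z)\mapsto(P,P(z))$ is flat because the motion conjugates $P$ to nearby $P'$. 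Fix a base point $P_0$ and a conjugacy $h_0\colon J_{P_0}\to\Sigma_d$. Parallel transport around a loop $\gamma$ at $P_0$ gives a self-homeomorphism $g_\gamma$ of $J_{P_0}$ that commutes with $P_0|_{J_{P_0}}$, since parallel transport commutes with the flat fiberwise dynamics; therefore $\BDK(\gamma):=h_0\,g_\gamma\,h_0^{-1}\in Aut(\Sigma_d,\sigma)$. Flatness gives homotopy invariance and concatenation of loops transports to composition, so $\BDK\colon\pi_1(\mathcal{S}_d)\to Aut(\Sigma_d,\sigma)$ is a well-defined homomorphism, independent of the auxiliary choices up to conjugacy.

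For surjectivity I would reduce to realizing a generating set of $Aut(\Sigma_d,\sigma)$, the automorphism group of the one-sided full $d$-shift, using its combinatorial description (sliding block codes whose symbol map is, level by level, a permutation): the coordinate-wise alphabet permutations $S_d$ form one easily realized subgroup, and the remaining generators are ``local'' automorphisms supported on finitely many cylinders. To realize them I would place $P_0$ deep in $\mathcal{S}_d$ with all critical values $v_1,\dots,v_{d-1}$ far outside a large round disk $D$ whose complement lies in the basin of infinity, so that $P_0^{-1}(D)$ consists of $d$ disjoint topological disks $D_0,\dots,D_{d-1}$, the itinerary $z\mapsto(s_n)$ with $P_0^{\,n}(z)\in D_{s_n}$ realizes $h_0$, and every cylinder of $J_{P_0}$ is a concrete piece indexed by a finite word. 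A loop dragging the critical values around one another so as to permute the $D_i$ induces a coordinate-wise permutation; a loop dragging a single $v_j$ around a small loop that encircles a prescribed finite union of pieces — the other critical values held fixed, mutually disjoint and outside $D$, so the whole path stays in $\mathcal{S}_d$ — induces the block permutation exchanging those pieces, the precise effect being read off by tracking how the cyclic order of the relevant external rays (equivalently the B\"{o}ttcher coordinates of the $D_i$ and their iterated preimages) changes each time $v_j$ crosses a ray. Checking that such loops exhaust a generating set shows $\BDK$ is onto.

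The main obstacle is this second half. One must (i) pin down a genuinely sufficient family of ``local'' generators for the one-sided full $d$-shift automorphism group, which is subtler than the two-sided case, and (ii) carry out the combinatorial bookkeeping that identifies the monodromy of a concrete critical-value loop with a concrete block permutation, all while certifying that the loop never leaves $\mathcal{S}_d$, that is, no collision between critical orbits and no critical value absorbed into $\mathcal{M}_d$. The flat-bundle formalism makes the homomorphism and its well-definedness routine; the surjectivity is where the real work lies, as in the original argument of Blanchard, Devaney, and Keen.
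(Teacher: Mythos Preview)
The paper does not prove this statement at all: it is quoted as a known result of Blanchard, Devaney, and Keen \cite{blanchard1991dynamics}, introduced with ``Blanchard, Devaney, and Keen proved the following theorem,'' and immediately followed not by a proof but by the remark that the only nontrivial shift automorphism of two symbols is the $0$-$1$ swap. So there is nothing to compare your argument against in this paper; the theorem functions here purely as background.

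That said, your outline is a reasonable sketch of the original BDK strategy: monodromy of the Julia-set bundle over $\mathcal{S}_d$ gives the homomorphism, and surjectivity is obtained by exhibiting explicit loops whose monodromy realizes a generating set of $Aut(\Sigma_d,\sigma)$. You are also right that the genuine content is in the surjectivity step. Your description of that step, however, remains a plan rather than a proof: you have not specified which finite family of automorphisms generates $Aut(\Sigma_d,\sigma)$, nor verified that each proposed critical-value braid stays in $\mathcal{S}_d$ and induces the claimed block permutation. Those are exactly the computations BDK carry out, and without them the argument is incomplete. If your goal is only to match the present paper, though, there is nothing to supply --- the authors simply cite the result.
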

	The only nontrivial shift automorphism of $2$ symbol is a $0$-$1$ swap, aformentioned earlier in the introduction.
	So each $\varphi_\alpha$ is not a shift automorphism of $\Sigma_2$.
    With the equivalence in definition \ref{defn:equivalence_relation_on_Sigma_2}, we now get a shift invariance.

    However, there is another shift invariance at the subset of $\Sigma_2$.
	Suppose we ignore the prefix part.
    Then it became a $0v^\alpha$-$1v^\alpha$ sequence, and hence commute with $\sigma^{\circ \PER(\alpha)}$, a $\PER(\alpha)$ times shift.
	\emph{i.e.,} if $0$ sends to $0v^\alpha$ and $1$ sends to $1v^\alpha$ then it behaves like a shift map and the action of $\varphi_\alpha$ became similar to the nontrivial shift automorphism of $2$ symbols.
	
	With the Douady's angle formula \ref{lem:douady_angle_tuning_formula}, if we restrict our interest on the Fatou component which contains $0$, the $0$-$1$ symbol swap is translated into $0v^\alpha$-$1v^\alpha$ swap after the tuning.
	We will focus on this phenomenon and construct a big mapping class of $\Mod(\mathbb{C} - \{ \text{Cantor set}\})$ or $\Mod(S^2 - \{ \text{Cantor set}\})$ in the upcoming paper.
	
	\newpage
	\appendix
		\section{Symbolic dynamics and puzzle pieces} \label{appendix:A-sym_dyn}
		In this section we only care about the narrow components unless specified.
		The main purpose of the appendix is to construct the dynamical graph, which is analogous to that of Atela's paper \cite{atela1993mandelbrot}.
        Such dynamical graph is built up by puzzle pieces of the unit disc, divided by chords of the orbit portrait of $\alpha$, with some additional chords.
        Speaking in advance, such construction is valid for non-narrow cases also, except the canonical indexing the puzzle.
		
		\subsection{Markov puzzle pieces}
		In section \ref{sec:3-char_arcs}, we prove the necessary and sufficient condition between narrowness and no nestings of $I = A_1$.
		This fact enable us to number the puzzle piece in a canonical way.
		
		Suppose $I = (\alpha, \overline{\alpha}), \alpha < \overline{\alpha}$ is a narrow arc or period $n$ and $I = A_1, A_2, \cdots, A_n$ as defined earlier.
		Denote $a_i$ as a chord whose endpoints agree with those of $A_i$'s.
		We add $a_0$ and $S$, which is an antipodal chord of $a_n$ and a diameter $\frac{\alpha}{2}\frac{\alpha+1}{2}$.
		They separate the unit disc into $(n+2)$ or $(n+3)$ pieces if $I$ is satellite or primitive, respectively.
		(Recall that if $I$ is narrow, $I$ is satellite if and only if $I$ is attached on the main cardioid $M_0$.
		In this case we get $n+2$ puzzle pieces.)
		
		We number each piece as follows.
        We give an example in figure \ref{fig:puzzle_piece} also.
		
		\begin{enumerate}
			\item There are two puzzle pieces, one enveloped by $a_n, S$ and the other enveloped by $a_0, S$.
			We denote each as $\Pi_0^0$, $\Pi_0^1$.
			\item Suppose $a_i$ is the innermost arc.
			We number the puzzle piece whose boundary is a union of $a_i$ and the arc $A_i$ with $\Pi_i$.
			\item Suppose $A_{i_1} \subset A_{i_2} \subset \cdots \subset A_{i_k}$ with $i_k \neq n$.
			We denote the puzzle piece between $A_{i_j}$ and $A_{i_{j+1}}$ as $\Pi_{i_{j+1}}$.
			\item Finally there are $2$ pieces left, one next to $a_n$ and the other next to $a_0$.
			We call them as $\Pi_U$ and $\Pi_D$, respectively.
			Note that if $I$ is satellite, then $\Pi_U$ became a polygon gap, and the number of sides are equal to the period of $I$.
		\end{enumerate} 
		
		\begin{figure}[h]
		    \centering
		    \includegraphics[width=.5\textwidth]{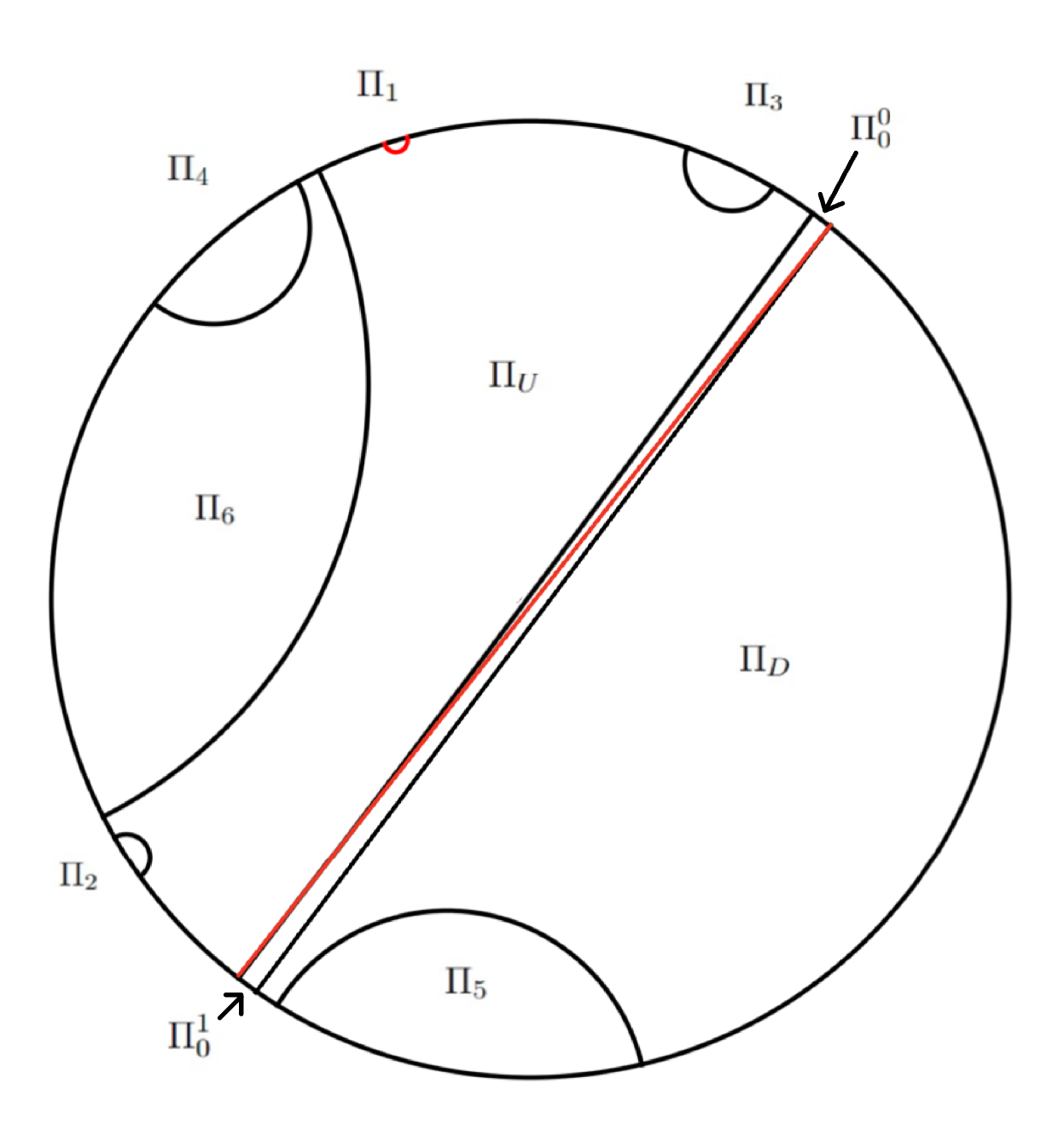}
		    \caption{Puzzle pieces of $I = \left(\frac{37}{127}, \frac{38}{127}\right)$.}
		    \label{fig:puzzle_piece}
		\end{figure}
		
		The angle doubling map $h : S^1 \to S^1$ maps each puzzle piece to some set of puzzle pieces.
		We draw a directed graph as follows.
		\begin{itemize}
			\item Vertices/states : Each puzzle pieces.
			\item Edges : a directed edge from $\Pi$ to $\Pi'$ if $\Pi' \subset h(\Pi)$.
		\end{itemize}
		Note that $\Pi_0^0$ and $\Pi_0^1$ both sends to $\Pi_1$.
		We call this as a transition graph $\mathcal{G}_\alpha$ associated to the angle $\alpha$.
		A path in $\mathcal{G}_\alpha$ is a one-sided sequence of states whose every consecutive $2$ states is connected by an edge compatible to its direction.
		\begin{lem}\label{lem:transition_graph_is_unique_for_char_arc}
			$\mathcal{G}_\alpha$ and $\mathcal{G}_{\overline{\alpha}}$ are isomorphic.
		\end{lem}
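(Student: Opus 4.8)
The plan is to show that the disc dissections underlying $\mathcal{G}_\alpha$ and $\mathcal{G}_{\overline\alpha}$ differ only by the choice of a single diameter, that angle doubling cannot detect which of the two diameters has been drawn, and that the isomorphism is then forced.

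First I would observe that the two constructions share nearly all of their data. The characteristic arc $I=(\alpha,\overline\alpha)$ and its orbit portrait, hence the arcs $A_1,\dots,A_n$, depend only on the unordered pair $\{\alpha,\overline\alpha\}$, and by Proposition~\ref{prop:same_rep_word_v} one has $\PER(\alpha)=\PER(\overline\alpha)=n$. Therefore the chords $a_1,\dots,a_n$, the antipodal chord $a_0$ of $a_n$, the nesting pattern of $A_1,\dots,A_n$ used to number the pieces, and hence the outer pieces $\Pi_1,\dots$ together with $\Pi_U$ and $\Pi_D$, are literally the same objects for $\alpha$ and for $\overline\alpha$. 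The only datum that changes is the diameter: $S=\tfrac{\alpha}{2}\tfrac{\alpha+1}{2}$ for $\alpha$ versus $S'=\tfrac{\overline\alpha}{2}\tfrac{\overline\alpha+1}{2}$ for $\overline\alpha$. Both $S$ and $S'$ are the two diagonals of the center gap $Q=Q^{(\alpha,\overline\alpha)}_\lambda$ of Definition~\ref{defn:quadrilateral_lam}, whose two long sides are precisely $a_n$ and $a_0$; this identification is where narrowness enters, via Proposition~\ref{prop:narrow = no nesting} and the fact (as in the proof of Lemma~\ref{lem:narrow_starts_with_odd}) that $A_n$ is the long side nesting $A_1$ of the preimage quadrilateral of $A_1$. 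Consequently the two dissections agree outside $Q$, and inside $Q$ one is obtained from the other by exchanging the diagonal $S$ for $S'$; in particular $\mathrm{int}\,\Pi_0^0\sqcup\mathrm{int}\,\Pi_0^1=\mathrm{int}\,Q\setminus S$ and $\mathrm{int}\,\Pi_0^{0\prime}\sqcup\mathrm{int}\,\Pi_0^{1\prime}=\mathrm{int}\,Q\setminus S'$ split the same quadrilateral interior.

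Next I would verify that $h$ does not see the diagonal. Since $h(a_i)=a_{i+1}$ for $i<n$, $h(a_n)=a_1$, $h(a_0)=h(a_n)=a_1$, and $h(S)$ is a single point, the boundary of $h(\Pi)$ for an outer piece $\Pi$ consists of chords taken from $\{a_1,\dots,a_n\}$; none of these meets $\mathrm{int}\,Q$, and neither $S$ nor $S'$ ever occurs as a boundary chord of such an image. Hence $h(\Pi)$ either contains all of $Q$ or is disjoint from $\mathrm{int}\,Q$, and this holds verbatim for both dissections; moreover, as already remarked in the construction, each of $\Pi_0^0,\Pi_0^1$ (and of $\Pi_0^{0\prime},\Pi_0^{1\prime}$) maps onto $\Pi_1$. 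So in $\mathcal{G}_\alpha$ the vertices $\Pi_0^0$ and $\Pi_0^1$ carry identical adjacency data: a single out-edge, to $\Pi_1$, and in-edges exactly from the outer pieces $\Pi$ with $Q\subseteq h(\Pi)$; the identical description holds for $\Pi_0^{0\prime},\Pi_0^{1\prime}$ in $\mathcal{G}_{\overline\alpha}$, while every remaining vertex and edge coincides. The bijection $\Phi$ which is the identity on the common pieces and sends $\Pi_0^0\mapsto\Pi_0^{0\prime}$ and $\Pi_0^1\mapsto\Pi_0^{1\prime}$ is therefore a graph isomorphism.

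The step I expect to be the main obstacle is the claim that no chord $a_i$, and hence no image $h(a_i)$, crosses the interior of $Q$ — equivalently, that the entire discrepancy between the two dissections is confined to $Q$; this is exactly where the narrowness and the non-nesting statements of Section~\ref{sec:3-char_arcs} must be used with care, together with an easy separate check in the satellite case that $Q$ stays a nondegenerate quadrilateral even though $\Pi_U$ degenerates into a polygon gap.
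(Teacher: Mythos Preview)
Your proof is correct and takes essentially the same approach as the paper: both observe that only the two pieces $\Pi_0^0,\Pi_0^1$ differ between the dissections, that each has a single out-edge to $\Pi_1$, and that the in-edges coincide because the boundary of any $h(\Pi)$ consists only of chords $a_j$ and never the diameter $S$, so $h(\Pi)$ cannot separate $\Pi_0^0$ from $\Pi_0^1$. The paper concludes by merging $\Pi_0^0,\Pi_0^1$ into a single vertex $\Pi_0$ rather than exhibiting your explicit bijection, and the ``obstacle'' you flag---that no $a_i$ meets $\mathrm{int}\,Q$---is not actually needed beyond what is already implicit in the numbering of the pieces in the construction, so the detour through $Q$ and narrowness can be dropped.
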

		\begin{proof}
			Since $\alpha$ and $\overline{\alpha}$ are the endpoints of $I$, every puzzle is identical except $\Pi_0^0$ and $\Pi_0^1$.
			But those are the preimages of $\Pi_1$, so the outgoing edge from $\Pi_0^*$ are same, both go to $\Pi_1$.
			Also the ingoing edges are also identical.
			Suppose not.
			Then there is a puzzle piece $\Pi$ whose image contains $\Pi_0^0$ but not $\Pi_0^1$.
			It implies that the boundary of $h(\Pi)$ contains $S$.
			Since the boundary of any pieces can admit $A_i$'s or $a_i$'s, and $h$ sends boundary to boundary, $S$ cannot be the boundary.
			Thus, every ingoing and outgoing edge of both $\Pi_0^0$ and $\Pi_0^1$ is identical and hence we can consider it as a unified one state $\Pi_0$.
			Define transition graph $\mathcal{G}'$ of puzzle pieces with $\Pi_0$ instead of $\Pi_0^0, \Pi_0^1$.
			Obviously same $\mathcal{G}'$ is obtained from $\alpha$ or $\overline{\alpha}$, we get the same transition graph.
		\end{proof}
		The difference between those two transition graph appears when we assign a symbol $\in\{0,1\}$ to each puzzle piece.
		We assign each puzzle piece with a symbol as we produce an itinerary sequence.
        \emph{i.e.,} a puzzle piece $\Pi$ is assigned by $0$ if $\Pi$ is contained in $A_\alpha$, $1$ if it is contained in $B_\alpha$.
		Note that $\Pi_0^0, \Pi_1$ and $\Pi_U$ are always marked by $0$, $\Pi_0^1, \Pi_D$ is $1$ on the other side.
		If we only focus on the arc boundaries of each puzzle pieces, then the arc boundary of $\Pi_0^{0, \alpha}$ is same with that of $\Pi_0^{1, \overline{\alpha}}$.
		
		The symbol assigning map $\{\text{puzzle pieces}\} \mapsto \{0,1\}$ induces a map $\{\text{Paths in }\mathcal{G}_\alpha\} \to \Sigma_2$.
		\emph{i.e.,} Each path will give a description of itinerary sequences. 
		In fact, every sequence in $\Sigma_2$ is realized.
		
		\begin{figure}[h]
		    \centering
		    \includegraphics[width=.6\textwidth]{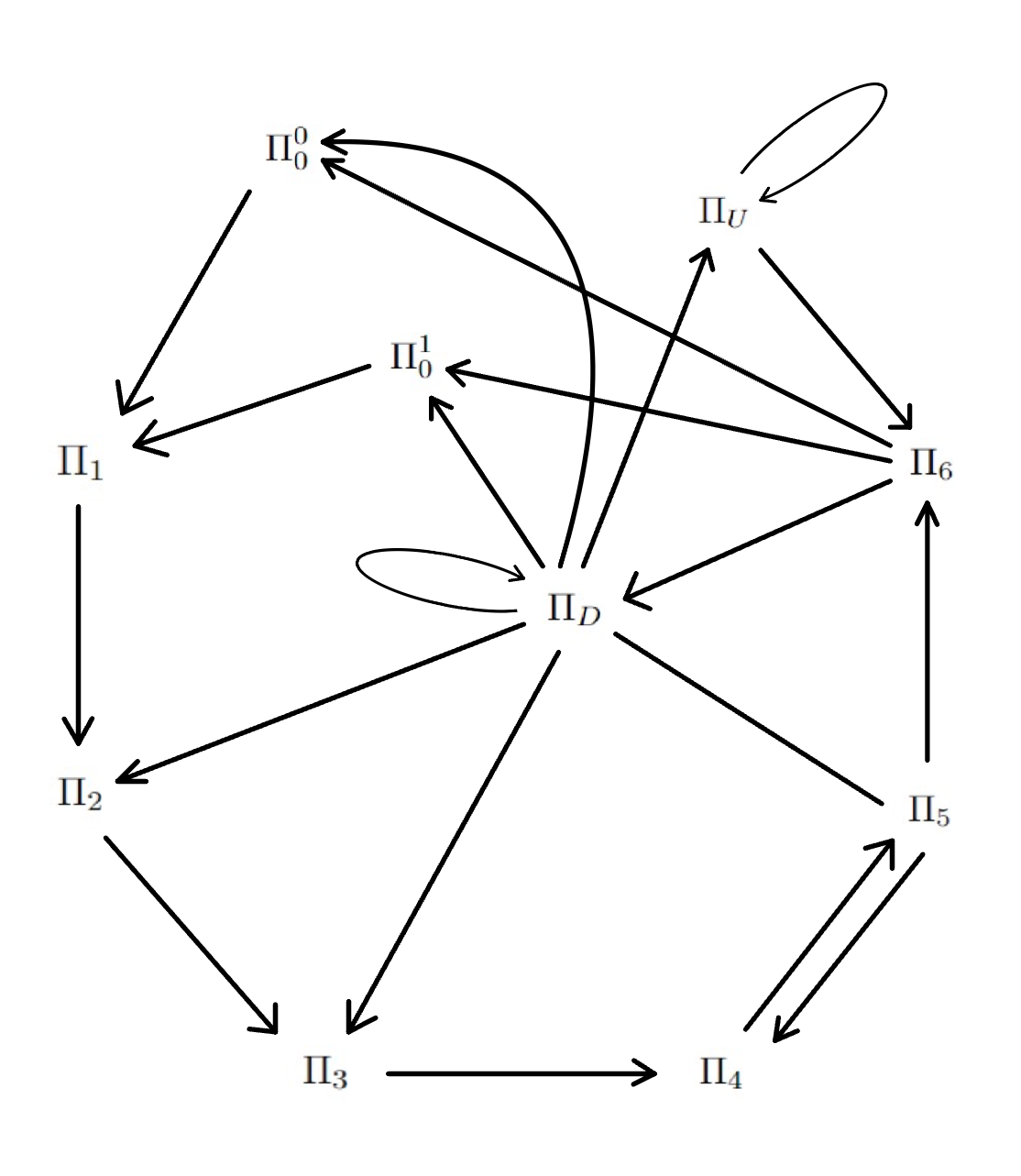}
		    \caption{The transition graph $\mathcal{G}_\alpha$ for $\alpha = \frac{37}{127}$}
		    \label{fig:transition_graph}
		\end{figure}
		
		\begin{thm}\label{thm:path_onto_sequences}
			Let $\mathbb{P}$ be the collection of all paths of the transition graph $\mathcal{G}_\alpha$.
			Then the symbol assigning map is a surjective map $\mathbb{P} \twoheadrightarrow \Sigma_2$.
		\end{thm}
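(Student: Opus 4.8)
The plan is to prove surjectivity by a dynamical argument: realize each sequence $s\in\Sigma_2$ as the sequence of puzzle pieces visited by the forward orbit, under the angle-doubling map $h$, of an extended angle whose itinerary is $s$.

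First I would record that $h$ respects the puzzle structure. Narrowness gives $l(A_i)=2^{i-1}/(2^n-1)<1/2$ for $1\le i\le n-1$, so $A_i\mapsto A_{i+1}$ is plain doubling there and $h(a_i)=a_{i+1}$; a direct computation with the companion quadrilateral (whose vertices are $\tfrac{\alpha}{2},\tfrac{\alpha+1}{2},\tfrac{\overline{\alpha}}{2},\tfrac{\overline{\alpha}+1}{2}$) shows that $a_n$ and $a_0$ are its two long sides, so $h(a_n)=h(a_0)=\alpha\overline{\alpha}=a_1$, while the diameter $S$ is a diagonal and $h$ collapses it to the vertex $\alpha$ of $a_1$. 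Hence $h$ maps the chord skeleton $\{a_0,\dots,a_n,S\}$ into itself, so $h(\Pi)$ is a union of puzzle pieces for every piece $\Pi$; and since $S$ is part of the skeleton, each $\Pi$ lies on one side of $S$, i.e.\ inside $\overline{A_\alpha}$ or $\overline{B_\alpha}$, so $h|_\Pi$ is injective and $h(\Pi)$ is an \emph{honest} union of pieces. It follows that whenever $\theta\in S^1$ has an orbit missing the skeleton, the pieces $\Pi^{(k)}\ni h^k(\theta)$ form a path in $\mathcal{G}_\alpha$ (the edge $\Pi^{(k)}\to\Pi^{(k+1)}$ exists because $h^{k+1}(\theta)\in h(\Pi^{(k)})$ forces $\Pi^{(k+1)}\subseteq h(\Pi^{(k)})$), and by Definition \ref{defn:kneading_seq} the symbol on $\Pi^{(k)}$ equals the $k$-th entry of $I^\alpha(\theta)$.

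Next, given $s\in\Sigma_2$, Proposition \ref{prop:extended_domain_of_I^alpha} provides $\theta\in\mathbf{E}_\alpha$ with $I^\alpha(\theta)=s$. If $\theta$ carries a marker $\pm$, I read it as the standing instruction ``whenever an iterate lands on the skeleton, take the adjacent piece on the $+$ (resp.\ $-$) side''. Because $h$ is an orientation-preserving local homeomorphism, $h^k(\theta^{\pm})=h^k(\theta)^{\pm}$, so the marked side is the same at every step and the rule coherently defines $\Pi^{(k)}$ for all $k$. As in the previous paragraph the symbol on $\Pi^{(k)}$ is the $k$-th entry of $s$, and $\Pi^{(k)}\to\Pi^{(k+1)}$ is an edge; in the boundary case the cleanest justification is to perturb $\theta$ to a nearby unmarked angle $\theta'$ on the marked side, whose orbit avoids the skeleton for arbitrarily long, traces interiors of the \emph{same} pieces, and converges to $(\Pi^{(k)})$, so continuity upgrades $\Pi'^{(k+1)}\subseteq h(\Pi'^{(k)})$ to the desired incidence. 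Thus $(\Pi^{(k)})_{k\ge 0}\in\mathbb{P}$ has symbol sequence $s$, which is the assertion.

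The whole difficulty lives in the boundary bookkeeping of the last step: iterates hitting the skeleton are exactly the angles whose un-extended itinerary would contain a $*$, and one must check both that the $\pm$ marker selects one coherent side along the entire orbit — where orientation-preservation of $h$ is essential — and that at such a visit the relevant out-edge of $\mathcal{G}_\alpha$ contains the next piece and not merely its boundary; the perturbation argument settles both, modulo the routine point of aligning the path index with the $0$-indexing $x_0x_1x_2\cdots$ of the kneading sequence. I would also warn the reader that the naive ``greedy'' argument on the graph — from each vertex pick an out-edge with the wanted next label — fails, since for instance $\Pi_0^0$ has only the out-edge to $\Pi_1$ and both are labelled $0$; this is precisely why one must run the dynamics rather than argue on the abstract labelled graph.
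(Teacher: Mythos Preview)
Your argument is correct and shares the paper's core strategy: realize an arbitrary $s\in\Sigma_2$ as $I^\alpha(\theta)$ for some $\theta\in\mathbf{E}_\alpha$ (Proposition~\ref{prop:extended_domain_of_I^alpha}), then read off a path in $\mathcal{G}_\alpha$ by recording which puzzle piece contains each iterate $h^k(\theta)$. The difference is in how the well-definedness of that tracking is justified. The paper routes through Keller's $D_x$ machinery (Definition~\ref{defn:D_x's}, Lemma~\ref{lem:D_x's}, Corollary~\ref{cor:puzzles_contains_D_x}): the set $D_{\mathbf{x}}$ of all extended angles with itinerary $\mathbf{x}$ is unlinked with the orbit portrait, hence lies in a single puzzle piece, so any $\theta\in D_{\mathbf{x}}$ works and the piece at step $k$ is unambiguous. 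You instead verify directly that the chord skeleton $\{a_0,\dots,a_n,S\}$ is forward-invariant under $h$ (the Markov property for the partition) and then handle skeleton hits by one-sided perturbation. Your route is more elementary and self-contained; the paper's leans on the lamination framework already developed in Section~\ref{sec:4-inv_lam}. Your closing remark that the greedy graph argument fails is a useful caution the paper does not make explicit.

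One small wording wrinkle: your boundary discussion is phrased only for marked $\theta$, but an unmarked $\theta$ can also have its orbit hit the skeleton, namely when some $h^k(\theta)$ lands on the orbit of $\overline{\alpha}$ or on $\ddot{\overline{\alpha}}$. There is then no canonical ``marked side'' to perturb into. This does not damage surjectivity: such a skeleton point is strictly interior to $A_\alpha$ or $B_\alpha$ (it is never an endpoint of $S$, since those are $\dot{\alpha},\ddot{\alpha}$), so perturbing to either side yields a path---possibly two distinct paths---with the same symbol sequence $s$. It would be worth saying this explicitly.
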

		
		We will prove this by showing each sequence is a $I^\alpha(\theta)$ for some $\theta\in \mathbf{E}_\alpha$.
		
		\begin{defn}\label{defn:D_x's}
			Let $x\in\{0,1\}^*$ be a $0$-$1$ word and $\alpha\in S^1$ be nonzero periodic.
			$D_x^\alpha$ is defined as a subset of $\mathbf{E}_\alpha$ that $\{\theta \in \mathbf{E}_\alpha ~|~ I^\alpha(\theta) \text{ starts with }x\}$.
			Also one can define for any sequence $\mathbf{x} = \{x_1, x_2, \cdots\} \in \{0,1\}^\mathbb{N}$ as $D_\mathbf{x} := \bigcap_{n = 1}^\infty D_{x_1\cdots x_n}$.
		\end{defn}
			
		\begin{lem}\label{lem:D_x's}
			\begin{enumerate}			
				\item If $x$ is $\alpha$-regular, then $D_x^\alpha = D_{x0}^\alpha \cup D_{x1}^\alpha$ and $D_{x0}^\alpha \cap D_{x1}^\alpha = S^\alpha_x$.
				\item If $x$ is not $\alpha$-regular, then $D_x^\alpha = D_{x0}^\alpha \cup D_{x1}^\alpha \cup \Delta_x^\alpha$.
				In this case, $D_{x0}^\alpha \cap \Delta_x^\alpha = S^{\alpha, 0}_x$ and $D_{x1}^\alpha \cap \Delta_x^\alpha = S^{\alpha, 1}_x$.
			\end{enumerate}
		\end{lem}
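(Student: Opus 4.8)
The plan is to induct on $|x|$, propagating the decomposition outward from the long chord by the inverse branches; throughout, $D^\alpha_x$ is read as the closed subregion of $\overline{\mathbb{D}}$ that it bounds, so that the claimed equalities make sense as equalities of regions. Two facts get used repeatedly. First, the prefix‑attaching property $I^\alpha\big(l^{\alpha,t}_w(\theta)\big)=w\cdot I^\alpha(\theta)$; it yields, for any word $z$ and any symbol $\epsilon$, the identity $l^\alpha_\epsilon(D^\alpha_z)=D^\alpha_{\epsilon z}$ of regions (a point whose itinerary begins with $\epsilon z$ begins in particular with $\epsilon$, hence lies in the half‑disc $D^\alpha_\epsilon=l^\alpha_\epsilon(\overline{\mathbb{D}})$, and its $l^\alpha_\epsilon$‑preimage has itinerary beginning $z$), where $l^\alpha_\epsilon$ is the single‑symbol branch, an unambiguous homeomorphism of $\overline{\mathbb{D}}$ onto $D^\alpha_\epsilon$. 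Second, the superscript in $l^{\alpha,t}_w$ can matter only when the running point meets the orbit of $\alpha$, and the two branches $l^{\alpha,0}_w,l^{\alpha,1}_w$ genuinely differ precisely when $w$ ends in $0v^\alpha$ or $1v^\alpha$, i.e.\ exactly when $w$ is \emph{not} $\alpha$‑regular — this is already implicit in the construction of the triangle gaps $\Delta^\alpha_{utv^\alpha}$. The base case $x=\lambda$ is $\alpha$‑regular and is just the definition of $D^\alpha_0,D^\alpha_1$: the two closed half‑discs cut off by $S^\alpha_\lambda=\frac{\alpha}{2}\frac{\alpha+1}{2}$, meeting exactly along $S^\alpha_\lambda$ (the $\pm$‑markers at $\dot\alpha,\ddot\alpha$ being distributed by the definition of $\mathbf{E}_\alpha$ so that $S^\alpha_\lambda$ lies in both).

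For the inductive step write $x=\epsilon y$ with $\epsilon$ the leading symbol (write $n=\PER(\alpha)$, so $|v^\alpha|=n-1$). A short look at tails shows $x$ is $\alpha$‑regular iff $y$ is, with the single exception $|x|=n$, $x=tv^\alpha$, where $x$ is irregular but $y=v^\alpha$ is regular. If $x$ (hence $y$) is $\alpha$‑regular, apply the regular inductive hypothesis to $y$ and push $D^\alpha_y=D^\alpha_{y0}\cup D^\alpha_{y1}$, $D^\alpha_{y0}\cap D^\alpha_{y1}=S^\alpha_y$, forward by the homeomorphism $l^\alpha_\epsilon$: using $l^\alpha_\epsilon(D^\alpha_{y\delta})=D^\alpha_{x\delta}$ and $l^\alpha_\epsilon(S^\alpha_y)=l^\alpha_\epsilon\big(l^\alpha_y(S^\alpha_\lambda)\big)=l^\alpha_x(S^\alpha_\lambda)=S^\alpha_x$ (the composition unambiguous because $x$ is $\alpha$‑regular) gives (1) for $x$. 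If $x$ is irregular with $|x|\ge n+1$, then $y=utv^\alpha$ is also irregular; applying the irregular hypothesis to $y$ and pushing forward by $l^\alpha_\epsilon$ gives (2) for $x=\epsilon utv^\alpha$, because $l^\alpha_\epsilon$ carries $\Delta^\alpha_{utv^\alpha}$ to $\Delta^\alpha_{\epsilon utv^\alpha}$ (triangle gaps being the $h$‑preimages of $\Delta^\alpha_{0v^\alpha},\Delta^\alpha_{1v^\alpha}$) and $S^{\alpha,\delta}_y$ to $S^{\alpha,\delta}_x$.

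The remaining case, $x=tv^\alpha$ with $|x|=n$, is the real content of part (2). Here $tv^\alpha$ is the shortest irregular word, so the $n$‑th preimage of the long chord that falls inside $D^\alpha_{tv^\alpha}$ is not a single chord but the pair $S^{\alpha,0}_{tv^\alpha},S^{\alpha,1}_{tv^\alpha}$ (the two values of the branch $l^{\alpha,\cdot}_{tv^\alpha}$), and these two chords together with $S^\alpha_\lambda$ bound the triangle gap $\Delta^\alpha_{tv^\alpha}$ already identified in the text. By the prefix‑attaching property applied to the half‑discs $D^\alpha_0,D^\alpha_1$ along $l^{\alpha,0}_{tv^\alpha}$ and $l^{\alpha,1}_{tv^\alpha}$ respectively, the part of $D^\alpha_{tv^\alpha}$ behind $S^{\alpha,0}_{tv^\alpha}$ (on the side away from $\Delta^\alpha_{tv^\alpha}$) is exactly $D^\alpha_{tv^\alpha 0}$, the part behind $S^{\alpha,1}_{tv^\alpha}$ is exactly $D^\alpha_{tv^\alpha 1}$, and what is left of $D^\alpha_{tv^\alpha}$ is the triangle $\Delta^\alpha_{tv^\alpha}$ sandwiched between them, the intersections $D^\alpha_{tv^\alpha\delta}\cap\Delta^\alpha_{tv^\alpha}=S^{\alpha,\delta}_{tv^\alpha}$ being the shared sides. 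Narrowness enters here through Lemma \ref{lem:narrow_follows_v_alpha}, which forces a point behind either of these chords to have itinerary beginning with the whole block $tv^\alpha$, not merely with $t$ followed by a proper prefix of $v^\alpha$ — this is what makes $S^{\alpha,0}_{tv^\alpha},S^{\alpha,1}_{tv^\alpha}$ the \emph{first} chords subdividing $D^\alpha_{tv^\alpha}$.

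The main obstacle is precisely this last step: verifying that the two inverse branches glue so that their images abut along $\Delta^\alpha_{tv^\alpha}$, neither overlapping nor leaving a larger gap. I expect to settle it by unwinding the periodicity identities $l^{\alpha,t}_{tv^\alpha}(\dot\alpha)=\dot\alpha$ and $l^{\alpha,t}_{(1-t)v^\alpha}(\ddot\alpha)=\ddot\alpha$ at the terminal block, together with the no‑nesting property of the narrow arc $(\alpha,\overline\alpha)$ from Proposition \ref{prop:narrow = no nesting}, which between them confine the two‑valuedness to the single preimage level at which $S^\alpha_\lambda$ is pulled back across the orbit of $\alpha$.
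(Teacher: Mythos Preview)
The paper does not actually prove this; it cites Theorem~2.15 in Keller~\cite{keller2007invariant}. Your inductive scheme, pushing the decomposition forward along the single-symbol branches $l^\alpha_\epsilon$, is the natural argument and is presumably how Keller proceeds.

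There is, however, a genuine gap in your treatment of the base irregular case $x=tv^\alpha$. You invoke narrowness, via Lemma~\ref{lem:narrow_follows_v_alpha} and Proposition~\ref{prop:narrow = no nesting}, but narrowness is not a hypothesis here: the lemma is quoted from Keller's general theory and holds for arbitrary nonzero periodic $\alpha$, and the appendix explicitly notes that its results carry over to the non-narrow case. Worse, Lemma~\ref{lem:narrow_follows_v_alpha} concerns the short sides of the central \emph{quadrilateral} gap (preimages of the characteristic chord $\alpha\overline\alpha$), whereas the chords $S^{\alpha,\delta}_{tv^\alpha}$ you need are sides of the \emph{triangle} gap (preimages of the long chord $S^\alpha_\lambda$); these are different objects, so the citation does not do the work you ask of it even in the narrow setting.

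The repair is simpler than what you sketch and requires no narrowness. The prefix-attaching property you already quote shows that the entire image of each branch $l^{\alpha,\delta}_{tv^\alpha}$ lies in $D^\alpha_{tv^\alpha}$, so points behind $S^{\alpha,\delta}_{tv^\alpha}$ automatically have itinerary beginning $tv^\alpha$. What remains is purely combinatorial: the periodicity identities $l^{\alpha,t}_{tv^\alpha}(\dot\alpha)=\dot\alpha$ and $l^{\alpha,t}_{(1-t)v^\alpha}(\ddot\alpha)=\ddot\alpha$, recorded just before the triangle gaps are introduced, force $S^{\alpha,0}_{tv^\alpha}$ and $S^{\alpha,1}_{tv^\alpha}$ to share endpoints with $S^\alpha_\lambda$ and hence bound a triangle adjacent to $S^\alpha_\lambda$ inside $D^\alpha_t$. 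Since the $(n{+}1)$st itinerary symbol is decided by which side of $S^\alpha_\lambda$ the point $h^n(\theta)$ lies on, and these two chords are exactly the two $h^n$-preimages of $S^\alpha_\lambda$ landing in $D^\alpha_{tv^\alpha}$, the decomposition in~(2) follows. Drop the appeals to narrowness and your induction goes through in full generality.
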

		\begin{proof}
			This is a theorem 2.15 in \cite{keller2007invariant}.
		\end{proof}
		The above lemma implies that such $D_x$ has a boundary of chords, and as $|x| \to \infty$ it splits into $2$ pieces by another chord generated by the long chord $\frac{\alpha}{2}\frac{(\alpha+1)}{2}$.
		Hence we get the following corollary.		
		\begin{cor}\label{cor:puzzles_contains_D_x}
			Choose arbitrary $\mathbf{x}\in \{0,1\}^\mathbb{N}$.
			For any $\theta_1, \theta_2 \in D_\mathbf{x}$, $\theta_1\theta_2$ is unlinked with the orbit portrait of $\alpha$.
		\end{cor}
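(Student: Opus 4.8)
The plan is to deduce the corollary from two facts: that two extended angles with the same itinerary sequence are Julia equivalent, and that the orbit portrait of $\alpha$ consists of leaves of the limit lamination $\partial\mathcal L_\alpha$.

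First I would note that if $\theta_1,\theta_2\in D_{\mathbf x}$ then, by Definition \ref{defn:D_x's}, both $I^\alpha(\theta_1)$ and $I^\alpha(\theta_2)$ begin with every finite prefix of $\mathbf x$, so $I^\alpha(\theta_1)=I^\alpha(\theta_2)=\mathbf x$. By Proposition \ref{prop:Julia_equiv_and_iti_seq}, together with the extension of $\approx^\alpha$ to $\mathbf E_\alpha$ described before Definition \ref{defn:equivalence_relation_on_Sigma_2}, this yields $\theta_1\approx^\alpha\theta_2$; equivalently, by Definition \ref{defn:Julia_equivalence}, there is a finite chain $\theta_1=z_1,\dots,z_m=\theta_2$ with each $z_jz_{j+1}$ a leaf of $\partial\mathcal L_\alpha$. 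Thus $\theta_1,\theta_2$ belong to one $\approx^\alpha$-class, corresponding to a single point of the Julia set of $z^2+r_\alpha$. Since $\partial\mathcal L_\alpha$ is the landing pattern of $z^2+r_\alpha$ by Theorem \ref{thm:landing_pattern=limit_lam} and that Julia set is locally connected, this class is either a single point, the two endpoints of a single leaf of $\partial\mathcal L_\alpha$, or the vertex set of a single finite polygon gap of $\partial\mathcal L_\alpha$. In every case the chord $\theta_1\theta_2$ is either a leaf of $\partial\mathcal L_\alpha$ or a diagonal of a polygon gap of $\partial\mathcal L_\alpha$, and so it crosses no leaf of $\partial\mathcal L_\alpha$. (Alternatively, I could obtain this localization directly from Lemma \ref{lem:D_x's}: every $D_{x_1\cdots x_k}$ is a hyperbolically convex region bounded by leaves of $\mathcal L_\alpha$, hence $\theta_1\theta_2\subseteq D_{x_1\cdots x_k}$ for all $k$ and therefore $\theta_1\theta_2\subseteq D_{\mathbf x}$.)

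Next I would observe that every leaf $a_i$ of the orbit portrait of $\alpha$ is itself a leaf of $\partial\mathcal L_\alpha$: the characteristic leaf $a_1=\alpha\overline\alpha$ lies in $\partial\mathcal L_\alpha$ by Definition \ref{def:associated_periodic_point}, each $a_i$ with $i<n$ is the forward image $h^{i-1}(a_1)$, the long leaf $a_n$ is an inverse-branch image of $a_1$, and $\partial\mathcal L_\alpha$ is invariant under $h$ in both directions; in the satellite case the same applies to the polygon gaps, since the orbit portrait simply records the part of the landing pattern $\partial\mathcal L_\alpha$ carried by the parabolic periodic orbit. Combining this with the previous step, $\theta_1\theta_2$ crosses no leaf of the orbit portrait, i.e. it is unlinked with the orbit portrait of $\alpha$, which is what we wanted.

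I expect the main obstacle to be the classification of the $\approx^\alpha$-class in the middle step — the claim that such a class cannot straddle a leaf of $\partial\mathcal L_\alpha$. This uses the finiteness of Julia equivalence classes (equivalently, local connectivity of the parabolic Julia set $J_{r_\alpha}$) together with the fact that a leaf of $\partial\mathcal L_\alpha$ lying inside a closed polygon gap must be a side of that gap rather than a diagonal, so that $\theta_1\theta_2$ stays in the closure of a single complementary region of $\partial\mathcal L_\alpha$. One must also check that the $+/-$ markers carried by angles in $\mathbf E_\alpha$ are geometrically consistent — that they point toward the common landing point — so that passing from $S^1$ to $\mathbf E_\alpha$ does not move $\theta_1$ or $\theta_2$ across an orbit-portrait leaf.
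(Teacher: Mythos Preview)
Your parenthetical alternative is exactly the paper's proof: Lemma~\ref{lem:D_x's} shows that each $D_{x_1\cdots x_k}$ is bounded by chords of $\mathcal{L}_\alpha$, so the intersection $D_{\mathbf{x}}$ is bounded by chords of $\partial\mathcal{L}_\alpha$; since the orbit portrait sits inside $\partial\mathcal{L}_\alpha$, the convex hull of $D_{\mathbf{x}}$ is unlinked with it. This is the short, self-contained argument, and you already have it.

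Your main route through Proposition~\ref{prop:Julia_equiv_and_iti_seq} and the landing-pattern description of $\partial\mathcal{L}_\alpha$ is correct but noticeably heavier. It imports the classification of $\approx^\alpha$-classes (point, leaf, finite polygon), which in turn leans on local connectivity of the parabolic Julia set and finiteness of the number of rays landing at each point---facts that are true here but external to the combinatorial setup of this appendix. You also flag the marker-consistency issue on $\mathbf{E}_\alpha$ yourself; it can be handled, but it is extra work that the convexity argument avoids entirely. The payoff of your main route is a slightly stronger conclusion (you locate $\theta_1\theta_2$ as a leaf or polygon diagonal of $\partial\mathcal{L}_\alpha$, not merely as unlinked with the orbit portrait), but the corollary as stated only needs the weaker statement, and the paper takes the cheaper path.
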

		\begin{proof}
			$\partial\mathcal{L}_\alpha$ is generated by $\alpha\overline{\alpha}$, hence it contains every chord in the orbit portrait of $\alpha$.
			By lemma \ref{lem:D_x's}, the boundary of each $D_x$ consists of chords of $\mathcal{L}_\alpha$ for any word $x \in \{0,1\}^*$.
			Taking the limit, $D_\mathbf{x}$ is bounded by chords in $\partial\mathcal{L}_\alpha$, therefore the convex hull of them is unlinked with the orbit portrait.		
		\end{proof}
	
	\begin{proof}
		(Proof of theorem \ref{thm:path_onto_sequences}) Choose any sequence $x \in \Sigma_2$.
		Then by corollary \ref{cor:puzzles_contains_D_x}, it is either properly contained in a puzzle or $\partial D_x$ comprises boundaries of some puzzle pieces.
		Choose any angle $\theta$ in $D_x$.
		Note that $\theta$ is an extended angle.
		The path on transition graph can be obtained by keeping track on which puzzle piece the angle $h^{\circ m}(\theta)$ belongs to.
	\end{proof}
		
		The puzzle piece $\Pi_0$ admits a preimage of $A_1$ as its boundary.
		Note that the itinerary sequence symbol changes only at this region.
		In other words, we have the following rule how the itinerary sequence changes when $\alpha$ varies to $\overline{\alpha}$.
		
		\begin{thm}[Itinerary sequence rule]\label{thm:iti_seq_rule}
			For $\theta \in \mathbf{E}_\alpha$, $\varphi_\alpha : I^\alpha(\theta) \mapsto I^{\overline{\alpha}}(\theta)$ by the following rule.
			\begin{enumerate}
				\item Lift $I^{\alpha}(\theta)$ to a path in $\mathcal{G}_\alpha$.
				\item Consider it as a path in $\mathcal{G}_{\overline{\alpha}}$.
				\item Re-assign the path with respect to $\overline{\alpha}$.
			\end{enumerate}
			By the construction, $\varphi_\alpha$ is order $2$ element.
		\end{thm}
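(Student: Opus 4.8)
The plan is to realize both itinerary sequences as readings of one and the same combinatorial path. For $\theta\in\mathbf{E}_\alpha$ let $\mathbf{p}_\alpha(\theta)=(\Pi^{(0)},\Pi^{(1)},\dots)$ be the sequence of puzzle pieces of the $\alpha$-partition with $h^{\circ m}(\theta)\in\Pi^{(m)}$, the marker on $\theta$ breaking the tie whenever $h^{\circ m}(\theta)$ lies on the diameter $\frac{\alpha}{2}\frac{\alpha+1}{2}$. Exactly as in the proof of Theorem \ref{thm:path_onto_sequences}, the Markov property of the partition makes $\mathbf{p}_\alpha(\theta)$ a genuine path in $\mathcal{G}_\alpha$, and since each piece lies entirely in $A_\alpha$ (resp.\ $B_\alpha$) and is assigned $0$ (resp.\ $1$) accordingly, reading the $\alpha$-symbols along $\mathbf{p}_\alpha(\theta)$ returns $I^\alpha(\theta)$; the same statement with $\overline{\alpha}$ in place of $\alpha$ returns $I^{\overline{\alpha}}(\theta)$ from $\mathbf{p}_{\overline{\alpha}}(\theta)$. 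Let $\Phi\colon\mathcal{G}_\alpha\to\mathcal{G}_{\overline{\alpha}}$ be the isomorphism that fixes every state except that it interchanges $\Pi_0^0$ and $\Pi_0^1$; this is one of the isomorphisms produced by Lemma \ref{lem:transition_graph_is_unique_for_char_arc}. Then the whole theorem reduces to the single identity $\Phi\circ\mathbf{p}_\alpha(\theta)=\mathbf{p}_{\overline{\alpha}}(\theta)$: granting it, reading $\overline{\alpha}$-symbols along $\Phi\circ\mathbf{p}_\alpha(\theta)$ is simultaneously the output of steps (1)--(3) and the sequence $I^{\overline{\alpha}}(\theta)$.

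To establish that identity I would compare the two partitions directly. Because $\alpha$ and $\overline{\alpha}$ are the endpoints of a single characteristic arc they share the orbit portrait, so the chords $a_0,a_1,\dots,a_n$ used to cut the disc are common to both; the only change is that the diameter $\frac{\alpha}{2}\frac{\alpha+1}{2}$ is replaced by $\frac{\overline{\alpha}}{2}\frac{\overline{\alpha}+1}{2}$. Both of these are diagonals of the center gap $Q=Q^{(\alpha,\overline{\alpha})}_\lambda$ of Definition \ref{defn:quadrilateral_lam}, whose four vertices $\dot{\alpha},\ddot{\alpha},\dot{\overline{\alpha}},\ddot{\overline{\alpha}}$ are the $h$-preimages of $\alpha,\overline{\alpha}$ and for which $a_n=\dot{\alpha}\dot{\overline{\alpha}}$ and $a_0=\ddot{\alpha}\ddot{\overline{\alpha}}$ are a pair of opposite sides. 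Hence every puzzle piece other than $\Pi_0^0,\Pi_0^1$ is literally the same region in both partitions, so its boundary arc is disjoint from the symmetric difference $A_\alpha\triangle A_{\overline{\alpha}}$ and it carries the same symbol for $\alpha$ and for $\overline{\alpha}$; and $\Phi$ really does only interchange $\Pi_0^0,\Pi_0^1$. The one non-formal ingredient is the fact recorded when the pieces were indexed: replacing the diagonal $\frac{\alpha}{2}\frac{\alpha+1}{2}$ of $Q$ by $\frac{\overline{\alpha}}{2}\frac{\overline{\alpha}+1}{2}$ interchanges the two triangular pieces of $Q$ in such a way that the boundary arc of $\Pi_0^0$ for $\alpha$ equals the boundary arc of $\Pi_0^1$ for $\overline{\alpha}$ (and symmetrically); this is checked by writing out the cyclic order $\ddot{\alpha},\dot{\overline{\alpha}},\dot{\alpha},\ddot{\overline{\alpha}}$ of the vertices of $Q$ --- which uses narrowness, e.g.\ $\dot{\alpha}=\frac{\alpha+1}{2}$ and $\dot{\overline{\alpha}}=\frac{\overline{\alpha}}{2}$ as in the proof of Corollary \ref{cor:1-step_test_for_characteristic_symbol} --- and locating $a_n$ and $a_0$. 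I expect this geometric bookkeeping to be the main obstacle: making airtight which triangle of $Q$ each boundary arc lies on and that it is precisely the swap realized by $\Phi$. It is also the only place narrowness is genuinely needed, through Proposition \ref{prop:narrow = no nesting}, which is what makes the canonical indexing of the pieces legitimate.

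With this in hand the identity follows at once. For each $m$: if $h^{\circ m}(\theta)$ does not meet $Q$ then $\Pi^{(m)}$ is a piece common to both partitions, so $\Phi$ fixes it and the $m$-th symbols agree; if $h^{\circ m}(\theta)$ meets $Q$ --- equivalently $\mathbf{p}_\alpha(\theta)$ is at a $\Pi_0$-state at time $m$ --- then by the boundary-arc identification the $\alpha$-piece and the $\overline{\alpha}$-piece containing $h^{\circ m}(\theta)$ are interchanged by $\Phi$. Therefore $\Phi\circ\mathbf{p}_\alpha(\theta)=\mathbf{p}_{\overline{\alpha}}(\theta)$, which is the rule. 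Finally $\varphi_\alpha$ has order $2$: read purely on sequences, the procedure fixes every symbol except that it flips the symbol at each time the path visits a $\Pi_0$-state, and this set of times is intrinsic to the sequence and unaffected by the flip, so a second application undoes the first; this also follows from $\overline{\overline{\alpha}}=\alpha$ and Theorem \ref{thm:shift_invariant}. A minor point --- the consistent treatment of markers and $*$'s so that $\mathbf{p}_\alpha(\theta)$ and $\mathbf{p}_{\overline{\alpha}}(\theta)$ are both well defined on $\mathbf{E}_\alpha$ --- is handled exactly by the $\mathbf{E}_\alpha$-formalism of Section \ref{sec:5-action_on_iti_seq}.
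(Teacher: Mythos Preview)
Your argument is correct and is essentially the paper's own: the paper does not give a separate proof of this theorem but treats it as immediate from the construction (Lemma~\ref{lem:transition_graph_is_unique_for_char_arc} plus the remark that ``the arc boundary of $\Pi_0^{0,\alpha}$ is same with that of $\Pi_0^{1,\overline{\alpha}}$''), and you have simply spelled out the identity $\Phi\circ\mathbf{p}_\alpha(\theta)=\mathbf{p}_{\overline{\alpha}}(\theta)$ that makes this precise. One small over-caution: you flag narrowness as ``genuinely needed'' for the cyclic-order check on the vertices of $Q$, but in fact $a_n$ and $a_0$ are opposite sides of $Q$ and the arc-boundary swap holds regardless --- the paper's closing remark in Appendix~\ref{appendix:A-sym_dyn} says exactly that the theorem holds in the non-narrow case as well, with narrowness only needed for the canonical indexing of the remaining pieces.
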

	
		Final remark is that the construction we introduced here is not limited in the narrow cases, as we aforementioned in the beginning.
        The only limitation is that we could not find any canonical ways to number/index each state/puzzle piece for non-narrow cases.
		However, every theorem in the appendix \ref{appendix:A-sym_dyn} still holds for the non-narrow cases, as analogous as the Atela's work in \cite{atela1993mandelbrot}.
		Nonetheless, it still exists that the ambiguity of $\varphi_\alpha$ and we cannot explicitly find how the sequence varies if we have an input only.
	
		\section{Tables of not simply renormalizable, non-narrow arcs of non-prime period}\label{appendix:B-3non}
		Renormalizability can be detected by its internal address, which is first proposed by Dierk Schleicher in his paper \cite{schleicher2017internal}.
		We introduce his theorem about relation between renormalizability and internal address.
		\begin{thm}[Proposition 4.7 in \cite{schleicher2017internal}]
			Suppose the characteristic arc $I$ admits $[S_1, S_2, \cdots, S_n]$ as its internal address.
			$I$ is (simply) renormalizable if and only if there exists $S_i$ such that $S_i~|~S_{j}$ for all $i \leq j \leq n$.
		\end{thm}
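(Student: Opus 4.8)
The plan is to reduce the statement to the transformation law of internal addresses under Douady tuning, the operation recalled in Section~\ref{sec:2-prelim}. The tool I need is a \emph{tuning formula for internal addresses}: if $M$ is a hyperbolic component of period $k$ with internal address $[1,\sigma_2,\ldots,\sigma_{r-1},k]$ and $W_0$ is a hyperbolic component with internal address $[1,\tau_2,\ldots,\tau_m]$, then $M\perp W_0=\psi_M(W_0)$ has internal address
\[
[\,1,\sigma_2,\ldots,\sigma_{r-1},\,k,\,k\tau_2,\,k\tau_3,\ldots,k\tau_m\,].
\]
I would record this as a preliminary lemma, attributing it to Schleicher \cite{schleicher2017internal}; it can also be recovered from Lemma~\ref{lem:douady_angle_tuning_formula} by tracking how the kneading sequence $\hat{\alpha}=I^{\alpha}(\alpha)$ transforms under tuning — every symbol of the kneading sequence of $W_0$ is replaced by one of two length-$k$ blocks read off from $M$, with the $\star$ inserted at the appropriate position — and then running the standard algorithm that reads the internal address off a kneading sequence. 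The point to keep in mind is that the formula preserves strict monotonicity of addresses and multiplies the period by $k$.

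\textbf{Forward direction.} Suppose the hyperbolic component $W$ carried by $I$ is simply renormalizable. By the definition of simple renormalization (Douady tuning), $W=M\perp W_0$ for a hyperbolic component $M$ of period $k\ge 2$ and some hyperbolic component $W_0$. Substituting into the tuning formula, the internal address $[S_1,\ldots,S_n]$ of $W$ has an entry equal to $k$, say $S_i=k$, and every later entry $S_j$ (for $i\le j\le n$) equals $k$ times an entry of the internal address of $W_0$; in particular $S_i\mid S_j$ for all $j\ge i$ and $S_i>1$. This is exactly the asserted condition, understanding that the witnessing entry is required to be $\neq 1$ (the entry $S_1=1$ divides everything vacuously).

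\textbf{Reverse direction.} Conversely, assume some index has $S_i>1$ and $S_i\mid S_j$ for all $i\le j\le n$, and pick the smallest such index; set $k:=S_i$. Split the address into a prefix $[S_1,\ldots,S_{i-1},k]$ and a rescaled tail $[\,1,\,S_{i+1}/k,\,S_{i+2}/k,\ldots,S_n/k\,]$. Both are strictly increasing sequences of positive integers beginning with $1$, hence each is realized as the internal address of a hyperbolic component — $M$ of period $k$ for the prefix and $W_0$ of period $S_n/k$ for the rescaled tail — by Schleicher's classification of which sequences occur as internal addresses. Applying the tuning formula to $M$ and $W_0$ produces a component $M\perp W_0$ with internal address
\[
[\,S_1,\ldots,S_{i-1},\,k,\;k\cdot(S_{i+1}/k),\ldots,k\cdot(S_n/k)\,]=[\,S_1,\ldots,S_n\,],
\]
so $W$ and $M\perp W_0$ carry the same internal address. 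As $M\perp W_0=\psi_M(W_0)$ lies in the image of the tuning map, it is simply renormalizable, and since simple renormalizability is detected by the internal address, so is $W$.

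\textbf{Expected main obstacle.} The delicate point is the reverse direction, on two counts. First, the rescaled tail has to be an \emph{admissible} internal address; this is not obvious from first principles and relies on Schleicher's theorem that every strictly increasing sequence of positive integers starting with $1$ is realized, which I would have to quote rather than reprove. Second, I must upgrade the equality of \emph{un-angled} internal addresses to the conclusion that $W$ itself — not merely some component sharing its address — is a tuning $M\perp W_0$; I expect to handle this by carrying the angle decorations through the whole argument, i.e.\ working with angled internal addresses and the angled version of the tuning formula, so that a factorization of the address forces a compatible factorization of the decoration. A minor additional subtlety is that several indices $i$ may satisfy the divisibility condition (a chain of nested renormalizations); choosing the smallest extracts the outermost simple renormalization, which is the one the statement concerns.
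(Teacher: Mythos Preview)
The paper does not prove this theorem at all: it is quoted verbatim from Schleicher \cite{schleicher2017internal} as an external input used only to compile the tables in Appendix~\ref{appendix:B-3non}. So there is no ``paper's own proof'' to compare against; what follows is an assessment of your argument on its own merits.

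Your overall architecture --- reduce everything to how internal addresses transform under Douady tuning --- is the right one, and the forward direction is fine. The reverse direction, however, has a real gap. You write that the rescaled tail $[1,S_{i+1}/k,\ldots,S_n/k]$ is admissible because ``every strictly increasing sequence of positive integers starting with $1$ is realized'' as an internal address. That statement is false: Schleicher (and later Bruin--Schleicher) give an explicit admissibility criterion, and there are strictly increasing sequences beginning with $1$ that do \emph{not} arise from any hyperbolic component. So you cannot simply ``quote'' this; the claim you would be quoting does not exist.

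The repair is not to manufacture $W_0$ abstractly and then tune, but to argue in the other order: the prefix $[S_1,\ldots,S_i]$ is automatically admissible (it is a truncation of the admissible address of $W$), so it determines a component $M$ of period $k=S_i$; the divisibility hypothesis then forces $W$ to lie in the small Mandelbrot copy $\psi_M(\mathcal{M})$, whence $W=M\perp W_0$ for $W_0:=\psi_M^{-1}(W)$, and the tuning formula applied \emph{a posteriori} shows $W_0$ has internal address $[1,S_{i+1}/k,\ldots,S_n/k]$. This also dissolves your second worry about un-angled addresses not determining components: you never need to reconstruct $W$ from its address, only to locate the given $W$ inside a small copy. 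The substantive step you are missing is precisely ``$S_i\mid S_j$ for all $j\ge i$ implies $W$ sits in the wake/small copy of $M$'', and that is where Schleicher's actual work lies.
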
		
		Here we list all characteristic arcs of non-prime period, which are not narrow, and non-renormalizable, up to period $10$.
		Note that there are only one non narrow characteristic arc of period $4$, which is $(2/5, 3/5)$ and it is a basilica tuned by basilica. (Hence renormalizable.)
		
		We give a kneading sequence and internal address for each characteristic arc.
		Since all arcs are symmetric by $\frac{1}{2}$ (if $(a,b)$ is characteristic arc, then $\left(\frac{1}{2}-b, \frac{1}{2}-a\right)$ is also a characteristic arc), we only provide one of each symmetric case.
		The tuple in the kneading sequence column implies its $v^\alpha$.
		\emph{i.e.,} $I^\alpha(\alpha) = \overline{v^\alpha *}$.

        We also provide the ratio of non-narrow arcs among all not simply renormalizable characteristic arcs of each period.
        Here the column narrow and non-narrow are the number of total narrow and non-narrow arcs regardless of renormalizablity.
        
        \begin{table}[h]
    		\tiny
    		\begin{tabular}{|c|r|r|r| |r|r|r|}
    			\hline
                 & \multicolumn{3}{|c||}{All arcs} & \multicolumn{3}{|c|}{Non-renormalizable arcs} \\
                \hline
                Period & narrow & non-narrow & total & non-narrow & total & ratio \\
                \hline
                4 & 5 & 1 & 6 & 0 & 5 & 0\\
                6 & 19 & 8 & 27 & 2 & 21 & 0.095238\\
                8 & 83 & 37 & 120 & 26 & 109 & 0.238532\\
                9 & 173 & 79 & 252 & 70 & 243 & 0.288065\\
                10 & 325 & 170 & 495 & 140 & 465 & 0.301075\\
                12 & 1303 & 707 & 2010 & 626 & 1929 & 0.324520\\
                14 & 5231 & 2896 & 8127 & 2770 & 8001 & 0.346206\\
                15 & 10527 & 5838 & 16365 & 5748 & 16275 & 0.353179\\
    			\hline
    		\end{tabular}
            \caption{The ratio measures how much portion the number of non-narrow arcs occupies among the total number of non-simply renormalizable arcs.}
            \label{tab:ratio}
    	\end{table}
     
        As one can see in the table \ref{tab:ratio}, the ratio of non-narrow not simply renormalizable arcs keep increasing as the period does, up to $15$.
        It is natural to ask whether this ratio converges as nonprime periods diverge.
        We do not have a theoretical answer yet.
        
		\begin{table}[h]
			\tiny
		\begin{adjustbox}{width=\columnwidth,center}
		\begin{tabular}{|l||l|l| |l||l|l|}
            \hline
			\multicolumn{6}{|l|}{Period = 6} \\
			\hline
			Characteristic arc & Kneading sequence & Internal address & Characteristic arc & Kneading sequence & Internal address \\
			\hline
			29/63, 34/63 & [0, 1, 1, 0, 0] & [1, 2, 3, 5, 6] & 10/21, 11/21 & [0, 1, 1, 1, 0] & [1, 2, 3, 4, 6] \\
			\hline
			\hline
			\multicolumn{6}{|l|}{Period = 8} \\
			\hline
			Characteristic arc & Kneading sequence & Internal address & Characteristic arc & Kneading sequence & Internal address\\
			\hline
			41/255, 10/51 & [0, 0, 1, 0, 0, 0, 0] & [1, 3, 6, 8] & 116/255, 139/255 & [0, 1, 1, 0, 0, 0, 1] & [1, 2, 3, 5, 8] \\
			14/85, 49/255 & [0, 0, 1, 0, 1, 0, 0] & [1, 3, 5, 8] & 39/85, 46/85 & [0, 1, 1, 0, 0, 0, 0] & [1, 2, 3, 5, 7, 8] \\
			19/85, 22/85 & [0, 0, 1, 1, 0, 0, 0] & [1, 3, 4, 7, 8] & 118/255, 137/255 & [0, 1, 1, 0, 0, 1, 0] & [1, 2, 3, 5, 6, 8] \\
			58/255, 13/51 & [0, 0, 1, 1, 1, 0, 0] & [1, 3, 4, 5, 8] & 121/255, 134/255 & [0, 1, 1, 1, 0, 1, 0] & [1, 2, 3, 4, 7, 8] \\
			31/85, 98/255 & [0, 1, 0, 1, 1, 0, 0] & [1, 2, 5, 7, 8] & 122/255, 133/255 & [0, 1, 1, 1, 0, 0, 0] & [1, 2, 3, 4, 6, 8] \\
			94/255, 97/255 & [0, 1, 0, 1, 1, 1, 0] & [1, 2, 5, 6, 8] & 41/85, 44/85 & [0, 1, 1, 1, 0, 0, 1] & [1, 2, 3, 4, 6, 7, 8] \\
			109/255, 146/255 & [0, 1, 0, 0, 1, 0, 0] & [1, 2, 4, 5, 7, 8] & 124/255, 131/255 & [0, 1, 1, 1, 1, 0, 1] & [1, 2, 3, 4, 5, 8] \\
			22/51, 113/255 & [0, 1, 1, 0, 1, 1, 0] & [1, 2, 3, 8] & 25/51, 26/51 & [0, 1, 1, 1, 1, 0, 0] & [1, 2, 3, 4, 5, 7, 8] \\
			38/85, 47/85 & [0, 1, 1, 0, 1, 0, 0] & [1, 2, 3, 6, 8] & 42/85, 43/85 & [0, 1, 1, 1, 1, 1, 0] & [1, 2, 3, 4, 5, 6, 8] \\
			23/51, 28/51 & [0, 1, 1, 0, 1, 0, 1] & [1, 2, 3, 6, 7, 8] & & & \\
			\hline
		\end{tabular}
		\end{adjustbox}
        \caption{Period  = $6, 8$}
		\end{table}
		
		\begin{table}[h]
			\tiny
		\begin{adjustbox}{width=\columnwidth,center}
		\begin{tabular}{|l||l|l| |l||l|l| }
			\hline
			\multicolumn{6}{|l|}{Period = 9} \\
			\hline
			Characteristic arc & Kneading sequence & Internal address & Characteristic arc & Kneading sequence & Internal address\\
			\hline
			7/73, 66/511 & [0, 0, 0, 1, 0, 0, 0, 0] & [1, 4, 8, 9] & 221/511, 226/511 & [0, 1, 1, 0, 1, 1, 0, 0] & [1, 2, 3, 8, 9] \\
			50/511, 65/511 & [0, 0, 0, 1, 1, 0, 0, 0] & [1, 4, 5, 9] & 222/511, 225/511 & [0, 1, 1, 0, 1, 1, 1, 0] & [1, 2, 3, 7, 9] \\
			89/511, 14/73 & [0, 0, 1, 0, 1, 0, 0, 0] & [1, 3, 5, 8, 9] & 229/511, 282/511 & [0, 1, 1, 0, 1, 0, 0, 0] & [1, 2, 3, 6, 8, 9] \\
			90/511, 97/511 & [0, 0, 1, 0, 1, 1, 0, 0] & [1, 3, 5, 6, 9] & 230/511, 281/511 & [0, 1, 1, 0, 1, 0, 1, 0] & [1, 2, 3, 6, 7, 9] \\
			15/73, 114/511 & [0, 0, 1, 1, 0, 0, 0, 0] & [1, 3, 4, 7, 9] & 233/511, 278/511 & [0, 1, 1, 0, 0, 0, 1, 0] & [1, 2, 3, 5, 8, 9] \\
			106/511, 113/511 & [0, 0, 1, 1, 0, 1, 0, 0] & [1, 3, 4, 6, 9] & 234/511, 277/511 & [0, 1, 1, 0, 0, 0, 0, 0] & [1, 2, 3, 5, 7, 9] \\
			115/511, 132/511 & [0, 0, 1, 1, 0, 0, 0, 1] & [1, 3, 4, 7, 8, 9] & 235/511, 276/511 & [0, 1, 1, 0, 0, 0, 0, 1] & [1, 2, 3, 5, 7, 8, 9] \\
			116/511, 131/511 & [0, 0, 1, 1, 1, 0, 0, 1] & [1, 3, 4, 5, 9] & 236/511, 275/511 & [0, 1, 1, 0, 0, 1, 0, 1] & [1, 2, 3, 5, 6, 9] \\
			121/511, 130/511 & [0, 0, 1, 1, 1, 0, 0, 0] & [1, 3, 4, 5, 8, 9] & 237/511, 274/511 & [0, 1, 1, 0, 0, 1, 0, 0] & [1, 2, 3, 5, 6, 8, 9] \\
			122/511, 129/511 & [0, 0, 1, 1, 1, 1, 0, 0] & [1, 3, 4, 5, 6, 9] & 34/73, 39/73 & [0, 1, 1, 0, 0, 1, 1, 0] & [1, 2, 3, 5, 6, 7, 9] \\
			173/511, 178/511 & [0, 1, 0, 1, 0, 1, 0, 0] & [1, 2, 8, 9] & 241/511, 270/511 & [0, 1, 1, 1, 0, 1, 1, 0] & [1, 2, 3, 4, 8, 9] \\
			174/511, 177/511 & [0, 1, 0, 1, 0, 1, 1, 0] & [1, 2, 7, 9] & 242/511, 269/511 & [0, 1, 1, 1, 0, 1, 0, 0] & [1, 2, 3, 4, 7, 9] \\
			181/511, 202/511 & [0, 1, 0, 1, 0, 0, 0, 0] & [1, 2, 6, 8, 9] &	243/511, 268/511 & [0, 1, 1, 1, 0, 1, 0, 1] & [1, 2, 3, 4, 7, 8, 9] \\
			26/73, 185/511 & [0, 1, 0, 1, 1, 0, 1, 0] & [1, 2, 5, 9] & 244/511, 267/511 & [0, 1, 1, 1, 0, 0, 0, 1] & [1, 2, 3, 4, 6, 9] \\
			186/511, 197/511 & [0, 1, 0, 1, 1, 0, 0, 0] & [1, 2, 5, 7, 9] & 35/73, 38/73 & [0, 1, 1, 1, 0, 0, 0, 0] & [1, 2, 3, 4, 6, 8, 9] \\
			27/73, 194/511 & [0, 1, 0, 1, 1, 1, 0, 0] & [1, 2, 5, 6, 8, 9] & 246/511, 265/511 & [0, 1, 1, 1, 0, 0, 1, 0] & [1, 2, 3, 4, 6, 7, 9] \\
			190/511, 193/511 & [0, 1, 0, 1, 1, 1, 1, 0] & [1, 2, 5, 6, 7, 9] & 247/511, 264/511 & [0, 1, 1, 1, 0, 0, 1, 1] & [1, 2, 3, 4, 6, 7, 8, 9] \\
			198/511, 201/511 & [0, 1, 0, 1, 0, 0, 1, 0] & [1, 2, 6, 7, 9] & 248/511, 263/511 & [0, 1, 1, 1, 1, 0, 1, 1] & [1, 2, 3, 4, 5, 9] \\
			205/511, 30/73 & [0, 1, 0, 0, 0, 1, 0, 0] & [1, 2, 4, 9] & 249/511, 262/511 & [0, 1, 1, 1, 1, 0, 1, 0] & [1, 2, 3, 4, 5, 8, 9] \\
			206/511, 209/511 & [0, 1, 0, 0, 0, 1, 1, 0] & [1, 2, 4, 7, 9] & 250/511, 261/511 & [0, 1, 1, 1, 1, 0, 0, 0] & [1, 2, 3, 4, 5, 7, 9] \\
			213/511, 298/511 & [0, 1, 0, 0, 0, 0, 0, 0] & [1, 2, 4, 6, 8, 9] & 251/511, 260/511 & [0, 1, 1, 1, 1, 0, 0, 1] & [1, 2, 3, 4, 5, 7, 8, 9] \\
			214/511, 297/511 & [0, 1, 0, 0, 0, 0, 1, 0] & [1, 2, 4, 6, 7, 9] & 36/73, 37/73 & [0, 1, 1, 1, 1, 1, 0, 1] & [1, 2, 3, 4, 5, 6, 9] \\
			31/73, 42/73 & [0, 1, 0, 0, 1, 0, 1, 0] & [1, 2, 4, 5, 9] & 253/511, 258/511 & [0, 1, 1, 1, 1, 1, 0, 0] & [1, 2, 3, 4, 5, 6, 8, 9] \\
			218/511, 293/511 & [0, 1, 0, 0, 1, 0, 0, 0] & [1, 2, 4, 5, 7, 9] & 254/511, 257/511 & [0, 1, 1, 1, 1, 1, 1, 0] & [1, 2, 3, 4, 5, 6, 7, 9] \\
			\hline
		\end{tabular}
		\end{adjustbox}
        \caption{Period  = $9$}
		\end{table}
	
		\begin{table}[h]
			\tiny
		\begin{adjustbox}{width=\columnwidth,center}
		\begin{tabular}{|l||l|l| |l||l|l| }
			\hline
			\multicolumn{6}{|l|}{Period = 10} \\
			\hline
			Characteristic arc & Kneading sequence & Internal address & Characteristic arc & Kneading sequence & Internal address\\
			\hline
			27/341, 98/1023 & [0, 0, 0, 1, 0, 0, 0, 0, 0] & [1, 4, 8, 10] & 443/1023, 452/1023 & [0, 1, 1, 0, 1, 1, 0, 0, 1] & [1, 2, 3, 8, 9, 10]\\
			82/1023, 97/1023 & [0, 0, 0, 1, 0, 1, 0, 0, 0] & [1, 4, 6, 10] & 148/341, 41/93 & [0, 1, 1, 0, 1, 1, 1, 0, 1] & [1, 2, 3, 7, 10] \\
			113/1023, 130/1023 & [0, 0, 0, 1, 1, 0, 0, 0, 0] & [1, 4, 5, 9, 10] & 445/1023, 150/341 & [0, 1, 1, 0, 1, 1, 1, 0, 0] & [1, 2, 3, 7, 9, 10] \\
			38/341, 43/341 & [0, 0, 0, 1, 1, 1, 0, 0, 0] & [1, 4, 5, 6, 10] & 446/1023, 449/1023 & [0, 1, 1, 0, 1, 1, 1, 1, 0] & [1, 2, 3, 7, 8, 10] \\
			51/341, 54/341 & [0, 0, 1, 0, 0, 1, 0, 0, 0] & [1, 3, 9, 10] & 457/1023, 566/1023 & [0, 1, 1, 0, 1, 0, 0, 1, 0] & [1, 2, 3, 6, 9, 10] \\
			14/93, 161/1023 & [0, 0, 1, 0, 0, 1, 1, 0, 0] & [1, 3, 7, 10] & 458/1023, 565/1023 & [0, 1, 1, 0, 1, 0, 0, 0, 0] & [1, 2, 3, 6, 8, 10] \\
			169/1023, 178/1023 & [0, 0, 1, 0, 1, 0, 0, 0, 0] & [1, 3, 5, 8, 10] & 153/341, 188/341 & [0, 1, 1, 0, 1, 0, 0, 0, 1] & [1, 2, 3, 6, 8, 9, 10] \\
			170/1023, 59/341 & [0, 0, 1, 0, 1, 0, 1, 0, 0] & [1, 3, 5, 7, 10] & 460/1023, 563/1023 & [0, 1, 1, 0, 1, 0, 1, 0, 1] & [1, 2, 3, 6, 7, 10] \\
			185/1023, 194/1023 & [0, 0, 1, 0, 1, 1, 0, 0, 0] & [1, 3, 5, 6, 9, 10] & 461/1023, 562/1023 & [0, 1, 1, 0, 1, 0, 1, 0, 0] & [1, 2, 3, 6, 7, 9, 10]\\
			2/11, 193/1023 & [0, 0, 1, 0, 1, 1, 1, 0, 0] & [1, 3, 5, 6, 7, 10] & 466/1023, 557/1023 & [0, 1, 1, 0, 0, 0, 1, 0, 0] & [1, 2, 3, 5, 8, 10] \\
			67/341, 274/1023 & [0, 0, 1, 0, 0, 0, 0, 0, 0] & [1, 3, 6, 9, 10] & 467/1023, 556/1023 & [0, 1, 1, 0, 0, 0, 1, 0, 1] & [1, 2, 3, 5, 8, 9, 10] \\
			202/1023, 91/341 & [0, 0, 1, 0, 0, 0, 1, 0, 0] & [1, 3, 6, 7, 10] & 156/341, 185/341 & [0, 1, 1, 0, 0, 0, 0, 0, 1] & [1, 2, 3, 5, 7, 10] \\
			19/93, 266/1023 & [0, 0, 1, 1, 0, 0, 1, 0, 0] & [1, 3, 4, 8, 10] & 469/1023, 554/1023 & [0, 1, 1, 0, 0, 0, 0, 0, 0] & [1, 2, 3, 5, 7, 9, 10] \\
			70/341, 265/1023 & [0, 0, 1, 1, 0, 0, 0, 0, 0] & [1, 3, 4, 7, 10] & 470/1023, 553/1023 & [0, 1, 1, 0, 0, 0, 0, 1, 0] & [1, 2, 3, 5, 7, 8, 10] \\
			211/1023, 76/341 & [0, 0, 1, 1, 0, 0, 0, 0, 1] & [1, 3, 4, 7, 9, 10] & 43/93, 50/93 & [0, 1, 1, 0, 0, 1, 0, 1, 0] & [1, 2, 3, 5, 6, 9, 10] \\
			212/1023, 227/1023 & [0, 0, 1, 1, 0, 1, 0, 0, 1] & [1, 3, 4, 6, 10] & 158/341, 183/341 & [0, 1, 1, 0, 0, 1, 0, 0, 0] & [1, 2, 3, 5, 6, 8, 10] \\
			7/33, 226/1023 & [0, 0, 1, 1, 0, 1, 0, 0, 0] & [1, 3, 4, 6, 9, 10] & 475/1023, 548/1023 & [0, 1, 1, 0, 0, 1, 0, 0, 1] & [1, 2, 3, 5, 6, 8, 9, 10] \\
			218/1023, 75/341 & [0, 0, 1, 1, 0, 1, 1, 0, 0] & [1, 3, 4, 6, 7, 10] & 476/1023, 547/1023 & [0, 1, 1, 0, 0, 1, 1, 0, 1] & [1, 2, 3, 5, 6, 7, 10] \\
			233/1023, 22/93 & [0, 0, 1, 1, 1, 0, 0, 0, 0] & [1, 3, 4, 5, 8, 10] & 159/341, 182/341 & [0, 1, 1, 0, 0, 1, 1, 0, 0] & [1, 2, 3, 5, 6, 7, 9, 10] \\
			78/341, 241/1023 & [0, 0, 1, 1, 1, 0, 1, 0, 0] & [1, 3, 4, 5, 7, 10] & 478/1023, 481/1023 & [0, 1, 1, 1, 0, 1, 1, 1, 0] & [1, 2, 3, 4, 10] \\
			81/341, 260/1023 & [0, 0, 1, 1, 1, 0, 0, 0, 1] & [1, 3, 4, 5, 8, 9, 10] & 482/1023, 541/1023 & [0, 1, 1, 1, 0, 1, 1, 0, 0] & [1, 2, 3, 4, 8, 10] \\
			244/1023, 259/1023 & [0, 0, 1, 1, 1, 1, 0, 0, 1] & [1, 3, 4, 5, 6, 10] & 161/341, 180/341 & [0, 1, 1, 1, 0, 1, 1, 0, 1] & [1, 2, 3, 4, 8, 9, 10] \\
			83/341, 86/341 & [0, 0, 1, 1, 1, 1, 0, 0, 0] & [1, 3, 4, 5, 6, 9, 10] & 44/93, 49/93 & [0, 1, 1, 1, 0, 1, 0, 0, 1] & [1, 2, 3, 4, 7, 10] \\
			250/1023, 257/1023 & [0, 0, 1, 1, 1, 1, 1, 0, 0] & [1, 3, 4, 5, 6, 7, 10] & 485/1023, 538/1023 & [0, 1, 1, 1, 0, 1, 0, 0, 0] & [1, 2, 3, 4, 7, 9, 10] \\
			281/1023, 290/1023 & [0, 0, 1, 0, 0, 1, 0, 0, 0] & [1, 3, 9, 10] & 162/341, 179/341 & [0, 1, 1, 1, 0, 1, 0, 1, 0] & [1, 2, 3, 4, 7, 8, 10] \\
			94/341, 289/1023 & [0, 0, 1, 0, 0, 1, 1, 0, 0] & [1, 3, 7, 10] & 487/1023, 536/1023 & [0, 1, 1, 1, 0, 1, 0, 1, 1] & [1, 2, 3, 4, 7, 8, 9, 10] \\
			349/1023, 118/341 & [0, 1, 0, 1, 0, 1, 1, 0, 0] & [1, 2, 7, 9, 10] & 488/1023, 535/1023 & [0, 1, 1, 1, 0, 0, 0, 1, 1] & [1, 2, 3, 4, 6, 10] \\
			350/1023, 353/1023 & [0, 1, 0, 1, 0, 1, 1, 1, 0] & [1, 2, 7, 8, 10] & 163/341, 178/341 & [0, 1, 1, 1, 0, 0, 0, 1, 0] & [1, 2, 3, 4, 6, 9, 10] \\
			365/1023, 370/1023 & [0, 1, 0, 1, 1, 0, 1, 0, 0] & [1, 2, 5, 9, 10] & 490/1023, 533/1023 & [0, 1, 1, 1, 0, 0, 0, 0, 0] & [1, 2, 3, 4, 6, 8, 10] \\
			122/341, 123/341 & [0, 1, 0, 1, 1, 0, 1, 1, 0] & [1, 2, 5, 8, 10] & 491/1023, 532/1023 & [0, 1, 1, 1, 0, 0, 0, 0, 1] & [1, 2, 3, 4, 6, 8, 9, 10] \\
			373/1023, 394/1023 & [0, 1, 0, 1, 1, 0, 0, 0, 0] & [1, 2, 5, 7, 9, 10] & 164/341, 177/341 & [0, 1, 1, 1, 0, 0, 1, 0, 1] & [1, 2, 3, 4, 6, 7, 10] \\
			34/93, 377/1023 & [0, 1, 0, 1, 1, 1, 0, 1, 0] & [1, 2, 5, 6, 10] & 493/1023, 530/1023 & [0, 1, 1, 1, 0, 0, 1, 0, 0] & [1, 2, 3, 4, 6, 7, 9, 10] \\
			126/341, 389/1023 & [0, 1, 0, 1, 1, 1, 0, 0, 0] & [1, 2, 5, 6, 8, 10] & 494/1023, 529/1023 & [0, 1, 1, 1, 0, 0, 1, 1, 0] & [1, 2, 3, 4, 6, 7, 8, 10] \\
			127/341, 386/1023 & [0, 1, 0, 1, 1, 1, 1, 0, 0] & [1, 2, 5, 6, 7, 9, 10] & 497/1023, 526/1023 & [0, 1, 1, 1, 1, 0, 1, 1, 0] & [1, 2, 3, 4, 5, 9, 10] \\
			382/1023, 35/93 & [0, 1, 0, 1, 1, 1, 1, 1, 0] & [1, 2, 5, 6, 7, 8, 10] & 166/341, 175/341 & [0, 1, 1, 1, 1, 0, 1, 0, 0] & [1, 2, 3, 4, 5, 8, 10] \\
			130/341, 131/341 & [0, 1, 0, 1, 1, 0, 0, 1, 0] & [1, 2, 5, 7, 8, 10]  & 499/1023, 524/1023 & [0, 1, 1, 1, 1, 0, 1, 0, 1] & [1, 2, 3, 4, 5, 8, 9, 10] \\
			397/1023, 134/341 & [0, 1, 0, 1, 0, 0, 1, 0, 0] & [1, 2, 6, 7, 9, 10] & 500/1023, 523/1023 & [0, 1, 1, 1, 1, 0, 0, 0, 1] & [1, 2, 3, 4, 5, 7, 10] \\
			398/1023, 401/1023 & [0, 1, 0, 1, 0, 0, 1, 1, 0] & [1, 2, 6, 7, 8, 10] & 167/341, 174/341 & [0, 1, 1, 1, 1, 0, 0, 0, 0] & [1, 2, 3, 4, 5, 7, 9, 10] \\
			413/1023, 38/93 & [0, 1, 0, 0, 0, 1, 1, 0, 0] & [1, 2, 4, 7, 9, 10] & 502/1023, 521/1023 & [0, 1, 1, 1, 1, 0, 0, 1, 0] & [1, 2, 3, 4, 5, 7, 8, 10] \\
			138/341, 139/341 & [0, 1, 0, 0, 0, 1, 1, 1, 0] & [1, 2, 4, 7, 8, 10] & 503/1023, 520/1023 & [0, 1, 1, 1, 1, 0, 0, 1, 1] & [1, 2, 3, 4, 5, 7, 8, 9, 10] \\
			430/1023, 433/1023 & [0, 1, 0, 0, 1, 0, 1, 1, 0] & [1, 2, 4, 5, 8, 10] & 168/341, 173/341 & [0, 1, 1, 1, 1, 1, 0, 1, 1] & [1, 2, 3, 4, 5, 6, 10] \\
			437/1023, 586/1023 & [0, 1, 0, 0, 1, 0, 0, 0, 0] & [1, 2, 4, 5, 7, 9, 10] & 505/1023, 518/1023 & [0, 1, 1, 1, 1, 1, 0, 1, 0] & [1, 2, 3, 4, 5, 6, 9, 10] \\
			146/341, 195/341 & [0, 1, 0, 0, 1, 0, 0, 1, 0] & [1, 2, 4, 5, 7, 8, 10] & 46/93, 47/93 & [0, 1, 1, 1, 1, 1, 0, 0, 0] & [1, 2, 3, 4, 5, 6, 8, 10] \\
			147/341, 454/1023 & [0, 1, 1, 0, 1, 1, 0, 1, 0] & [1, 2, 3, 9, 10] & 169/341, 172/341 & [0, 1, 1, 1, 1, 1, 0, 0, 1] & [1, 2, 3, 4, 5, 6, 8, 9, 10] \\
			442/1023, 151/341 & [0, 1, 1, 0, 1, 1, 0, 0, 0] & [1, 2, 3, 8, 10] & 508/1023, 515/1023 & [0, 1, 1, 1, 1, 1, 1, 0, 1] & [1, 2, 3, 4, 5, 6, 7, 10] \\
			509/1023, 514/1023 & [0, 1, 1, 1, 1, 1, 1, 0, 0] & [1, 2, 3, 4, 5, 6, 7, 9, 10] & 170/341, 171/341 & [0, 1, 1, 1, 1, 1, 1, 1, 0] & [1, 2, 3, 4, 5, 6, 7, 8, 10] \\
			\hline
		\end{tabular}
		\end{adjustbox}
        \caption{Period  = $10$}
		\end{table}
    
	\clearpage
	\bibliographystyle{alpha} 
	\bibliography{refs}
	
\end{document}